\title{A Random Walk on the Category of Finite Abelian $p$-groups}
  \newtheorem{theorem}{Theorem}[section]
  \newtheorem{lemma}[theorem]{Lemma}
  \newtheorem*{claim}{Claim}
  \newtheorem{sublemma}{Lemma}[lemma]
  \newtheorem*{corollary}{Corollary}
  \newcommand{\probP}{\text{I\kern-0.15em P}}
  \newcommand{\probE}{\text{I\kern-0.15em E}}
  \newcommand{\coker}{\text{coker}}
  \newcommand{\defeq}{\overset{\mathrm{def}}{=\joinrel=}}
  \numberwithin{equation}{section}
  \theoremstyle{remark}
  \newtheorem*{remark}{Remark}
  \theoremstyle{remark}
  \newtheorem*{example}{Example}
  \newtheorem*{definition}{Definition}
  \newcommand{\Q}{\mathbb{Q}}
  \newcommand{\Z}{\mathbb{Z}}
  \newcommand{\fr}{\frac}
  \newcommand{\Hom}{\text{Hom}}
  \newcommand*{\tempstar}{\multicolumn{1}{|c}{*}}
  \newcommand*{\tempvdots}{\multicolumn{1}{|c}{\vdots}}
  \newcommand*{\tempzero}{\multicolumn{1}{|c}{0}}
  \newcommand{\dk}{d^k}
  \newcommand{\GB}{G/\alpha(B)}
  \newcommand{\dkplusone}{d^{k+1}}
  \newcommand{\mo}{\text{\textbf{Moment}}}
  \newcommand{\del}{\delta}
  \newcommand{\Xzero}{X_0}
  \newcommand{\Elambda}{E_{\lambda}}
  \newcommand{\hmap}{h}
  \newcommand{\phimap}{\phi}
  \newenvironment{diagram}
{ \[
\begin{tikzcd}}
{ \end{tikzcd}
\] }
    \newenvironment{mat}
    {
    \left[
\begin{array}}
{ \end{array}
\right]
}
\newcommand{\probM}{\mathcal{M}}
\newcommand{\R}{\mathbb{R}}
\newtheorem*{theorem*}{Theorem}
\newenvironment{ftheo*}
  {\begin{mdframed}\begin{theorem*}}
  {\end{theorem*}\end{mdframed}}
\newcommand{\ssfrac}[2]{#1 \Big/ #2}
\newtheorem{Corollary}{Corollary}[theorem]
\newtheorem*{goal}{Goal}
   \newcommand{\doper}{d}
   \newcommand{\theoremintro}{theorem}
   \DeclareSymbolFont{bbold}{U}{bbold}{m}{n}
   \DeclareSymbolFontAlphabet{\mathbbold}{bbold}
   \newcommand{\one}{\mathbbold{1}}
   \theoremstyle{remark}
 \theoremstyle{definition}
 \newenvironment{eq}
  {\begin{equation}}
  {\end{equation}}
 \newcommand{\emptydot}{\, \cdot \,}
 \newcommand{\questioneq}{\overset{\mathrm{?}}{=\joinrel=}}
 \newcommand{\comm}[1]{} 
 \newcommand{\mmarginpar}{\comm} 
 \newcommand{\F}{\mathbb{F}}
 \newcommand{\sur}{twoheadrightarrow}
 \newcommand{\lone}{L^1(X_0)}
 \newcommand{\ltwoCL}{L^2(X_0,\mu_0)}
 \newcommand{\hil}{\mathcal{H}}
 \newcommand{\llambda}{T}
 \newcommand{\ltwostar}{L^2(X_0,\mu_0)^{*}}
 \newcommand{\et}{\emptydot}
 \newcommand{\dstar}{d^{*}}
 \newcommand{\dkstar}{d^{*}{}^k}
 \newcommand{\lb}{\Big<}
 \newcommand{\rb}{\Big>}
 \newcommand{\prob}{\probP}
 \newcommand{\lltwoCL}{\ltwostar}
 \newcommand{\Ds}{\Delta_0}
 \newcommand{\Hil}{\mathcal{H}}
 \newcommand{\commm}[1]{}
 \newcommand{\marginparr}{\commm}
 \newcommand{\chapter}{section}
 \newcommand{\fouriermap}{\mathcal{U}}
\author{Nikita Lvov\footnote{nikita.lvov@mail.mcgill.ca}}
\begin{document} 

\maketitle
\begin{abstract}
We study an irreducible Markov chain on the category of finite abelian $p$-groups, whose stationary measure is the Cohen-Lenstra distribution. This Markov chain arises when one studies the cokernel of a random matrix $M$, after conditioning on a submatrix of $M$. We show two surprising facts about this Markov chain. Firstly, it is reversible. Hence, one may regard it is a random walk on finite abelian $p$-groups. The proof of reversibility also explains the appearance of the Cohen-Lenstra distribution in the context of random matrices. Secondly, we can explicitly determine the eigenvalues and eigenfunctions of the infinite transition matrix associated to this Markov chain.
\end{abstract}

\begin{mysection}
\section{Introduction}

\subsubsection{Short Intro on Cohen-Lenstra}


\subsubsection{$\Delta_0$}


\subsubsection{A unitary transform}


\subsubsection{Main Theorem}


\section{Outline}

\subsubsection{Notation}

\subsubsection{Equivalence of definitions of $\Delta_0$}

\subsubsection{Relation to matrices and composition property}

\subsubsection{Spectrum}


\subsubsection{Moment Basis}

\subsubsection{Action of $\Delta_0$ on moments}

\subsubsection{Derivation of Curious Formula}

\subsubsection{Set of Eigenfunctions}

\subsubsection{Orthogonal Complement of Image of Fourier is Kernel of Delta}

\subsubsection{Specializing to $\F_p$. (Optional)}

\section{The operator $\Delta_0$}

\section{Relation to Random Matrices}

\section{Spectrum}

\subsection{The Spectral Theorem for $\Delta_0$}

\subsubsection{Boundedness of Operator}

\subsection{A Basis of Moments}

\subsection{Derivation of Main Formulae}

\subsection{Image of $\fouriermap$}

\end{mysection}

\section*{Introduction}
\begin{mysection}
The cokernel of a large $p$-adic random matrix $M$ is a random abelian $p$-group. Friedman and Washington showed that its distribution asymptotically tends to the well-known Cohen-Lenstra distribution. In this paper, we study an irreducible Markov chain on the category of finite abelian $p$-groups, whose stationary measure is the Cohen-Lenstra distribution. This Markov chain arises when one studies the cokernel of $M$, after conditioning on a submatrix of $M$.  We show two surprising facts about this Markov chain. Firstly, it is reversible. Hence, one may regard it is a random walk on finite abelian $p$-groups.  Secondly, we can explicitly determine the spectrum of the infinite transition matrix associated to this Markov chain. In doing so, we show an interesting identity that generalizes the well-known fact that all the moments of the Cohen-Lenstra distribution are $1$.
\end{mysection}

The Cohen-Lenstra distribution is a probability distribution 
on finite abelian $p$-groups. It is the distribution that 
assigns to each finite abelian $p$-group $G$ a probability 
inversely proportional to the number of automorphisms of that 
group:
\begin{equation}
\label{eqn:cl}
\probP(G) \propto \frac{1}{\#Aut(G)}
\end{equation}

It originated in number theory, in the study of the 
statistical behaviour of class groups or quadratic fields. In \cite{CohenLenstra} 
Cohen and Lenstra made the conjecture that, for odd $p$, the $p$-part of the 
class group of a quadratic imaginary fields is distributed 
exactly in such a way, as the discriminant ranges over all 
possible negative values.

The prediction of Cohen and Lenstra agrees with numerical evidence and a small number of cases have been proven;\commm{the first instance was the result, proven by Davenport and Heilbronn before the conjecture was formulated, that the average number of $3$-torsion elements is in agreement with the value expected from the conjectures.  For other results,} see, for example, the survey \cite{WoodICM}. \commm{starting with the work of Davenport and Heilbronn, (preceding the conjecture)
who were able to compute the average number of 3-torsion elements
in class groups of quadratic fields matches the prediction of
the conjecture; see, for example, the survey \cite{WoodICM}.} A natural question to ask is why this particular distribution (\ref{eqn:cl}) is to be expected. Is this a special property of class groups of quadratic fields? Or rather is it a property of finite abelian $p$-groups, that when such groups appear at random, without any additional structure, they tend to be distributed according to the Cohen-Lenstra measure? In 1989, Friedman and Washington gave credence to the latter point of view, by finding the occurrence of this measure in another context: cokernels of random matrices.

  \begin{\theoremintro} \cite{FriedmanWashington}
  \label{thm:matricesandcl}
  If $\probM_{n,n}$ is a random matrix whose entries are 
independent uniformly distributed elements of $\Z_p$,
  \[
  \lim_{n \rightarrow \infty}\probP\Big( coker(\probM_{n,n}) 
\cong G \Big) \propto \frac{1}{|Aut(G)|}
  \]
  That is, the distribution of the cokernel of 
  $\probM_{n,n}$ is the Cohen-Lenstra distribution in the 
limit $n \rightarrow \infty$.
  \end{\theoremintro}

Moreover, as found in the more recent work of Maples, Wood, 
Nguyen and others, the conclusion of \autoref
{thm:matricesandcl} continues to hold when we replace 
"uniformly distributed" by "identically distributed", 
indicating that the distribution (\ref{eqn:cl}) is indeed 
somehow universal. These results are reminiscent of the central limit theorem.
In recent years, there has been a flurry of work devoted to 
proving more precise and more general statements in this 
direction, for example \cite{Maples1}, \cite{WoodMoments}, \cite{WoodIntegral}, \cite{WoodNguyen}, \cite{WoodNguyen2}, \cite{CheongYu}, \cite{LeeHermitian}, \cite{Yan}. Again, we refer the reader to the survey \cite{WoodICM}.

\begin{mysection}
 as well as... 
\end{mysection}

   In this paper, we study the Cohen-Lenstra distribution from 
a somewhat different point of view. Taking the random matrix 
model of \autoref{thm:matricesandcl} as our starting point, we 
study the distribution of the cokernel of a random 
matrix, after conditioning on one of its submatrices. 

   The first interesting statement we obtain on this line of 
inquiry is that this gives rise to an irreducible Markov chain 
on the category of finite abelian p-groups, whose stationary 
measure is the Cohen-Lenstra distribution. The description of 
this Markov chain is the content of the first section. The generator of this 
Markov chain is what we call the \textit{Cohen-Lenstra operator}, and denote as $\Delta_0$. 

This Markov chain has surprising properties. Firstly, it is 
reversible. Hence, it can be described \commm{realized?} as a random walk on a 
weighted graph whose vertices are finite abelian $p$-groups. 
In the second section, we prove this statement and give a 
simple explicit description of the graph, its edges and edge weights.

   Finally, the property of a Markov chain being reversible is 
equivalent to its generator being a self-adjoint operator, 
with respect to a natural Hilbert space structure. Therefore, the spectral theorem \cite[IV.194]{BourbakiTS}, applies to $\Delta_0$. In the final chapter, we determine the spectrum of $\Delta_0$ explicitly. Our study of the spectrum culminates in the following result:

\begin{theorem*}[Main Theorem]
There is an \textit{explicit} unitary operator $\fouriermap$, such that:
\begin{itemize}
\item The image of $\fouriermap$ contains the image of $\Delta_0$.
\item $
\fouriermap^{-1} \Delta_0 \, \fouriermap
$ is the diagonal operator $|\#G|^{-1}$.
\end{itemize}
\end{theorem*}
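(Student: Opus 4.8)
Write $\mathcal{H} = L^2(X_0,\mu_0)$ for the Hilbert space on which $\Delta_0$ acts, where $X_0$ is the set of isomorphism classes of finite abelian $p$-groups and $\mu_0$ is the Cohen--Lenstra measure; by the reversibility proven in the previous section, $\Delta_0$ is bounded and self-adjoint, so by the spectral theorem it suffices to exhibit, explicitly, an orthogonal family of eigenfunctions whose closed span is $(\ker\Delta_0)^{\perp}$ and to read off the eigenvalues. The plan is to produce one eigenfunction $\phi_G$ per finite abelian $p$-group $G$, with $\Delta_0\phi_G = |G|^{-1}\phi_G$, and then to \emph{define} $\fouriermap\colon \ell^2(X_0)\to\mathcal{H}$ by $\fouriermap(e_G) = \phi_G/\|\phi_G\|$. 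Granting the orthogonality and completeness below, $\fouriermap$ is then an isometry onto $(\ker\Delta_0)^{\perp} = \overline{\im\Delta_0}$, its image therefore contains $\im\Delta_0$, and $\fouriermap^{-1}\Delta_0\fouriermap$ is by construction multiplication by $|G|^{-1}$.

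The eigenfunctions are built from moment functions. For a finite abelian $p$-group $H$ let $m_H$ be the function $m_H(G) = \#\mathrm{Sur}(G,H)$; finiteness of the first and second $\mu_0$-moments of $\#\mathrm{Sur}(-,H)$ (the first being the classical value $1$) shows $m_H\in\mathcal{H}$, and the $m_H$ are linearly independent. The first main computation is to evaluate $\Delta_0 m_H$ using the decomposition of the Markov step into ``add a random column'' followed by ``add a random row'' supplied by the composition property. Adding a column replaces $G$ by $G/\langle v\rangle$ with $v$ Haar-uniform in $G$, and since a fixed surjection $G\twoheadrightarrow H$ kills $v$ with probability exactly $|H|^{-1}$, this step multiplies $m_H$ by $|H|^{-1}$ in expectation. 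Adding a row replaces the current group $Q$ by a group fitting into an exact sequence $0\to C\to Q'\to Q\to 0$ with $C$ cyclic; the surjections $Q'\twoheadrightarrow H$ that do not factor through $Q$ induce surjections onto proper quotients of $H$, so in expectation this step adds to $m_H$ a combination of $m_{H'}$ over proper quotients $H'$ of $H$. Composing,
\begin{equation}
\Delta_0 m_H \;=\; |H|^{-1} m_H \;+\; \sum_{H'\ \text{proper quotient of}\ H} a_{H,H'}\, m_{H'}
\end{equation}
with explicit coefficients $a_{H,H'}$; that is, $\Delta_0$ is upper triangular in the family $\{m_H\}$ for the partial order by quotients, with diagonal entries $|H|^{-1}$. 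Because the groups of order at most $|G|$ form a finite set closed under quotients, one can solve this triangular system to write, for each $G$, an explicit \emph{finite} combination $\phi_G = \sum_{Q\ \text{quotient of}\ G} c_{G,Q}\, m_Q$ with $c_{G,G}=1$ and $\Delta_0\phi_G = |G|^{-1}\phi_G$; the closed formula for the coefficients $c_{G,Q}$ (equivalently, for the matrix entries $\langle\phi_G, m_H\rangle$ of $\fouriermap$) is the promised identity, whose $G=\{e\}$ case reads $\phi_{\{e\}} = \one$ together with $\langle\one, m_H\rangle = \probE_{\mu_0}[\#\mathrm{Sur}(-,H)] = 1$.

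Orthogonality of $\{\phi_G\}$ splits in two: for $G,G'$ of different order the eigenvalues $|G|^{-1},|G'|^{-1}$ differ, so $\phi_G\perp\phi_{G'}$ automatically by self-adjointness; for $G\neq G'$ of the same order one checks $\langle\phi_G,\phi_{G'}\rangle=0$ directly from the closed formula, which also evaluates $\|\phi_G\|$, so that $\fouriermap$ is well defined and isometric onto $\overline{\mathrm{span}}\{\phi_G\} = \overline{\mathrm{span}}\{m_H\}$.

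The remaining point --- identifying $\overline{\mathrm{span}}\{m_H\}$ with $(\ker\Delta_0)^{\perp}$ --- is the heart of the matter and the main obstacle. One inclusion is soft: from the displayed formula and $m_{\{e\}} = \Delta_0 m_{\{e\}}\in\im\Delta_0$, induction on $|H|$ gives $m_H\in\overline{\im\Delta_0} = (\ker\Delta_0)^{\perp}$. The reverse inclusion amounts to $\im\Delta_0\subseteq\overline{\mathrm{span}}\{m_H\}$, and here the subtlety is that the $m_H$ do \emph{not} span a dense subspace of $\mathcal{H}$: one must show that applying $\Delta_0$ always lands inside their closed span, equivalently that any $f\in\mathcal{H}$ orthogonal to every moment function is annihilated by $\Delta_0$. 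The plan is to invert the surjection matrix $[\#\mathrm{Sur}(A,B)]$ on the lattice of finite abelian $p$-groups (triangular with nonzero diagonal $\#\mathrm{Aut}(B)$), express point masses and hence finitely supported functions as formal M\"obius series in the $m_H$, and then prove that after applying $\Delta_0$ these series converge in $\mathcal{H}$. Controlling the tails of these M\"obius series --- where the M\"obius coefficients and the norms $\|m_H\|$ both grow --- is where the real work lies, and is what the boundedness estimate for $\Delta_0$ from the earlier section is meant to feed into. Once this completeness statement is established, $\fouriermap$ has all the asserted properties.
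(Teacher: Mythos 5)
Your architecture matches the paper's almost exactly --- moment functions, upper-triangularity of $\Delta_0$ on them with diagonal entries $|H|^{-1}$, eigenfunctions $\phi_G$ obtained by solving the finite triangular system, a unitary map sending point masses to normalized eigenfunctions, and the reduction to showing $\overline{\mathrm{span}}\{m_H\}=(\ker\Delta_0)^{\perp}$ --- but the two steps you defer are precisely the substantive content, and both are genuine gaps as written. The first is unitarity. Orthogonality of $\phi_G$ and $\phi_{G'}$ for non-isomorphic groups of the \emph{same} order does not follow from self-adjointness, and the ``closed formula'' from which you propose to check it is never produced. What the paper actually proves (\autoref{thm:curioussum:mainbody}) is the identity
\[
c_0\sum_G \frac{\#Sur(G,F_1)\,\#Sur(G,F_2)}{\#Aut(G)}\;=\;\sum_{G'}\frac{\#Sur(F_1,G')\,\#Sur(F_2,G')}{\#Aut(G')},
\]
established by counting subgroups of $F_1\times F_2$ projecting onto both factors via Cartesian squares; this is exactly the generalization of ``all Cohen--Lenstra moments equal $1$'' that you allude to, and it is what makes the assignment $\frac{\#Sur(F,\emptydot)}{\#Aut(\emptydot)}\mapsto\sqrt{c_0}\,\frac{\#Sur(\emptydot,F)}{\#Aut(\emptydot)}$ an isometry, delivering the orthogonality relations and the norms $\|\phi_G\|$ in one stroke. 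Without this identity (or a substitute) your $\fouriermap$ is not known to be unitary; Gram--Schmidt inside each eigenspace would salvage \emph{a} unitary diagonalizer, but would forfeit the explicitness the theorem asserts and the canonical indexing by groups.

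The second gap is the completeness statement $\im(\Delta_0)\subseteq\overline{\mathrm{span}}\{m_H\}$, which you correctly identify as the heart of the matter but for which you only propose a plan. That plan --- M\"obius-invert the surjection matrix to write point masses as formal series in the $m_H$ and control the tails after applying $\Delta_0$ --- faces a structural obstruction: $\delta_G$ is in general \emph{not} in $\overline{\mathrm{span}}\{m_H\}$ (if it always were, $\fouriermap$ would be surjective, a question the paper explicitly leaves open), so the series cannot converge in $\ltwostar$ before $\Delta_0$ is applied, and everything rides on tail estimates you do not supply. The paper takes a different, complete route: using the composability identity $(\doper\dstar)^{N}=d^{N}d^{*N}$ together with the $\ltwostar$ limit $d^{N}(B\times\Z_p^{N})\to\mathbf{Moment}(B)$, it shows that for finitely supported $\nu$ one has $\Delta_0^{N}\nu=\lambda^{-N}\sum_i a_{F_i}E_{F_i}+o(\lambda^{-N})$, and then a spectral-theorem lemma (a bounded self-adjoint $T$ with $T^{N}v\to0$ on a dense set has norm at most $1$) forces the restriction of $\Delta_0$ to $\mathrm{span}\{E_F : |F|\le\lambda\}^{\perp}$ to have norm at most $\lambda^{-1}$, hence $\Delta_0=0$ on $\im(\fouriermap)^{\perp}$. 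To close your argument you would need either to carry out the tail estimates or to import this power-iteration argument.
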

\begin{mysection} The spectrum of $\Delta_0$, acting on measures is the spectrum of ... Moreover, the two operators are related by an explicit unitary transformation. 
\end{mysection}

In the remaining part of the introduction, we go through the 
contents of each section in more detail, and give exact statements of our results.

\subsection{Survey of Section 1: Some random operators on $p$-groups}

\subsubsection{The Cohen-Lenstra operator, $\Delta_0$, and two related operators}
   First result: We consider two sets, $X_0$ and $X_1$.
\begin{enumerate}
\item $X_0$ is the set of finite $p$-groups; i.e. $\Z_p$-modules with $\Q_p$ rank 0.
\item $X_1$ is the set of $\Z_p$-modules with $\Q_p$ rank 1.
\end{enumerate}
   We write $G$ to denote a typical element of $X_0$, and $H$, to denote a typical element of $X_1$.
      \newline
   \newline

   \begin{definition}
   There is a random operator, $d$, from $X_1$ to $X_0$ defined as follows: 
  \[ d(H) \] is the quotient of $H$ by a randomly chosen element of $H$.
   \end{definition}

   \begin{definition}
   There is a random operator, $d^{*}$, from $X_0$ to $X_1$ defined as follows: \[ d^{*}(G)\] is the extension of $G$, corresponding to a randomly chosen element of $Ext(G,\Z_p)$.
   \end{definition}
   
   \begin{definition}
   There is a random operator, $\Delta_0$, from $X_0$ to $X_0$ defined as follows: 
   \[
   \Delta_0 \defeq d d^{*}  
   \]
   \end{definition}
   
   $\Delta_0$ generates a Markov chain on finite abelian $p$-groups; we call $\Delta_0$ the \textit{Cohen-Lenstra operator}.

   \subsubsection{Relation with random matrices} 
   Below, we give the main results of section $1$. In the statements below, we will write $*$ to denote independent Haar-random variables valued in $\Z_p$. 
   \begin{itemize}
   \item Let $M_{n,n}$ be an $n \times n$ matrix with cokernel $G$. $d^{*}(G)$ is the cokernel of the random matrix
   \begin{equation}
   \label{eqn:intro:matrixone}
   \begin{mat}{ccc}
   & & \\
   & M_{n,n}& \\
   & & \\ \hline
   * & \hdots & *
   \end{mat}
   \end{equation}

   \item Let $M_{n+1,n}$ be an $n + 1 \times n$ matrix with cokernel $H$. $d(H)$ is the cokernel of the random matrix
   \begin{equation}
   \label{eqn:intro:matrixtwo}
   \begin{mat}{ccc|c}
   &  & & * \\
   & M_{n+1,n} & &\vdots \\ 
   &  & & *
   \end{mat}
   \end{equation}

   \item Let $M_{n,n}$ be an $n \times n$ matrix with cokernel $G$. $\Delta_0(G)$ is the cokernel of the random matrix
   \begin{equation}
   \label{eqn:intro:matrixthree}
   \begin{mat}{ccc|c}
   & & & *\\
   & M_{n,n}& & \vdots \\
   & & & *\\ \hline
   * & \hdots & * & *
   \end{mat}
   \end{equation}
   \end{itemize}

\subsubsection{Relation with previous work}

The idea of associating to a random matrix the random process consisting of the cokernels of its submatrices appears in \cite{Maples1} and \cite{Maples2}. This was seen as an approach to universality. Indeed, universality was also the initial motivation in this thesis for studying this process.

\indent The relation of $d$ to random matrices is an obvious fact, an immediate consequence of the definitions. The relation of $d^{*}$ to random matrices is perhaps not as evident. However, the author would not venture to claim that it is original.

\indent We remark that there is another Markov chain which is often mentioned in the context of the Cohen-Lenstra measure, that is introduced in \cite{Evans}. Given a random $p$-group $\mathcal{G}$ distributed according to the Cohen-Lenstra measure, Evans studied the random process
\[
\mathcal{X}_k \defeq \text{dim}_{\mathbb{F}_p}\Big( \mathbb{F}_p \otimes p^k\mathcal{G} \Big)
\]
and showed that it is a Markov chain. 

\indent Also, in recent work, studying $p$-adic random matrices from a somewhat different perspective, Assiotis introduced a Markov chain in \cite[12]{Theo}. The relation of this latter Markov chain to ours is unclear.

   \subsection{Survey of Section 2: Properties of $\dstar$ and $\doper$ and the reversibility of the Markov chain induced by $\Delta_0$}
   The main result of this chapter is that $\Delta_0$ gives rise to a reversible Markov chain, with stationary measure $\mu_0$.
   \[
   \mu_0(G_1)\probP(G_1 \xrightarrow{\Delta_0} G_2) = 
   \mu_0(G_2)\probP(G_2 \xrightarrow{\Delta_0} G_1) \hspace{0.2in} \forall G_1, G_2 \in X_0
   \]  
   This statement is equivalent to $\Delta_0$ being self-adjoint with respect to a certain natural inner product. We prove this by showing that $\Delta_0$ is the composition of $d$ and its adjoint:
   
   \subsubsection{$d$ and $d^{*}$ are adjoint operators}

   In a natural sense, $d^{*}$ is the formal adjoint of $d$. To define what this means, we need to specify measures on the discrete sets $X_0$ and $X_1$.
   
   \begin{enumerate}
   \item The measure on $X_0$ is $\mu_0$: 
   \begin{equation}
   \label{eqn:cnaught}
   \mu_{0}(G)=\frac{c_0}{|Aut(G)|}
   \end{equation}
   where $c_0$ is normalized so that $\mu_0$ is a probability measure.
   \item For $H \in X_1$, let $H_{tors}$ denote the torsion part of $H$. The measure on $X_1$ is $\mu_1$:
   $$\mu_{1}(H)=\frac{c_1}{|H_{tors}||Aut(H_{tors})|}$$
   where $c_1$ is normalized so that $\mu_1$ is a probability measure.
   \end{enumerate}
   
   These measures induce inner products:
   
   \begin{enumerate}
   \item For any two measures $\nu_1$ and $\nu_2$ on $X_0$,
   \begin{equation}
   \label{eqn:innerproductone}
    \lb \nu_1, \nu_2 \rb_{X_0} = \sum_{G \in X_0} \frac{\nu_1(G)\nu_2(G)}{\mu_0(G)}
   \end{equation}
   \item For any two measures $\nu_1$ and $\nu_2$ on $X_1$, 
   \begin{equation}
   \label{eqn:innerproducttwo}
   \lb \nu_1, \nu_2 \rb_{X_1} = \sum_{H \in X_1} \frac{\nu_1(H)\nu_2(H)}{\mu_1(H)} 
   \end{equation}
   \end{enumerate}

   With these preliminaries we have the following theorem:
   \begin{theorem}
   With respect to the inner products (\ref{eqn:innerproductone}) and (\ref{eqn:innerproducttwo}) $d$ is the adjoint of $d^{*}$:
   \[
   \lb \nu_1, \doper \nu_2 \rb_{X_1}=\lb \dstar \nu_1, \nu_2 \rb_{X_0}
   \]
    Equivalently, for any $G \in X_0$ and $H \in X_1$
   \begin{equation}
   \label{eqn:intro:reversible}
   \mu_0(G) \probP (G \xrightarrow{d^{*}} H) = \mu_1(G) \probP (H \xrightarrow{d} G)
   \end{equation}
   \end{theorem}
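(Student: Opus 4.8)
The asserted adjointness is equivalent to the pointwise detailed-balance identity $\mu_0(G)\,\probP(G\xrightarrow{\dstar}H)=\mu_1(H)\,\probP(H\xrightarrow{\doper}G)$, so the plan is to compute both transition probabilities explicitly and compare. Write $T=H_{tors}$; the modules in $X_1$ being finitely generated of rank one over $\Z_p$, we have $H\cong\Z_p\oplus T$, and I expect each side of detailed balance to equal an elementary prefactor times the common combinatorial quantity
\[
\Sigma(G,T)\;\defeq\;\sum_{\substack{N\le G,\ N\cong T\\ G/N\ \text{cyclic}}}|\mathrm{Aut}(G/N)|.
\]

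For $\dstar$: applying $\Hom(G,-)$ to $0\to\Z_p\to\Q_p\to\Q_p/\Z_p\to0$ and using that $\Q_p$ is an injective $\Z_p$-module while $G$ is torsion identifies $\Ext^1_{\Z_p}(G,\Z_p)$ with $\Hom(G,\Q_p/\Z_p)$; in particular this group is finite of order $|G|$, which is what makes "a uniformly random element" meaningful. The extension attached to $\phi$ is the pullback $H_\phi=\{(g,q)\in G\times\Q_p:\phi(g)\equiv q\bmod\Z_p\}$, and a one-line check gives $(H_\phi)_{tors}\cong\ker\phi$. Since the image of any $\phi$ lies in the locally cyclic group $\Q_p/\Z_p$, the kernels that occur are exactly the $N\le G$ with $G/N$ cyclic, and — because $\Q_p/\Z_p$ has a unique subgroup of each order — the number of $\phi$ with $\ker\phi=N$ is $|\mathrm{Aut}(G/N)|$. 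Summing over the $N\cong T$ gives $\probP(G\xrightarrow{\dstar}H)=\Sigma(G,T)/|G|$.

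For $\doper$: equip the compact group $H\cong\Z_p\oplus T$ with its normalized Haar measure (the pushforward of Haar measure on $\Z_p^{n+1}$, consistent with the random-matrix description of $\doper$ in the introduction), so that $\doper(H)=H/\Z_p h$ for $h$ Haar-random. Almost surely $\Z_p h$ is a free rank-one submodule $L$, and $H/L\cong G$ forces $L$ to be open of index $|G|$; such an $L$ has Haar measure $1/|G|$ in $H$, and within $L\cong\Z_p$ the generators have measure $1-1/p$, whence $\probP(H\xrightarrow{\doper}G)=\frac{1-1/p}{|G|}\cdot\#\{L\le H:\ L\ \text{open, torsion-free},\ H/L\cong G\}$. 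I would then translate submodules into surjections: the admissible $L$ are precisely the kernels of surjections $\psi\colon H\twoheadrightarrow G$ with $\psi|_T$ injective, each $L$ arising from $|\mathrm{Aut}(G)|$ such surjections; writing $\psi$ as the pair $(\psi(1,0),\psi|_T)$ and grouping $\psi|_T$ by its image $N\cong T$ yields $\#\{\psi\}=|T|\,|\mathrm{Aut}(T)|\,\Sigma(G,T)$, hence $\probP(H\xrightarrow{\doper}G)=\frac{(1-1/p)\,|T|\,|\mathrm{Aut}(T)|}{|G|\,|\mathrm{Aut}(G)|}\,\Sigma(G,T)$.

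Comparing the two formulas, detailed balance collapses to the single scalar relation $c_1/c_0=p/(p-1)$. I would close this either from the product evaluation $\sum_{G\in X_0}|\mathrm{Aut}(G)|^{-1}=\prod_{k\ge1}(1-p^{-k})^{-1}$ together with its $X_1$-analogue, or, more cleanly, by noting that the unnormalized weights $G\mapsto|\mathrm{Aut}(G)|^{-1}$ and $H\mapsto\big((1-1/p)\,|H_{tors}|\,|\mathrm{Aut}(H_{tors})|\big)^{-1}$ already satisfy detailed balance (immediate from the two displayed probabilities), so that, since $\doper$ and $\dstar$ are Markov kernels, summing detailed balance over $H$ and over $G$ forces these two weights to have the same total mass — normalizing then reproduces exactly $\mu_0$ and $\mu_1$. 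The main obstacle is the $\doper$ computation: one must handle Haar measure on the infinite group $H$, correctly isolate which submodules $L$ can occur (they must be torsion-free, i.e. $\cong\Z_p$, not merely of finite index), and carry the automorphism bookkeeping faithfully through the submodule-to-surjection translation; by contrast the $\dstar$ side is routine once the identification $\Ext^1(G,\Z_p)\cong\Hom(G,\Q_p/\Z_p)$ and the formula for $(H_\phi)_{tors}$ are in hand.
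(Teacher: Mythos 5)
Your proposal is correct, but it takes a genuinely different route from the paper. The paper proves a \emph{refined}, orbit-by-orbit version of detailed balance: it matches each $Aut(G)$-orbit in $Ext(G,\Z_p)$ with the corresponding $Aut(H)$-orbit in $Hom(\Z_p,H)$ and shows that $\mu_0(G)\probP(Aut(G)\circ\phi)=\mu_1(H)\probP(Aut(H)\circ h)$, with both sides equal to $c_0/\big(|Aut(H,h)||H/h|\big)$; the two ingredients are the orbit--stabilizer theorem on the finite set $Ext(G,\Z_p)$ (of order $|G|$), a Haar-measure orbit--stabilizer formula on $Hom(\Z_p,H)$ (computing the density $dH/d(Aut(H))$ via the congruence neighborhoods $U_l$ and the index $|Aut(H)/U_l|=|H_{tors}||Aut(H_{tors})||GL_1(\Z/p^l\Z)|$), and the isomorphism $Aut(G,\phi)\cong Aut(H,h)$; the theorem then follows by summing over orbits. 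You instead aggregate both transition probabilities directly into the common combinatorial quantity $\Sigma(G,T)=\sum_{N\cong T,\ G/N\ \text{cyclic}}|Aut(G/N)|$ --- on the $\dstar$ side via the structure of $\Q_p/\Z_p$ (counting characters by their kernel), and on the $\doper$ side by stratifying Haar measure over the open torsion-free submodules $L$ and converting to a surjection count. Both computations are correct, including the count $\#\{\psi\}=|T||Aut(T)|\Sigma(G,T)$ and the measure $(1-1/p)/|G|$ of the generators of a fixed $L$; and your closing observation that the ratio $c_1/c_0=p/(p-1)$ is \emph{forced} because both kernels are Markov is a clean alternative to the paper's use of the explicit product formulae. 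The trade-off: your argument is more elementary (no infinite-group orbit--stabilizer, no $U_l$ limit, no identification of stabilizers), but it only yields the aggregated identity, whereas the paper's per-orbit statement is exactly what later gives the description of $(\doper,\dstar)$ as a random walk on the weighted graph of exact sequences with edge weights $c_0/\big(|Aut(\Z_p\to H)||G|\big)$; that refinement does not follow from your computation.
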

    
   \begin{corollary}
   This yields two self-adjoint operators:
   \begin{itemize}
   \item The operator $d d^{*}$ is self-adjoint with respect to $\mu_0$.
   \item The operator $d^{*} d$ is self-adjoint with respect to $\mu_1$.
   \end{itemize}
   \end{corollary}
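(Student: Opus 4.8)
The plan is to deduce both statements purely formally from the preceding theorem, which identifies $\doper$ and $\dstar$ as mutually adjoint operators, so the only real work is bookkeeping. First I would restate the theorem in operator-theoretic language. Viewing measures on $X_0$ and $X_1$ as elements of the Hilbert spaces determined by the inner products (\ref{eqn:innerproductone}) and (\ref{eqn:innerproducttwo}), and writing $\doper \colon (X_1,\mu_1) \to (X_0,\mu_0)$ and $\dstar \colon (X_0,\mu_0) \to (X_1,\mu_1)$ for the induced pushforward operators, the identity
\[
\lb \doper \nu_1, \nu_2 \rb_{X_0} = \lb \nu_1, \dstar \nu_2 \rb_{X_1}
\]
asserts exactly that the Hilbert-space adjoint of $\doper$ is $\dstar$; since taking adjoints is an involution, it simultaneously asserts that the adjoint of $\dstar$ is $\doper$. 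With this identification, self-adjointness of the two composites is the standard fact $(\doper\dstar)^{\dagger} = (\dstar)^{\dagger}(\doper)^{\dagger} = \doper\dstar$ and $(\dstar\doper)^{\dagger} = (\doper)^{\dagger}(\dstar)^{\dagger} = \dstar\doper$.

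Concretely, I would verify the defining symmetry identity by a single application of the theorem to each slot. For measures $\nu_1,\nu_2$ on $X_0$,
\[
\lb \doper\dstar \nu_1, \nu_2 \rb_{X_0} = \lb \dstar \nu_1, \dstar \nu_2 \rb_{X_1} = \lb \nu_1, \doper\dstar \nu_2 \rb_{X_0},
\]
where each equality is one instance of the adjoint relation; this shows $\doper\dstar$ is self-adjoint on measures on $X_0$, i.e. with respect to $\mu_0$. The computation for $\dstar\doper$ is identical with the roles of $X_0$ and $X_1$ interchanged, giving self-adjointness with respect to $\mu_1$. Taking $\nu_1 = \nu_2 = \nu$ in the display makes manifest that both composites are in fact positive, since $\lb \doper\dstar \nu, \nu \rb_{X_0} = \lb \dstar\nu, \dstar\nu \rb_{X_1} \ge 0$, a fact I would record in passing as it is needed for the later spectral analysis of $\Delta_0 = \doper\dstar$.

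The only genuine subtlety is functional-analytic rather than algebraic: $X_0$ and $X_1$ are infinite, so one must specify the domains on which these manipulations are valid. I would first establish the identities above on the dense subspace of finitely-supported measures, where every sum defining the inner products is finite and the interchange of $\doper$ and $\dstar$ across the pairing is unproblematic. To upgrade a symmetric operator densely defined in this way to a genuinely self-adjoint operator on the full Hilbert space, I would invoke the boundedness of $\Delta_0 = \doper\dstar$ established in the section on boundedness (together with that of $\dstar\doper$, which follows from the boundedness of $\doper$ and $\dstar$ individually): a bounded symmetric operator on a dense subspace extends uniquely to a bounded self-adjoint operator on the closure. I expect this passage from the formal symmetry identity to honest self-adjointness on all of $L^2$ to be the only step requiring care; the algebra itself is immediate once the theorem is read as the statement that $\doper$ and $\dstar$ are adjoint.
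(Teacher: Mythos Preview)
Your proposal is correct and follows the same reasoning the paper has in mind: the corollary is stated in the paper without proof, as an immediate formal consequence of the adjointness relation, and your chain $\lb \doper\dstar \nu_1,\nu_2\rb_{X_0}=\lb \dstar\nu_1,\dstar\nu_2\rb_{X_1}=\lb \nu_1,\doper\dstar\nu_2\rb_{X_0}$ is exactly the intended deduction. You are in fact more careful than the paper here, which defers the boundedness of $\Delta_0$ to a later appendix and does not pause over domain issues at this point; your remark on positivity is also a useful bonus not made explicit in the paper.
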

   The first of these operators is the operator $\Delta_0$, whose study is the main subject of this paper.

\subsubsection{Alternate Perspective} 

We give here an alternative perspective on the equality (\ref{eqn:intro:reversible}):
\[
\mu_0(G) \probP(G \xrightarrow{\dstar} H) = \mu_1(H) \probP(H \xrightarrow{\doper} G)
\]
The pair of operators 
\begin{eq}
\label{eqn:markov}
(\doper, \dstar)
\end{eq}
together form a single Markov chain, whose state space is the union of state spaces, $X_0 \cup X_1$. The equality (\ref{eqn:intro:reversible}) is then equivalent to the statement that this Markov chain is reversible with stationary measure 
\[
(\mu_0,\mu_1)
\] This will be a consequence of the following lemma:

\begin{lemma}
\label{cor:weightedgraph:intro}
We can represent the Markov chain $(\doper,\dstar)$ on $X_0 \cup X_1$, as a random walk on a weighted graph whose edges are exact sequences:
\begin{equation}
\label{eqn:exactsequence:one}
0 \rightarrow \Z_p \rightarrow H \rightarrow G \rightarrow 0
\end{equation}
with weight:
\[
\frac{1}{|Aut(\Z_p \rightarrow H)||G|}.
\]
\end{lemma}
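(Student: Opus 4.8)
The plan is to verify directly that the random walk on the graph $\Gamma$ of the statement has exactly the transition probabilities of the chain $(\doper,\dstar)$; everything about reversibility and the equilibrium measure then drops out, since a random walk on a weighted graph is automatically reversible with stationary mass proportional to the vertex weights. We take the vertices of $\Gamma$ to be the isomorphism classes $X_0\sqcup X_1$, and an edge between $[G]$ and $[H]$ to be an isomorphism class of short exact sequence $0\to\Z_p\xrightarrow{\iota}H\to G\to0$ with the copy of $\Z_p$ rigidified --- isomorphisms of such sequences are required to be the identity on $\Z_p$, but may be arbitrary on $H$ and on $G$ --- so that $\text{Aut}(\Z_p\to H)$ means the group of $\beta\in\text{Aut}(H)$ with $\beta\iota=\iota$, i.e.\ the automorphisms of $H$ fixing the submodule $\iota(\Z_p)$ pointwise. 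One first checks this group is finite, so that the weights make sense: an element of it inducing the identity on $G=H/\iota(\Z_p)$ differs from $\text{id}_H$ by a map $G\to\Z_p$, and $\Hom_{\Z_p}(G,\Z_p)=0$ for finite $G$, so $\text{Aut}(\Z_p\to H)\hookrightarrow\text{Aut}(G)$; one also checks $\sum_{e\ni v}w(e)<\infty$ at each vertex.

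The technical core is a counting identity for short exact sequences. Fix a finite $G$ and an $H\in X_1$ (so $H\cong\Z_p\oplus H_{tors}$ with $H_{tors}$ finite), and count the set $S$ of pairs $(\iota,\pi)$ making $0\to\Z_p\xrightarrow{\iota}H\xrightarrow{\pi}G\to0$ exact, in two ways. Choosing first the submodule $L=\iota(\Z_p)$, then $\iota$ with image $L$ and $\pi$ with kernel $L$, gives $|S|=|\text{Aut}(\Z_p)|\,|\text{Aut}(G)|\,\#\{L\subseteq H: L\cong\Z_p,\ H/L\cong G\}$. On the other hand $\text{Aut}(H)$ acts on $S$ (by the middle term), the action is \emph{free} --- again because $\Hom(G,\Z_p)=0$ --- and its orbit set is in bijection with $\{c\in\Ext(G,\Z_p):\text{middle}\cong H\}$, so $|S|=|\text{Aut}(H)|\,\#\{c:\text{mid}\cong H\}$. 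Using $|\text{Aut}(H)|=|\text{Aut}(\Z_p)|\,|H_{tors}|\,|\text{Aut}(H_{tors})|$ (which follows from the splitting and $\Hom(H_{tors},\Z_p)=0$) and cancelling the shared factor $|\text{Aut}(\Z_p)|$ gives
\[
\#\{c\in\Ext(G,\Z_p):\text{mid}\cong H\}=\frac{|\text{Aut}(G)|\,\#\{L\subseteq H: L\cong\Z_p,\ H/L\cong G\}}{|H_{tors}|\,|\text{Aut}(H_{tors})|}.
\]
The same orbit--stabilizer bookkeeping applied to the groupoid defining the edges evaluates the total weight of the edges joining $[G]$ and $[H]$ as $\#\{L\subseteq H: L\cong\Z_p,\ H/L\cong G\}/(|G|\,|H_{tors}|\,|\text{Aut}(H_{tors})|)$, which by the display equals $\#\{c:\text{mid}\cong H\}/(|G|\,|\text{Aut}(G)|)$.

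Both transition rules now follow. For $\dstar$: from $0\to\Z_p\to\Q_p\to\Q_p/\Z_p\to0$ and injectivity of $\Q_p$ over $\Z_p$ one gets $\Ext(G,\Z_p)\cong\Hom(G,\Q_p/\Z_p)$, of order $|G|$; since $\dstar$ returns the middle term of a uniformly chosen class, $\probP(G\xrightarrow{\dstar}H)=\#\{c:\text{mid}\cong H\}/|G|$, while summing the edge weights above over $H$ gives $|\Ext(G,\Z_p)|/(|G|\,|\text{Aut}(G)|)=|\text{Aut}(G)|^{-1}$, so the normalized weight from $[G]$ to $[H]$ is precisely this probability. For $\doper$: by definition of Haar measure on the compact group $H$, a Haar-random $h$ has infinite order almost surely, and for each submodule $L\cong\Z_p$ one has $\probP(\Z_p h=L)=(1-\tfrac1p)[H:L]^{-1}$ (conditioned to lie in $L$, $h$ is Haar on $L\cong\Z_p$ and generates $L$ iff $h\notin pL$); the prefactor being independent of $L$, $\doper(H)=H/\Z_p h$ lands in $[G]$ with probability proportional to $\#\{L:H/L\cong G\}/|G|$, and the edge weights at $[H]$ all carry the common factor $(|H_{tors}|\,|\text{Aut}(H_{tors})|)^{-1}$, so the normalized weight to $[G]$ equals that probability too.

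This proves the walk on $\Gamma$ is the chain $(\doper,\dstar)$, which is the Lemma; the downstream consequences are then formal, since $\sum_{e\ni[G]}w(e)=|\text{Aut}(G)|^{-1}\propto\mu_0(G)$ and $\sum_{e\ni[H]}w(e)=\tfrac{p}{p-1}(|H_{tors}|\,|\text{Aut}(H_{tors})|)^{-1}\propto\mu_1(H)$ (the constant $\tfrac{p}{p-1}=\sum_{L\subseteq H,\,L\cong\Z_p}[H:L]^{-1}$ being independent of $H$), so the equilibrium measure is $(\mu_0,\mu_1)$ and the chain is reversible. The one place where anything happens is the counting identity, and its engine is the vanishing $\Hom_{\Z_p}(G,\Z_p)=0$ for finite $G$: this is what makes the $\text{Aut}(H)$-action free and pins down $|\text{Aut}(H)|$, and hence what forces the automorphism weighting --- and so the Cohen--Lenstra distribution --- to appear. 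The obstacle to be careful about is the bookkeeping with the infinite groups $\text{Aut}(\Z_p)$ and $\text{Aut}(H)$: every ratio one needs is finite, but the orbit--stabilizer arguments should be organized so that this is manifest (e.g.\ by working with $\text{Aut}(H)/\text{scalars}$, of order $|H_{tors}|\,|\text{Aut}(H_{tors})|$).
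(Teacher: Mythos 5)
Your proposal is correct and runs on the same engine as the paper's proof: the orbit--stabilizer theorem for the $Aut(G)$-action on $Ext(G,\Z_p)$ together with $|Ext(G,\Z_p)|=|Hom(G,\Q_p/\Z_p)|=|G|$ on one side, an orbit--stabilizer count over the profinite group $Aut(H)$ with $|Aut(H)|$ ``equal to'' $|Aut(\Z_p)|\,|H_{tors}|\,|Aut(H_{tors})|$ on the other, and the vanishing $\Hom(G,\Z_p)=0$ to make the relevant actions free and to identify the two stabilizers. The difference is packaging: you fold everything into a single double count of the set $S$ of exact sequences with fixed middle term $H$, which makes the symmetry of the edge weight visible at once, whereas the paper computes $\probP(Aut(G)\circ\phimap)$ and $\probP(Aut(H)\circ\hmap)$ separately (\autoref{philemma} and \autoref{hlemma}) and matches them through $Aut(H,\hmap)\cong Aut(G,\phimap)$. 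The one step you leave heuristic is exactly where the paper spends its effort: your two evaluations of $|S|$ are both infinite cardinalities, and ``cancelling the shared factor $|Aut(\Z_p)|$'' is not literally an operation one can perform on them. The paper's rigorous substitute is the Haar-measure orbit--stabilizer formula $\nu(Aut(H)\circ\hmap)=|Aut(H,\hmap)|^{-1}\int_{Aut(H)}\tfrac{dH}{d(Aut(H))}\,d(Aut(H))$, with the change-of-measure factor computed through the congruence subgroups $U_l\subseteq GL_1(\Z_p)$ via $\nu(U_l\circ\hmap)=|H/h|^{-1}|\Z/p^l\Z|^{-1}$ and $|Aut(H)/U_l|=|H_{tors}|\,|Aut(H_{tors})|\,|GL_1(\Z/p^l\Z)|$; this is precisely the worked-out form of your closing parenthetical about organizing the count so that only manifestly finite ratios appear. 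Your $\doper$-side verification via $\probP(\Z_p h=L)=(1-\tfrac1p)[H:L]^{-1}$ and the resulting vertex masses $|Aut(G)|^{-1}$ and $\tfrac{p}{p-1}\bigl(|H_{tors}|\,|Aut(H_{tors})|\bigr)^{-1}$ agree with the paper's constants and with $\mu_0$, $\mu_1$.
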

$Aut(\Z_p \rightarrow H)$ denotes the group of automorphisms of $H$ that preserve the homomorphism $\Z_p \rightarrow H$.

\begin{mysection}   
   \begin{theorem*}(\ref{cohenlenstraconvergencetheorem})
   Given any probability measure $\nu$ on $ \in X_0$, we have the pointwise limit:
   \begin{equation}
   \label{eqn:intro:limit}
   \lim_{n \to \infty} \Delta_0^k \nu = \mu_0
   \end{equation}
   \end{theorem*}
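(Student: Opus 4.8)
The statement is the convergence-to-equilibrium property of the Markov chain generated by $\Delta_0$, and the plan is to deduce it from the self-adjointness of $\Delta_0 = \doper\dstar$ (the Corollary that $\doper\dstar$ is self-adjoint with respect to $\mu_0$) together with the spectral theorem, after first reducing to point masses. For the reduction: since $\Delta_0^k$ is the $k$-step transition operator of a Markov chain it is a total-variation contraction, so for a probability measure $\nu$ on $X_0$ we have $(\Delta_0^k\nu)(G) = \sum_{G_0 \in X_0}\nu(G_0)\,(\Delta_0^k\delta_{G_0})(G)$ for every $G \in X_0$. As each $\Delta_0^k\delta_{G_0}$ is again a probability measure, $0 \le (\Delta_0^k\delta_{G_0})(G) \le 1$, and since $\sum_{G_0}\nu(G_0) = 1 < \infty$, the dominated convergence theorem in the variable $G_0$ reduces the claim to the case $\nu = \delta_{G_0}$ of a point mass.

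Next I would pass to the Hilbert space $\hil$ of signed measures $\rho$ on $X_0$ with $\lb\rho,\rho\rb_{X_0} = \sum_G\rho(G)^2/\mu_0(G) < \infty$, carrying the inner product (\ref{eqn:innerproductone}). Both $\delta_{G_0}$ (of square norm $1/\mu_0(G_0)$) and $\mu_0$ (of square norm $\sum_G\mu_0(G) = 1$) lie in $\hil$, and because $\lb\delta_{G_0},\mu_0\rb_{X_0} = \lb\mu_0,\mu_0\rb_{X_0} = 1$, the orthogonal projection of $\delta_{G_0}$ onto $\R\mu_0$ is exactly $\mu_0$; equivalently $g \defeq \delta_{G_0} - \mu_0$ satisfies $g \perp \mu_0$. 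On $\hil$ the operator $\Delta_0 = \doper\dstar$ is self-adjoint (the Corollary), positive since $\lb\Delta_0\rho,\rho\rb_{X_0} = \lb\dstar\rho,\dstar\rho\rb_{X_1} \ge 0$, and a contraction, being conjugate under the unitary $\rho\mapsto\rho/\mu_0:\hil\to\ltwoCL$ to the transition operator on $\ltwoCL$ (a contraction by Jensen's inequality and stationarity of $\mu_0$). Hence $\Delta_0$ is bounded, $\sigma(\Delta_0)\subseteq[0,1]$, and $\Delta_0\mu_0 = \mu_0$.

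The spectral argument then runs as follows. The one genuine input I need is that $1$ is a \emph{simple} eigenvalue of $\Delta_0$, i.e. $\ker(\Delta_0 - I) = \R\mu_0$; this follows from the Main Theorem (under the unitary $\fouriermap$, $\Delta_0$ becomes the diagonal operator with entries $|\#G|^{-1}$, and the value $1$ is attained only at the trivial group), or alternatively from irreducibility of the chain, which forces the $\Delta_0$-invariant elements of $\hil$ — all of finite total variation by Cauchy--Schwarz, hence finite invariant measures — to form a line. Therefore the spectral projection $E_{\{1\}}$ of $\Delta_0$ is the rank-one projection onto $\R\mu_0$, so $dE_{g,g}$ assigns no mass to $1$, and by the spectral theorem
\begin{equation}
\|\Delta_0^k g\|^2 \;=\; \int_{[0,1)}\lambda^{2k}\, dE_{g,g}(\lambda)\;\xrightarrow[\;k\to\infty\;]{}\;0,
\end{equation}
the limit by dominated convergence ($\lambda^{2k}\le 1$, $\lambda^{2k}\to 0$ on $[0,1)$, and $E_{g,g}$ finite). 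Since $\Delta_0^k\delta_{G_0} = \Delta_0^k\mu_0 + \Delta_0^k g = \mu_0 + \Delta_0^k g$, this gives $\Delta_0^k\delta_{G_0}\to\mu_0$ in $\hil$, and hence, for each fixed $G$, $|(\Delta_0^k\delta_{G_0})(G) - \mu_0(G)| \le \mu_0(G)^{1/2}\,\|\Delta_0^k\delta_{G_0} - \mu_0\| \to 0$. Combined with the first reduction this proves the theorem; using the spectral gap $\sigma(\Delta_0)\cap(1/p,1) = \emptyset$ from the Main Theorem one even obtains the rate $\|\Delta_0^k\delta_{G_0} - \mu_0\| \le p^{-k}\|g\|$.

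I expect the only real obstacle to be the identification $\ker(\Delta_0 - I) = \R\mu_0$ — equivalently, that $\mu_0$ is, up to scaling, the unique $\Delta_0$-invariant element of $\hil$. This is exactly where irreducibility of the Markov chain (or the explicit diagonalization of the Main Theorem) is indispensable: without it $E_{g,g}$ could charge $1$ and $\Delta_0^k g$ need not vanish. The reduction to point masses is needed because an arbitrary probability measure $\nu$ on $X_0$ need not lie in $\hil$, but once that reduction is made the remainder is routine functional calculus.
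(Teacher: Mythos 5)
Your argument is correct, but it is not the route the paper takes. (In fact the paper never prints a proof of this statement: the theorem and its proof sketch live in excluded \texttt{mysection} blocks, where the author says the limit ``can be deduced from'' the two identities $\dk{}^T=\dkstar$ and $(\doper\dstar)^k=\dk\dkstar$.) The intended mechanism is explicit and constructive: by the composability identity (\ref{eqn:composability}) one has $\Delta_0^k=\dk\dkstar$; by \autoref{lem:charactersandextensions} the random extension $\dkstar(G)$ is $\ker(\phi)\times\Z_p^k$ for a Haar-random $\phi\in Hom(G,(\Q_p/\Z_p)^k)$, which is split (i.e.\ $\ker(\phi)=0$) with probability tending to $1$; and \autoref{limitlemma} computes $\lim_k \dk(B\times\Z_p^k)=\text{\textbf{Moment}}[B]$ pointwise, in $L^1$ and in $\ltwostar$, which for $B$ trivial is exactly $\mu_0$. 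Equivalently, the statement is the $\lambda=1$ case of the leading-term theorem \autoref{thm:leadingtermone:mainbody}, since $E_{\{0\}}=\mu_0$ and the leading coefficient is the total mass of $\nu$. By contrast, you run the soft spectral argument: decompose $\delta_{G_0}=\mu_0+g$ with $g\perp\mu_0$, use self-adjointness, positivity and contractivity of $\Delta_0$ on $\ltwostar$, and the simplicity of the eigenvalue $1$ to kill $\Delta_0^k g$ via the spectral theorem. Your reductions (to point masses by dominated convergence, and from norm convergence to pointwise convergence via $|\rho(G)|\le\mu_0(G)^{1/2}\|\rho\|$) are all sound, as is the computation showing the orthogonal projection of $\delta_{G_0}$ onto $\R\mu_0$ is $\mu_0$. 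The trade-off is where the real work sits: your proof front-loads it into the identification $\ker(\Delta_0-I)=\R\mu_0$, which you must import either from the Main Theorem (the full diagonalization, proved only at the end of the paper) or from irreducibility plus uniqueness of the finite invariant signed measure; once that is granted you get, as a bonus, the sharp rate $\|\Delta_0^k\delta_{G_0}-\mu_0\|\le p^{-k}\|g\|$ from the spectral gap. The paper's route needs neither irreducibility nor the spectral theorem, stays at the level of the Section~2 identities, and yields $L^1$ convergence as well --- but it gives the limit only as a limit, with the rate appearing only later through the same leading-term analysis.
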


The motivation behind this chapter was to show the limit (\ref{eqn:intro:limit}). This can be translated into a statement about random matrices which is doubtless not original. However, the other statements in this chapter have apparently not been previously considered.
\end{mysection}

\subsubsection{Relation with previous work}

   The original motivation behind this chapter was to explain why $\mu_0$ is a stationary measure for $\Delta_0$. This fact can be translated into a statement about random matrices which is not original. However, the other statements in this chapter have apparently not been previously considered.
   
\indent It is worth noting that Markov chains that are self-adjoint with respect to their stationary measure, such as $\Delta_0$, are usually called \textit{reversible} Markov chains.
\begin{mymysection}
Usually called or called?
\end{mymysection}

   \subsection{Survey of Section 3: Eigenfunctions of $\Delta_0$}
   \label{eigenfunctionsection}
   Before stating our theorems about $\Delta_0$, we first introduce a useful collection of measures on $X_0$.

   \begin{definition}
   For a finite abelian group $F$, we let $\text{\textbf{Moment}}[F]$ denote the measure on $X_0$, whose value on $G$ is
   \[
   \# Sur(G,F) \mu_0(G).
   \]
   \end{definition}

   Interestingly, the action of $\Delta_0$ on $\text{\textbf{Moment}}[F]$ is particularly easy to describe.

   \begin{theorem}
   \label{basis}
   \[
   \Delta_0 \left( \text{\textbf{\emph{Moment}}}[F] \right)=\frac{1}{|F|}\sum_{Hom(\Z_p,F)} \text{\textbf{\emph{Moment}}}[\coker(\Z_p \rightarrow F)]
   \]
    \end{theorem}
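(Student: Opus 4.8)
The plan is to combine the reversibility of the chain with its random--matrix description. Since $\Delta_0$ is self--adjoint with respect to $\mu_0$ (the Corollary above), its transition kernel $P(G,G')\defeq\probP(\Delta_0(G)=G')$ satisfies detailed balance, $\mu_0(G)\,P(G,G')=\mu_0(G')\,P(G',G)$. Applying this term by term to $\mo[F](G)=\#\mathrm{Sur}(G,F)\,\mu_0(G)$ gives, for every $G'\in X_0$,
\[
\big[\Delta_0\,\mo[F]\big](G')=\sum_{G\in X_0}\#\mathrm{Sur}(G,F)\,\mu_0(G)\,P(G,G')=\mu_0(G')\sum_{G\in X_0}\#\mathrm{Sur}(G,F)\,P(G',G)=\mu_0(G')\,\probE\big[\#\mathrm{Sur}(\Delta_0(G'),F)\big],
\]
the expectation being over a single step of the chain started at $G'$. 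On the other hand, the bijection $\Hom(\Z_p,F)\xrightarrow{\ \sim\ }F$, $\phi\mapsto\phi(1)$, identifies $\coker(\phi)$ with $F/\langle\phi(1)\rangle$, so the asserted right--hand side, evaluated at $G'$, equals $\mu_0(G')\cdot\frac1{|F|}\sum_{w\in F}\#\mathrm{Sur}(G',F/\langle w\rangle)$. Cancelling the positive factor $\mu_0(G')$, the theorem is equivalent to the single moment identity
\[
\probE\big[\#\mathrm{Sur}(\Delta_0(G),F)\big]=\frac1{|F|}\sum_{w\in F}\#\mathrm{Sur}\big(G,\ F/\langle w\rangle\big)\qquad(G\in X_0).
\]

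To prove this, fix a square matrix $M$ with $\coker(M)\cong G$ and use the description of $\Delta_0$ from the Introduction (equation~\eqref{eqn:intro:matrixthree}): $\Delta_0(G)$ is the cokernel of the matrix $\widetilde{M}$ obtained from $M$ by adjoining an independent Haar--random column $c$, row $r$, and corner entry $s$. A homomorphism $\coker(\widetilde{M})\to F$ is a pair $u=(u',u_{n+1})$ with $u'\in\Hom(\Z_p^n,F)$, $u_{n+1}\in F$ and $u\widetilde{M}=0$, and it is surjective iff $\im(u')+\langle u_{n+1}\rangle=F$. For $u$ fixed, the equations from the first $n$ columns read $u_{n+1}r_j=-u'(M_{\cdot j})$, each holding with probability $|\langle u_{n+1}\rangle|^{-1}$ or $0$ and independently in $j$; the equation from the last column reads $u'(c)+u_{n+1}s=0$, and since $u'(c)$ is uniform on $\im(u')$ and $u_{n+1}s$ is uniform on $\langle u_{n+1}\rangle$ (independently), their sum is uniform on $\im(u')+\langle u_{n+1}\rangle=F$, so this holds with probability $|F|^{-1}$. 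Hence
\[
\probE\big[\#\mathrm{Sur}(\Delta_0(G),F)\big]=\sum_{\substack{u=(u',u_{n+1})\\ u\colon\Z_p^{n+1}\twoheadrightarrow F}}\frac{\mathbf{1}\big[\,u'M\equiv 0 \pmod{\langle u_{n+1}\rangle}\,\big]}{|F|\cdot|\langle u_{n+1}\rangle|^{\,n}}.
\]
Now group the sum by $w\defeq u_{n+1}$. The condition $u'M\equiv 0\pmod{\langle w\rangle}$ says that the composite $\Z_p^n\xrightarrow{u'}F\twoheadrightarrow F/\langle w\rangle$ kills $\im(M)$, hence factors through a map $\bar u\colon G\to F/\langle w\rangle$, and $\im(u')+\langle w\rangle=F$ iff $\bar u$ is onto. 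Because $\Z_p^n$ is free, each surjection $\bar u\colon G\twoheadrightarrow F/\langle w\rangle$ lifts to exactly $|\langle w\rangle|^{\,n}$ vectors $u'$. Thus the $w$--summand equals $\#\mathrm{Sur}(G,F/\langle w\rangle)\cdot|\langle w\rangle|^{\,n}/\big(|F|\cdot|\langle w\rangle|^{\,n}\big)$, and summing over $w\in F$ gives exactly $\frac1{|F|}\sum_{w\in F}\#\mathrm{Sur}(G,F/\langle w\rangle)$; in particular the answer is independent of the chosen presentation $M$, as it must be.

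The one real obstacle is the bookkeeping of this last computation: checking that the event $u\widetilde{M}=0$ has probability factoring cleanly over the $n+1$ new columns, and that after grouping by $u_{n+1}$ the dependence on $M$ collapses (forced, since the left side depends only on $G$). Everything else is formal. One could also bypass reversibility and verify the identity of measures by pairing both sides with the functions $G\mapsto\#\mathrm{Sur}(G,F')$ (which separate the measures in play, by the moment--basis discussion): both pairings reduce to the identity $\frac1{|F'|}\sum_{v\in F'}N(F,F'/\langle v\rangle)=\frac1{|F|}\sum_{w\in F}N(F/\langle w\rangle,F')$ for $N(A,B)\defeq\sum_{G}\#\mathrm{Sur}(G,A)\,\#\mathrm{Sur}(G,B)\,\mu_0(G)$, which equals the number of subgroups of $A\times B$ surjecting onto both factors (here using that every Cohen--Lenstra moment is $1$); Goursat's lemma then rewrites both sides as $\sum_{Q}\frac{|\mathrm{Aut}(Q)|}{|Q|}\,s_Q(F)\,s_Q(F')$, where $s_Q(A)$ counts the quotients of $A$ isomorphic to $Q$, and this is manifestly symmetric in $F$ and $F'$.
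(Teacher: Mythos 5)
Your proposal is correct, but it takes a genuinely different route from the paper's. The paper proves the finite-size identity $\Delta_0\big(\dk(G\times\Z_p^k)\big)=\dkplusone\big(\Z_p^{k+1}\times\del_0 G\big)$ by a block-matrix manipulation (\autoref{actionofdelta}), and then lets $k\to\infty$ using the convergence $\dk(B\times\Z_p^k)\to\mo(B)$ in $L^1$ (\autoref{limitlemma}, itself proved via the adjointness $(\dk)^T=\dkstar$); \autoref{basis} emerges as the limit of the finite-$k$ relation. You instead invoke reversibility of $\Delta_0$ with respect to $\mu_0$ to reduce the claim, after cancelling $\mu_0(G')$, to the single scalar identity $\probE\big[\#\mathrm{Sur}(\Delta_0(G),F)\big]=\frac{1}{|F|}\sum_{w\in F}\#\mathrm{Sur}(G,F/\langle w\rangle)$, and then verify this \emph{exactly}, with no limiting argument, by a direct moment computation on the bordered matrix: linearity of expectation over candidate surjections $u=(u',u_{n+1})$, column-by-column independence of the events $u(\text{col}_j)=0$, and the count of $|\langle w\rangle|^n$ lifts of each surjection $G\twoheadrightarrow F/\langle w\rangle$. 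The details check out, including the uniformity of $u'(c)+u_{n+1}s$ on $\im(u')+\langle u_{n+1}\rangle=F$ and the degenerate case $u_{n+1}=0$, and the two inputs you use (self-adjointness of $\Delta_0$ and the bordered-matrix description of $\Delta_0(G)$) are both established earlier in the paper, so there is no circularity; note the paper's own limit lemma leans on the closely related adjointness $(\dk)^T=\dkstar$, so the external inputs are comparable. What the paper's longer route buys is the machinery it reuses downstream: the triangular operator $\del_0$ and the convergence $\dk(e_F\times\Z_p^k)\to\mo(e_F)$ both reappear in the proofs of \autoref{thm:leadingtermone:mainbody} and \autoref{thm:eqnfundamental}. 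What your route buys is brevity and exactness at finite matrix size, in the style of standard moment computations for cokernels of random matrices; your closing alternative (pairing both sides against moments and symmetrizing via Goursat) is only a sketch, but the main argument stands on its own.
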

    
    \begin{definition}
    We define a partial ordering on finite abelian groups, as follows: $F' \leq F$ if and only if $F$ admits a surjection to $F'$.
    \end{definition}
   
   \begin{corollary} (of \autoref{basis})
   The action of $\Delta_0$ on $\text{\textbf{\emph{Moment}}}[\et]$ is upper-diagonal with respect to the above partial ordering.
   \end{corollary}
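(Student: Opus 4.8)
The plan is to read the corollary straight off the explicit formula in \autoref{basis}. That formula writes $\Delta_0(\mo[F])$ as a \emph{finite} linear combination of terms $\mo[\coker(\Z_p \to F)]$, one for each $\phi \in \Hom(\Z_p, F)$; the index set is finite (of size $|F|$) because $F$ is a finite $p$-group. The single fact needed is that for every such $\phi$ the group $\coker(\phi) = F/\phi(\Z_p)$ is a quotient of $F$, so $F$ surjects onto it, hence $\coker(\phi) \le F$ by the definition of the partial order. Therefore $\Delta_0(\mo[F])$ lies in the span of $\{\mo[F'] : F' \le F\}$, which — given that the measures $\mo[F]$ are linearly independent (from the discussion of the moment basis) — is exactly the assertion that the matrix of $\Delta_0$ in the moment ``basis'' is upper-triangular for $\le$.

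Along the way I would also record two small points. First, $\le$ really is a partial order on isomorphism classes of finite abelian $p$-groups: if $F \le F'$ and $F' \le F$ then there are surjections both ways between finite groups, forcing $|F| = |F'|$ and hence an isomorphism, so antisymmetry holds. Second, the diagonal entry at $F$ is exactly $\frac{1}{|F|}$: among the $|F|$ maps $\phi$, one has $\coker(\phi) \cong F$ precisely when $|\phi(\Z_p)| = 1$, i.e. when $\phi = 0$, contributing the lone term $\frac{1}{|F|}\mo[F]$. Pinning down the diagonal this way is consistent with — and anticipates — the eigenvalue $|\#G|^{-1}$ appearing in the Main Theorem.

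I do not anticipate a genuine obstacle: the corollary is an immediate unwinding of \autoref{basis} together with the elementary observation that a cokernel of a map into $F$ is a quotient of $F$. The only points meriting a word of care are that each ``row'' of the matrix is a finite sum, so that upper-triangularity makes literal sense, and that the moments are linearly independent — both of which are available from earlier in the paper.
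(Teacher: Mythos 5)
Your proposal is correct and follows essentially the same route as the paper: the corollary is read off directly from the formula in \autoref{basis}, using only the observation that each $\coker(\Z_p \to F)$ is a quotient of $F$ and hence $\leq F$ in the stated partial order, so that $\Delta_0(\mo[F])$ lies in the span of $\{\mo[F'] : F' \leq F\}$. Your two supplementary remarks (antisymmetry of $\leq$ via finiteness, and the diagonal entry $\frac{1}{|F|}$ coming from the single map $\phi = 0$) match what the paper records elsewhere and are accurate.
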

   
   In other words, for every $F$, we have
     \[
     \Delta_0 \left( \text{\textbf{\emph{Moment}}}[F] \right)=\sum_{F' \leq F}     b_{F'} \text{\textbf{\emph{Moment}}}[F']
     \]
     for some coefficients $b_{F'}$.

   \begin{definition}
   We will say that a finite linear combination of $\text{\textbf{Moment}}[\et]$:
   \[
   \sum_{i} a_{i} \text{\textbf{Moment}}[F_i]
   \]
   has leading term 
   \[
   a_{k} \text{\textbf{Moment}}[F_k]
   \]
   if $F_i \leq F_k$ for all $i$.
   \end{definition}

   \begin{corollary} (of \autoref{basis})
   For every finite abelian group $F$, there exists an eigenfunction $E_{F}$ of $\Delta_0$ that is a finite linear combination of terms of the form $\text{\textbf{\emph{Moment}}}[\emptydot]$ with leading term $\text{\textbf{\emph{Moment}}}[F]$. The eigenvalue associated to $E_F$ is $\frac{1}{|F|}$. The $E_F$ are linearly independent.
   \end{corollary}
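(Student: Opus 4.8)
The plan is to construct $E_F$ by triangularising $\Delta_0$ on the (finite‑dimensional) span of the moment measures lying at or below $F$, using cardinality to get around the fact that the eigenvalues $1/|F|$ repeat. The one order‑theoretic input I would isolate first: if $F'\le F$ with $F'\not\cong F$, then $|F'|<|F|$ strictly. Indeed $F'\le F$ gives a surjection $F\twoheadrightarrow F'$ of finite abelian groups, and such a surjection with $|F'|=|F|$ is an isomorphism. Consequently the set $\{F':F'\le F\}$ is finite (a finite abelian group has only finitely many quotients up to isomorphism), and on it the quantity $1/|F'|$ strictly decreases as one moves down the order away from $F$. By \autoref{basis} this finite set spans a $\Delta_0$‑invariant subspace $V_F$ on which $\Delta_0$ is upper‑triangular: reading off \autoref{basis}, the coefficient of $\mo[F]$ in $\Delta_0\mo[F]$ is $1/|F|$ — it comes from the unique homomorphism $\Z_p\to F$ with cokernel $F$, the zero map — while every other summand has cokernel a proper quotient $F'<F$.

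Next I would look for an eigenfunction $E_F=\mo[F]+\sum_{F'<F}a_{F'}\,\mo[F']$ with eigenvalue $1/|F|$ and solve for the $a_{F'}$ by descending induction. Matching, for each $F'<F$, the coefficient of $\mo[F']$ on the two sides of $\Delta_0 E_F=\tfrac1{|F|}E_F$ yields (with the convention $a_F=1$) a relation of the shape
\[
a_{F'}\left(\frac{1}{|F'|}-\frac{1}{|F|}\right)=\Big(\text{a fixed linear expression in the }a_{F''},\ F'<F''\le F\Big).
\]
By the order‑theoretic observation the scalar $\tfrac1{|F'|}-\tfrac1{|F|}$ is nonzero, so this determines $a_{F'}$ uniquely once the $a_{F''}$ above it are known; processing $\{F':F'\le F\}$ in any linear extension of the reverse order (largest first) solves the whole system and produces $E_F$, an eigenfunction of $\Delta_0$ with eigenvalue $1/|F|$ and leading term $\mo[F]$.

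For linear independence I would first record that the moment measures $\{\mo[F]\}$ are linearly independent on $X_0$: given a finite relation $\sum_{F\in S}a_F\mo[F]=0$, pick $F_0$ minimal in $S$ and evaluate the left side at the group $G=F_0$; since $\mo[F](F_0)=\# Sur(F_0,F)\,\mu_0(F_0)$ vanishes unless $F\le F_0$, minimality leaves only $a_{F_0}\,\# Aut(F_0)\,\mu_0(F_0)\ne 0$, forcing $a_{F_0}=0$, and induction on $|S|$ finishes. For the $E_F$, suppose $\sum_{F\in S}a_F E_F=0$ with $F_0$ maximal in $S$. Each $E_F$ is a combination of $\mo[F']$ with $F'\le F$ and has $\mo[F]$‑coefficient $1$, so the coefficient of $\mo[F_0]$ in the sum is just $a_{F_0}$ (no strictly larger member of $S$ contributes an $\mo[F_0]$ term). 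Independence of the moments then gives $a_{F_0}=0$, and induction finishes.

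The only step that is not purely formal is the interplay of triangularity with the spectrum: $\Delta_0$ on moments has many coinciding diagonal entries $1/|F|$ (e.g. $\Z/p^2\Z$ and $(\Z/p\Z)^2$), so "distinct eigenvalues" is unavailable; the construction works precisely because the partial order refines the cardinality order, so that along any chain below $F$ the value $1/|F|$ never recurs — which is exactly the nonvanishing of $\tfrac1{|F'|}-\tfrac1{|F|}$ used to invert the triangular system.
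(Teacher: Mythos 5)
Your proof is correct and takes essentially the same route as the paper: both arguments rest on the upper-triangularity of $\Delta_0$ in the moment basis with respect to the surjection order (the paper packages this via the auxiliary operator $\del_0$ on singleton measures, diagonalizes it to get $e_F$, and sets $E_F=\mo(e_F)$ using the intertwining relation $\Delta_0\circ\mo=\mo\circ\del_0$ --- which is the same triangular system you solve directly in the moment basis). Your explicit observation that $F'<F$ forces $|F'|<|F|$, so the diagonal entry $1/|F|$ never recurs along a chain and the system can be inverted despite repeated eigenvalues overall, is precisely the point the paper leaves implicit when it asserts that the triangular operator is ``easy to diagonalize''; your write-up is the more careful one on that score.
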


   We recall that there is an inner product on $X_0$, associated to $\mu_0$, which we denoted as \[
   \lb \emptydot,\emptydot \rb_{X_0}.
   \]

   \begin{definition}
   We call a measure $\nu$ on $X_0$ \textbf{square-summable} iff its norm under this inner product is finite. The Hilbert space of all square-summable functions will be denoted as $\ltwostar$.
   \end{definition}

   It is simple to verify that the $E_{F}$ are square-summable. The first main theorem of this chapter is the following.
   
   \begin{theorem}
   \label{thm:eigenfunctions:intro}
   Any square-summable eigenfunction of $\Delta_0$ either lies in $ker(\Delta_0)$, or is a finite linear combination of the eigenfunctions $E_{F}$. 
   \end{theorem}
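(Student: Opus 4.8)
The theorem asserts a rigidity statement: every square-summable eigenfunction of $\Delta_0$ that is not annihilated by $\Delta_0$ must be a finite combination of the $E_F$. The natural strategy is to combine the structural information from \autoref{basis} (the upper-triangularity of $\Delta_0$ on the $\mo[\et]$ "basis") with a square-summability estimate forcing finiteness. First I would set up the pairing: for a square-summable measure $\nu$ and a finite abelian group $F$, consider the "moment coordinate" $\langle \nu, \mo[F]\rangle_{X_0}$, or more precisely the raw moment $\sum_G \#\mathrm{Sur}(G,F)\,\nu(G) = \sum_G \#\mathrm{Sur}(G,F)\,\mu_0(G)\cdot \frac{\nu(G)}{\mu_0(G)}$, which is exactly $\langle \nu, \mo[F]\rangle_{X_0}$ up to the normalization built into the inner product \eqref{eqn:innerproductone}. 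The key point is that $\langle \cdot, \mo[F]\rangle_{X_0}$ is a bounded linear functional on $\ltwostar$ — one must check $\mo[F]$ is itself square-summable, which follows from the classical fact that the moments of Cohen--Lenstra are finite (indeed all equal to $1$), combined with the Cauchy--Schwarz-type bound $\sum_G \#\mathrm{Sur}(G,F)^2 \mu_0(G) < \infty$.

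**Main steps.** (1) Using \autoref{basis}, compute how $\Delta_0$ acts on moment functionals: for any square-summable $\nu$, $\langle \Delta_0 \nu, \mo[F]\rangle_{X_0} = \langle \nu, \Delta_0 \mo[F]\rangle_{X_0}$ by self-adjointness, and the right side expands via \autoref{basis} into $\frac{1}{|F|}\sum_{\Hom(\Z_p,F)} \langle \nu, \mo[\coker(\Z_p\to F)]\rangle_{X_0}$. So the vector of moment coordinates $\big(\langle \nu, \mo[F]\rangle\big)_F$ transforms under $\Delta_0$ by an explicit transpose operator that is lower-triangular with respect to the partial order $\leq$ (the coker is always a quotient of $F$, hence $\leq F$), with diagonal entry $\tfrac{1}{|F|}$ coming from the trivial homomorphism. (2) Suppose $\Delta_0 \nu = \lambda \nu$ with $\lambda \neq 0$. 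Apply the previous identity: $\lambda \langle \nu,\mo[F]\rangle = \frac{1}{|F|}\langle \nu,\mo[F]\rangle + (\text{lower order terms in } F)$. If $\langle\nu,\mo[F]\rangle\neq 0$ for some $F$, pick such an $F$ minimal in the partial order among those with $|F|$ as small as forces $\lambda = 1/|F|$... more carefully: among all $F$ with $\langle\nu,\mo[F]\rangle\neq 0$, the triangular system forces $\lambda \in \{1/|F| : \langle\nu,\mo[F]\rangle \neq 0\}$, and in fact a standard triangular-system argument (taking $F$ maximal with $\langle\nu,\mo[F]\rangle\neq 0$ and $1/|F| \neq \lambda$ would give a contradiction, after checking the lower-order terms involving only smaller groups have already been pinned down) shows that the set of $F$ with nonzero moment coordinate is controlled. (3) The finiteness: show that only finitely many $F$ can have $\langle \nu, \mo[F]\rangle \neq 0$. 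This is where square-summability enters decisively — the eigenvalue equation relates $\langle\nu,\mo[F]\rangle$ for $F$ with $|F| = 1/\lambda$ to a finite linear system, but one must rule out an infinite "tail" of groups $F$ all of the same order $1/\lambda$ contributing; here I would bound $\|\nu\|^2 \geq \sum_F |\text{(component of $\nu$ along $E_F$)}|^2 \cdot \|E_F\|^{-2}$-type inequality, or argue directly that $\nu - \sum_F \langle\nu,\ldots\rangle E_F$ lies in $\ker \Delta_0$ by showing all its moment coordinates vanish. (4) Conclude: having pinned down finitely many nonzero moment coordinates, form $\nu' = \nu - (\text{appropriate finite combination of } E_F\text{'s})$; by construction $\Delta_0 \nu' = \lambda \nu'$ and all moment coordinates of $\nu'$ vanish, and then a separate lemma (presumably \autoref{thm:eigenfunctions:intro}'s companion — that moment functions span a dense subspace, or that a measure with all moments zero is zero, or lies in $\ker\Delta_0$) finishes it.

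**The main obstacle.** The delicate point is step (3)/(4): controlling the potentially infinite family of groups $F$ of a fixed order. The partial order $\leq$ is not a well-order and there are infinitely many abelian $p$-groups of bounded order only if we don't bound the order — wait, there are finitely many abelian $p$-groups of each fixed order $p^m$, so "$|F| = 1/\lambda$ fixed" already gives finitely many $F$ at the top level; the real subtlety is that the eigenvalue equation at level $1/\lambda$ feeds into equations at all smaller levels, and one needs square-summability to prevent the "correction terms" $E_F$ from requiring infinitely many moments $\mo[F']$ with $|F'|$ unbounded. I expect the crux is proving that $\nu \mapsto \langle \nu, \mo[F]\rangle$ being bounded on $\ltwostar$, together with the triangular action, forces the support of the moment-coordinate vector to be finite — essentially an argument that an $\ell^2$ vector fixed (up to the triangular perturbation) by a diagonal operator with eigenvalues $1/|F| \to 0$ must be finitely supported once we exclude the kernel. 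A clean way to package this: decompose $\ltwostar = \ker\Delta_0 \oplus (\ker\Delta_0)^\perp$, show $(\ker\Delta_0)^\perp$ is spanned (as a Hilbert space) by the $E_F$ after orthogonalization, and then any eigenfunction outside $\ker\Delta_0$ lies in the closed span of $\{E_F\}$ with eigenvalue $1/|F_0|$ of finite multiplicity — but upgrading "closed span" to "finite linear combination" is exactly the content that needs the triangular structure, not just Hilbert-space generalities. I would spend most of the effort making that upgrade rigorous.
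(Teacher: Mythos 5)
Your reduction in steps (1)--(2) is sound and close in spirit to the paper's bookkeeping: the moment coordinates $\langle\nu,\mo[F]\rangle$ of a $\lambda$-eigenfunction with $\lambda\neq 0$ satisfy a triangular recursion (by \autoref{basis} and self-adjointness) which forces them to vanish for $|F|<1/\lambda$, leaves them free only at the finitely many groups of order $1/\lambda$, and determines them above that level; subtracting the matching finite combination of the $E_F$ with $|F|=1/\lambda$ then produces a $\lambda$-eigenfunction all of whose moment coordinates vanish. Your worry in ``the main obstacle'' about infinitely many groups at a fixed level is a non-issue (there are finitely many abelian $p$-groups of each order, and the recursion pins down everything above the critical level), so the finiteness part of the statement comes for free from the triangular structure.

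The genuine gap is your step (4). Everything hinges on the ``separate lemma'' that a square-summable measure orthogonal to all $\mo[F]$ lies in $\ker(\Delta_0)$, and you do not prove it or even identify it correctly: of the three candidate formulations you list, two (``moment functions span a dense subspace'' and ``a measure with all moments zero is zero'') are equivalent to the surjectivity of $\fouriermap$, which the paper explicitly states is an \emph{open question}. The one that is actually true and needed is precisely the paper's Second Main Theorem, $im(\fouriermap)=\ker(\Delta_0)^{\perp}$, and this is where essentially all of the analytic work of the paper is concentrated: it requires the asymptotic expansion $\Delta_0^N\nu=\lambda^{-N}\sum_i a_{F_i}E_{F_i}+o(\lambda^{-N})$ for finitely supported $\nu$ (\autoref{thm:leadingtermone:mainbody}), which in turn rests on the composability identity $(\doper\dstar)^k=\dk\dkstar$, the $\ltwostar$-limit $\lim_k \dk(B\times\Z_p^k)=\mo(B)$, and the spectral-theoretic input \autoref{lem:hilbert:appendix} applied to $\lambda\Delta_0$ restricted to the orthocomplements $\hil_\lambda$. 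Without this ingredient your argument cannot exclude a square-summable eigenfunction with nonzero eigenvalue that is orthogonal to every moment measure, so the proof as proposed does not close. By contrast, the paper deduces the theorem directly from its two Main Theorems: an eigenfunction with $\lambda\neq 0$ lies in $\ker(\Delta_0)^{\perp}=im(\fouriermap)$, hence equals $\fouriermap(f)$ where $f$ is an eigenfunction of the diagonal operator $|\#G|^{-1}$, hence is supported on the finitely many groups of order $1/\lambda$.
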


   \subsubsection{Alternate Perspective}

There is an interesting perspective on the results of this chapter. The theorems above can be expressed as a relation between the operator $\Delta_0$ and an \textit{explicit} unitary operator. This is reminiscent of the relation between the one-dimensional Laplacian and the Fourier transform $\mathcal{F}$ in analysis:
\[
\frac{d^2}{dt^2} \mathcal{F}\Big[ f(x) \Big]= \mathcal{F}\Big[ (-x^2) f(x) \Big]
\]
First, we define a unitary operator on square-summable measures.

\begin{lemma}
There is a unique linear continuous operator $\ltwostar \rightarrow \ltwostar$ that takes the measure
\[
\frac{\# Sur ( F , \emptydot)}{\#Aut(\emptydot)}
\]
to the measure
\[
\sqrt{c_0} \frac{\# Sur ( \emptydot, F)}{\#Aut(\emptydot)}
\]
where $c_0$ is the normalization constant (\ref{eqn:cnaught}), above. This operator is \textbf{unitary}.
\end{lemma}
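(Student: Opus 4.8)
The plan is to construct $\fouriermap$ by extending the prescribed assignment from the dense subspace it is defined on, and to verify unitarity by checking inner products on a spanning family together with a density statement on the target side. Throughout, write $m^\vee_F := \frac{\#\mathrm{Sur}(F,\emptydot)}{\#\mathrm{Aut}(\emptydot)}$ for the source measures and $m_F := \frac{\#\mathrm{Sur}(\emptydot,F)}{\#\mathrm{Aut}(\emptydot)}$ for the target measures, so the claimed operator should send $m^\vee_F \mapsto \sqrt{c_0}\,m_F$. First I would record the elementary fact that $m^\vee_F = \sum_{K\le F}\delta_{F/K}$ is finitely supported on the quotients of $F$, and that a triangular inversion over the finite poset of quotients of $F$ (starting from $m^\vee_0 = \delta_0$) shows $\delta_G \in \mathrm{span}\{m^\vee_F\}_F$ for every $G$. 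Hence $D^\vee := \mathrm{span}\{m^\vee_F\}_F$ is exactly the space of finitely supported measures, which is dense in $\ltwostar$. Density of $D^\vee$ already yields the \emph{uniqueness} clause (two continuous operators agreeing on $D^\vee$ coincide), so it remains to produce the operator and prove it is unitary.

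The heart of the matter is the identity
\begin{equation}
\label{eqn:keyfourier}
\sum_{G \in X_0} \frac{\#\mathrm{Sur}(F_1,G)\,\#\mathrm{Sur}(F_2,G)}{\#\mathrm{Aut}(G)} \;=\; \sum_{G\in X_0} \#\mathrm{Sur}(G,F_1)\,\#\mathrm{Sur}(G,F_2)\,\mu_0(G),
\end{equation}
which I would prove by showing that \emph{both} sides count the subdirect products of $F_1$ and $F_2$, i.e.\ the subgroups $H\le F_1\times F_2$ surjecting onto each factor. For the left-hand side: a pair of surjections $(\psi_1\colon F_1\twoheadrightarrow G,\ \psi_2\colon F_2\twoheadrightarrow G)$ carries a free post-composition action of $\mathrm{Aut}(G)$ (freeness because $\psi_1$ is onto), and Goursat's lemma identifies the orbit set, as $G$ ranges over $X_0$, with the set of subdirect products. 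For the right-hand side: a pair $(\phi_1\colon G\twoheadrightarrow F_1,\ \phi_2\colon G\twoheadrightarrow F_2)$ factors uniquely as $G\twoheadrightarrow H\hookrightarrow F_1\times F_2$ with $H$ subdirect, the fibre over a fixed $H$ having size $\#\mathrm{Sur}(G,H)$; summing against $\mu_0$ and invoking the known fact that every Cohen--Lenstra moment is $1$ (so $\sum_G \#\mathrm{Sur}(G,H)\,\mu_0(G) = 1$) collapses the sum to the number of subdirect products as well. Unwinding \eqref{eqn:keyfourier} through $\mu_0(G)=c_0/\#\mathrm{Aut}(G)$ and the definitions of $\lb\emptydot,\emptydot\rb_{X_0}$ gives $\lb m^\vee_{F_1}, m^\vee_{F_2}\rb_{X_0} = c_0\,\lb m_{F_1}, m_{F_2}\rb_{X_0}$, i.e.\ the normalization $\sqrt{c_0}$ is exactly what makes $m^\vee_F\mapsto \sqrt{c_0}\,m_F$ inner-product-preserving. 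In particular the assignment is well defined and isometric on $D^\vee$, hence (by density of $D^\vee$) extends uniquely to an isometry $\fouriermap\colon\ltwostar\to\ltwostar$.

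Finally I would show $\fouriermap$ is surjective, equivalently that $\mathrm{span}\{m_F\}_F$ is dense in $\ltwostar$, equivalently (passing to the adjoint, using that the dual of $\ltwostar$ is $L^2(X_0,\mu_0)$) that no nonzero $f\in L^2(X_0,\mu_0)$ satisfies $\sum_G f(G)\,\#\mathrm{Sur}(G,F)\,\mu_0(G)=0$ for all $F$. This is the determinacy of the Cohen--Lenstra moment problem inside $L^2$, and I expect it to be the main obstacle. The route I would take: the functions $\{\#\mathrm{Sur}(\emptydot,F)\}_F$ together with the constants form a point-separating, product-closed family (using $\#\mathrm{Sur}(G,F_1)\#\mathrm{Sur}(G,F_2)=\sum_{H}\#\mathrm{Sur}(G,H)$ once more), while $\mu_0$ places geometrically small mass on $\{\,G:|G|=p^n\,\}$ (the partition function is subexponential and $\#\mathrm{Aut}$ grows); combining these via a Vandermonde-type argument — first separating groups by order, then refining within a fixed order — forces $f\equiv 0$. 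This is precisely the density statement taken up in the later subsection on the image of $\fouriermap$, where it is also matched with the vanishing of $\ker\Delta_0$. Once surjectivity is established, $\fouriermap$ is an isometric surjection, hence unitary, and by construction it is the unique continuous operator sending $m^\vee_F$ to $\sqrt{c_0}\,m_F$.
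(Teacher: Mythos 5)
Your construction of the operator and your proof that it preserves inner products follow the paper's own route almost exactly: you prove the same key identity
\[
c_0\sum_G \frac{\# Sur(G,F_1)\,\# Sur(G,F_2)}{\# Aut(G)}=\sum_{G'}\frac{\# Sur(F_1,G')\,\# Sur(F_2,G')}{\# Aut(G')}
\]
by showing that both sides count the subdirect products of $F_1\times F_2$ --- the paper factors a map onto $F_1\times F_2$ that is surjective to both coordinates through its image, applies the moments-are-one identity to collapse the sum over $G$, and then matches subdirect subgroups with Cartesian squares over a common quotient modulo $Aut(G')$, which is exactly your Goursat step --- and you get well-definedness, uniqueness and the continuous extension from the fact that the measures $\# Sur(F,\emptydot)/\# Aut(\emptydot)$ span the finitely supported measures, which are dense in $\ltwostar$. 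Up to that point your argument and the paper's coincide.

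The gap is your final paragraph. You read ``unitary'' as requiring surjectivity onto all of $\ltwostar$ and attempt to prove that $\text{span}\{\# Sur(\emptydot,F)/\# Aut(\emptydot)\}$ is dense. The paper does not prove this and in fact explicitly flags it as open: it only establishes that $\fouriermap$ is a norm-preserving map onto the \emph{closure of the span of the target measures}, and the remark following Theorem \ref{lem:surjectivitydelta} states that whether $\fouriermap$ is surjective (equivalently whether $\ker(\Delta_0)\cap\ltwostar=0$) is not known. The later subsection you defer to proves $im(\fouriermap)=\ker(\Delta_0)^{\perp}$, not density of the image, so it cannot supply the missing step. Moreover the sketch you give would not close it: a point-separating, product-closed family of \emph{unbounded} functions need not be dense in $L^2$ of an infinite discrete measure space --- this is precisely the indeterminacy phenomenon in moment problems, and Stone--Weierstrass-type reasoning does not apply here. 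The fix is simply to prove (as the paper does) that the prescribed assignment extends to an isometry of $\ltwostar$ onto the closed span of the measures $\# Sur(\emptydot,F)/\# Aut(\emptydot)$, i.e.\ a unitary onto its image; everything you wrote before the surjectivity paragraph already accomplishes this.
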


\begin{remark}
The existence of this operator and its unitarity both follow from the curious formula below:
\[
c_0 \sum_G \frac{\# Sur ( G, F_1)\# Sur ( G, F_2)}{\# Aut(G)}= 
\]
\[
=\sum_G \frac{\# Sur ( F_2 , G)\# Sur (F_1, G)}{\# Aut(G)} \hspace{0.5in}
\forall \, F_1, F_2
\]
For the proof of this formula, see \autoref{thm:curioussum:mainbody}.
\end{remark}

Let us denote this linear operator as $\fouriermap$. We have

\begin{theorem}[First Main Theorem]
\label{thm:eqnfundamental:intro}
 $\fouriermap$ satisfies the relation:
\begin{equation}
\label{eqn:fundamental:intro}
\Delta_0 \, \fouriermap \, ( \, f  \, ) = \fouriermap \Big( |\#G|^{-1} f  \Big)
\end{equation}
\end{theorem}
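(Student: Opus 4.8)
The plan is to verify the identity (\ref{eqn:fundamental:intro}) by checking it on the spanning set $\{\text{\textbf{Moment}}[F]\}_{F}$ of (a dense subspace of) $\ltwostar$. Since $\fouriermap$ is the unique continuous operator sending $\frac{\#Sur(F,\et)}{\#Aut(\et)}$ to $\sqrt{c_0}\frac{\#Sur(\et,F)}{\#Aut(\et)} = \sqrt{c_0}\,\text{\textbf{Moment}}[F]$, and since $\Delta_0$ and $\fouriermap$ are both bounded (the former by the boundedness result in Section 4, the latter being unitary), it suffices to prove both sides of (\ref{eqn:fundamental:intro}) agree when $f$ ranges over the measures $g_F := \frac{\#Sur(F,\et)}{\#Aut(\et)}$. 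First I would record that $\fouriermap(g_F) = \sqrt{c_0}\,\text{\textbf{Moment}}[F]$ by definition, so the left side of (\ref{eqn:fundamental:intro}) is $\sqrt{c_0}\,\Delta_0\big(\text{\textbf{Moment}}[F]\big)$, which by \autoref{basis} equals
\[
\sqrt{c_0}\,\frac{1}{|F|}\sum_{Hom(\Z_p,F)} \text{\textbf{Moment}}[\coker(\Z_p \to F)].
\]

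The more delicate side is the right-hand side, $\fouriermap\big( |\#G|^{-1} g_F \big)$. Here $|\#G|^{-1}g_F$ is the measure $G \mapsto \frac{1}{\#G}\cdot\frac{\#Sur(F,G)}{\#Aut(G)}$, and I would need to re-expand this as a linear combination of the basis measures $g_{F'}$ in order to apply $\fouriermap$ termwise. The key combinatorial step is an identity of the shape
\[
\frac{1}{\#G}\,\#Sur(F,G) \;=\; \sum_{F'} a_{F'}\,\#Sur(F',G)
\]
valid for all $G$, with coefficients $a_{F'}$ independent of $G$; I expect the correct statement to be that $\frac{1}{\#G}\#Sur(F,G)$ is, up to the factor $\tfrac{1}{|F|}$, a sum over $Hom(\Z_p,F)$ (equivalently over $\#F$ choices of element of $F$, or over subgroups/quotients) of $\#Sur(\coker(\Z_p\to F),G)$ — the \emph{dual} incarnation of \autoref{basis}. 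Granting this, $|\#G|^{-1}g_F = \frac{1}{|F|}\sum_{Hom(\Z_p,F)} g_{\coker(\Z_p\to F)}$, so applying $\fouriermap$ and using $\fouriermap(g_{F'}) = \sqrt{c_0}\,\text{\textbf{Moment}}[F']$ gives exactly the expression displayed above for the left-hand side. Matching the two completes the proof.

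The main obstacle is establishing that dual combinatorial identity and making the termwise application of $\fouriermap$ rigorous. For the former, the natural route is the curious formula quoted in the Remark: pairing both candidate expressions for $|\#G|^{-1}g_F$ against an arbitrary test moment $\text{\textbf{Moment}}[F_2]$ and using $c_0\sum_G \frac{\#Sur(G,F_1)\#Sur(G,F_2)}{\#Aut(G)} = \sum_G \frac{\#Sur(F_2,G)\#Sur(F_1,G)}{\#Aut(G)}$ to translate the known identity \autoref{basis} (which lives on the "$\#Sur(G,\et)$" side) into the required identity on the "$\#Sur(\et,G)$" side. For the latter, I would either truncate to finitely supported approximants and pass to the limit using boundedness of $\fouriermap$ and $\Delta_0$, or observe that for fixed $F$ the sum $\sum_{Hom(\Z_p,F)} g_{\coker(\Z_p\to F)}$ is \emph{finite} (only finitely many isomorphism types of $\coker(\Z_p\to F)$ occur, each with bounded multiplicity), so no limiting argument is needed at all and termwise application of the linear map $\fouriermap$ is immediately justified. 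I would also double-check the edge case $F = 0$, where both sides should reduce to the statement $\Delta_0\mu_0 = \mu_0$ (eigenvalue $1$), consistent with all Cohen–Lenstra moments being $1$.
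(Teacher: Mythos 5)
Your proposal is correct, but it takes a genuinely different route from the paper's. The paper deduces the theorem from \autoref{lem:momentdelta} (upper-triangularity of $\Delta_0$ on the measures $\#Sur(\et,G)/\#Aut(\et)$, with diagonal entry $|\#G|^{-1}$) and then uses self-adjointness to symmetrize: the matrix of $\Delta_0$ in the basis $\fouriermap(1_G)$ is simultaneously upper- and lower-triangular, hence diagonal; equivalently, $\fouriermap(1_G)$ is identified with the eigenfunction $E_G$ as a generator of the orthogonal complement of $V^{<G}$ in $V^{\leq G}$. You instead verify the identity head-on on the spanning set $g_F=\#Sur(F,\et)/\#Aut(\et)$, using the full strength of \autoref{basis} on the left-hand side and re-expanding $|\#G|^{-1}g_F$ in the basis $\{g_{F'}\}$ on the right-hand side. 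This buys a completely explicit computation that never invokes self-adjointness of $\Delta_0$ (and shows that the off-diagonal coefficients in \autoref{lem:momentdelta} are exactly mirrored on the multiplication side), at the price of needing the exact coefficients from \autoref{basis} rather than mere triangularity plus symmetry.

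The one step you should tighten is the ``dual combinatorial identity.'' It is true, but your proposed route --- pairing against test moments and invoking the curious formula --- is both unnecessary and shakier than it looks: the measures being compared are not finitely supported, so deducing equality of measures from equality of their pairings against all $\text{\textbf{Moment}}[F_2]$ would itself require a moment-determinacy argument. Instead, check the identity pointwise at each $G$: identifying $Hom(\Z_p,F)$ with $F$, the sum $\sum_{f\in F}\#Sur\big(F/\langle f\rangle,\,G\big)$ counts pairs $(f,\psi)$ with $\psi\in Sur(F,G)$ and $f\in\ker\psi$, hence equals $\#Sur(F,G)\cdot|F|/|G|$; dividing by $|F|$ gives exactly $|\#G|^{-1}\#Sur(F,G)$, as required. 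Since only finitely many isomorphism types of $\coker(\Z_p\to F)$ occur, applying the linear map $\fouriermap$ termwise needs no limiting argument, as you note. (A minor normalization point: with the paper's convention $\text{\textbf{Moment}}[F]=\#Sur(\et,F)\,\mu_0(\et)$ one has $\fouriermap(g_F)=c_0^{-1/2}\,\text{\textbf{Moment}}[F]$ rather than $\sqrt{c_0}\,\text{\textbf{Moment}}[F]$, but the constant is the same on both sides and cancels.)
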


The second main theorem is:

\begin{theorem}[Second Main Theorem]
\label{lem:surjectivitydelta:intro}
 \begin{equation}
 \label{eqn:surjectivitydelta:intro}
  im(\fouriermap) = ker(\Delta_0)^{\perp}.
 \end{equation}
\end{theorem}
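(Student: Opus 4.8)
Throughout I identify a square‑summable measure with the corresponding element of $\ltwostar$. For a finite abelian $p$‑group $F$ write $u_F$ for the measure $G\mapsto \#\mathrm{Sur}(F,G)/\#\mathrm{Aut}(G)$ on $X_0$; it is supported on the finitely many quotients of $F$, and the system $\{u_{F'}:F'\le F\}$ is triangular with respect to the partial order with diagonal value $u_{F}(F)=1$, so these measures span every finitely supported measure on $X_0$, hence a dense subspace of $\ltwostar$. Since $\fouriermap$ is an isometry carrying $u_F$ to $\frac{1}{\sqrt{c_0}}\moment[F]$, its image is the \emph{closed} subspace $\overline{\mathrm{span}\{\moment[F]\}}$. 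On the other side $\Delta_0$ is bounded and self‑adjoint, so $\ker(\Delta_0)^{\perp}=\overline{\im(\Delta_0)}$ and $\mathrm{span}\{\moment[F]\}^{\perp}=(\im\Delta_0)^{\perp}$; taking orthocomplements, the theorem $\im(\fouriermap)=\ker(\Delta_0)^{\perp}$ is equivalent to
\[
\mathrm{span}\{\moment[F]\}^{\perp}=\ker(\Delta_0),
\]
with $F$ ranging over all finite abelian $p$‑groups. I would prove the two inclusions separately.

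For $\ker(\Delta_0)\subseteq\mathrm{span}\{\moment[F]\}^{\perp}$, I would show that each $\moment[F]$ already lies in $\im(\Delta_0)$. Identifying $\Hom(\Z_p,F)$ with $F$, \autoref{basis} reads
\[
\Delta_0\moment[F]=\frac{1}{|F|}\moment[F]+\frac{1}{|F|}\sum_{x\in F\setminus\{0\}}\moment[F/\langle x\rangle],
\]
and every $F/\langle x\rangle$ with $x\neq 0$ has strictly smaller order than $F$. Hence on the finite‑dimensional space $W_N=\mathrm{span}\{\moment[F']:|F'|\le p^N\}$ the operator $\Delta_0$ is upper triangular with nonzero diagonal entries $1/|F'|$, so $\Delta_0|_{W_N}$ is invertible and $\moment[F]\in\Delta_0(W_N)\subseteq\im(\Delta_0)$ whenever $|F|\le p^N$. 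Writing $\moment[F]=\Delta_0 g_F$, self‑adjointness gives $\langle\moment[F],w\rangle=\langle g_F,\Delta_0 w\rangle=0$ for all $w\in\ker(\Delta_0)$.

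The reverse inclusion $\mathrm{span}\{\moment[F]\}^{\perp}\subseteq\ker(\Delta_0)$ is the heart of the matter. Since $\Delta_0=dd^{*}$ is positive, $\Delta_0 w=0$ is equivalent to $d^{*}w=0$, i.e.\ to $w\perp\im(d)$, so it suffices to prove $\im(d)\subseteq\overline{\mathrm{span}\{\moment[F]\}}$. As $d$ is bounded and the finitely supported measures are dense in $L^2(X_1,\mu_1)^{*}$, this reduces to showing $d(\delta_H)=\mathrm{Law}(d(H))\in\overline{\mathrm{span}\{\moment[F]\}}$ for each $H\in X_1$. Here I would compute the law of $d(H)$ explicitly --- writing $H=H_{tors}\oplus\Z_p$ and quotienting by a Haar‑random element, or equivalently using the balance identity $\mu_0(G)\probP(G\xrightarrow{d^{*}}H)=\mu_1(H)\probP(H\xrightarrow{d}G)$ together with $\#\Ext(G,\Z_p)=|G|$ --- and then exhibit $d(\delta_H)$ as an explicit, in general infinite, linear combination $\sum_F c_{H,F}\moment[F]$: the analogue for $d$ of the formula for $\Delta_0$ in \autoref{basis}. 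Convergence of this series in $\ltwostar$ should be controlled by the curious formula of \autoref{thm:curioussum:mainbody}, which makes every inner product $\langle\moment[F_1],\moment[F_2]\rangle$ finite and equal to an explicit finite sum.

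The main obstacle is precisely this last computation --- pinning down the coefficients $c_{H,F}$ and verifying norm convergence. The cleanest implementation is probably to first compute how $d$ acts on the ``co‑moment'' measures on $X_1$ (the analogues of the $u_F$, which again span a dense subspace by the same triangular argument) and check that their images are combinations of the $\moment[F']$ on $X_0$, then pass to closures. There is also a more structural route that avoids the computation but needs one extra input: if $\Delta_0$ is known to be compact --- and its nonzero spectrum is already contained in $\{p^{-k}:k\ge 0\}$ with finite multiplicities --- then $\overline{\mathrm{span}\{\moment[F]\}}^{\perp}$ is a $\Delta_0$‑invariant subspace (by self‑adjointness, together with the fact that $\Delta_0$ preserves $\overline{\mathrm{span}\{\moment[F]\}}$, which follows from \autoref{basis}) on which $\Delta_0$ acts as a positive compact self‑adjoint operator. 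Any eigenvector of that restriction with nonzero eigenvalue would be an eigenvector of $\Delta_0$ lying both in this complement and, by \autoref{thm:eigenfunctions:intro}, in $\mathrm{span}\{E_F\}\subseteq\overline{\mathrm{span}\{\moment[F]\}}$, which is impossible; hence the restriction has no nonzero eigenvalue and therefore vanishes, which is exactly $\mathrm{span}\{\moment[F]\}^{\perp}\subseteq\ker(\Delta_0)$.
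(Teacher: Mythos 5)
Your reduction of the theorem to the two inclusions $\ker(\Delta_0)\subseteq\mathrm{span}\{\mo[F]\}^{\perp}$ and $\mathrm{span}\{\mo[F]\}^{\perp}\subseteq\ker(\Delta_0)$ is sound, and your proof of the first inclusion is correct and arguably cleaner than what the paper does: observing that \autoref{basis} makes $\Delta_0$ upper-triangular with nonzero diagonal entries $1/|F|$ on each finite-dimensional space $W_N$, so that every $\mo[F]$ lies in $\im(\Delta_0)$ and is therefore orthogonal to the kernel, is a complete argument (granting linear independence of the moments, which is easy). One warning, though: the clause ``$\mathrm{span}\{\mo[F]\}^{\perp}=(\im\Delta_0)^{\perp}$'' that you state in passing does not follow from boundedness. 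Since the moments are not known a priori to span a dense subspace --- that is precisely the open question of whether $\fouriermap$ is surjective --- you cannot get $\im(\Delta_0)\subseteq\overline{\mathrm{span}\{\mo[F]\}}$ by approximating an arbitrary measure by moment combinations; that identity is essentially equivalent to the theorem itself.

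The second inclusion is where all the content lives, and your proposal does not establish it. The reduction to $\im(\doper)\subseteq\overline{\mathrm{span}\{\mo[F]\}}$ via $\ker(\Delta_0)=\ker(\dstar)=\im(\doper)^{\perp}$ is a reasonable idea, but the step you yourself flag as ``the main obstacle'' --- exhibiting $\doper(\delta_H)$ as a norm-convergent series $\sum_F c_{H,F}\,\mo[F]$ --- is not carried out, and it is not clear such an expansion exists: the law of $\doper(H)$ involves counting homomorphisms with a \emph{prescribed kernel}, which is not visibly a linear combination of surjection counts with coefficients independent of the argument. Your fallback route is circular as stated: compactness of $\Delta_0$ and \autoref{thm:eigenfunctions:intro} are both downstream of the two main theorems, not available inputs. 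The paper closes this gap by a genuinely different mechanism: using the composability $\Delta_0^{N}=\doper^{N}\dstar^{N}$, the triangularity of the auxiliary operator $\del_0$, and the $\ltwostar$-limit $d^{k}(B\times\Z_p^{k})\to\mo[B]$ of \autoref{limitlemma}, it proves (\autoref{thm:leadingtermone:mainbody}) that for every finitely supported $\nu$ the leading asymptotic term of $\Delta_0^{N}\nu$ is a combination of the eigenfunctions $E_F$; hence $\lambda^{N}\Delta_0^{N}\nu\to0$ for $\nu$ orthogonal to all $E_F$ with $|F|\le\lambda$, and the spectral-theorem input (\autoref{lem:hilbert:appendix}) then forces $\Delta_0$ to have norm at most $\lambda^{-1}$ on that complement, hence to vanish on $\im(\fouriermap)^{\perp}=\bigcap_{\lambda}\hil_{\lambda}$. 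To complete your proof you would need either to supply this asymptotic argument or to actually produce, and prove convergence of, the moment expansion of $\doper(\delta_H)$.
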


\begin{corollary}
\[ im(\Delta_0) \in im(\fouriermap)\]
\end{corollary}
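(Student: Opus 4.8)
The plan is to deduce this corollary formally from the Second Main Theorem together with the self-adjointness of $\Delta_0$; no genuinely new argument is needed.

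First I would recall two facts already established. From the Corollary to the adjointness theorem in Section~2, $\Delta_0 = d d^{*}$ is self-adjoint with respect to the inner product $\lb \emptydot, \emptydot \rb_{X_0}$, and from the boundedness subsection it extends to a bounded operator on the Hilbert space $\ltwostar$. For any bounded self-adjoint operator on a Hilbert space one has $im(\Delta_0) \subseteq ker(\Delta_0)^{\perp}$, and I would spell this out directly: if $\nu = \Delta_0 \mu$ lies in $im(\Delta_0)$ and $\eta \in ker(\Delta_0)$, then
\[
\lb \nu, \eta \rb_{X_0} = \lb \Delta_0 \mu, \eta \rb_{X_0} = \lb \mu, \Delta_0 \eta \rb_{X_0} = \lb \mu, 0 \rb_{X_0} = 0 .
\]
Since $\eta$ ranges over all of $ker(\Delta_0)$, this shows $\nu \in ker(\Delta_0)^{\perp}$, hence $im(\Delta_0) \subseteq ker(\Delta_0)^{\perp}$.

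Second, I would invoke the Second Main Theorem (\autoref{lem:surjectivitydelta:intro}), which gives $ker(\Delta_0)^{\perp} = im(\fouriermap)$. Combining the inclusion of the previous step with this equality yields $im(\Delta_0) \subseteq ker(\Delta_0)^{\perp} = im(\fouriermap)$, which is the assertion.

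There is essentially no obstacle here; the only thing that must have been secured in advance is that $\Delta_0$ is a bona fide bounded self-adjoint operator on all of $\ltwostar$ (not merely a densely-defined symmetric one), which is exactly what the boundedness subsection and the Corollary in Section~2 provide. I would also point out that the conclusion holds for the honest images rather than their closures: the computation above shows pointwise that every element of $im(\Delta_0)$ is orthogonal to $ker(\Delta_0)$, so no completion or approximation argument enters.
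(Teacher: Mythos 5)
Your argument is correct and is the intended deduction: the paper states this corollary without proof immediately after the Second Main Theorem, and the standard fact $im(\Delta_0) \subseteq ker(\Delta_0)^{\perp}$ for a bounded self-adjoint operator, combined with $ker(\Delta_0)^{\perp} = im(\fouriermap)$, is exactly what is needed. Your closing remark that no closure is required is also justified, since $\fouriermap$ is an isometry and hence has closed image, so the equality $im(\fouriermap)=ker(\Delta_0)^{\perp}$ holds for the honest image.
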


Together, these two theorems subsume \autoref{thm:eigenfunctions:intro} above.

   \subsubsection{Relation with previous work}
   
   To the author's knowledge, the questions and answers in this chapter are new. However, these results are also related to the problem of determining a probability measure from its moments, that has been the subject of recent work of Wood, Sawin, and others (see \cite{WoodICM} and \cite{SawinWood}).
   \begin{mymysection}
"is discussed in"? The stuff concerning measures
\end{mymysection}

\subsubsection{Notation}

Here, we collect some notation used throughout this text.

\begin{itemize}
\item $X_0$ denotes the set of finite abelian $p$-groups, i.e. finitely generated $\Z_p$-modules of $\Q_p$-rank $0$.
\item $X_1$ denotes the set of finitely generated $\Z_p$-modules of $\Q_p$-rank $1$.
\item $G$ denotes a typical element of $X_0$.
\item $H$ denotes a typical element of $X_1$.
\item $\mu_0$ denotes the Cohen-Lenstra measure on $X_0$:
\[
\mu_0(G) \defeq \frac{c_0}{|Aut(G)|}
\]
where $c_0$ is a normalization constant explicitly given by
\[
\prod_{i=1}^{\infty}(1-\frac{1}{p^i})
\]
\item $\mu_1$ denotes the Cohen-Lenstra measure on $X_1$:
\[
\mu_1(H) \defeq \frac{c_1}{|Aut(H_{tors})|
|H_{tors}|}
\]
where $c_1$ is a normalization constant explicitly given by
\[
\prod_{i=2}^{\infty}(1-\frac{1}{p^i})=\frac{p}{p-1}c_0
\]
\item $\doper$ denotes a linear map from probability measures on $X_1$ to probability measures on $X_0$ defined as follows:
\[
\doper(H)
\]
is the quotient of $H$ by a random element chosen from the Haar measure on $H$.
\item $\dstar$ denotes a linear map from probability measures on $X_0$ to probability measures on $X_1$ defined as follows:
\[
\dstar(G)
\]
is a uniformly random $\Z_p$ extension of $G$, i.e. the extension corresponding to a uniformly random element of the finite group $Ext(G,\Z_p)$.
\item $\Delta_0 \defeq \doper \dstar$
\begin{mysection}
\item $X_k$ denotes the set of finitely generated $\Z_p$-modules of $\Q_p$-rank $k$.
\item $\mu_k$ denotes the Cohen-Lenstra measure on $X_k$:
\[
\mu_k(H) \defeq \frac{c_k}{|Aut(H_{tors})|
|H_{tors}|^k}
\]
\marginparr{Check This.}
where $c_k$ is a normalization constant explicitly given by
\[
c_k = \prod_{i=k+1}^{\infty}(1-\frac{1}{p^i})
\]
\item $\dk$ denotes a linear map from probability measures on $X_k$ to probability measures on $X_0$ defined as follows:
\[
\dk(H)
\]
is the quotient of $H$ by a random element of $Hom(\Z_p^k,H)$ chosen from the Haar measure on $Hom(\Z_p^k,H)$.
\item $\dkstar$ denotes a linear map from probability measures on $X_0$ to probability measures on $X_k$ defined as follows:
\[
\dkstar(G)
\]
is a uniformly random $\Z_p^k$ extension of $G$, i.e. the extension corresponding to a uniformly random element of the finite group $Ext(G,\Z_p^k)$.
\end{mysection}
\end{itemize}

\subsubsection{Acknowledgments}

The author would like to thank his advisor, Manjul Bhargava. The author is still fascinated by how much beautiful mathematics originated from two questions that he asked at the beginning of this project. The author would also like to thank Matthew Hernandez, Adam Marcus, Ramon Van Handel, Roger Van Peski, Melanie Matchett Wood and Alexander Yu for useful discussions. The author would also like to particularly thank Alexander Yu for reading earlier versions of this manuscript and making many useful comments. Finally, the author would like to thank Yakov Sinai for his encouragement.

\newpage

\section{Some operators arising in the study of random matrices}
\subsection{The operators $\doper$, $\dstar$ and $\Delta_0$}
\label{doperdstar}

We consider two sets, $X_0$ and $X_1$.
\begin{enumerate}
\item $X_0$ is the set of finite $p$-groups; i.e. $\Z_p$-modules with $\Q_p$-rank 0.
\item $X_1$ is the set of $\Z_p$-modules with $\Q_p$-rank 1.
\end{enumerate}
   To denote an element of $X_0$, we will use the letter $G$, and to denote an element of $X_1$, we will use the letter $H$.

   \begin{definition}
   There is a random operator, $d$, from $X_1$ to $X_0$ defined as follows: 
   \[ d(H)
   \] is the quotient of $H$ by a uniformly random element.
   \end{definition}

   \begin{definition}
   There is a random operator, $d^{*}$, from $X_0$ to $X_1$ defined as follows: \[ d^{*}(G)\] is the extension
   of $G$, corresponding to a uniformly random element of $Ext(G,\Z_p)$.
   \end{definition}

   \begin{definition}
   We also define:
   \[
   \Delta_0 \defeq \doper \dstar
   \]
   $\Delta_0$ is a random operator from $X_0$ to $X_0$
   \end{definition}

   \subsection{Connection of operators $\doper$ and $\dstar$ with random matrices}
   \label{sec:randomoperatorsandrandommatrices}
   Here, we will show how these operators are related to random matrices.
   \begin{remark}
   In the statements below, we will write $*$ to denote independent Haar-random variables valued in $\Z_p$. 
   \end{remark}   
   
   \begin{theorem}
   \label{thm:operatorsandrandommatrices}
   \begin{enumerate}[label=(\alph*)]
   \item Let $M_{n,n}$ be an $n \times n$ matrix with cokernel $G$. $d^{*}(G)$ is distributed as the cokernel of the random matrix:
   \[
   \begin{mat}{ccc}
   & & \\
   & M_{n,n}& \\
   & & \\ \hline
   * & \hdots & *
   \end{mat}
   \]
   \item  Let $M_{n+1,n}$ be an $(n + 1) \times n$ matrix with cokernel $H$. $d(H)$ is distributed as the cokernel of the random matrix:
   \[
   \begin{mat}{ccc|c}
   &  & & * \\
   & M_{n+1,n} & &\vdots \\ 
   &  & & *
   \end{mat}
   \]
   \item Let $M_{n,n}$ be an $n \times n$ matrix with cokernel $G$. $\Delta_0(G)$ is distributed as the cokernel of the random matrix:
   
   \[
   \begin{mat}{ccc|c}
   & & & *\\
   & M_{n,n}& &\vdots\\
   & & \\ \hline
   * & \hdots & * & *
   \end{mat}
   \]
   \end{enumerate}
   \end{theorem}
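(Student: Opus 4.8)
The plan is to prove each of the three statements by directly unwinding the definitions of $d$, $d^{*}$ and $\Delta_0$ in terms of cokernels, using the elementary fact that adjoining a row or column to a presentation matrix corresponds to a specific operation on the cokernel. Throughout I will use the standard identification: if $N$ is an $a \times b$ matrix over $\Z_p$, then $\coker(N) = \Z_p^{a} / (\text{column span of } N)$, and each new row (resp. column) of the presentation matrix imposes an extra relation (resp. introduces an extra generator).

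\emph{Part (b)} is the easiest and I would do it first, as it is essentially a tautology. Write $H = \coker(M_{n+1,n}) = \Z_p^{n+1}/\langle \text{columns}\rangle$. Appending one column of Haar-random entries $(*,\ldots,*)^{T}$ to $M_{n+1,n}$ adds to the relation lattice the submodule generated by the image $\bar v$ of that random vector in $H$; hence the cokernel of the augmented matrix is $H/\langle \bar v\rangle$. So I must check that the image in $H$ of a Haar-random vector in $\Z_p^{n+1}$ is a Haar-random element of $H$: this is immediate because the quotient map $\Z_p^{n+1}\twoheadrightarrow H$ pushes Haar measure to Haar measure (it is a continuous surjective homomorphism of compact groups). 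That is exactly the definition of $d(H)$, so (b) follows.

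\emph{Part (a)} is the substantive step and will be the main obstacle. Here I want to show that appending a Haar-random row $(\ast,\ldots,\ast)$ to an $n\times n$ presentation matrix $M_{n,n}$ of $G$ produces the cokernel of a \emph{uniformly random $\Z_p$-extension} of $G$, i.e.\ realizes $d^{*}(G)$. The augmented matrix $M' = \binom{M_{n,n}}{\text{row}}$ is $(n+1)\times n$, so $\coker(M')$ has $\Q_p$-rank $1$ and sits in an exact sequence $0 \to \Z_p \to \coker(M') \to \coker(M_{n,n}) = G \to 0$ — one should see this by noting that the extra row cuts out a codimension-one ``kernel direction'': passing from $\Z_p^{n}$ (the target) to $\Z_p^{n}$ modulo columns, then the transpose / Pontryagin-dual picture shows the new relation contributes a free $\Z_p$ on the dual side. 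The key point is then a bijection (compatible with the uniform measures) between choices of the random row and elements of $\Ext(G,\Z_p)$. I would establish this via the standard presentation-theoretic computation of $\Ext$: choosing a free resolution $\Z_p^{n} \xrightarrow{M_{n,n}} \Z_p^{n} \to G \to 0$, one has $\Ext^{1}(G,\Z_p) = \coker\big(\Hom(\Z_p^{n},\Z_p)\xrightarrow{M_{n,n}^{T}}\Hom(\Z_p^{n},\Z_p)\big) = \Z_p^{n}/M_{n,n}^{T}\Z_p^{n}$, and the new row is precisely an element of $\Hom(\Z_p^{n},\Z_p) = \Z_p^{n}$ whose class in this cokernel determines the extension class, with two rows giving isomorphic extensions (over the fixed $\Z_p$ and $G$) iff they differ by an element of $M_{n,n}^{T}\Z_p^{n}$. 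Since a Haar-random element of $\Z_p^{n}$ projects to a uniform element of the finite group $\Z_p^{n}/M_{n,n}^{T}\Z_p^{n} = \Ext(G,\Z_p)$, the augmented cokernel is distributed as $d^{*}(G)$. The care needed here — matching up ``the extension determined by the random row'' with ``the extension class in $\Ext$'' at the level of actual isomorphism classes of modules, not just $\Ext$-classes — is where I expect the argument to require the most attention.

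\emph{Part (c)} then follows formally by composing (a) and (b). Start with $M_{n,n}$ presenting $G$; by (a), appending a random bottom row gives an $(n+1)\times n$ matrix whose cokernel $H$ is distributed as $d^{*}(G)$; this $H$ has $\Q_p$-rank $1$, so by (b), appending a further random right column gives a matrix whose cokernel is distributed as $d(H)$. Composing, the cokernel of the full $(n+1)\times(n+1)$ augmented matrix is distributed as $d(d^{*}(G)) = \Delta_0(G)$. One should check that the order of operations — adding the row then the column versus building the block matrix all at once — does not affect the distribution, which is clear since the $\ast$ entries are jointly independent and the cokernel depends only on the resulting matrix, not on how it was assembled; and that the bottom-right $\ast$ entry is harmlessly absorbed (it is one of the Haar-random entries of the appended column). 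This completes the proof.
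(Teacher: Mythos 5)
Your proposal is correct, and its overall architecture matches the paper's: part (b) is dispatched as a near-tautology, part (c) is obtained by composing (a) and (b), and all the work goes into (a), where one computes the extension class of the augmented matrix as a function of the appended row and checks that a Haar-random row yields a uniformly distributed class. The one genuine difference is the model you use for $\Ext(G,\Z_p)$. The paper identifies $\Ext(G,\Z_p)$ with $\Hom(G,\Q_p/\Z_p)$ via the terminal extension $0\to\Z_p\to\Q_p\to\Q_p/\Z_p\to 0$ (its Lemma on characters and extensions) and then computes the class of the augmented cokernel explicitly as $g\mapsto -v\,M_{n,n}^{-1}\,g \bmod \Z_p$, so that uniformity of the class follows from uniformity of $v$. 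You instead compute $\Ext^1(G,\Z_p)$ from the free resolution $0\to\Z_p^n\xrightarrow{M}\Z_p^n\to G\to 0$ as the finite quotient $\Z_p^n/M^T\Z_p^n$, observe that the appended row is exactly a representative of a class in this quotient (via the pushout description of the extension), and use that Haar measure on $\Z_p^n$ pushes forward to the uniform measure on any finite quotient. The two computations are equivalent — the isomorphism $\Z_p^n/M^T\Z_p^n\cong\Hom(G,\Q_p/\Z_p)$ is precisely $v\mapsto(g\mapsto vM^{-1}g)$ — but your route avoids introducing $\Q_p/\Z_p$ at the cost of having to justify the pushout identification of the extension attached to a row, which is the point you correctly flag as needing the most care. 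The paper buys a cleaner measure-theoretic endgame (uniformity on $\Hom(G,\Q_p/\Z_p)$ is immediate once the formula is in hand) and reuses the character-theoretic lemma later in the paper; your version is more self-contained and makes the finite quotient on which the uniform distribution lives completely explicit.
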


   The relation for $\Delta_0$ follows immediately from the previous two relations. In the appendix to this chapter, we will prove the relations for $\doper$ and $\dstar$.

   \subsection{$\dk$ and $\dkstar$}

   Below, we will slightly generalize the situation considered in \S \ref{doperdstar}. We now consider the sets, $X_0$ and $X_k$.
\begin{enumerate}
\item As before, $X_0$ is the set of finite $p$-groups; i.e. $\Z_p$-modules with $\Q_p$-rank $0$.
\item $X_k$ is the set of $\Z_p$-modules with $\Q_p$-rank $k$.
\end{enumerate}
   To denote an element of $X_0$, we will use the letter $G$, and to denote an element of $X_k$, we will use the letter $H$.
   \newline
   \newline

   \begin{definition}
   There is a random operator, $\dk$, from $X_1$ to $X_0$ defined as follows: $$\dk(H)$$ is the quotient of $H$ by a randomly chosen element of $Hom(\Z_p^k,H)$
   \end{definition}

   \begin{definition}
   There is a random operator, $\dkstar$, from $X_0$ to $X_k$ defined as follows: $$\dkstar(G)$$ is the extension of $G$, corresponding to a randomly chosen element of $Ext(G,\Z_p^k)$.
   \end{definition}

   \begin{definition}
   We also define
   \[
   \Delta_0^{\Z_p^k}(G) \defeq \dkstar \dk
   \]
   This is a random operator from $X_0$ to $X_0$.
   \end{definition}

    \subsubsection{Connection of $\dk$ and $\dkstar$ with random matrices}

   Here, again, we will show how the operators introduced above are related to random matrices.
   \begin{lemma}
   \begin{enumerate}[label=(\alph*)]
   \item Let $M_{n,n}$ be an $n \times n$ matrix with cokernel $G$. $\dkstar(G)$ is the cokernel of the random $(n + k) \times n$ matrix 
   \[
   \begin{mat}{ccc}
   & & \\
   & M_{n,n}& \\
   & & \\ \hline
   * & \hdots & * \\
   \vdots & \ddots & \vdots \\
   * & \hdots & *
   \end{mat}
   \]

   \item  Let $M_{n+k,n}$ be an $(n + k) \times n$ matrix with cokernel $H$. $\dk(H)$ is the cokernel of the $(n + k) \times (n +k) $ random matrix
   \[
   \begin{mat}{ccc|ccc}
   &  & & * & \hdots & * \\
   & M_{n+k,n} & & \vdots & \ddots & \vdots \\ 
   &  & & * & \hdots & *
   \end{mat}
   \]
   \item Let $M_{n,n}$ be an $n \times n$ matrix with cokernel $G$. $\Delta_0^{\Z_p^k}(G)$ is the cokernel of the $(n + k) \times (n +k) $ random matrix
   
   \[
   \begin{mat}{ccc|ccc}
   &  & & * & \hdots & * \\
   & M_{n,n} & & \vdots & \ddots & \vdots \\ 
   &  & & * & \hdots & * \\ \hline
   * & \hdots & * & * & \hdots & * \\
   \vdots & \ddots & \vdots & \vdots & \ddots & \vdots \\
   * & \hdots & * & * & \hdots & *
   \end{mat}
   \]
   \end{enumerate}
   \end{lemma}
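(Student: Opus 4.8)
The plan is to derive (c) from (a) and (b) and to treat (a) and (b) separately; of the two, (b) is essentially a reformulation of the definition of $\dk$, whereas (a) — the counterpart for $\dkstar$ of the $\dstar$ relation deferred to the appendix — carries the real content. For (c): since $\Delta_0^{\Z_p^k}=\dk\,\dkstar$, one first realizes $\dkstar(G)$ as the cokernel of the $(n+k)\times n$ matrix $\left[\begin{smallmatrix}M_{n,n}\\ R\end{smallmatrix}\right]$ produced by (a), and then applies (b) to that module, which appends $k$ fresh Haar-random columns on the right; the outcome is exactly the $(n+k)\times(n+k)$ matrix of (c), and all the starred entries are independent because the entries of $R$ and of the appended columns are drawn independently. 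So it remains to establish (a) and (b).

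For (b), write $H=\coker(M_{n+k,n})=\Z_p^{n+k}/\im(M_{n+k,n})$. Then $\Hom(\Z_p^{k},H)\cong H^{k}$, and a Haar-random (hence uniform) element of $H^{k}$ is obtained by picking $k$ independent Haar-random columns $c_1,\dots,c_k\in\Z_p^{n+k}$ and reducing them modulo $\im(M_{n+k,n})$, because the surjection $\Z_p^{n+k}\twoheadrightarrow H$ pushes Haar measure forward to the uniform measure on the finite group $H$. The quotient of $H$ by the image of the associated homomorphism is $\Z_p^{n+k}/\bigl(\im(M_{n+k,n})+\langle c_1,\dots,c_k\rangle\bigr)$, which is precisely the cokernel of $\bigl(\,M_{n+k,n}\mid c_1\ \cdots\ c_k\,\bigr)$. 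This proves (b).

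For (a), note that $G\in X_0$ is finite, so $M:=M_{n,n}$ is injective and $0\to\Z_p^{n}\xrightarrow{M}\Z_p^{n}\to G\to 0$ is a free resolution of $G$. Applying $\Hom(-,\Z_p^{k})$ identifies $\Ext^{1}(G,\Z_p^{k})$ with $\coker\bigl(\Z_p^{k\times n}\xrightarrow{\,R\mapsto RM\,}\Z_p^{k\times n}\bigr)$, and the extension attached to a class $[R]$ is the pushout of $\Z_p^{n}\xleftarrow{M}\Z_p^{n}\xrightarrow{R}\Z_p^{k}$, that is, $H_R:=(\Z_p^{n}\oplus\Z_p^{k})/\{(Mx,-Rx):x\in\Z_p^{n}\}\cong\coker\left[\begin{smallmatrix}M\\ R\end{smallmatrix}\right]$ (the sign of $R$ is immaterial). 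Injectivity of $M$ yields $\Z_p^{k}\hookrightarrow H_R$ with $H_R/\Z_p^{k}\cong G$, so $H_R\in X_k$. Taking $R$ with independent Haar entries makes $[R]\in\Ext^{1}(G,\Z_p^{k})$ uniform (a surjective homomorphism onto a finite group carries Haar measure to the uniform measure), while $\coker\left[\begin{smallmatrix}M\\ R\end{smallmatrix}\right]\cong H_R$ for every $R$; hence $\coker\left[\begin{smallmatrix}M\\ R\end{smallmatrix}\right]$ has exactly the law of $\dkstar(G)$. As a byproduct this shows the law does not depend on the representative $M$ chosen for $G$, which is itself part of the assertion.

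The main obstacle will be the homological bookkeeping in (a): one must match the abstract class in $\Ext^{1}(G,\Z_p^{k})$ with the concrete matrix extension via the free resolution coming from $M$, and then check that independent Haar-random entries induce the uniform measure on $\Ext^{1}(G,\Z_p^{k})$. For $k=1$ this is precisely the appendix computation promised above; the argument here is that computation carried out for general $k$, and it requires no new idea.
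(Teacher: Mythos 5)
Your proof is correct, and for part (a) it takes a genuinely different route from the paper's. The paper (which proves only the $k=1$ case in its appendix and declares the general case analogous) works on the ``injective'' side: it identifies $\Ext(G,\Z_p)$ with $\Hom(G,\Q_p/\Z_p)$ via the fiber product against $0\to\Z_p\to\Q_p\to\Q_p/\Z_p\to 0$, and then explicitly computes that appending the row $v$ produces the character $g\mapsto -v\,M_{n,n}^{-1}g \bmod \Z_p$, which is visibly uniform for Haar-random $v$. You instead work on the projective side: you use $0\to\Z_p^n\xrightarrow{M}\Z_p^n\to G\to 0$ as a free resolution, identify $\Ext^1(G,\Z_p^k)$ with $\coker(R\mapsto RM)$, and realize the extension attached to $[R]$ as the pushout, i.e.\ as the cokernel of the stacked matrix. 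Your version handles all $k$ uniformly with no extra work and avoids the explicit $M^{-1}$ formula, at the cost of invoking the Yoneda/pushout description of $\Ext^1$; the paper's version yields a concrete formula for the extension class, which it reuses elsewhere (via the character/extension dictionary of its \S on preliminaries). Your treatments of (b) and of the deduction of (c) coincide with the paper's. One small wording slip: in (b) the group $H\in X_k$ is profinite, not finite, so the push-forward of Haar measure under $\Z_p^{n+k}\twoheadrightarrow H$ is the Haar measure on the compact group $H$ rather than ``the uniform measure on the finite group $H$''; the argument is unaffected.
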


   The relation for $\Delta_0^{\Z_p^k}(G)$ follows immediately from the previous two relations. The proof of the relations for $\dk$ and $\dkstar$ is analogous to the proof of the relations for $\doper$ and $\dstar$ and is therefore omitted.
\marginparr{Proof exists in older version}

\subsection{Appendix}

In the appendix, we give the proof of \autoref{thm:operatorsandrandommatrices}. Firstly, to prove part (b), we note that the cokernel of 
   \[
   \begin{mat}{ccc|c}
   &  & & * \\
   & M_{n+1,n} & &\vdots \\ 
   &  & & *
   \end{mat}
   \]
is the quotient of $coker(M_{n+1,n})$ by a Haar random element of $coker(M_{n+1,n})$.

The remainder of the appendix is devoted to giving some basic facts about $\Z_p$-extensions and using them to prove part (a) of \autoref{thm:operatorsandrandommatrices}.

\subsubsection{Preliminaries on $\Z_p$-extensions}
\label{sec:charactersandextensions}

\begin{lemma}
\label{lem:charactersandextensions}
Given a finite group $G$, there is a bijection between $Hom(G, \Q_p/\Z_p)$ and $ Ext(G,\Z_p)$. Under this bijection, $\phi \in Hom(G, \Q_p/\Z_p)$ corresponds to the extension given by the fiber product:
\begin{diagram}
G \arrow[dr,"\phi"]& \times & \Q_p \arrow[dl] \\ 
&\Q_p / \Z_p& \\
\end{diagram}
Moreover, as a group, the extension corresponding to $\phi$ is isomorphic to 
\[
ker(\phi) \times \Z_p
\]
\end{lemma}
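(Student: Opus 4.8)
The plan is to deduce both assertions from the long exact sequence in $\Ext$ attached to the short exact sequence of $\Z_p$-modules
\[
0 \to \Z_p \to \Q_p \to \Q_p/\Z_p \to 0 .
\]
Applying $\Hom(G,-)$ gives
\[
\Hom(G,\Q_p) \to \Hom(G,\Q_p/\Z_p) \xrightarrow{\ \delta\ } \Ext^1(G,\Z_p) \to \Ext^1(G,\Q_p).
\]
Since $G$ is finite, hence torsion, while $\Q_p$ is torsion-free, the first term vanishes; and since $\Q_p$ is a $\Q$-vector space it is divisible, hence injective (as a $\Z$-module, equivalently as a $\Z_p$-module), so the last term vanishes. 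Thus $\delta$ is an isomorphism $\Hom(G,\Q_p/\Z_p)\xrightarrow{\sim}\Ext^1(G,\Z_p)$, which is the claimed bijection. (Whether $\Hom$ and $\Ext$ are formed over $\Z$ or over $\Z_p$ is immaterial, as everything in sight is a $\Z_p$-module and the torsion/divisibility arguments do not see the difference.)

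Next I would identify $\delta$ with the fiber-product construction in the statement. By the standard description of the connecting homomorphism, $\delta(\phi)$ is represented by the pullback of the sequence above along $\phi$, i.e. by
\[
E_\phi \defeq \Q_p \times_{\Q_p/\Z_p} G = \bigl\{\,(x,g)\in\Q_p\times G \ :\ x \bmod \Z_p = \phi(g)\,\bigr\},
\]
together with $\Z_p \hookrightarrow E_\phi$, $a\mapsto (a,0)$, and $E_\phi \to G$, $(x,g)\mapsto g$. This is exactly the fiber-product diagram displayed in the lemma. I would either cite a homological-algebra reference for this description of $\delta$, or include the short diagram chase verifying directly that $E_\phi$ represents $\delta(\phi)$.

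For the \emph{moreover} clause I would compute the underlying group of $E_\phi$ by means of the other projection $\rho\colon E_\phi\to\Q_p$, $(x,g)\mapsto x$. Its kernel is $\{0\}\times\ker(\phi)\cong\ker(\phi)$, and its image is the preimage in $\Q_p$ of the finite subgroup $\phi(G)\subseteq\Q_p/\Z_p$. Since $\Q_p/\Z_p$ is isomorphic to the Pr\"ufer group $\Z(p^\infty)$, every finite subgroup of it is cyclic; hence $\phi(G)$ has order $p^m$ for some $m\geq 0$, its preimage in $\Q_p$ is $p^{-m}\Z_p$, and multiplication by $p^m$ identifies this with $\Z_p$. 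So $\rho$ yields a short exact sequence $0\to\ker(\phi)\to E_\phi\to\Z_p\to 0$, which splits because $\Z_p$ is free and hence projective; therefore $E_\phi\cong\ker(\phi)\times\Z_p$.

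I do not anticipate a real obstacle: the whole argument is standard homological algebra. The only point that needs care is to state precisely the identification of the abstract connecting map $\delta$ with the concrete fiber-product extension — this is classical but should not simply be asserted — together with the minor bookkeeping about $\Z$- versus $\Z_p$-coefficients noted above.
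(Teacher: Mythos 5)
Your proof is correct, and it rests on exactly the same two facts as the paper's: that $\Hom(G,\Q_p)=0$ for $G$ finite, and that $\Q_p$ is injective as a $\Z_p$-module. The packaging differs. The paper phrases these facts as the statement that $0\to\Z_p\to\Q_p\to\Q_p/\Z_p\to 0$ is a terminal object in the category of $\Z_p$-extensions of finite groups (injectivity gives existence of the map $\psi:H\to\Q_p$ extending $\Z_p\hookrightarrow\Q_p$; vanishing of $\Hom(G,\Q_p)$ gives uniqueness), and reads off the bijection from the resulting Cartesian square. You instead invoke the long exact sequence of $\Ext(G,-)$ and identify the connecting map $\delta$ with the pullback construction; this is the same argument run through standard homological machinery, and it buys you a cleaner citation trail at the cost of having to justify separately that $\delta$ is realized by the fiber product (a point you correctly flag as needing either a reference or a diagram chase, and which the paper's terminal-object formulation gets for free). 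For the \emph{moreover} clause the two arguments are essentially identical: your $\rho$ is the paper's $\psi$, and both conclude by splitting $0\to\ker(\phi)\to E_\phi\to\Z_p\to 0$ using projectivity of $\Z_p$. Your computation of $\mathrm{im}(\rho)$ as $p^{-m}\Z_p$ is actually more explicit than the paper's bare assertion that $\mathrm{im}(\psi)\cong\Z_p$, so on that point your write-up fills in a detail the paper leaves to the reader.
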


\begin{proof}
Let 
\begin{equation}
\label{eqn:terminalelement}
0 \rightarrow \Z_p \rightarrow \Q_p \rightarrow \Q_p /\Z_p \rightarrow 0
\end{equation}
be the obvious exact sequence.
Let $G$ be a finite group. Given any $\Z_p$-extension of $G$,
\begin{equation}
\label{eqn:exactsequence}
0 \rightarrow \Z_p \rightarrow H \rightarrow G \rightarrow 0,
\end{equation}
there is a unique commutative diagram:
\begin{equation}
\label{eqn:commdiagram}
\begin{tikzcd}
0 \arrow[r] & \Z_p \arrow[r] \arrow[d, equal] & H \arrow[r] \arrow[d, "\psi"] & 
G \arrow[d, "\phi"] \arrow[r] & 0 \\
0 \arrow[r] & \Z_p \arrow[r] & \Q_p \arrow[r] & \Q_p/\Z_p \arrow[r] & 0 \\
\end{tikzcd}
\end{equation}
In other words, (\ref{eqn:terminalelement}) is a terminal element in the category of $\Z_p$-extensions of finite groups. The second square in (\ref{eqn:commdiagram}) is Cartesian. This shows the first part of the lemma.

For the second part, we again refer to the diagram (\ref{eqn:commdiagram}). Note that $im(\psi)$ is isomorphic to $\Z_p$. Furthermore, $ker(\psi)$ isomorphic to the $ker(\phi)$. Since there are no non-trivial extensions of $\Z_p$,
\[
H \cong im(\psi) \times ker(\psi) \cong \Z_p \times ker(\phi)
\]
\end{proof}

\subsubsection{Proof of \autoref{thm:operatorsandrandommatrices}}

Now, we prove part (a) of \autoref{thm:operatorsandrandommatrices}.

\begin{theorem}
\label{thm:randomextclass}
Suppose that $M_{n,n}$ is an invertible matrix. Then, the extension class of
\[
0 \rightarrow \Z_p \rightarrow coker
\left(
\begin{array}
{ccc}
   & & \\
   & M_{n,n}& \\
   & & \\ \hline
   * & \hdots & *
\end{array}
\right) \rightarrow coker(M_{n,n}) \rightarrow 0
\]
is a uniformly random element of $Ext\Big(coker(M_{n,n}),\Z_p \Big)$.
\end{theorem}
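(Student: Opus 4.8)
The plan is to make the exact sequence in the statement completely explicit and then apply the bijection $\Ext(G,\Z_p)\cong\Hom(G,\Q_p/\Z_p)$ furnished by \autoref{lem:charactersandextensions}. Write $G=\Z_p^n/M_{n,n}\Z_p^n$, let $a\in\Z_p^n$ denote the random bottom row, and let $\widetilde M\colon \Z_p^n\to\Z_p^{n+1}$, $v\mapsto(M_{n,n}v,\,a^\top v)$, be the enlarged matrix, so that $H:=\coker(\widetilde M)=\Z_p^{n+1}/\widetilde M\Z_p^n$. The inclusion $\Z_p\hookrightarrow H$ is the image of the last coordinate axis, and the surjection $H\twoheadrightarrow G$ is induced by the projection $\Z_p^{n+1}\to\Z_p^n$ onto the first $n$ coordinates; here invertibility of $M_{n,n}$ is exactly what forces $\Z_p\cap\widetilde M\Z_p^n=0$, hence the injectivity of $\Z_p\hookrightarrow H$ and the exactness of the sequence.

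Next I would compute the lift $\psi\colon H\to\Q_p$ appearing in diagram (\ref{eqn:commdiagram}). A $\Q_p$-linear functional on $\Q_p^{n+1}$ that vanishes on $\widetilde M\Z_p^n$ and sends the last basis vector to $1$ is given by the row vector $w=(-a^\top M_{n,n}^{-1},\,1)$, and $\psi$ is multiplication by $w$: indeed $w\widetilde M=-a^\top M_{n,n}^{-1}M_{n,n}+a^\top=0$ and $we_{n+1}=1$, while the non-integral entries of $w$ do no harm since $\psi$ need only land in $\Q_p$. Reading off the induced map on $G$, the character $\phi_a\in\Hom(G,\Q_p/\Z_p)$ attached to our extension is $\phi_a(\bar y)=-a^\top M_{n,n}^{-1}y\bmod\Z_p$, which is well defined on $G$ because $-a^\top M_{n,n}^{-1}(M_{n,n}v)=-a^\top v\in\Z_p$. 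Thus, via \autoref{lem:charactersandextensions}, the extension class in the statement is the image of $\phi_a$ under the bijection, and it suffices to show that $a\mapsto\phi_a$ pushes Haar measure on $\Z_p^n$ forward to the uniform measure on the finite group $\Hom(G,\Q_p/\Z_p)$.

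I would finish with a soft argument. The assignment $a\mapsto\phi_a$ is a homomorphism from $(\Z_p^n,+)$ to the finite group $\Hom(G,\Q_p/\Z_p)$, and it is continuous because it factors through $\Z_p^n/p^m\Z_p^n$ for $m=v_p(\#G)$ (then $p^mM_{n,n}^{-1}$ is integral). For any surjective continuous homomorphism from a profinite group with its Haar probability measure onto a finite group, each fibre is a coset of the open, finite-index kernel and so has measure $1/|A|$, whence the pushforward is uniform; it therefore remains only to prove surjectivity of $a\mapsto\phi_a$. Given $\phi\in\Hom(G,\Q_p/\Z_p)$, pull it back along $\Z_p^n\twoheadrightarrow G$ to an element of $\Hom(\Z_p^n,\Q_p/\Z_p)\cong(\Q_p/\Z_p)^n$, represented by some $t\in(\Q_p/\Z_p)^n$; descending to $G$ forces $M_{n,n}^\top t=0$, i.e. $M_{n,n}^\top\widetilde t\in\Z_p^n$ for any lift $\widetilde t\in\Q_p^n$. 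Setting $a:=-M_{n,n}^\top\widetilde t\in\Z_p^n$ yields $\phi_a=\phi$, proving surjectivity and completing the argument.

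I expect the only delicate point to be the middle step: pinning down the functional $w$, hence the formula for $\phi_a$, with all transposes and the directions of $\psi$ and $\phi$ consistent with diagram (\ref{eqn:commdiagram}). The surjectivity computation is routine linear algebra over $\Z_p$, and the measure-theoretic conclusion is standard; it is also worth one sentence to record that the hypothesis of invertibility of $M_{n,n}$ is what guarantees both the finiteness of $G$ and the injectivity of $\Z_p\hookrightarrow H$.
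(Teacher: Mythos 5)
Your proposal is correct and follows essentially the same route as the paper: the paper's own argument passes through Lemma \ref{lem:extclass}, which records exactly your formula $\phi_a(\bar y)=-a^\top M_{n,n}^{-1}y \bmod \Z_p$ for the extension class and then deduces uniformity from the Haar-randomness of the bottom row. The only difference is that you supply the details the paper leaves as exercises, namely the explicit lift $\psi$ given by the functional $(-a^\top M_{n,n}^{-1},\,1)$ and the verification that $a\mapsto\phi_a$ is a surjective continuous homomorphism onto $\Hom\big(\coker(M_{n,n}),\Q_p/\Z_p\big)$, so that the pushforward of Haar measure is uniform.
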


To prove this, for any fixed row vector $v$ we will explicitly compute the extension class of
\[
0 \rightarrow \Z_p \rightarrow coker(M') \rightarrow coker(M_{n,n}) \rightarrow 0
\]
\[
M' \defeq \left(
\begin{array}
{ccc}
   & & \\
   & M_{n,n}& \\
   & & \\ \hline
    & v & 
\end{array} \right)
\]

\begin{lemma}
\label{lem:extclass}
The extension class of 
\[
0 \rightarrow \Z_p \rightarrow coker(M') \rightarrow coker(M_{n,n}) \rightarrow 0
\]
 corresponds to the following element of $Hom\Big( coker(M_{n,n}, \Q_p/\Z_p) \Big)$:
\[
g \mapsto - v \, M_{n,n}^{-1} \, g \mod \Z_p 
\]
\end{lemma}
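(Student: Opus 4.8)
The plan is to chase the definition of the connecting homomorphism through an explicit presentation of $\coker(M')$. First I would set up coordinates: write $M_{n,n}$ as a map $\Z_p^n \to \Z_p^n$, so that $\coker(M_{n,n}) = \Z_p^n / M_{n,n}\Z_p^n$, and write $M'$ as the map $\Z_p^n \to \Z_p^{n+1}$ whose first $n$ coordinates are $M_{n,n}$ and whose last coordinate is the linear functional $v$. Then $\coker(M') = \Z_p^{n+1}/M'\Z_p^n$, and the sub-$\Z_p$ is the image of the last basis vector $e_{n+1}$; the quotient map $\coker(M') \to \coker(M_{n,n})$ is induced by the projection $\Z_p^{n+1} \to \Z_p^n$ onto the first $n$ coordinates. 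One checks quickly that this is a short exact sequence $0 \to \Z_p \to \coker(M') \to \coker(M_{n,n}) \to 0$ (injectivity of $\Z_p \to \coker(M')$ uses that $M_{n,n}$ is invertible, so nothing in the image of $M'$ has all its first $n$ coordinates zero except multiples of $M'$ applied to $0$).

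Next I would identify the extension class with an element of $\Hom(\coker(M_{n,n}), \Q_p/\Z_p)$ via \autoref{lem:charactersandextensions}: the class is the map $\phi$ obtained by lifting, pushing out along $\Z_p \hookrightarrow \Q_p$, and reading off the induced map to $\Q_p/\Z_p$. Concretely, given $g \in \coker(M_{n,n})$, pick a representative $\tilde g \in \Z_p^n$, lift it to $(\tilde g, 0) \in \Z_p^{n+1}$ representing an element of $\coker(M')$; this is a lift of $g$ along the surjection. Its "boundary" measures the failure to be well-defined over $\Z_p$: changing the representative $\tilde g$ by $M_{n,n} w$ changes $(\tilde g, 0)$ by $M' w - (0, \dots, 0, v w) = (\text{something in the image of } M') - (0,\dots,0, vw)$, so in $\coker(M')$ the lift changes by $-(vw)\, e_{n+1}$, i.e. by $-(vw)$ times the generator of the $\Z_p$ subgroup. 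Working in the pushout $\coker(M') \otimes_{\Z_p}(\Q_p \text{ along the generator})$ — equivalently, allowing the $e_{n+1}$-coordinate to take values in $\Q_p$ — the canonical lift of $g$ is $(\tilde g, 0)$ but we may instead use $M_{n,n}^{-1}\tilde g$-style bookkeeping: solving $M_{n,n} x = \tilde g$ over $\Q_p$ gives $x = M_{n,n}^{-1}\tilde g$, and then $M' x = (\tilde g, v M_{n,n}^{-1} \tilde g)$, so $(\tilde g, 0) = M'x - (0, v M_{n,n}^{-1}\tilde g)$, exhibiting the image of $(\tilde g,0)$ in the pushout as $-(v M_{n,n}^{-1}\tilde g)\, e_{n+1} \bmod \Z_p \cdot e_{n+1}$. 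Reading off the $\Q_p/\Z_p$-value of the $e_{n+1}$-coordinate gives $\phi(g) = -v M_{n,n}^{-1} \tilde g \bmod \Z_p$, and independence of the choice of $\tilde g$ is exactly the statement that $v M_{n,n}^{-1}(M_{n,n}w) = vw \in \Z_p$, so $\phi$ is well-defined on $\coker(M_{n,n})$.

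I would then double-check compatibility with \autoref{lem:charactersandextensions} by verifying that the fiber product of $\coker(M_{n,n})\xrightarrow{\phi}\Q_p/\Z_p \leftarrow \Q_p$ is isomorphic to $\coker(M')$ over $\coker(M_{n,n})$, which is a direct identification: an element of the fiber product is a pair $(g, q)$ with $q \in \Q_p$ reducing to $\phi(g)$, and $(g,q)\mapsto (\tilde g, \text{(}q \text{ adjusted by the chosen lift)})$ gives the isomorphism; this also recovers the "$\ker\phi \times \Z_p$" description. The main obstacle is purely bookkeeping: getting the signs and the direction of the connecting map right, and being careful that "lift over $\Q_p$" versus "push out the subgroup $\Z_p \hookrightarrow \Q_p$" are reconciled consistently — the invertibility of $M_{n,n}$ is what makes $M_{n,n}^{-1}$ available over $\Q_p$ and is used exactly once, to produce the rational solution $x$. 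There is no deep input; the entire content is unwinding the definition of the Yoneda/connecting-map extension class in the concrete matrix presentation.
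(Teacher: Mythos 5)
Your proof is correct and follows the same route the paper intends: the paper's own proof of this lemma simply says to complete the diagram of \autoref{lem:charactersandextensions} with explicit maps $\psi$ and $\phi$ and calls this an exercise in linear algebra, and your computation (in effect taking $\psi$ to be the map induced by $(x,t)\mapsto t - v\,M_{n,n}^{-1}x$ on $\Z_p^{n+1}$) is precisely that exercise carried out, with the correct sign and the correct single use of the invertibility of $M_{n,n}$. The only step worth tightening is the discarding of $M'x$ for $x\in\Q_p^{n}$ in the pushout $P$, which is justified because such elements lie in the image of the unique splitting $\coker(M_{n,n})\to P$ of $0\to\Q_p\to P\to\coker(M_{n,n})\to 0$; your independent change-of-representative computation confirms the resulting cocycle in any case.
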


\begin{proof}
For the proof we need to find explicit $\psi$ and $\phi$ to complete the diagram:
\begin{diagram}
0 \arrow[r] & \Z_p \arrow[r] \arrow[d, equal] & coker(M') \arrow[r] \arrow[d, "\psi"] & 
coker(M_{n,n}) \arrow[d, "\phi"] \arrow[r] & 0 \\
0 \arrow[r] & \Z_p \arrow[r] & \Q_p \arrow[r] & \Q_p/\Z_p \arrow[r] & 0 \\
\end{diagram}
This is an exercise in linear algebra.
\end{proof}

\begin{proof} (of \autoref{thm:randomextclass})
\autoref{thm:randomextclass} is a corollary of \autoref{lem:extclass}. Indeed, if $v$ is sampled from a uniformly random distribution on $\Z_p^n$, then 
\[
g \mapsto - v \,  M_{n,n}^{-1} \, g \mod \Z_p 
\]
is a uniformly random element of $Hom\Big( coker(M_{n,n}, \Q_p/\Z_p) \Big)$.
\end{proof}

\newpage

\section{Properties of $\doper$ and $\dstar$ and the reversibility of $\Delta_0$}
\begin{mysection}

\end{mysection}

\begin{mysection}
\marginparr{Maybe talk about why the moments are what they are (in your notes).}
\end{mysection}
\begin{mysection}
In this section, we will investigate properties of the operators we defined, and in particular prove the reversibility of $\Delta_0$. 
\end{mysection}
\subsection{Survey of Section 2}

We study two operators on groups, that were introduced in the previous \chapter, $\doper$ and $\dstar$.

\begin{mysection}
We briefly recall their definitions:
\begin{itemize}
\item Given a finite $p$-group $G$, $\dkstar(G)$ is a uniformly random $\Z_p^k$ extension of $G$.
\item Given a $p$-group $H$ of $\Q_p$-rank $k$, $\dk(H)$ is the quotient by a uniformly random  element $\Hom(\Z_p^k,H)$.
\end{itemize}
\end{mysection}

One of the main results of this section is

\begin{theorem}
$\dstar$ is the adjoint of $\doper$ with respect to the measures $\mu_0$ and $\mu_1$.
\end{theorem}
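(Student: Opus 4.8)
The plan is to prove the adjointness relation
\[
\lb \nu_1, \doper \nu_2 \rb_{X_1} = \lb \dstar \nu_1, \nu_2 \rb_{X_0}
\]
by reducing it, via bilinearity, to the case where $\nu_1 = \delta_G$ and $\nu_2 = \delta_H$ are point masses. Unwinding the two inner products (\ref{eqn:innerproductone}) and (\ref{eqn:innerproducttwo}) and the definitions of the measures $\mu_0, \mu_1$, this is exactly the pointwise identity (\ref{eqn:intro:reversible}):
\[
\mu_0(G)\,\probP(G \xrightarrow{\dstar} H) = \mu_1(H)\,\probP(H \xrightarrow{\doper} G).
\]
So the whole theorem comes down to counting, on each side, the weighted number of short exact sequences $0 \to \Z_p \to H \to G \to 0$, which is the content of \autoref{cor:weightedgraph:intro}: both sides equal (a constant times) the number of such sequences weighted by $\frac{1}{|\Aut(\Z_p \hookrightarrow H)|\,|G|}$.

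First I would make precise what $\probP(G \xrightarrow{\dstar} H)$ and $\probP(H \xrightarrow{\doper} G)$ are as sums over extensions. For $\dstar$: by \autoref{lem:charactersandextensions}, $\Ext(G,\Z_p) \cong \Hom(G,\Q_p/\Z_p)$ is finite, and $\dstar(G)$ picks a uniformly random element $\phi$ of it; the resulting module is $\ker(\phi)\times\Z_p$, whose torsion part is $\ker(\phi)$. So $\probP(G \xrightarrow{\dstar} H)$ is $\frac{1}{|\Ext(G,\Z_p)|}$ times the number of $\phi \in \Hom(G,\Q_p/\Z_p)$ with $\ker(\phi)\times\Z_p \cong H$. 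For $\doper$: $H = H_{tors}\times\Z_p$ up to isomorphism, and $\doper(H)$ quotients by a Haar-random $x \in H$; I would split $x$ according to its image in $H/H_{tors}\cong\Z_p$. If that image is $p^j u$ with $u$ a unit (probability $(1-p^{-1})p^{-j}$), then $H/\langle x\rangle$ has $\Q_p$-rank $0$ and one computes it explicitly — this is where the factor $|H_{tors}|$ and the structure of $H_{tors}$ enter. I expect $\probP(H\xrightarrow{\doper}G)$ to be expressible as a weighted count of surjections or of subgroups of $H_{tors}$ with the right quotient.

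Next I would assemble both counts into the common currency of \emph{marked extensions}: triples $(H, \iota, \pi)$ with $0\to\Z_p \xrightarrow{\iota} H \xrightarrow{\pi} G \to 0$ exact, two such being identified when related by an automorphism of $H$ commuting with $\iota$ and $\pi$. Orbit–counting (Burnside / the orbit–stabilizer theorem) converts the uniform-over-$\Ext$ count and the Haar-over-$H$ count, each carrying its own normalization $\frac{1}{\#\Aut(G)}$ or $\frac{1}{\#\Aut(H_{tors})|H_{tors}|}$, into the \emph{same} symmetric expression
\[
\text{const}\cdot \sum_{[0\to\Z_p\to H\to G\to 0]} \frac{1}{|\Aut(\Z_p\hookrightarrow H)|\,|G|},
\]
which manifestly does not distinguish the roles of $G$ and $H$; this proves the identity and, simultaneously, \autoref{cor:weightedgraph:intro}. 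The corollary about $\doper\dstar$ and $\dstar\doper$ being self-adjoint is then formal: a composition of an operator with its adjoint.

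The main obstacle is the bookkeeping on the $\doper$ side: unlike $\dstar$, where \autoref{lem:charactersandextensions} hands us a clean parametrization of extensions, quotienting $H$ by a random element mixes the torsion and free directions, and I must verify that the Haar measure on $H$ pushes forward to exactly the right weights on pairs (subgroup of $H_{tors}$ with specified quotient $G$, together with the "slope" data in the $\Z_p$-direction). Concretely, the normalization constants $c_0, c_1$ and the extra factor $\frac{p}{p-1}$ relating them must come out consistently, and getting the $|H_{tors}|$ versus $|G|$ factors to match across the two sides is the delicate point; I would double-check it on a small example such as $G = \Z/p$ before trusting the general computation.
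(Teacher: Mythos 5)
Your proposal is correct and follows essentially the same route as the paper: reduce by bilinearity to point masses, reinterpret both sides as measures on isomorphism classes of exact sequences $0 \to \Z_p \to H \to G \to 0$, and apply the orbit--stabilizer theorem on each side (finite for $Aut(G)$ acting on $Ext(G,\Z_p)\cong Hom(G,\Q_p/\Z_p)$, Haar-measure-theoretic for $Aut(H)$ acting on $H$), identifying both stabilizers with $Aut(\Z_p \to H)$ to land on the common weight $c_0/(|Aut(\Z_p \to H)||G|)$. The paper organizes the delicate $\doper$-side computation slightly differently from your "slope" stratification --- it computes the Haar measure of the orbit $Aut(H)\circ h$ as a limit over congruence neighborhoods $U_l$ of the identity in $Aut(H)$, using $\nu(U_l \circ h) = |H/h|^{-1}|\Z/p^l\Z|^{-1}$ and $[Aut(H):U_l] = |H_{tors}||Aut(H_{tors})||GL_1(\Z/p^l\Z)|$ --- but this is the same bookkeeping you correctly flag as the main obstacle, and it produces the factor $(1-\tfrac1p)$ that reconciles $c_1$ with $c_0$ exactly as you anticipate.
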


\begin{corollary}
The operator $\Delta_0 = \doper \dstar$ is self-adjoint with respect to the measure $\mu_0$.
\end{corollary}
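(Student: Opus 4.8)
The plan is to obtain this as an essentially one-line consequence of the theorem just stated, together with the boundedness of $\Delta_0$ (the content of the later subsection ``Boundedness of Operator''). Recall that the theorem says precisely that $\doper$ and $\dstar$ are a mutually adjoint pair of operators between the Hilbert space of square-summable measures on $X_1$ and that of square-summable measures on $X_0$: writing $\lb \cdot,\cdot\rb_{X_0}$ and $\lb \cdot,\cdot\rb_{X_1}$ for the inner products attached to $\mu_0$ and $\mu_1$, one has
\[
\lb \doper a, b\rb_{X_0} = \lb a, \dstar b\rb_{X_1}, \qquad \lb \dstar b, a\rb_{X_1} = \lb b, \doper a\rb_{X_0}
\]
for all square-summable measures $a$ on $X_1$ and $b$ on $X_0$; that is, in operator notation $\dstar = \doper^{*}$ and $\doper = \dstar^{*}$.

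Given this, I would first check symmetry of $\Delta_0$ on square-summable measures on $X_0$ by applying the adjoint relation twice. For any two such measures $\nu_1,\nu_2$,
\[
\lb \Delta_0 \nu_1, \nu_2\rb_{X_0} = \lb \doper\dstar\nu_1,\nu_2\rb_{X_0} = \lb \dstar\nu_1,\dstar\nu_2\rb_{X_1} = \lb \nu_1,\doper\dstar\nu_2\rb_{X_0} = \lb \nu_1,\Delta_0\nu_2\rb_{X_0},
\]
the two middle equalities being the two halves of the adjointness statement, used with $a=\dstar\nu_1$, $b=\nu_2$ and then with $b=\nu_1$, $a=\dstar\nu_2$. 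Equivalently, and more compactly, $\Delta_0 = \doper\dstar = \doper\,\doper^{*}$, and for any bounded Hilbert-space operator $T$ the composition $T T^{*}$ is automatically self-adjoint, since $(T T^{*})^{*} = T^{**} T^{*} = T T^{*}$; here $T = \doper$. To pass from ``symmetric'' to ``self-adjoint'' in the honest Hilbert-space sense (domain equal to the whole space), I would invoke the boundedness of $\Delta_0$ proved later, using the standard fact that a bounded symmetric operator on a Hilbert space is self-adjoint.

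The only genuine obstacle here is the functional-analytic bookkeeping, not any algebra: one must know that $\doper$ (or at least $\Delta_0 = \doper\dstar$) really maps the space of square-summable measures into itself, so that the manipulations above are legitimate on the full Hilbert space and not merely on the dense subspace of finitely supported measures. This is exactly what the boundedness estimate supplies, and it is the point at which the proof of the corollary genuinely relies on something beyond the preceding theorem. I would also record the probabilistic face of the statement: summing the identity (\ref{eqn:intro:reversible}) over $H \in X_1$ and using $\Delta_0 = \doper\dstar$ yields the detailed-balance relation $\mu_0(G_1)\,\probP(G_1 \xrightarrow{\Delta_0} G_2) = \mu_0(G_2)\,\probP(G_2 \xrightarrow{\Delta_0} G_1)$, i.e. the Markov chain generated by $\Delta_0$ is reversible with stationary measure $\mu_0$.
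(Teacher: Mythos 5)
Your argument is correct and is essentially the paper's own: the paper likewise deduces self-adjointness of $\Delta_0 = \doper\dstar$ directly from the adjointness of $\doper$ and $\dstar$ (i.e.\ from $(\doper\dstar)^{*} = \dstar{}^{*}\doper^{*} = \doper\dstar$), with boundedness handled separately in the appendix. The extra bookkeeping you add about domains and the detailed-balance reformulation matches what the paper does implicitly or elsewhere, so there is nothing to correct.
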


\begin{mysection}
 which we recall are defined as:

\begin{itemize}
\item $\mu(G) \propto \frac{1}{|Aut(G)|}$
\item $\mu(G) \propto \frac{1}{|Aut(H_{tors})||H_{tors}|}$
\end{itemize}
\end{mysection}

The exact expression we need to show is:

\begin{equation}
\label{eqn:reversibility}
\mu(G) \probP(G \xrightarrow{\dstar} H) = \mu(H) \probP(H \xrightarrow{\doper} G) 
\end{equation}
for all $G, H$

\begin{remark}
A concrete way to interpret this equality is as follows. We can get a measure on pairs $(G,H) \in X_0 \times X_1$ in two ways:
\begin{enumerate}
\item $\doper(\mu_1)$, i.e. first sampling from $\mu_1$ and then applying the random operator $\doper$.
\item $\dstar(\mu_0)$, i.e. first sampling from $\mu_0$ and then applying the random operator $\dstar$.
\end{enumerate}
The equality (\ref{eqn:reversibility}) implies that the two models give the same distribution on $X_0 \times X_1$
\end{remark}

We will in fact prove a slightly stronger statement:
\begin{theorem}
\label{thm:exactseq}
$\doper(\mu_1)$ and $\dstar (\mu_0)$ induce the same measure on exact sequences:
\[
0 \rightarrow \Z_p \rightarrow H \rightarrow G \rightarrow 0
\]

Furthemore, this measure is very explicit: the measure of the exact sequence
\[
0 \rightarrow \Z_p \rightarrow H \rightarrow G \rightarrow 0
\]
is proportional to 
\[
\frac{1}{|G||Aut(\Z_p \rightarrow H)|}
\]
\end{theorem}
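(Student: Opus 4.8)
The plan is to compute both measures on the set of (isomorphism classes of) short exact sequences $0 \to \Z_p \to H \to G \to 0$ directly, and observe that they agree up to a global constant. First I would fix the combinatorial object we are measuring: a ``pointed extension'' is a surjection $H \twoheadrightarrow G$ with kernel identified with $\Z_p$, taken up to isomorphism of extensions; equivalently, by \autoref{lem:charactersandextensions}, the data of $G$ together with an element of $\Ext(G,\Z_p) \cong \Hom(G,\Q_p/\Z_p)$, modulo $\mathrm{Aut}(G)$.

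For the $\dstar$ side: sampling $G \sim \mu_0$ and then a uniform element of $\Ext(G,\Z_p)$, the probability of landing on the extension class $[\varepsilon]$ is $\mu_0(G) \cdot \frac{1}{|\Ext(G,\Z_p)|} = \frac{c_0}{|\mathrm{Aut}(G)|\,|\Ext(G,\Z_p)|}$, and then I must account for the $\mathrm{Aut}(G)$-orbit structure to pass to isomorphism classes of sequences. Using $|\Ext(G,\Z_p)| = |\Hom(G,\Q_p/\Z_p)| = |G|$ (the group $G$ is finite, so $\Hom(G,\Q_p/\Z_p)\cong \Hom(G,(\Q_p/\Z_p))\cong G$), this already produces the factor $\frac{1}{|G|}$. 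For the $\doper$ side: sampling $H \sim \mu_1$ and quotienting by a Haar-random element $v \in H$, I need the probability that $H/\langle v\rangle \cong G$ with the resulting sequence being the prescribed one; the Haar measure on $H$ gives each primitive line the appropriate weight, and the stabilizer computation turns $\mu_1(H) = \frac{c_1}{|H_{tors}|\,|\mathrm{Aut}(H_{tors})|}$ into the claimed $\frac{1}{|G|\,|\mathrm{Aut}(\Z_p \to H)|}$ after unwinding $\mathrm{Aut}(\Z_p \to H)$ versus $\mathrm{Aut}(H_{tors})$ and $H_{tors}$.

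The key bookkeeping identity I would isolate as a lemma is an orbit–stabilizer statement relating the three groups in play: for a fixed extension $0 \to \Z_p \to H \to G \to 0$ with classifying map $\phi\colon G \to \Q_p/\Z_p$, one has $|\mathrm{Aut}(G)| = |\mathrm{Orbit\ of\ }\phi| \cdot |\mathrm{Stab}(\phi)|$, and $\mathrm{Stab}(\phi)$ in $\mathrm{Aut}(G)$ is precisely the image of $\mathrm{Aut}(\Z_p \to H)$ (automorphisms of $H$ fixing the copy of $\Z_p$) under the map $\mathrm{Aut}(\Z_p\to H)\to\mathrm{Aut}(G)$; the kernel of that map is $\Hom(G,\Z_p) = 0$ since $G$ is torsion, so in fact $\mathrm{Aut}(\Z_p\to H)\cong \mathrm{Stab}(\phi)$. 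Combined with $H \cong \ker(\phi)\times\Z_p$ from \autoref{lem:charactersandextensions}, so $H_{tors}\cong\ker(\phi)$ and $|H_{tors}| = |G|/|\mathrm{im}(\phi)|$, both computations reduce to the same closed form proportional to $\frac{1}{|G|\,|\mathrm{Aut}(\Z_p\to H)|}$.

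The main obstacle I anticipate is the careful handling of the Haar measure on $H$ in the $\doper$ computation: a random element $v\in H$ need not generate a copy of $\Z_p$, and the probability that $H/\langle v\rangle$ is torsion-free-part-trivial of the right isomorphism type requires summing the Haar measure over the set of $v$ whose quotient gives the prescribed sequence, which is a coset-counting problem in $H \cong H_{tors}\times\Z_p$. I would organize this by first conditioning on the image of $v$ in $H/H_{tors}\cong\Z_p$ (which must be a generator, an event of probability $1-1/p$, absorbed into the normalization), then identifying the fiber. Once this is done, matching the two expressions and reading off that the common value is proportional to $\frac{1}{|G|\,|\mathrm{Aut}(\Z_p\to H)|}$ — hence also proving \autoref{cor:weightedgraph:intro} and the reversibility equation~(\ref{eqn:reversibility}) — is a formal consequence.
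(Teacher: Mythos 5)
Your route is essentially the paper's: reduce to comparing the probability of an $Aut(G)$-orbit in $Ext(G,\Z_p)$ with the Haar measure of the corresponding $Aut(H)$-orbit in $H$, use $|Ext(G,\Z_p)|=|Hom(G,\Q_p/\Z_p)|=|G|$ together with orbit--stabilizer on the finite side, identify both stabilizers with $Aut(\Z_p\to H)$ via the vanishing of $Hom(G,\Z_p)$, and then do the delicate Haar computation on the profinite side. The paper packages that last computation as an orbit--stabilizer formula for the action of the profinite group $Aut(H)$ on $H$, computing a ``Jacobian'' $dH/d(Aut(H))$ through the congruence neighborhoods $U_l=\{u\equiv \mathrm{id}\bmod p^l\}$ (\autoref{lem:differential}); your plan of decomposing $H\cong H_{tors}\times\Z_p$ and integrating over the orbit directly is the same computation organized differently, and both correctly locate where the normalization $c_1\cdot(1-\tfrac1p)=c_0$ comes from.

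The one step that would fail as written is your claim that, on the $\doper$ side, the image of $v$ in $H/H_{tors}\cong\Z_p$ ``must be a generator, an event of probability $1-1/p$.'' That holds only for those exact sequences in which $h(1)$ is primitive modulo torsion. Take $H=\Z_p$ and $G=\Z/p^k\Z$ with $k\ge 1$: the set of $v$ with $H/\langle v\rangle\cong G$ is $p^k\Z_p^{\times}$, none of whose elements generate $\Z_p$, and its Haar measure is $p^{-k}(1-\tfrac1p)$, not $1-\tfrac1p$. Conditioning on the image being a generator therefore assigns measure zero (or the wrong measure) to every sequence with $k\ge 1$, and these sequences genuinely occur in the chain. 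The correct conditioning is on the valuation of the image of $v$ matching that of $h(1)$; the factor $1-\tfrac1p$ survives as the measure of $\Z_p^{\times}$ inside $\Z_p$, while the extra $p^{-k}$ combines with the fiber count over $H_{tors}$ (itself an orbit--stabilizer count for $Aut(H_{tors})\ltimes p^kH_{tors}$ acting on the torsion component, using $|p^kH_{tors}|\,|H_{tors}[p^k]|=|H_{tors}|$) to produce exactly the $1/|H/h|=1/|G|$ in the claimed weight. With that correction your computation lands on the paper's \autoref{hlemma}, namely $\probP(Aut(H)\circ h)=\frac{|H_{tors}|\,|Aut(H_{tors})|}{|Aut(H,h)|\,|H/h|}\bigl(1-\frac1p\bigr)$, and the rest of your argument goes through as described.
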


The main tool in the proof of \autoref{thm:exactseq} is the orbit-stabilizer theorem.

\begin{mysection}
\begin{corollary}
OPTIONAL: $\Delta_0$ can be modeled as a random walk on the graph: 
\[
...
\]
Maybe put this at the beginning??
\end{corollary}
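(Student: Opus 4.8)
The plan is to compute, for a fixed short exact sequence $\mathcal{E}\colon 0\to\Z_p\xrightarrow{\iota}H\xrightarrow{\pi}G\to 0$, the mass that each of $\dstar(\mu_0)$ and $\doper(\mu_1)$ assigns to the isomorphism class of $\mathcal{E}$ (two exact sequences being isomorphic when related by isomorphisms of $H$ and $G$ that are the identity on $\Z_p$), and to show that both equal $c_0/(|G|\,|Aut(\Z_p\to H)|)$. This establishes simultaneously that the two measures coincide and that the common measure has the asserted form (with the proportionality constant being the explicit $c_0$). The two inputs I will need from \autoref{lem:charactersandextensions} are: the finite set $Ext(G,\Z_p)\cong Hom(G,\Q_p/\Z_p)$ has exactly $|G|$ elements (Pontryagin duality for finite abelian $p$-groups), and the middle term of the extension attached to $\phi$ is $\ker(\phi)\oplus\Z_p$; in particular every $H\in X_1$ splits as $H\cong\Z_p\oplus H_{tors}$.

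\emph{The $\dstar$ side.} Sampling from $\dstar(\mu_0)$ means choosing $G$ with probability $c_0/|Aut(G)|$ and then a uniformly random extension class $e\in Ext(G,\Z_p)$. Two extension classes produce isomorphic exact sequences precisely when they lie in one orbit of the \emph{finite} group $Aut(G)$ acting on $Ext(G,\Z_p)$, and — this is the one structural point on this side — the stabilizer $Stab_{Aut(G)}(e)$ is canonically isomorphic to $Aut(\Z_p\to H_e)$: an $\alpha\in Aut(G)$ lifts to an automorphism of $H_e$ fixing $\iota$ exactly when $\alpha^{\ast}e=e$, and the lift is then unique because $Hom(G,\Z_p)=0$. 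Hence, by the orbit–stabilizer theorem for $Aut(G)$,
\[
\dstar(\mu_0)(\mathcal{E})=\frac{c_0}{|Aut(G)|}\cdot\frac{|Aut(G)|}{|Aut(\Z_p\to H)|}\cdot\frac{1}{|G|}=\frac{c_0}{|G|\,|Aut(\Z_p\to H)|}.
\]

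\emph{The $\doper$ side.} Sampling from $\doper(\mu_1)$ means choosing $H$ with probability $c_1/(|H_{tors}|\,|Aut(H_{tors})|)$ and then a Haar-random $x\in H$, producing $0\to\langle x\rangle\to H\to H/\langle x\rangle\to 0$ (the locus where $x$ has finite order is Haar-null). Writing $x_0=\iota(1)$, $L_0=\langle x_0\rangle$, $T=H_{tors}$, the outcome lands in the class of $\mathcal{E}$ iff $x\in Aut(H)\cdot x_0$, which is exactly the disjoint union, over the submodules $L$ in the $Aut(H)$-orbit $\mathcal{O}$ of $L_0$, of the generators of $L$ — indeed $Stab_{Aut(H)}(L_0)$ surjects onto $Aut(L_0)\cong\Z_p^{\times}$ via scalar multiplication, so every generator of every $L\in\mathcal{O}$ is reached. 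Each $L\in\mathcal{O}$ is open of index $|G|$ in $H$, with its generators a subset of $L$ of relative Haar measure $1-1/p$, so this locus has Haar measure $|\mathcal{O}|\,(1-1/p)/|G|$, giving
\[
\doper(\mu_1)(\mathcal{E})=\frac{c_1}{|T|\,|Aut(T)|}\cdot\frac{|\mathcal{O}|\,(1-1/p)}{|G|}.
\]
It remains to evaluate $|\mathcal{O}|$, and this is the one delicate point, because $Aut(H)$ is an infinite profinite group. The key observation is that the scalar automorphisms $m_u$ ($u\in\Z_p^{\times}$) are central and fix \emph{every} submodule of $H$, so the action of $Aut(H)$ on submodules factors through the \emph{finite} quotient $\bar A:=Aut(H)/\{m_u\}$, which, via the block-triangular description of $Aut(\Z_p\oplus T)$ coming from $H\cong\Z_p\oplus H_{tors}$, has order $|T|\,|Aut(T)|$; moreover the image of $Stab_{Aut(H)}(L_0)$ in $\bar A$ is isomorphic to $Aut(\Z_p\to H)$. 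Orbit–stabilizer in $\bar A$ then gives $|\mathcal{O}|=|T|\,|Aut(T)|/|Aut(\Z_p\to H)|$, and substituting this together with $c_1(1-1/p)=c_0$ (immediate from the stated values of $c_0,c_1$) yields $\doper(\mu_1)(\mathcal{E})=c_0/(|G|\,|Aut(\Z_p\to H)|)$, matching the $\dstar$ computation.

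The main obstacle is precisely the profinite bookkeeping just flagged: one must not apply orbit–stabilizer directly to the infinite group $Aut(H)$, but first kill the scalars — which move no submodule — and work in the finite group $\bar A$ of order $|H_{tors}|\,|Aut(H_{tors})|$. After that, the proof is pure orbit–stabilizer together with the splitting $H\cong\Z_p\oplus H_{tors}$ and the elementary count of $Aut(\Z_p\oplus T)$, while the $\dstar$ side involves only finite groups and causes no difficulty. As a byproduct, since $\dstar(\mu_0)$ is a probability measure, the computation shows that $\sum_{\mathcal{E}}c_0/(|G|\,|Aut(\Z_p\to H)|)=1$.
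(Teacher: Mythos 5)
Your computation is correct, and on the $\dstar(\mu_0)$ side it coincides with the paper's: both use orbit--stabilizer for the finite group $Aut(G)$ acting on $Ext(G,\Z_p)$, together with $|Ext(G,\Z_p)|=|G|$ and the identification of the stabilizer of an extension class with $Aut(\Z_p\to H)$. On the $\doper(\mu_1)$ side, however, you take a genuinely different route. The paper keeps the infinite group $Aut(H)$ and invokes an orbit--stabilizer formula for profinite group actions: it computes a density ("Jacobian") $dH/d(Aut(H))$ as a limit over shrinking congruence neighbourhoods $U_l$ of the identity, using $\nu(U_l\circ h)=1/(|H/h|\,|\Z/p^l\Z|)$ and $|Aut(H)/U_l|=|H_{tors}|\,|Aut(H_{tors})|\,|GL_1(\Z/p^l\Z)|$, and the factor $1-1/p$ emerges from the ratio $|GL_1(\Z/p^l\Z)|/p^l$. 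You instead observe that the central scalar subgroup $\Z_p^{\times}\subset Aut(H)$ fixes every submodule, so the action on submodules factors through the finite quotient of order $|H_{tors}|\,|Aut(H_{tors})|$; a finite orbit--stabilizer count then gives the number of submodules in the orbit, and the factor $1-1/p$ appears directly as the Haar measure of the generators $L\setminus pL$ inside each index-$|G|$ submodule $L$. Your version is more elementary and avoids the limiting argument entirely, which is a real simplification; the trade-off is that it exploits the centrality of the scalars, which is special to rank one. The paper's $U_l$-limit machinery is what carries over to the generalization $\dk{}^T=\dkstar$ treated later, where the relevant subgroup $GL_k(\Z_p)$ is no longer central and no longer fixes all submodules, so your argument would need genuine modification there. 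One could tighten your write-up by stating explicitly that the stabilizer of the submodule $L_0$ in the finite quotient is the isomorphic image of $Aut(\Z_p\to H)$ because $Stab(L_0)=Stab(x_0)\cdot\{m_u\}$ with trivial intersection, but this is exactly the one structural point you already flag, and it is correct.
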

\end{mysection}

\subsubsection{Alternate Perspective} 

We give here an alternative perspective on the equality (\ref{eqn:reversibility}):
\[
\mu(G) \probP(G \xrightarrow{\dstar} H) = \mu(H) \probP(H \xrightarrow{\doper} G)
\]

\begin{remark}
The pair of operators 
\begin{eq}
\label{eqn:markov:intro}
(\doper, \dstar)
\end{eq} together form a single Markov chain, whose state space is the union of state spaces, $X_0 \cup X_1$. (\ref{eqn:reversibility}) is then equivalent to the statement that this Markov chain is \textit{reversible}.
\end{remark}

A prototypical example of a \textit{reversible} Markov chain is a random walk on a graph. 

\begin{remark}
A reversible Markov chain with a countable state space is \textit{reversible} if and only if it can be represented as a random walk on a \textit{graph with weighted edges}.
\end{remark}

\begin{lemma}
\label{cor:weightedgraph}
We can represent the Markov chain $(\doper,\dstar)$ on $X_0 \cup X_1$, as a random walk on a weighted graph whose edges are exact sequences:
\[
0 \rightarrow \Z_p \rightarrow H \rightarrow G \rightarrow 0
\] 
with weight:
\[
\frac{1}{|Aut(\Z_p \rightarrow H)||G|}.
\]
\end{lemma}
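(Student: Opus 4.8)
The plan is to derive this lemma as an immediate consequence of Theorem \ref{thm:exactseq}, together with the standard correspondence between reversible Markov chains on countable state spaces and random walks on weighted graphs. First I would set up the bipartite graph: vertices are the elements of $X_0 \sqcup X_1$, and for each exact sequence $0 \to \Z_p \to H \to G \to 0$ (taken up to isomorphism of extensions) we place an edge between the vertex $G \in X_0$ and the vertex $H \in X_1$, with weight $w(G,H) \defeq \frac{1}{|G|\,|Aut(\Z_p \to H)|}$. Note the graph is genuinely bipartite — all edges run between $X_0$ and $X_1$ — which matches the structure of the combined chain $(\doper,\dstar)$, whose one "step" always moves from one side to the other.

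Next I would recall the general fact: given a weighted graph with edge weights $w(x,y)\ge 0$ and finite vertex weights $W(x) \defeq \sum_y w(x,y)$, the random walk that from $x$ jumps to $y$ with probability $w(x,y)/W(x)$ is reversible with stationary measure proportional to $W(x)$. So it suffices to check two things: (i) the transition probabilities of this random walk coincide with those of $(\doper,\dstar)$, and (ii) the resulting vertex weights $W(\cdot)$ are proportional to $\mu_0$ on $X_0$ and to $\mu_1$ on $X_1$. For (i), starting from $G \in X_0$, the walk picks an edge out of $G$ with probability proportional to $\frac{1}{|G|\,|Aut(\Z_p\to H)|}$; since $|G|$ is constant across all edges incident to $G$, this is proportional to $\frac{1}{|Aut(\Z_p\to H)|}$, which by Theorem \ref{thm:exactseq} (the explicit form of the $\dstar(\mu_0)$ measure on exact sequences, normalized at a fixed $G$) is exactly the law of $\dstar(G)$; symmetrically, from $H \in X_1$ the edge weights are proportional to $\frac{1}{|G|}$ over the sequences with middle term $H$, and Theorem \ref{thm:exactseq} identifies this with the law of $\doper(H)$. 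For (ii), summing the edge weights at a fixed $G$ and at a fixed $H$ and comparing with $\mu_0, \mu_1$ is precisely the content of the normalization already recorded in Theorem \ref{thm:exactseq}, namely that the common measure on exact sequences is proportional to $\frac{1}{|G|\,|Aut(\Z_p\to H)|}$; marginalizing this joint measure onto $X_0$ returns $\mu_0$ and onto $X_1$ returns $\mu_1$.

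The one genuine subtlety — and the step I would be most careful about — is bookkeeping of isomorphism classes: an "edge" should be an isomorphism class of short exact sequences $0\to\Z_p\to H\to G\to 0$ rather than of the pair $(G,H)$, since a given $(G,H)$ may be connected by several inequivalent extensions, and the automorphism factor $|Aut(\Z_p\to H)|$ is exactly what makes the orbit-counting come out right. Concretely I would invoke orbit–stabilizer: the set of surjections (or of extension data) realizing a given $(G,H)$ is a union of $Aut$-orbits, and weighting each orbit by $1/|\text{stabilizer}|$ reproduces the measure of Theorem \ref{thm:exactseq}; this is the same orbit-stabilizer input cited there, so no new computation is needed. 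Once this correspondence is stated precisely, the lemma follows formally, so I would keep the write-up short and lean entirely on Theorem \ref{thm:exactseq}.
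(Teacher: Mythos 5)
Your proposal is correct and follows essentially the same route as the paper: the paper likewise notes that this lemma is a consequence of \autoref{thm:exactseq}, and its direct verification (via \autoref{thm:gmeasure} and \autoref{thm:hmeasure}) consists of exactly the two checks you perform --- that conditioning the edge weights at a fixed $G$ reproduces the law of $\dstar(G)$ and at a fixed $H$ the law of $\doper(H)$, with the same orbit--stabilizer bookkeeping over isomorphism classes of extensions. Your additional remark that the vertex weights marginalize to $\mu_0$ and $\mu_1$ is consistent with, and implicit in, the paper's treatment.
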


\autoref{cor:weightedgraph} is in fact a consequence of \autoref{thm:exactseq}.

\subsubsection{Proof that explicit graph gives the Markov Chain $(\dstar,\doper)$}

In this survey, we describe a direct proof of \autoref{cor:weightedgraph}, bypassing \autoref{thm:exactseq}. This will be a little more stream-lined than the proof of the latter, while containing all the main ingredients.

In order to show that the Markov chain can represented as a random walk on the graph in \autoref{cor:weightedgraph}, we need to show that the transition probabilities coincide.

To show that the transition probabilities for $\dstar$ coincide with the transition probabilities of our graph, we have to show the following theorem:

\begin{theorem}
\label{thm:gmeasure}
Suppose we are given $G$ and we generate a random exact sequence by picking an element of $Ext(G,\Z_p)$ \textit{uniformly} at random. 
Then the probability of an \textit{isomorphism class} of exact sequences
\[
0 \rightarrow \Z_p \rightarrow H \rightarrow G \rightarrow 0
\]
 is inversely proportional to $|Aut(\Z_p \rightarrow H)|$:
\[
\probP\Big( 0 \rightarrow \Z_p \rightarrow H \rightarrow G \rightarrow 0 \Big) \propto \frac{1}{|Aut(\Z_p \rightarrow H)|}
\]
\end{theorem}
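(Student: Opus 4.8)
The plan is to parametrize the relevant objects and apply the orbit–stabilizer theorem. Fix $G$ and recall from \autoref{lem:charactersandextensions} that $\Ext(G,\Z_p) \cong \Hom(G,\Q_p/\Z_p)$, a finite group on which we put the uniform measure; so it suffices to understand, for a fixed isomorphism class of exact sequences $0 \to \Z_p \to H \to G \to 0$, how many elements $\phi \in \Hom(G,\Q_p/\Z_p)$ give rise to an extension abstractly isomorphic (as an extension of $G$ by $\Z_p$) to that one. First I would fix a single "reference" extension $E\colon 0 \to \Z_p \xrightarrow{\iota} H \xrightarrow{\pi} G \to 0$ in the given isomorphism class, and let $\characterset_E \subseteq \Hom(G,\Q_p/\Z_p)$ be the set of characters whose associated pushout/fiber-product extension is isomorphic to $E$. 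The probability in the statement is then $|\characterset_E| / |\Hom(G,\Q_p/\Z_p)|$, and the goal becomes showing $|\characterset_E| \propto 1/|\Aut(\Z_p \to H)|$ with a constant independent of $E$.

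The key step is to exhibit a group action with $\characterset_E$ as an orbit. The natural candidate is the group $\Aut(G)$ (or perhaps $\Aut(G) \times \Aut(\Z_p) = \Aut(G) \times \Z_p^\times$, accounting for automorphisms of the kernel) acting on $\Hom(G,\Q_p/\Z_p)$ by precomposition (and postcomposition by $\Z_p^\times$ on the $\Q_p/\Z_p$ side). Two characters lie in the same orbit precisely when the corresponding extensions are isomorphic as extensions-up-to-automorphism-of-$G$-and-$\Z_p$, which is exactly the notion of "isomorphism class of exact sequence" in the statement. By orbit–stabilizer, $|\characterset_E| = |\Aut(G) \times \Z_p^\times| / |\mathrm{Stab}(\phi)|$ for any $\phi \in \characterset_E$, so everything reduces to identifying the stabilizer. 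Here I would argue that the stabilizer of $\phi$ is canonically isomorphic to the group of automorphisms of $H$ that descend compatibly, i.e. that fit into a commutative ladder over $\mathrm{id}_G$ and some unit on $\Z_p$ — and by \autoref{lem:charactersandextensions} (which gives $H \cong \ker\phi \times \Z_p$) one can make this identification explicit. The subtle point is bookkeeping the difference between $\Aut(\Z_p \to H)$ (automorphisms of $H$ fixing the map $\Z_p \to H$ pointwise) and the slightly larger group allowing a unit scalar on $\Z_p$; the factor $\Z_p^\times$ should cancel this discrepancy, leaving the clean proportionality to $1/|\Aut(\Z_p \to H)|$.

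The main obstacle I anticipate is precisely this last identification — pinning down the stabilizer subgroup and checking that the "extra" symmetries (the $\Z_p^\times$ acting on the kernel, and the choice of splitting $H \cong \ker\phi \times \Z_p$ which is not canonical) are accounted for exactly once, so that no spurious constant depending on $H$ creeps in. A secondary technical point is that $\Hom(G,\Q_p/\Z_p)$ and $\Aut(G)$ are finite (true since $G$ is finite), so orbit–stabilizer applies verbatim and the proportionality constant is just $|\Aut(G)|\,|\Z_p^\times|$ divided by a normalization, independent of $H$; I would state this cleanly at the end. Once Theorem \ref{thm:gmeasure} is in hand, the symmetric statement for $\doper$ is comparatively easy (the fiber of $\doper$ over $G$ is governed by orbits of Haar-random elements of $H$ under $\Aut(H)$, again by orbit–stabilizer), and \autoref{cor:weightedgraph} and \autoref{thm:exactseq} follow by matching the two transition kernels against the common edge weight $\frac{1}{|Aut(\Z_p \rightarrow H)||G|}$.
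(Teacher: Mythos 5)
Your proposal matches the paper's proof: the probability of an isomorphism class is the size of its $Aut(G)$-orbit in $Ext(G,\Z_p)\cong Hom(G,\Q_p/\Z_p)$ divided by $|Ext(G,\Z_p)|$, and orbit--stabilizer together with the identification of the stabilizer $Aut(G,\phi)$ with $Aut(\Z_p\rightarrow H)$ gives the claim. The only caveat is that your parenthetical extra factor $\Aut(\Z_p)=\Z_p^{\times}$ is unnecessary (and would be awkward, being infinite): the paper's notion of isomorphism of exact sequences fixes the map $\Z_p\rightarrow H$ pointwise, so the plain $Aut(G)$-action is the right one.
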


\begin{proof}
The probability is proportional to size of the orbit under the action of $Aut(G)$. The theorem then follows the orbit-stabilizer theorem, which tells us that the size of each orbit is:
\[
\frac{|Aut(G)|}{|Aut(\Z_p \rightarrow H)|}
\]
\end{proof}

\begin{theorem}
\label{thm:hmeasure}
Suppose we are given $H$ and we generate a random exact sequence
\[
0 \rightarrow \Z_p \xrightarrow{h} H \rightarrow H/h \rightarrow 0
\]
by picking $h \in H$ from the Haar measure on $H$.
Then the probability of each isomorphism class of exact sequences is inversely proportional to:
\[
|Aut(\Z_p \xrightarrow{h} H)||H/h|
\]
\end{theorem}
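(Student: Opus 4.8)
The plan is to reduce everything to an explicit orbit computation, using the structure theory of finitely generated $\Z_p$-modules. First, since $H$ has $\Q_p$-rank $1$, write $H \cong \Z_p \oplus T$ with $T := H_{tors}$ finite; the Haar probability measure on the compact group $H$ is then the product $\mu_H = \mu_{\Z_p} \otimes \mathrm{unif}_T$. Since $Hom(T,\Z_p) = 0$, the ring $End(H)$ is ``lower triangular'': an endomorphism is the data of $u \in \Z_p = End(\Z_p)$, $\gamma \in End(T)$ and $\beta \in Hom(\Z_p,T) \cong T$, and it is an automorphism iff $u \in \Z_p^{\times}$ and $\gamma \in Aut(T)$. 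Such an automorphism $\alpha_{u,\beta,\gamma}$ acts on $H$ by $\alpha_{u,\beta,\gamma}(a,t) = (ua,\ a\beta + \gamma t)$, where $a\beta \in T$ denotes the module action of $a$ on $\beta \in T$. Recording these facts is Step 1.

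Step 2 is to identify isomorphism classes of exact sequences with $Aut(H)$-orbits. A non-torsion $h = (a,t) \in H$ determines the sequence $0 \to \Z_p \xrightarrow{\,h\,} H \to H/\Z_p h \to 0$, the first map (sending $1 \mapsto h$) being injective precisely because $h$ is non-torsion, which holds for $\mu_H$-almost every $h$; and two such sequences are isomorphic exactly when the corresponding elements lie in a common $Aut(H)$-orbit. From the action formula, the orbit of $(a,t)$ is $\{a' : v_p(a') = k\} \times \big(p^kT + Aut(T)\cdot t\big)$, where $k := v_p(a)$; it depends only on $k$ together with the $Aut(T)$-orbit of the image $\bar t$ of $t$ in $T/p^kT$. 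Hence the probability of the isomorphism class attached to $h$ is $\mu_H$ of this orbit, namely
\[
\big(p^{-k}-p^{-k-1}\big)\cdot\frac{|p^kT|\cdot|Aut(T)\bar t|}{|T|},
\]
using that $p^kT + Aut(T)t$ is a union of exactly $|Aut(T)\bar t|$ cosets of $p^kT$.

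Step 3 evaluates the right-hand side of the claimed formula for $h=(a,t)$, $k = v_p(a)$. An easy computation with the explicit description of $\Z_p h$ yields a short exact sequence $0 \to T \to H/\Z_p h \to \Z_p/p^k\Z_p \to 0$, so $|H/h| = p^k|T|$. For $Aut(\Z_p \xrightarrow{h} H) = \mathrm{Stab}_{Aut(H)}(h)$: the equation $\alpha_{u,\beta,\gamma}(h) = h$ forces $u = 1$ (as $a \neq 0$ and $\Z_p$ is a domain), then forces $\gamma t \equiv t \pmod{p^kT}$, and for each admissible $\gamma$ the equation $a\beta = t - \gamma t$ is solvable (because $t-\gamma t \in p^kT = aT$) with exactly $|T[p^k]|$ solutions $\beta$. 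Applying orbit--stabilizer inside the \emph{finite} group $Aut(T)$ acting on $T/p^kT$,
\[
|Aut(\Z_p \xrightarrow{h} H)| = |T[p^k]|\cdot|\mathrm{Stab}_{Aut(T)}(\bar t)| = |T[p^k]|\cdot\frac{|Aut(T)|}{|Aut(T)\bar t|}.
\]

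Finally, combining the two displays and using $|p^kT| = |T|/|T[p^k]|$, the product
\[
\mu_H(\text{orbit of }h)\cdot|Aut(\Z_p\xrightarrow{h}H)|\cdot|H/h| = (1-p^{-1})\,|Aut(T)|\,|T|
\]
is independent of $h$, which is exactly the assertion (with proportionality constant $(1-p^{-1})\,|Aut(H_{tors})|\,|H_{tors}|$). I expect the delicate step to be the stabilizer count in Step 3 --- correctly attributing the factor $|T[p^k]|$ to the $Hom(\Z_p,T)$-part of $Aut(H)$ and the factor $|\mathrm{Stab}_{Aut(T)}(\bar t)|$ to the $Aut(T)$-part; the remainder is bookkeeping with the structure theorem together with two finite orbit--stabilizer arguments.
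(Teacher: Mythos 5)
Your proof is correct, and it reaches exactly the constant the paper obtains (the orbit measure equals $(1-p^{-1})\,|H_{tors}|\,|Aut(H_{tors})|$ divided by $|Aut(\Z_p\to H)|\,|H/h|$), but it gets there by a genuinely different route. The paper treats the orbit measure via an orbit--stabilizer formula for the profinite group $Aut(H)$ acting on $H$: it introduces shrinking neighbourhoods $U_l$ of the identity in $Aut(H)$, computes $\nu(U_l\circ h)=\bigl(|H/h|\,|\Z/p^l\Z|\bigr)^{-1}$ and $|Aut(H)/U_l|=|H_{tors}|\,|Aut(H_{tors})|\,|GL_1(\Z/p^l\Z)|$ separately, and extracts the measure of the orbit as a limit of ratios (a ``Jacobian'' computation), never explicitly describing the orbit as a subset of $H$ nor evaluating $|Aut(H,h)|$. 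You instead identify the orbit of $h=(a,t)$ globally and exactly as the product set $\{v_p=k\}\times\bigl(p^kT+Aut(T)\cdot t\bigr)$, compute its Haar measure directly, and separately compute the stabilizer order $|T[p^k]|\cdot|Aut(T)|/|Aut(T)\bar t|$; the identity $|p^kT|=|T|/|T[p^k]|$ then makes the $h$-dependence cancel. What your approach buys is a completely elementary, limit-free argument with an explicit description of both the orbit and the stabilizer (the latter is genuinely extra information the paper's proof does not produce); what the paper's local/limit formulation buys is that it transfers with little change to the rank-$k$ operators $\dk$, $\dkstar$ later in the paper, where the explicit description of the $Aut(H)$-orbit of a map $\Z_p^k\to H$ would be considerably messier. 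One small point worth making explicit in your write-up: the isomorphism classes of exact sequences are in bijection with $Aut(H)$-orbits of non-torsion elements (torsion $h$ being a Haar-null set), which you note but should state as the reason the probability of an isomorphism class is the measure of the corresponding orbit.
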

In order to show the theorem, we need to show that the measure of the orbit $Aut(H) \circ h$ in $H$ is proportional to 
\[
\frac{1}{|Aut(\Z_p \xrightarrow{h} H)||H/h|}
\] up to an overall constant that may depend on $H$, but not on $h$.

\begin{example}
One can first of all verify that this holds in the simplest case, for example when $H \cong \Z_p$ and $h$ is any element. Then $Aut(H) \circ h$ is the set of all elements of $\Z_p$ that have the same $p-adic$ norm as $h$. The proportion of these elements in $H$ is \[\frac{p-1}{p}\frac{1}{|H/h|}
\]
\end{example}

\begin{proof}
We outline the general proof. We use an orbit-stabilizer formula for actions of infinite groups. 
\begin{itemize}
\item The map $Aut(H) \circ h \rightarrow H$ is a local isomorphism from $Aut(H)$ to $H$.
\item To get the measure of the orbit, we calculate the change-of-measure factor (the "Jacobian") from $Aut(H)$ to $H$. 
\item Then we integrate over $Aut(H)$ and divide by the size of the stabilizer of $h$.
\end{itemize}
\end{proof}

\autoref{thm:gmeasure} and \autoref{thm:hmeasure} imply that the explicit graph in \autoref{cor:weightedgraph} represents the Markov chain $(\doper,\dstar)$.

\begin{corollary}
The Markov chain $(\doper,\dstar)$ is reversible.
\end{corollary}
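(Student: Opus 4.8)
The plan is to deduce reversibility formally from the two transition-probability computations \autoref{thm:gmeasure} and \autoref{thm:hmeasure}, together with the elementary observation that a random walk on a weighted graph is always reversible. Recall that for a graph with vertex set $V$, edge set $E$, and positive weights $w : E \to \Real_{>0}$, the associated walk moves from $x$ to $y$ with probability $P(x \to y) = \big(\sum_{e : x,y \in e} w(e)\big)\big/\big(\sum_{e \ni x} w(e)\big)$; putting $\pi(x) \defeq Z^{-1}\sum_{e \ni x} w(e)$ for a normalizing constant $Z$, the detailed-balance identity $\pi(x)P(x\to y) = Z^{-1}\sum_{e : x,y\in e} w(e) = \pi(y)P(y\to x)$ holds by inspection, so the walk is reversible with stationary measure $\pi$.

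First I would recall the bipartite weighted graph of \autoref{cor:weightedgraph}: its two sides are $X_0$ and $X_1$, an edge joining $G$ and $H$ is an isomorphism class of exact sequence $0 \to \Z_p \to H \to G \to 0$, and its weight is $\frac{1}{|Aut(\Z_p \to H)|\,|G|}$. Then I would match transition probabilities on both sides. Out of $G \in X_0$ the chain applies $\dstar$, i.e. picks a uniform class in $Ext(G,\Z_p)$; by \autoref{thm:gmeasure} this lands on the isomorphism class of $0 \to \Z_p \to H \to G \to 0$ with probability proportional to $\frac{1}{|Aut(\Z_p \to H)|}$, which is exactly the normalized weight of that edge at $G$, since the factor $|G|^{-1}$ is constant over edges incident to $G$ and cancels. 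Out of $H \in X_1$ the chain applies $\doper$; by \autoref{thm:hmeasure} it lands on each isomorphism class with probability proportional to $\frac{1}{|Aut(\Z_p \xrightarrow{h} H)|\,|H/h|}$, which is again the normalized weight at $H$. Hence $(\doper,\dstar)$ on $X_0 \cup X_1$ is literally the random walk on the graph of \autoref{cor:weightedgraph}, and therefore reversible by the previous paragraph.

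Finally, I would identify the stationary measure to confirm it is $(\mu_0,\mu_1)$. For $G \in X_0$ the total weight at $G$ equals $|G|^{-1}\sum 1/|Aut(\Z_p\to H)|$, summed over isomorphism classes of extensions; these classes are the $Aut(G)$-orbits on $Ext(G,\Z_p)$, the orbit through $0 \to \Z_p \to H \to G \to 0$ having size $|Aut(G)|/|Aut(\Z_p\to H)|$ by orbit–stabilizer (as in the proof of \autoref{thm:gmeasure}), so summing orbit sizes gives $\sum 1/|Aut(\Z_p\to H)| = |Ext(G,\Z_p)|/|Aut(G)|$, and $|Ext(G,\Z_p)| = |Hom(G,\Q_p/\Z_p)| = |G|$ by \autoref{lem:charactersandextensions}. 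Thus the total weight at $G$ is $1/|Aut(G)| \propto \mu_0(G)$, and an analogous count over the embeddings $\Z_p \hookrightarrow H$ shows the total weight at $H$ is proportional to $\frac{1}{|Aut(H_{tors})|\,|H_{tors}|} \propto \mu_1(H)$; restricting the two-step chain to $X_0$ then recovers the reversibility of $\Delta_0 = \doper\dstar$ with respect to $\mu_0$. The only substantive input is \autoref{thm:hmeasure}, whose proof requires the orbit–stabilizer/Jacobian computation for the infinite group $Aut(H)$; granting it and \autoref{thm:gmeasure}, the present corollary is purely formal and presents no further obstacle.
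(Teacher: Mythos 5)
Your proposal is correct and follows essentially the same route as the paper: it combines \autoref{thm:gmeasure} and \autoref{thm:hmeasure} to identify $(\doper,\dstar)$ with the random walk on the weighted graph of \autoref{cor:weightedgraph}, and then invokes the (standard, detailed-balance) reversibility of such walks. Your extra computation of the vertex weights, recovering $\mu_0$ and $\mu_1$ as the stationary measure, is exactly the "keeping track of the proportionality constants" step the paper defers to its conclusion, so nothing essentially new or missing here.
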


\subsubsection{Conclusion}

We have outlined the proofs of the fact that the Markov chain $(\dstar,\doper)$ is reversible with respect to \textit{some measure}. We can conclude that this measure is $(\mu_0,\mu_1)$ by keeping track of the proportionality constants in the proofs of \autoref{thm:gmeasure} and \autoref{thm:hmeasure}, outlined above.

The final result is the following
\begin{theorem}
The Markov chain is reversible with respect to the measure $(\mu_0,\mu_1)$, or equivalently,
\[
\mu_0(G) \probP( G \xrightarrow{\dstar} H ) = \mu_1(H) \probP(H \xrightarrow{\doper} G)
\]
\end{theorem}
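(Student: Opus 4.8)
The plan is to obtain this as an essentially immediate consequence of \autoref{thm:exactseq}. That theorem already asserts the strictly stronger fact that $\dstar(\mu_0)$ and $\doper(\mu_1)$ induce \emph{the same} probability measure, call it $\lambda$, on isomorphism classes of short exact sequences $0 \to \Z_p \to H \to G \to 0$ (and identifies it, up to a global constant, as $\lambda \propto \tfrac{1}{|G|\,|Aut(\Z_p \to H)|}$, matching the edge weights of \autoref{cor:weightedgraph}). The remaining work is to transfer this equality of measures on exact sequences into the detailed-balance identity on the pair $(G,H)$ by pushing $\lambda$ forward along the forgetful map $\bigl(0 \to \Z_p \to H \to G \to 0\bigr) \mapsto (G,H) \in X_0 \times X_1$.

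Carrying this out: let $\nu$ denote the pushforward of $\lambda$ to $X_0 \times X_1$. Computed through the $\dstar$ description --- sample $G \sim \mu_0$, then form a uniformly random element of $Ext(G,\Z_p)$ --- one has $\nu(G,H) = \mu_0(G)\,\probP(G \xrightarrow{\dstar} H)$, since summing over the isomorphism classes of exact sequences lying above a fixed pair $(G,H)$ merely recovers the probability that the random $\Z_p$-extension of $G$ is isomorphic to $H$ as a module. Computed through the $\doper$ description --- sample $H \sim \mu_1$, then quotient by a Haar-random element --- the same pushforward equals $\mu_1(H)\,\probP(H \xrightarrow{\doper} G)$. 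Equating these two expressions for $\nu(G,H)$ gives
\[
\mu_0(G)\,\probP( G \xrightarrow{\dstar} H ) = \mu_1(H)\,\probP(H \xrightarrow{\doper} G),
\]
which is exactly the claimed reversibility. That the equilibrium measure is $(\mu_0,\mu_1)$ with its precise normalizing constants $c_0,c_1$, rather than some proportional pair, is then automatic: $\dstar$ and $\doper$ are genuine Markov kernels, so the $X_0$-marginal of $\nu$ is $\mu_0$ and its $X_1$-marginal is $\mu_1$, while $\nu(G,H)/\mu_0(G)$ and $\nu(G,H)/\mu_1(H)$ are by construction the two transition probabilities.

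Alternatively --- and this is the route suggested by the survey above --- one can bypass \autoref{thm:exactseq} entirely and read off reversibility directly from \autoref{thm:gmeasure} and \autoref{thm:hmeasure}, which present both families of transition probabilities as proportional to the common weight $\tfrac{1}{|Aut(\Z_p \to H)|\,|G|}$; reversibility with respect to \emph{some} measure is then formal, and the measure is identified as $(\mu_0,\mu_1)$ by tracking the proportionality constants through the two orbit--stabilizer arguments, using $|Ext(G,\Z_p)| = |G|$ (\autoref{lem:charactersandextensions}) on the $G$-side and the Haar-measure change of variables on the $H$-side. Either way, the genuine difficulty lies upstream, in \autoref{thm:hmeasure}: there $Aut(H)$ is a non-compact group and the orbit--stabilizer count must be replaced by integration against Haar measure with an explicit Jacobian. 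In the deduction of the present theorem the only point demanding care is the bookkeeping that separates isomorphism classes of exact sequences from isomorphism classes of the underlying modules, so that the fibrewise sum of $\lambda$ reproduces $\probP(G \xrightarrow{\dstar} H)$ and $\probP(H \xrightarrow{\doper} G)$ with the correct multiplicities and with the exact constants $c_0, c_1$ rather than an undetermined scalar.
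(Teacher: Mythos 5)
Your proposal is correct and follows essentially the same route as the paper: the paper likewise deduces this theorem by summing the orbit-level identity (the common measure on isomorphism classes of exact sequences, weighted by $\tfrac{c_0}{|G|\,|Aut(\Z_p \to H)|}$) over the fibres above a fixed pair $(G,H)$, with the constants pinned down exactly as you describe via the orbit--stabilizer computations on the $Ext(G,\Z_p)$ side and the Haar-measure computation on the $Aut(H)$ side. Both of your routes appear in the paper, so there is nothing further to add.
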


\subsubsection{The operators $\dkstar$ and $\dk$}

In the previous section, we defined $\dkstar$ and $\dk$, as follows:

\begin{definition}
Given a finite $p$-group $G$, $\dkstar(G)$ is a uniformly random $\Z_p^k$ extension of $G$.
\end{definition}

\begin{definition}
Given a $p$-group $H$ of $\Q_p$-rank $k$, $\dk(H)$ is the quotient by a uniformly random  element $\Hom(\Z_p^k,H)$.
\end{definition}

\begin{claim}
The same arguments as we used to prove $\dstar=\doper^T$ can be used to show that $\dkstar=\dk{}^T$
\end{claim}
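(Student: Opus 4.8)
The plan is to rerun the proof of \autoref{thm:exactseq} (equivalently \autoref{cor:weightedgraph}) essentially word for word, with the rank-one group $\Z_p$ replaced throughout by $\Z_p^k$. Concretely, I would show that $\dkstar(\mu_0)$ and $\dk(\mu_k)$ induce one and the same probability measure on isomorphism classes of exact sequences
\[ 0 \rightarrow \Z_p^k \rightarrow H \rightarrow G \rightarrow 0, \]
namely the one proportional to $\frac{1}{|G|^k \, |Aut(\Z_p^k \rightarrow H)|}$; summing over $H$ (respectively over $G$) then yields the detailed-balance identity $\mu_0(G)\,\probP(G \xrightarrow{\dkstar} H) = \mu_k(H)\,\probP(H \xrightarrow{\dk} G)$, which is exactly the assertion $\dkstar = \dk{}^{T}$.

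First I would upgrade the structural \autoref{lem:charactersandextensions}: since $\Ext(\cdot,-)$ and $\Hom(\cdot,-)$ commute with finite products in the second variable, applying it in each of the $k$ coordinates gives a bijection $\Ext(G,\Z_p^k)\cong\Hom(G,(\Q_p/\Z_p)^k)$ under which $\phi$ corresponds to the fiber product $G\times_{(\Q_p/\Z_p)^k}\Q_p^k$, and --- since $\Z_p^k$ is free, so every $\Z_p^k$-extension splits --- the middle term is isomorphic to $\ker(\phi)\times\Z_p^k$; in particular it lies in $X_k$ and $|\Ext(G,\Z_p^k)|=|G|^k$. (If the matrix description of $\dkstar$ is also wanted, \autoref{lem:extclass} generalizes verbatim: appending a Haar-random $k\times n$ block $V$ to an invertible $M_{n,n}$ gives the extension class $g\mapsto -V M_{n,n}^{-1} g \bmod \Z_p^k$, which is uniform in $\Hom(\coker(M_{n,n}),(\Q_p/\Z_p)^k)$.)

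Next come the two orbit-counting steps. For the ``$G$-side'' (the analogue of \autoref{thm:gmeasure}) I would choose a uniform element of $\Ext(G,\Z_p^k)$ and pass to the isomorphism class of the resulting exact sequence; by orbit--stabilizer for the $Aut(G)$-action this is distributed proportionally to $|Aut(G)|/|Aut(\Z_p^k\rightarrow H)|$ over the $|G|^k$ possibilities, and multiplying by $\mu_0(G)\propto 1/|Aut(G)|$ gives the claimed weight. For the ``$H$-side'' (the analogue of \autoref{thm:hmeasure}), writing $H\cong\Z_p^k\oplus H_{tors}$, a Haar-random $h\in\Hom(\Z_p^k,H)\cong H^k$ has finite cokernel with probability one (the bad locus is where a $k\times k$ determinant over $\Q_p$ vanishes), and I would compute the Haar measure of the orbit $Aut(H)\circ h$ inside $\Hom(\Z_p^k,H)$ by the same local-isomorphism/Jacobian device as in the scalar case: take the change-of-variables factor of the local isomorphism $Aut(H)\to Aut(H)\circ h$ and divide by the order of the stabilizer $Aut(\Z_p^k\xrightarrow{h}H)$. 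Matching this against the $G$-side weight, and using that $\dk(\mu_k)$ is a probability measure, simultaneously confirms the weight and pins down the normalization $c_k=\prod_{i=k+1}^{\infty}(1-\frac{1}{p^i})$, so that $\mu_k$ is forced to be the stationary measure.

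The one genuinely delicate ingredient is this last Jacobian computation. Here $Aut(H)$ is a group built from $GL_k(\Z_p)$ (its ``infinite part''), $Aut(H_{tors})$, and $\Hom(\Z_p^k,H_{tors})$, acting on $\Hom(\Z_p^k,H)\cong\Mat_{k\times k}(\Z_p)\oplus(\text{finite part})$; one must check that the matrix-valued change-of-variables determinants recombine, after division by the stabilizer order, into the clean weight $\frac{1}{|G|^k|Aut(\Z_p^k\rightarrow H)|}$. This is the $k=1$ argument with matrix coordinates in place of scalars, so no new idea is required, only more careful bookkeeping of $p$-adic determinants. Once the two weights agree, reversibility of the combined chain $(\dk,\dkstar)$ on $X_0\cup X_k$ --- hence $\dkstar=\dk{}^{T}$ --- follows exactly as for $k=1$.
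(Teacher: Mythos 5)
Your overall plan coincides with the paper's: the paper likewise reduces the claim to the orbit-level identity $\mu_0(G)\,\probP(Aut(G)\circ\phi)=\mu_k(H)\,\probP(Aut(H)\circ h)$ and states explicitly that every step of the $k=1$ argument carries over \emph{except} the computation of $\probP(Aut(H)\circ h)$. The one place where your write-up is too optimistic is exactly there. You assert that the Haar measure of the orbit $Aut(H)\circ h$ inside $\Hom(\Z_p^k,H)$ follows from the scalar case by ``more careful bookkeeping of $p$-adic determinants,'' but for $k>1$ there is a genuinely new point that the bookkeeping must address. The computation that is actually easy gives the measure of the \emph{right} orbit $h\circ U_l$ (pre-composition by automorphisms of $\Z_p^k$ near the identity, i.e.\ the probability that a Haar-random homomorphism factors through $p^l h$), namely $|H/\im(h)|^{-k}\,|\Mat_k(\Z/p^l\Z)|^{-1}$; what the orbit--stabilizer formula requires is the measure of the \emph{left} orbit $U_l\circ h$ under a neighbourhood of the identity of $Aut(H)$. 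For $k=1$ the two coincide for free because scalars act centrally, which is why the issue is invisible in the proof you are transporting; for $k>1$ the paper must prove $\nu(U_l\circ h)=\nu(h\circ U_l)$, which it does by splitting $H\cong H_{tors}\times\Z_p^k$ to reduce to the torsion-free case and then using that $|\coker(h^T)|=|\coker(h)|$ --- equivalently, that left and right multiplication by the matrix $h$ on $\Mat_k(\Z_p)$ have the same index $|\det h|_p^{-k}$. Your direct Jacobian computation can be pushed through, but only by establishing precisely this left/right comparison; the step you dismiss as routine is in fact the entire content of the paper's proof of this claim, so it should be stated and proved rather than waved at.
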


\subsubsection{Composability}

Suppose that, as usual $*$ represents independent uniformly distributed random variables.
In the previous section we have shown that
\[
\doper \dstar \big( \coker(M) \big) = \coker
\begin{mat}{ccc|c}
 &  & & * \\
 & M & & \vdots \\
 &  & & * \\ \hline
* & \hdots &* & * \\
\end{mat}
\]
and that 
   \[
   \dk \dkstar \big(\coker(M) \big) = \coker
   \begin{mat}{ccc|ccc}
   &  & & * & \hdots & * \\
   & \probM_{n,n} & & \vdots & \ddots & \vdots \\ 
   &  & & * & \hdots & * \\ \hline
   * & \hdots & * & * & \hdots & * \\
   \vdots & \ddots & \vdots & \vdots & \ddots & \vdots \\
   * & \hdots & * & * & \hdots & *
   \end{mat}
   \]
Reiterating the first equality $k$ times, gives
\[
(\doper \dstar)^k \big( \coker(M) \big) = \dk \dkstar \big(\coker(M) \big)
\]
for any square matrix $M$.

Every finite abelian $p$-groups can be represented as $coker(M)$, for some square matrix $M$. Hence, we can conclude:
\begin{equation}
\label{eqn:composability}
(\doper \dstar)^k   = \dk \dkstar 
\end{equation}

\begin{mysection}
\subsubsection{Convergence to the Cohen-Lenstra measure}
As an application, we describe one of many possible proofs of the following statement

\begin{claim}
$\Delta_0^N(\nu)$ converges to the Cohen-Lenstra measure as $N \rightarrow \infty$ for any initial probability measure $\nu$.
\end{claim}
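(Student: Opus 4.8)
The plan is to combine the composability identity (\ref{eqn:composability}), namely $\Delta_0^N = d^N d^{*N}$, with the random-matrix description of $d^N d^{*N}$ already established. By the relation for $\dk \dkstar$, for any finite abelian $p$-group $G = \coker(M_{n,n})$, the group $\Delta_0^N(G)$ is distributed as the cokernel of the $(n+N)\times(n+N)$ matrix obtained by bordering $M_{n,n}$ with $N$ extra rows and $N$ extra columns of independent Haar-random $\Z_p$ entries. As $N \to \infty$, this is a very large random matrix whose lower-right $N \times N$ block, together with the adjacent bordering entries, consists entirely of independent uniform entries, while the fixed block $M_{n,n}$ occupies a vanishing corner. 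First I would argue that the distribution of the cokernel of such a matrix converges, as $N \to \infty$, to the Cohen-Lenstra measure $\mu_0$, uniformly in the fixed corner $M_{n,n}$ — this is exactly a quantitative strengthening of the Friedman–Washington theorem (\autoref{thm:matricesandcl}), and versions allowing a fixed deterministic corner are available in the literature on universality (e.g. \cite{WoodNguyen}, \cite{Maples1}).

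The cleanest way to package this, however, is purely operator-theoretic and avoids re-proving matrix universality from scratch. The steps I would carry out are: (1) By linearity it suffices to prove $\Delta_0^N(\delta_G) \to \mu_0$ pointwise for each fixed $G \in X_0$, since a general probability measure $\nu$ is a countable convex combination $\nu = \sum_G \nu(G)\delta_G$ and one can pass to the limit term-by-term using that $\Delta_0^N(\delta_G)(G')$ is bounded by $1$ and dominated convergence. (2) By the moment machinery of \S\ref{eigenfunctionsection}, the quantity $\sum_{G'} \#\mathrm{Sur}(G',F)\,\Delta_0^N(\delta_G)(G')$ — the $F$-moment of $\Delta_0^N(\delta_G)$ — can be computed using \autoref{basis}, which shows $\Delta_0$ acts on the moment functionals triangularly with the single top eigenvalue $1$ (attained only at $F = 0$) and all other eigenvalues $1/|F| < 1$. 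Iterating, every moment of $\Delta_0^N(\delta_G)$ for $F \neq 0$ decays geometrically like $|F|^{-N}$ up to lower-order contributions, hence tends to $1$, which is precisely the $F$-moment of $\mu_0$. (3) Finally, invoke the determination of a measure on finite abelian $p$-groups by its moments — the moments of $\mu_0$ are all $1$, they grow slowly enough to satisfy the Carleman-type bound of \cite{SawinWood}, \cite{WoodMoments}, and $\Delta_0^N(\delta_G)$ is a genuine probability measure for all $N$ — to upgrade convergence of all moments to pointwise convergence $\Delta_0^N(\delta_G) \to \mu_0$.

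The main obstacle will be step (3): controlling moments alone does not immediately give pointwise convergence of the measures unless one knows the relevant moment problem is determinate and that no mass escapes to infinity. I would handle this by citing the moment-determinacy results of Sawin–Wood and the tightness/no-escape-of-mass estimates that accompany the universality theorems (the same estimates that underlie \autoref{thm:matricesandcl}); alternatively, one can bypass moments entirely and run the direct matrix argument of the first paragraph, using a known universality statement with a fixed corner. A secondary technical point is justifying the term-by-term limit in step (1), but this is routine given that each $\Delta_0^N(\delta_G)$ is a probability measure and $\mu_0$ is a probability measure, so Scheffé's lemma applies once pointwise convergence on each atom is known.
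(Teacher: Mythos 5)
Your proposal is correct, but it takes a genuinely different route from the paper's. The paper deduces the claim from exactly two of its own ingredients: the adjointness $\dk{}^T=\dkstar$ and the composability $(\doper\dstar)^N=d^N d^{*N}$. Summing the reversibility identity over $H$ gives the exact equation $d^N(\mu_N)=\mu_0$; since $d^{*N}(\delta_G)$ is the law of $\ker(\phi)\times\Z_p^N$ for a Haar-random $\phi\in Hom(G,(\Q_p/\Z_p)^N)$, it converges in $L^1$ to $\delta_{\Z_p^N}$, which is within $1-c_N\to 0$ of $\mu_N$; and since $d^N$ is an $L^1$-contraction, $\Delta_0^N\delta_G=d^N d^{*N}\delta_G\to d^N(\mu_N)=\mu_0$ with no external input. (Equivalently, \autoref{thm:leadingtermone:mainbody} applied to a finitely supported probability measure has leading term $E_0=\mu_0$.) Your step (2) is essentially the paper's \autoref{basis}, and your conclusion that every $F$-moment of $\Delta_0^N(\delta_G)$ tends to $1$ is sound; the only imprecision is the phrase "decays geometrically like $|F|^{-N}$" --- the moment itself does not decay, since the $F'=0$ component of $\Delta_0^N\,\text{\textbf{Moment}}[F]$ contributes the constant $1$ and only the complementary components decay. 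The real divergence is your step (3), where you close the argument by importing the Sawin--Wood moment-determinacy theorem (and, in your first paragraph, universality with a fixed corner). Both are legitimate, and moments identically $1$ comfortably satisfy the determinacy hypotheses, so your proof goes through; but each outsources the analytic core to machinery strictly heavier than needed, whereas the paper's adjointness identity produces the limit measure for free. What your route buys is robustness --- the moment argument would survive replacing Haar entries by more general distributions --- at the cost of self-containedness.
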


There are several ways to show this. In particular, it can be deduced from two results obtained in this section:
\[
\dk{}^T = \dkstar 
\]
and 
\[
(\doper \dstar)^k   = \dk \dkstar 
\]
\begin{proof}
The deduction is given in the last section of this chapter. 
\end{proof}
\end{mysection}

\subsection{Organization}

In the ensuing sections, we give detailed proofs of the statements outlined above.
\commm{... ... ... ...}

\subsection{Measures and inner products on $X_0$, $X_1$ and $X_k$}
\label{sec:measures}
In this section, we define measures on the spaces $X_0$, $X_1$ and $X_k$:
 \begin{enumerate}
   \item The measure on $X_0$ is $\mu_0$: 
   $$\mu_{0}(G)=\frac{c_0}{|Aut(G)|}$$
   where $c_0$ is normalized so that $\mu_0$ is a probability measure. Explicitly:
   \[
   c_0 = \prod_{i=1}^{\infty}(1-\frac{1}{p^{i}})
   \]
   \item For $H \in X_1$, let $H_{tors}$ denote the torsion part of $H$. The measure on $X_1$ is $\mu_1$:
   $$\mu_{1}(H)=\frac{c_1}{|H_{tors}||Aut(H_{tors})|}$$
   where $c_1$ is normalized so that $\mu_1$ is a probability measure. Explicitly:
   \[
   c_1 = \prod_{i=2}^{\infty}(1-\frac{1}{p^{i}})
   \]
   \item For $H \in X_k$, let $H_{tors}$ denote the torsion part of $H$. The measure on $X_1$ is $\mu_1$:
   $$\mu_{k}(H)=\frac{c_k}{|H_{tors}|^k|Aut(H_{tors})|}$$
   where $c_k$ is normalized so that $\mu_k$ is a probability measure. Explicitly:
   \[
   c_k = \prod_{i=k+1}^{\infty}(1-\frac{1}{p^{i}})
   \]
   \end{enumerate}
   \marginparr{Give a justification for the explicit form.}
   
   These measures induce inner products:
   
   \begin{enumerate}
   \item For any two measures $\nu_1$ and $\nu_2$ on $X_0$, $$ \lb \nu_1, \nu_2 \rb_{X_0} = \sum_{G \in X_0} \frac{\nu_1(G)\nu_2(G)}{\mu_0(G)} $$
   \item For any two measures $\nu_1$ and $\nu_2$ on $X_1$, $$ \lb \nu_1, \nu_2 \rb_{X_1} = \sum_{H \in X_1} \frac{\nu_1(H)\nu_2(H)}{\mu_1(H)} $$
   \item For any two measures $\nu_1$ and $\nu_2$ on $X_k$, $$ \lb \nu_1, \nu_2 \rb_{X_k} = \sum_{H \in X_1} \frac{\nu_1(H)\nu_2(H)}{\mu_k(H)} $$
   \end{enumerate}

\subsection{$\doper$ is the adjoint of $\dstar$ with respect to the Cohen-Lenstra measure}
\label{sec2}
\subsubsection{Statement of the Problem}
We aim to prove that $\doper$ is the adjoint of $\dstar$ with respect to the Cohen-Lenstra measures on $X_0$ and $X_1$, which we will here denote as $\mu_0$ and $\mu_1$.

This is the statement that, for any measures $\nu_0$ on $X_0$ and any measure $\nu_1$ on $X_1$,
\[
\lb \nu_1, \dstar \, \nu_0 \rb_{X_1} = \lb \doper \, \nu_1,  \nu_0 \rb_{X_0}
\]

By letting $\nu_0$ and $\nu_1$ be measures concentrated on the groups $G$ and $H$ respectively, this expression becomes
\begin{equation}
\label{selfadj}
\mu_0(G)\probP(G \xrightarrow{\dstar} H ) = \mu_1(H)\probP(H \xrightarrow{\doper} G )
\end{equation}

where $\probP(G \xrightarrow{\dstar} H )$ is the probability of the random process $\dstar$ taking the group $G$ to the group $H$.
\newline

Conversely, a proof of relation \ref{selfadj} also implies self-adjointness by linearity. Thus, in the rest of this section, we will be proving relation \ref{selfadj}.

\subsubsection{Probability measures on exact sequence}

\paragraph{The LHS of (\ref{selfadj})} 

First of all, we make the observation that
\[
\mu_1(H)\probP(H \xrightarrow{\doper} G )
\]
defines a probability measure on $X_0 \times X_1$. This measure is defined as follows:
\begin{itemize}
\item[(A)] Sample $H \in X_1$ from the probability measure $\mu_1$.
\item[(B)] Mod out by a uniformly random element of $H$ to get $G \in X_0$. 
\end{itemize}

\begin{remark}
Steps $(A)$ and $(B)$ define a probability measure on isomorphism classes of exact sequences:
\[
0 \rightarrow \Z_p \rightarrow H \rightarrow G \rightarrow 0
\]
\end{remark}

\begin{lemma}
Under this probability measure,
\[
\probP\Big( 0 \rightarrow \Z_p \xrightarrow{h} H \rightarrow G \rightarrow 0 \Big) 
= \mu_{1}(H) \probP(Aut(H) \circ h)
\]
\end{lemma}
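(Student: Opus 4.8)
The plan is to unwind the definition of the measure produced by steps (A) and (B) and reduce it to a statement about a single $Aut(H)$-orbit. Recall that ``mod out by a uniformly random element of $H$'' in step (B) means: draw $h' \in H$ from the Haar probability measure $\lambda_H$ on the compact group $H$ and output the sequence $0 \rightarrow \Z_p \xrightarrow{h'} H \rightarrow H/h' \rightarrow 0$, where $\Z_p \xrightarrow{h'} H$ is the map $1 \mapsto h'$. First I would record that $H_{tors}$ is finite and hence $\lambda_H$-null (since $H$ has positive $\Q_p$-rank), so almost surely $h'$ has infinite order, the map $\Z_p \xrightarrow{h'} H$ is injective, and the resulting sequence is genuinely short exact. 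Since the middle term of an exact sequence is intrinsic to it, the $\mu_1$-then-$\lambda_H$ procedure is exactly: choose the isomorphism class of the middle term $H$ with probability $\mu_1(H)$, then, conditionally on that, distribute the exact sequence according to the pushforward of $\lambda_H$ under $h' \mapsto \big[\,0 \rightarrow \Z_p \xrightarrow{h'} H \rightarrow H/h' \rightarrow 0\,\big]$. Hence the probability of a fixed class $[\,0 \rightarrow \Z_p \xrightarrow{h} H \rightarrow G \rightarrow 0\,]$ equals $\mu_1(H)$ times the $\lambda_H$-measure of the set of $h'$ whose associated sequence is isomorphic to this one.

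It then remains to identify that set of ``good'' $h'$ with the orbit $Aut(H)\circ h$, which is the only real content. Using the standard commutative-ladder description of a morphism of $\Z_p$-extensions with fixed middle term $H$, the sequences attached to $h$ and to $h'$ are isomorphic precisely when there exist $\sigma \in Aut(H)$ and $a \in Aut(\Z_p) = \Z_p^\times$ with $\sigma(h') = a\cdot h$; conversely, any $\sigma$ realizing such a relation induces an isomorphism on the quotients and hence of the whole sequence. Since multiplication by a unit of $\Z_p$ is itself an automorphism of the $\Z_p$-module $H$, the scalar $a$ can be absorbed into $\sigma$, and the condition collapses to $h' \in Aut(H)\circ h$. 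Therefore $\{\,h' : \mathrm{sequence}(h') \cong \mathrm{sequence}(h)\,\} = Aut(H)\circ h$, and its $\lambda_H$-measure is exactly $\probP(Aut(H)\circ h)$ in the paper's notation, yielding
\[
\probP\Big( 0 \rightarrow \Z_p \xrightarrow{h} H \rightarrow G \rightarrow 0 \Big) = \mu_1(H)\,\probP(Aut(H)\circ h).
\]

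There is essentially no serious obstacle in this lemma itself; it is a bookkeeping step, and the two points requiring care are (i) checking that torsion elements form a $\lambda_H$-null set so that the construction almost surely yields a bona fide extension, and (ii) the orbit characterization above, where one must remember that unit scalars already lie in $Aut(H)$ so that no separate quotient by $Aut(\Z_p)$ is needed. The genuinely hard input is deferred: actually evaluating $\probP(Aut(H)\circ h)$ and showing it is proportional to $1/\big(|Aut(\Z_p \xrightarrow{h} H)|\,|H/h|\big)$ is the content of the orbit--stabilizer analysis behind \autoref{thm:hmeasure}.
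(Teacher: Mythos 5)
Your proof is correct, and it takes the only natural route: the paper actually states this lemma with no proof at all, treating it as immediate from the two-step description (A)--(B) of the measure, and your writeup simply makes explicit the two points being silently used --- that the torsion elements of $H$ form a Haar-null set, and that two elements $h,h'$ yield isomorphic sequences precisely when $h'\in Aut(H)\circ h$ (with the unit scalar from $Aut(\Z_p)$ absorbed into $Aut(H)$). Both points are verified correctly, so your argument is a faithful filling-in of the omitted details rather than a different approach.
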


\paragraph{The RHS of \autoref{selfadj}}

Now, we make the observation that
\[
\mu_0(H)\probP(G \xrightarrow{\dstar} H )
\]
also defines a probability measure on $X_0 \times X_1$. This measure is defined as follows:
\begin{itemize}
\item[(A]$'$) Sample $G \in X_0$ from the probability measure $\mu_0$.
\item[(B]$'$) Pick a uniformly random $\Z_p$-extension of $G$ to get $H \in X_1$.
\end{itemize}

\begin{remark}
Steps $A'$ and $B'$ define a probability measure on isomorphism classes of exact sequences:
\[
0 \rightarrow \Z_p \rightarrow H \rightarrow G \rightarrow 0
\]
as follows:

\begin{equation}
\label{eqn:exsequence}
\probP\Big( 0 \rightarrow \Z_p \rightarrow H \rightarrow G \rightarrow 0 \Big) =
\end{equation}
\[
\mu_0(G) \frac{|Aut(G) \circ \phi|}{|Ext(G,\Z_p)|}
\]
where $\phi$ is the element of $Ext(G,\Z_p)$ associated to the exact sequence \ref{eqn:exsequence}.

\end{remark}

\subsubsection{Reformulation}

The relation (\ref{selfadj}) resembles the condition for a Markov chain to be reversible, and indeed it can be interpreted as such.

\begin{definition}
Define $(\doper, \dstar)$ to be the Markov chain on the state space $X_0 \cup X_1$ whose transition probabilities are defined as
\[
\prob\Big(G\xrightarrow{(\doper, \dstar)} H \Big) \defeq \prob\Big(G\xrightarrow{ \dstar } H \Big) 
\text{ for }G \in X_0, \, H \in X_1
\]
\[
\prob\Big(H\xrightarrow{(\doper, \dstar)} G \Big) \defeq \prob\Big(H\xrightarrow{\doper} G \Big)
\text{ for }G \in X_0, \, H \in X_1
\]
\[
0 \text{ otherwise }
\]
\end{definition}

The relation (\ref{selfadj}) is then the statement that $(\doper, \dstar)$ is a reversible Markov chain, with stationary measure $(\mu_0,\mu_1)$.

\subsubsection{Proof that $(\doper)^T=\dstar$}

The main goal of this section is to prove the following theorem:
\begin{theorem}
\label{generalselfadjtheorem}
\begin{equation}
\label{eqn:weightsone}
\mu_{0}(G) \probP (G \xrightarrow{\dstar} H) = \mu_1(H) \probP (H \xrightarrow{\doper} G)
\, \forall \, G,H
\end{equation}
\end{theorem}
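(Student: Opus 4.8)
The plan is to establish the identity
\[
\mu_0(G)\,\probP(G \xrightarrow{\dstar} H) = \mu_1(H)\,\probP(H \xrightarrow{\doper} G)
\]
by computing both sides explicitly and showing that each is proportional to the quantity
\[
\frac{1}{|G|\,|Aut(\Z_p \hookrightarrow H)|},
\]
where $Aut(\Z_p \hookrightarrow H)$ is the stabilizer in $Aut(H)$ of a fixed injection $h\colon \Z_p \to H$ with cokernel $G$; if $H$ admits no such injection, both sides vanish and there is nothing to prove. The two computations are structurally parallel, and each is an instance of the orbit-stabilizer theorem: the first for the finite group $Aut(G)$ acting on $Ext(G,\Z_p)$, the second for the \emph{infinite} (profinite) group $Aut(H)$ acting on $H$.

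First I would handle the left-hand side. By \autoref{lem:charactersandextensions}, $Ext(G,\Z_p) \cong Hom(G,\Q_p/\Z_p)$, a finite group of order $|G|$, and $\dstar$ picks an element $\phi$ of it uniformly. Two elements give isomorphic exact sequences precisely when they lie in the same $Aut(G)$-orbit, so $\probP(G \xrightarrow{\dstar} H)$ is $|Aut(G)\circ\phi| / |Ext(G,\Z_p)|$ for $\phi$ realizing $H$. By orbit-stabilizer this equals $|Aut(G)|\big/\big(|G|\cdot|Stab_{Aut(G)}(\phi)|\big)$. The key lemma here is the identification $Stab_{Aut(G)}(\phi) \cong Aut(\Z_p \hookrightarrow H)$: an automorphism of $G$ fixing $\phi$ lifts uniquely to an automorphism of the fiber-product extension $H$ fixing the embedded $\Z_p$, and conversely. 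Multiplying by $\mu_0(G) = c_0/|Aut(G)|$, the $|Aut(G)|$ cancels and the left-hand side becomes $c_0\big/\big(|G|\,|Aut(\Z_p\hookrightarrow H)|\big)$.

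For the right-hand side I would apply an orbit-stabilizer formula for the Haar-measure-preserving action of the compact group $Aut(H)$ on $H$. Writing $H = H_{tors} \times \Z_p$ (so $Aut(H)$ and the Haar measure decompose compatibly), the measure of the orbit $Aut(H)\circ h$ equals $|Aut(H)\circ h|_{\mathrm{Haar}} = C_H \big/ |Stab_{Aut(H)}(h)|$ for a constant $C_H$ depending only on $H$; here $Stab_{Aut(H)}(h) = Aut(\Z_p\hookrightarrow H)$ by definition. The remaining point is to pin down $C_H$: tracking the normalization shows the Haar measure of the orbit is proportional to $1/\big(|G|\,|Aut(\Z_p\hookrightarrow H)|\big)$ with constant $c_1$, after using $|H_{tors}|\cdot|G| $-type bookkeeping and the normalization of $\mu_1(H) = c_1/(|H_{tors}|\,|Aut(H_{tors})|)$. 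Then $\mu_1(H)\,\probP(H\xrightarrow{\doper}G) = \mu_1(H)\cdot(\text{orbit measure})$ collapses to the same expression $c_0/\big(|G|\,|Aut(\Z_p\hookrightarrow H)|\big)$, using $c_1 = \tfrac{p}{p-1}c_0$ to reconcile the constants.

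The main obstacle will be the right-hand side: making the orbit-stabilizer argument for the infinite group $Aut(H)$ rigorous, i.e. justifying that $Aut(H)\circ h \to H$ is a local homeomorphism with a well-defined, $h$-independent Jacobian, and then computing that Jacobian correctly so that the constant $C_H$ comes out to exactly match $c_1/c_0$ times the finite-side constant. Once the orbit measures on both sides are expressed in closed form in terms of $|Aut(\Z_p\hookrightarrow H)|$ and $|G|$, the identity (\ref{eqn:weightsone}) follows by direct comparison, and as a byproduct one reads off \autoref{thm:exactseq}: both sampling procedures induce the measure on exact sequences proportional to $1/\big(|G|\,|Aut(\Z_p\hookrightarrow H)|\big)$.
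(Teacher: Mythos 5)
Your proposal is correct and follows essentially the same route as the paper: reduce to an orbit-level identity, apply orbit--stabilizer to the $Aut(G)$-action on $Ext(G,\Z_p)\cong Hom(G,\Q_p/\Z_p)$ of order $|G|$, identify both stabilizers with $Aut(\Z_p\to H)$ via automorphisms of the exact sequence, and handle the $Aut(H)$-action on $H$ by a Haar-measure orbit--stabilizer computation whose normalization is reconciled through $c_1=\tfrac{p}{p-1}c_0$. The paper carries out the profinite Jacobian computation you flag as the main obstacle by shrinking to the congruence neighborhoods $U_l$ and comparing $\nu(U_l\circ h)$ with $|Aut(H)/U_l|^{-1}$, exactly as your outline anticipates.
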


\begin{corollary}
We can represent the Markov chain $(\doper,\dstar)$ as a random walk whose edges are pairs $(G,H) \in X_0 \times X_1$, and where the edge $(G,H)$ has weight (\ref{eqn:weightsone}).
\end{corollary}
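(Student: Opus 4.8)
The plan is to prove the pointwise identity \ref{eqn:weightsone}, which is the content of the theorem, by refining both sides to the level of short exact sequences. Recall from the two lemmas of the previous subsection that $\dstar(\mu_0)$ and $\doper(\mu_1)$ each induce a probability measure on isomorphism classes of extensions $0\to\Z_p\to H\to G\to 0$, with explicit formulas: the class of a given such sequence is assigned mass $\mu_0(G)\,|Aut(G)\circ\phi|/|\Ext(G,\Z_p)|$ in the first model (with $\phi$ the corresponding extension class) and $\mu_1(H)\cdot\mathrm{Haar}_H(Aut(H)\circ h)$ in the second. I will show that both of these equal $c_0/(|G|\,|Aut(\Z_p\to H)|)$ on the class of a fixed extension — this is the refined statement \autoref{thm:exactseq}. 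Granting it, \ref{eqn:weightsone} follows at once by summing over all isomorphism classes of extensions with the prescribed endpoints $G$ and $H$; and the fact that the common value carries the constant $c_0$ rather than some other constant is itself forced, since each side of \ref{eqn:weightsone} sums over all $(G,H)$ to $1$ while $\sum_G 1/|Aut(G)| = 1/c_0$.

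For the $\dstar$-side this is pure finite group theory. By \autoref{lem:charactersandextensions}, $\Ext(G,\Z_p)\cong\Hom(G,\Q_p/\Z_p)$, which is the Pontryagin dual of the finite group $G$ and therefore has order $|G|$. By orbit–stabiliser, $|Aut(G)\circ\phi| = |Aut(G)|/|\mathrm{Stab}_{Aut(G)}(\phi)|$, and the key point is the identification $\mathrm{Stab}_{Aut(G)}(\phi)\cong Aut(\Z_p\to H)$: an automorphism of $H$ fixing the copy of $\Z_p$ pointwise descends to an automorphism $\alpha$ of $G$; conversely, a given $\alpha$ lifts to such an automorphism of $H$ exactly when $\alpha$ fixes the extension class $\phi$, and the lift is then unique because $\Hom(G,\Z_p)=0$. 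Substituting $\mu_0(G)=c_0/|Aut(G)|$ gives the claimed value $c_0/(|G|\,|Aut(\Z_p\to H)|)$.

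The $\doper$-side is the harder half: it amounts to computing the Haar measure of the orbit $Aut(H)\circ h$ inside $H$, where now $Aut(\Z_p\to H)$ is literally the (finite) stabiliser $\mathrm{Stab}_{Aut(H)}(h)$. I would carry out the orbit–stabiliser computation sketched for \autoref{thm:hmeasure} as follows. The group $Aut(H)$ is profinite and acts compatibly on each finite quotient $H/p^NH$, and the orbit is the decreasing intersection $\bigcap_N \pi_N^{-1}\big(Aut(H)\circ\pi_N(h)\big)$; hence $\mathrm{Haar}_H(Aut(H)\circ h) = \lim_N |Aut(H/p^NH)\circ\pi_N(h)|/|H/p^NH|$ (using that $Aut(H)\twoheadrightarrow Aut(H/p^NH)$ for $N$ large), each term being a finite orbit–stabiliser ratio. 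Alternatively one argues infinitesimally: the orbit map $\sigma\mapsto\sigma(h)$ intertwines left translation on $Aut(H)$ with the measure-preserving action on $H$, so its Jacobian is constant, and $\mathrm{Haar}_H(Aut(H)\circ h)$ equals that Jacobian divided by $|\mathrm{Stab}_{Aut(H)}(h)|$. Either way the outcome should be $\tfrac{p-1}{p}\cdot\tfrac{|H_{tors}|\,|Aut(H_{tors})|}{|G|\,|Aut(\Z_p\to H)|}$; multiplying by $\mu_1(H)=c_1/(|H_{tors}|\,|Aut(H_{tors})|)$ and using $c_1=\tfrac{p}{p-1}c_0$ produces $c_0/(|G|\,|Aut(\Z_p\to H)|)$, matching the other side. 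The part I expect to be fiddliest — and the place where the $p$-adic geometry really enters — is keeping track of these factors when $h$ is not primitive, i.e. $h\in pH$, so that $|G|$ exceeds $|H_{tors}|$ by the factor $p^{v(h)}$; this valuation is exactly what appears in the Jacobian (equivalently, in the finite counts). With both halves in hand, summing over isomorphism classes of extensions with endpoints $(G,H)$ completes \ref{eqn:weightsone} and hence \autoref{generalselfadjtheorem}. (A different, matrix-theoretic proof is available via \autoref{thm:operatorsandrandommatrices} together with the fact that a square $p$-adic matrix and its transpose have isomorphic cokernels, which converts the extra-row model of $\dstar$ into the extra-column model of $\doper$; but the orbit–stabiliser argument above is self-contained.)
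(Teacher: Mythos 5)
Your proposal follows the paper's own route: it reduces (\ref{eqn:weightsone}) to the orbit-level identity of \autoref{auxiliarygeneralselfadjtheorem}, handles the $\dstar$-side by the finite orbit--stabilizer theorem together with $|Ext(G,\Z_p)|=|Hom(G,\Q_p/\Z_p)|=|G|$ and the identification $\mathrm{Stab}_{Aut(G)}(\phi)\cong Aut(\Z_p\to H)$ (uniqueness of lifts from $Hom(G,\Z_p)=0$), handles the $\doper$-side by a profinite orbit--stabilizer/Haar-measure computation giving the same value $c_0/(|G|\,|Aut(\Z_p\to H)|)$ after the $c_1=\tfrac{p}{p-1}c_0$ bookkeeping, and then sums over orbits --- exactly the content of \autoref{philemma}, \autoref{hlemma} and \autoref{lem:differential}. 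The one inaccuracy is the parenthetical claim that $Aut(H)\twoheadrightarrow Aut(H/p^NH)$ for large $N$, which fails whenever $H_{tors}\neq 0$ (since $Hom(H_{tors},\Z/p^N\Z)\neq 0$ while $Hom(H_{tors},\Z_p)=0$); the finite-quotient limit must be taken over the \emph{image} of $Aut(H)$ in $Aut(H/p^NH)$, after which that route --- or your alternative Jacobian route, which is the one the paper actually takes --- goes through unchanged.
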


To every $Aut(H)$ orbit of elements
$$
\hmap \in Hom(\Z_p,H) 
$$
such that
\begin{equation}
\label{eqn:extclass}
0 \rightarrow \Z_p \xrightarrow{h} H \rightarrow G \rightarrow 0
\end{equation}
is exact, we can associate an $Aut(G)$ orbit of elements

\[
\phimap \in Ext(G,\Z_p)
\]
corresponding to the extension class (\ref{eqn:extclass}).

Therefore, to prove \autoref{generalselfadjtheorem}, it is sufficient to prove: 
\begin{theorem}
\label{auxiliarygeneralselfadjtheorem}
\begin{equation}
\label{eqn:weightstwo}
\mu_0(G) \probP \left( Aut(G) \circ \phimap \right) = \mu_1(H) \probP \left( Aut(H) \circ \hmap \right).
\end{equation}
Furthermore, both sides are equal to 
\[
\frac{c_0}{|H/h||Aut(H,h)|}
\]
\end{theorem}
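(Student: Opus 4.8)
The plan is to prove the two displayed equalities in \autoref{auxiliarygeneralselfadjtheorem} by evaluating each side of \eqref{eqn:weightstwo} independently and checking that both equal $\frac{c_0}{|H/h||Aut(H,h)|}$; the asserted equality \eqref{eqn:weightstwo} then follows at once. Write $T:=H_{tors}$ throughout.

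\textbf{Left-hand side.} Here everything takes place in the \emph{finite} group $Aut(G)$ acting on the finite set $Ext(G,\Z_p)$. By the orbit--stabilizer theorem $\probP(Aut(G)\circ\phimap)=\frac{|Aut(G)|}{|Stab_{Aut(G)}(\phimap)|\,|Ext(G,\Z_p)|}$, so $\mu_0(G)\,\probP(Aut(G)\circ\phimap)=\frac{c_0}{|Stab_{Aut(G)}(\phimap)|\,|Ext(G,\Z_p)|}$, the $|Aut(G)|$ cancelling. I would then make two identifications. First, by \autoref{lem:charactersandextensions} we have $Ext(G,\Z_p)\cong Hom(G,\Q_p/\Z_p)$, the Pontryagin dual of the finite abelian group $G$, so $|Ext(G,\Z_p)|=|G|=|H/h|$. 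Second, $g\in Aut(G)$ fixes the class $\phimap$ exactly when $g$ lifts to an automorphism of $H$ preserving the homomorphism $h\colon\Z_p\to H$; this gives a surjection $Aut(H,h)\twoheadrightarrow Stab_{Aut(G)}(\phimap)$, and its kernel consists of the maps $x\mapsto x+c(\bar x)\,h$ with $c\in Hom(G,\Z_p)$, hence is trivial since $G$ is finite and $\Z_p$ is torsion-free. Therefore $|Stab_{Aut(G)}(\phimap)|=|Aut(H,h)|$ and the left side equals $\frac{c_0}{|H/h||Aut(H,h)|}$.

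\textbf{Right-hand side.} This is the harder half: one must compute the Haar measure $\probP(Aut(H)\circ h)$ of an orbit of the profinite group $Aut(H)$ acting on $H$. I would make the structure fully explicit. Choose a splitting $H\cong\Z_p e_0\oplus T$; since $Hom(T,\Z_p)=0$ and $Hom(\Z_p,T)\cong T$, every automorphism of $H$ has the form $\psi_{u,s,\alpha}$ with $\psi_{u,s,\alpha}(e_0)=ue_0+s$ and $\psi_{u,s,\alpha}|_T=\alpha$, for $u\in\Z_p^\times$, $s\in T$, $\alpha\in Aut(T)$. Write $h=ae_0+t$ and $v=v_p(a)$ (an invariant of $h$, equal to the valuation of the image of $h$ in $H/T\cong\Z_p$). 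Then $\psi_{u,s,\alpha}(h)=(au)e_0+(as+\alpha t)$, so $Aut(H)\circ h$ is the explicit cylinder set $\{xe_0+y:\ v_p(x)=v,\ y\in p^vT+Aut(T)\,t\}$, whose Haar measure in $H\cong\Z_p\oplus T$ is $p^{-v}(1-\tfrac1p)\cdot\frac{|p^vT+Aut(T)t|}{|T|}$. I would unwind the last count via $|p^vT|=|T|/|T[p^v]|$ together with the orbit--stabilizer theorem for $Aut(T)$ acting on $T/p^vT$, obtaining $|p^vT+Aut(T)t|=\frac{|T||Aut(T)|}{|T[p^v]|\,N}$ with $N=|\{\alpha\in Aut(T):\alpha t\equiv t\ \bmod\ p^vT\}|$. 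Multiplying by $\mu_1(H)=\frac{c_1}{|T||Aut(T)|}$ and using $c_1(1-\tfrac1p)=c_0$ collapses the right side to $\frac{c_0}{p^v|T|\,|T[p^v]|\,N}$.

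\textbf{Reconciliation.} It remains to match this with $\frac{c_0}{|H/h||Aut(H,h)|}$, which I would do via the bookkeeping identities $|H/h|=p^v|T|$ and $|Aut(H,h)|=|T[p^v]|\,N$. For the first: using the splitting, $H/h(\Z_p)$ is an extension of $\Z_p/a\Z_p\cong\Z/p^v$ by $(a\Z_p e_0\oplus T)/h(\Z_p)\cong T$. For the second: solving $\psi_{u,s,\alpha}(h)=h$ forces $u=1$, forces $\alpha$ to fix $t$ in $T/p^vT$ ($N$ choices), and for each such $\alpha$ the equation $as=t-\alpha t$ has exactly $|T[p^v]|$ solutions $s\in T$. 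I expect the genuine obstacle to be the Haar-measure (the ``Jacobian'') computation for $Aut(H)\circ h$ hinted at in the survey; if one prefers to avoid integrating over the profinite group $Aut(H)$ directly, the same count can be carried out by reducing to $H/p^NH$ for $N>v$, applying finite orbit--stabilizer there, and passing to the limit, with the factor $p^{-v}(1-\tfrac1p)$ emerging as $\lim_{N\to\infty}|(\Z/p^N)^\times\! \cdot a|/p^N$.
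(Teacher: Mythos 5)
Your proposal is correct, and its overall skeleton coincides with the paper's: both evaluate the two sides of (\ref{eqn:weightstwo}) independently and show each equals $c_0/(|H/h||Aut(H,h)|)$. The left-hand side is handled identically (finite orbit--stabilizer, $|Ext(G,\Z_p)|=|Hom(G,\Q_p/\Z_p)|=|G|=|H/h|$, and the isomorphism $Aut(G,\phimap)\cong Aut(H,\hmap)$ via triviality of $Hom(G,\Z_p)$ --- this is exactly the paper's \autoref{philemma} and its corollary). Where you genuinely diverge is the harder half, the Haar measure of $Aut(H)\circ h$. The paper (\autoref{hlemma}) keeps $|Aut(H,h)|$ in the formula from the start, writing $\nu(Aut(H)\circ h)=\frac{1}{|Aut(H,h)|}\int_{Aut(H)}\frac{dH}{d(Aut(H))}$ and computing the ``Jacobian'' as a limit $\lim_l \nu(U_l\circ h)\cdot|Aut(H)/U_l|$ over congruence subgroups $U_l\subset GL_1(\Z_p)$, using $\nu(U_l\circ h)=|H/h|^{-1}|\Z/p^l\Z|^{-1}$ and $|Aut(H)/U_l|=|H_{tors}||Aut(H_{tors})||GL_1(\Z/p^l\Z)|$. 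You instead parametrize $Aut(H)$ explicitly as $\psi_{u,s,\alpha}$ on a splitting $H\cong\Z_p e_0\oplus T$, identify the orbit of $h=ae_0+t$ as the exact cylinder set $\{xe_0+y:\ v_p(x)=v,\ y\in p^vT+Aut(T)t\}$, and read off its Haar measure directly; the price is the extra bookkeeping identities $|H/h|=p^v|T|$ and $|Aut(H,h)|=|T[p^v]|N$, which you verify correctly. Your route is more elementary and self-contained (no orbit--stabilizer formalism for profinite group actions is needed), at the cost of depending on the chosen splitting and of computations that would not generalize as cleanly to the $\dk/\dkstar$ case treated later in the paper, where the paper's congruence-subgroup scheme is reused nearly verbatim. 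Your closing remark about reducing mod $p^N$ and passing to the limit is, in effect, the paper's actual argument.
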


\begin{corollary}
\autoref{generalselfadjtheorem} follows from (\ref{eqn:weightstwo}) by summing over automorphism orbits. 
\end{corollary}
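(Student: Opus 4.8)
The plan is to exhibit each side of (\ref{eqn:weightsone}) as a sum over orbits, match the summands using the bijection of exact sequences already set up just above \autoref{auxiliarygeneralselfadjtheorem}, and then apply (\ref{eqn:weightstwo}) term by term. Fix $G \in X_0$ and $H \in X_1$ once and for all. First I would rewrite the left-hand transition probability. Since $\dstar$ samples $\phimap$ uniformly from $Ext(G,\Z_p)$, and the extension group depends only on the $Aut(G)$-orbit of $\phimap$ (the $Aut(G)$-action on $Ext(G,\Z_p)$ preserves the isomorphism type of the middle group), the fibre of $\phimap$ giving a group isomorphic to $H$ is a union of $Aut(G)$-orbits, whence
\[
\probP(G \xrightarrow{\dstar} H) = \sum_{Aut(G)\circ\phimap \, : \, H_{\phimap}\cong H} \probP\!\left(Aut(G)\circ\phimap\right),
\]
with $\probP(Aut(G)\circ\phimap) = |Aut(G)\circ\phimap|/|Ext(G,\Z_p)|$. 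Symmetrically, $\doper$ quotients $H$ by a Haar-random $\hmap \in Hom(\Z_p,H)$, and the quotient is isomorphic to $G$ exactly when $\hmap$ lies in a union of $Aut(H)$-orbits of embeddings, giving
\[
\probP(H \xrightarrow{\doper} G) = \sum_{Aut(H)\circ\hmap \, : \, H/\hmap\cong G} \probP\!\left(Aut(H)\circ\hmap\right),
\]
each summand being the Haar measure of the orbit.

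Next I would invoke the correspondence recorded above \autoref{auxiliarygeneralselfadjtheorem}: to each $Aut(H)$-orbit of admissible $\hmap$ one assigns the extension class $\phimap \in Ext(G,\Z_p)$ of the sequence $0 \to \Z_p \xrightarrow{\hmap} H \to G \to 0$, well defined up to the $Aut(G)$-action. The content of this step is that the assignment is a \emph{bijection} between the two index sets of the sums above, both of which parametrise isomorphism classes of exact sequences $0 \to \Z_p \to H \to G \to 0$ with the abstract groups $G,H$ fixed — the $\phimap$-side presenting such a sequence by its extension-class data and the $\hmap$-side by its sub-object data. Granting the bijection, (\ref{eqn:weightstwo}) says exactly that corresponding summands satisfy $\mu_0(G)\probP(Aut(G)\circ\phimap) = \mu_1(H)\probP(Aut(H)\circ\hmap)$. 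Multiplying the first display by $\mu_0(G)$ and the second by $\mu_1(H)$ and summing term by term along this bijection yields
\[
\mu_0(G)\probP(G \xrightarrow{\dstar} H) = \mu_1(H)\probP(H \xrightarrow{\doper} G),
\]
which is (\ref{eqn:weightsone}); as $G$ and $H$ were arbitrary, this is \autoref{generalselfadjtheorem}.

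The hard part will be justifying that the orbit correspondence of the previous step is a genuine bijection rather than merely a well-defined map. I would check the three usual points: well-definedness (two elements $\hmap$ in one $Aut(H)$-orbit yield equivalent extensions, hence a single $Aut(G)$-orbit of $\phimap$), surjectivity (every $\phimap$ with $H_{\phimap}\cong H$ arises from some embedding $\hmap$, immediate from the explicit presentation $0\to\Z_p\to H_{\phimap}\to G\to 0$), and injectivity (distinct $Aut(H)$-orbits map to distinct $Aut(G)$-orbits). All three reduce to the statement that an isomorphism class of short exact sequence is recovered equally from its sub-object presentation modulo $Aut(H)$ and from its extension-class presentation modulo $Aut(G)$; here one must be careful about the role of $Aut(\Z_p)$ and confirm that the notion of ``isomorphism class of exact sequence'' implicit on the two sides — and in the statement of (\ref{eqn:weightstwo}) — agrees, since \autoref{thm:gmeasure} and \autoref{thm:hmeasure} were each phrased in terms of it. Once this matching is pinned down, the remaining manipulations are pure bookkeeping: after scaling by $\mu_0(G)$ and $\mu_1(H)$, each individual orbit probability on one side equals the probability of the matched orbit on the other, and the two sums coincide.
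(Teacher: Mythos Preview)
Your proposal is correct and follows exactly the approach the paper intends: decompose each transition probability into a sum over automorphism orbits, use the orbit correspondence set up just before \autoref{auxiliarygeneralselfadjtheorem}, and apply (\ref{eqn:weightstwo}) termwise. The paper itself offers no further proof beyond the one-line assertion in the corollary, so your write-up is in fact more detailed than the original; in particular, your discussion of why the orbit correspondence is a bijection (and the caveat about $Aut(\Z_p)$) fills in a point the paper leaves implicit.
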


\begin{corollary}
We can represent $(\doper,\dstar)$ as a random walk on the weighted bipartite graph on the vertex set $X_0 \cup X_1$, whose edges are exact sequences
\[
0 \rightarrow \Z_p \xrightarrow{h} H \rightarrow G \rightarrow 0
\]
with weight 
\[
\frac{c_0}{|H/h||Aut(H,h)|}
\]
\end{corollary}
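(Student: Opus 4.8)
The plan is to compute each of the two sides of \eqref{eqn:weightstwo} separately and show that each equals $\frac{c_0}{|H/h|\,|Aut(H,h)|}$; this proves both \eqref{eqn:weightstwo} and the ``furthermore'' clause of \autoref{auxiliarygeneralselfadjtheorem} at once. Two facts already in hand will be used throughout: $Ext(G,\Z_p)\cong Hom(G,\Q_p/\Z_p)$ is a \emph{finite} group of order $|G|$ (\autoref{lem:charactersandextensions} and Pontryagin duality for finite abelian $p$-groups), and $c_1=\tfrac{p}{p-1}c_0$, i.e. $\tfrac{c_0}{c_1}=\tfrac{p-1}{p}$. I will also use that $H$ splits as $\Z_p\oplus H_{tors}$ (\autoref{lem:charactersandextensions}, or the structure theorem over $\Z_p$) and that, writing $\hmap=(a_0,t_0)$ in this splitting with $a_0\in p^{m}\Z_p\setminus p^{m+1}\Z_p$, one has $|H/h|=p^{m}\,|H_{tors}|$ --- a fact I would verify by a short cokernel computation over $\Z_p$.

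For the $G$-side the argument is short once the right stabiliser is identified. Orbit--stabiliser for the action of the finite group $Aut(G)$ on the finite set $Ext(G,\Z_p)$ gives
\[
\probP\!\left(Aut(G)\circ\phimap\right)=\frac{|Aut(G)\circ\phimap|}{|Ext(G,\Z_p)|}=\frac{|Aut(G)|}{|Ext(G,\Z_p)|\,\bigl|Stab_{Aut(G)}(\phimap)\bigr|}.
\]
The key point is that $Stab_{Aut(G)}(\phimap)\cong Aut(H,h)$: restricting to $G$ an automorphism of the extension $H$ that fixes the distinguished copy of $\Z_p$ --- equivalently, fixes $\hmap$ --- lands in $Stab_{Aut(G)}(\phimap)$, and this restriction map is injective (its kernel is parametrised by $Hom(G,\Z_p)=0$, as $G$ is finite and $\Z_p$ torsion-free) and surjective (an automorphism $\sigma$ of $G$ fixing the class $\phimap$ satisfies $\sigma^{*}H\cong H$ as extensions, hence lifts to a middle map, which is an isomorphism by the five lemma and fixes $\hmap$). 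Since $|Ext(G,\Z_p)|=|G|=|H/h|$ this gives
\[
\mu_0(G)\,\probP\!\left(Aut(G)\circ\phimap\right)=\frac{c_0}{|Aut(G)|}\cdot\frac{|Aut(G)|}{|H/h|\,|Aut(H,h)|}=\frac{c_0}{|H/h|\,|Aut(H,h)|}.
\]

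The $H$-side is where the real work lies, and I expect it to be the main obstacle. By definition $\probP(Aut(H)\circ\hmap)$ is the Haar measure in $H$ (total mass $1$) of the orbit $Aut(H)\circ\hmap$. Since $Hom(H_{tors},\Z_p)=0$, every automorphism of $H=\Z_p\oplus T$ (write $T\defeq H_{tors}$) has the ``block lower-triangular'' form $(u,v)\mapsto\bigl(au,\,c(u)+d(v)\bigr)$ with $a\in\Z_p^{\times}$, $c\in Hom(\Z_p,T)$, $d\in Aut(T)$. Applying all of these to $\hmap=(a_0,t_0)$ with $a_0\in p^{m}\Z_p\setminus p^{m+1}\Z_p$, one checks that the orbit is exactly the \emph{product set}
\[
\{a_0 a:\ a\in\Z_p^{\times}\}\ \times\ \bigl(p^{m}T+Aut(T)\cdot t_0\bigr)\ \subseteq\ \Z_p\oplus T,
\]
whose Haar measure is $\bigl(p^{-m}-p^{-m-1}\bigr)\cdot\dfrac{|p^{m}T+Aut(T)\cdot t_0|}{|T|}$ (for $T=0$ this reproduces the value $\tfrac{p-1}{p}\tfrac{1}{|H/h|}$ of the example in the survey). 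It then remains to verify the purely combinatorial identity that this equals $\dfrac{c_0}{c_1}\cdot\dfrac{|T|\,|Aut(T)|}{|H/h|\,|Aut(H,h)|}$. Substituting $\tfrac{c_0}{c_1}=\tfrac{p-1}{p}$ and $|H/h|=p^{m}|T|$, the formula $|Aut(H,h)|=|T[p^{m}]|\cdot\bigl|Stab_{Aut(T)}(\bar t_0)\bigr|$ obtained by solving the fixed-point equations for the triples $(1,c,d)$ --- here $\bar t_0$ is the image of $t_0$ in $T/p^{m}T$ and $T[p^{m}]=\ker(p^{m}\colon T\to T)$ --- and the decomposition $|p^{m}T+Aut(T)\cdot t_0|=|p^{m}T|\cdot\bigl|Aut(T)\cdot\bar t_0\bigr|$ handled by orbit--stabiliser in $T/p^{m}T$ together with $|T[p^{m}]|=|T/p^{m}T|$ and $|p^{m}T|=|T|/|T/p^{m}T|$, all terms cancel and the two sides become the same expression. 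The genuinely delicate points are exactly these: pinning down the orbit of $\hmap$ under $Aut(H)$ as a product set, and the bookkeeping in the final cancellation; everything else is orbit--stabiliser, as the section outline advertises. Finally, \autoref{generalselfadjtheorem} follows from \autoref{auxiliarygeneralselfadjtheorem} by summing the identity over $Aut(G)$-orbits (equivalently, $Aut(H)$-orbits) of exact sequences $0\to\Z_p\to H\to G\to 0$.
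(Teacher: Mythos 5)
Your proposal is correct. On the $G$-side it coincides with the paper's own argument (\autoref{philemma} and its corollary): orbit--stabilizer for the finite group $Aut(G)$ acting on $Ext(G,\Z_p)$, the identity $|Ext(G,\Z_p)|=|G|=|H/h|$, and the identification of the stabilizer of $\phimap$ with $Aut(H,\hmap)$ via the five lemma and $Hom(G,\Z_p)=0$. On the $H$-side, however, you take a genuinely different route. The paper (\autoref{hlemma}, \autoref{lem:differential}, \autoref{lem:sublemmaexpression2}) treats the profinite action of $Aut(H)$ on $H$ by an infinite orbit--stabilizer formula, writing $\nu(Aut(H)\circ\hmap)$ as $\frac{1}{|Aut(H,\hmap)|}\int_{Aut(H)}\frac{dH}{d(Aut(H))}\,d(Aut(H))$ and computing the change-of-measure factor as a limit over congruence neighbourhoods $U_l$, via $\nu(U_l\circ\hmap)=\frac{1}{|H/h|\,|\Z/p^l\Z|}$ and $|Aut(H)/U_l|=|H_{tors}|\,|Aut(H_{tors})|\,|GL_1(\Z/p^l\Z)|$. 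You instead describe the orbit $Aut(H)\circ\hmap$ explicitly as the product set $\left(a_0\Z_p^{\times}\right)\times\left(p^{m}H_{tors}+Aut(H_{tors})\cdot t_0\right)$ inside $\Z_p\oplus H_{tors}$, using the lower-triangular block form of $Aut(\Z_p\oplus H_{tors})$ forced by $Hom(H_{tors},\Z_p)=0$, read off its Haar measure directly, and close with the fixed-point count $|Aut(H,\hmap)|=|H_{tors}[p^{m}]|\cdot|{\rm Stab}(\bar t_0)|$; I have checked the final cancellation (using $|H/h|=p^{m}|H_{tors}|$, $|H_{tors}[p^{m}]|=|H_{tors}/p^{m}H_{tors}|$, and orbit--stabilizer in $H_{tors}/p^{m}H_{tors}$) and it is correct. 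Your version is more elementary and entirely explicit: no limiting argument or Jacobian heuristic is needed, and every set whose measure is computed is exhibited concretely. What the paper's version buys is robustness: the $U_l$-argument transfers almost verbatim to the rank-$k$ operators $\dk$ and $\dkstar$ treated in the following subsection, where $\hmap\in Hom(\Z_p^k,H)$ and the orbit is no longer a product set, so your explicit description would become considerably more laborious there.
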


We will prove \autoref{auxiliarygeneralselfadjtheorem} by combining 
\autoref{philemma} and \autoref{hlemma}, below.

\begin{lemma}
\label{philemma}
$$
\probP \left( Aut(G) \circ \phimap \right) =\frac{|Aut(G)|}{|Aut(G,\phimap)||G|}  
$$
\end{lemma}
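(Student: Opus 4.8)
The claim to prove is Lemma~\ref{philemma}:
\[
\probP\left(Aut(G)\circ\phimap\right)=\frac{|Aut(G)|}{|Aut(G,\phimap)||G|}.
\]
Here $\phimap$ is a uniformly random element of $Ext(G,\Z_p)$, and by \autoref{lem:charactersandextensions} we have a canonical $Aut(G)$-equivariant bijection $Ext(G,\Z_p)\cong Hom(G,\Q_p/\Z_p)$. Since $G$ is a finite abelian $p$-group, Pontryagin duality gives $|Hom(G,\Q_p/\Z_p)|=|\widehat{G}|=|G|$, so $|Ext(G,\Z_p)|=|G|$. The plan is therefore almost purely the orbit-stabilizer theorem.

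First I would rewrite the probability: since $\phimap$ is uniform on a set of size $|G|$, the probability of landing in the orbit $Aut(G)\circ\phimap$ equals $|Aut(G)\circ\phimap|/|Ext(G,\Z_p)| = |Aut(G)\circ\phimap|/|G|$. Next I apply the orbit-stabilizer theorem to the action of the finite group $Aut(G)$ on the finite set $Ext(G,\Z_p)$ (equivalently on $Hom(G,\Q_p/\Z_p)$, where the action is precomposition): the orbit of $\phimap$ has size $|Aut(G)|/|Stab(\phimap)|$, and by definition $Stab(\phimap)=Aut(G,\phimap)$, the subgroup of automorphisms of $G$ fixing the extension class (equivalently, fixing the homomorphism $\phimap\colon G\to\Q_p/\Z_p$). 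Substituting gives
\[
\probP\left(Aut(G)\circ\phimap\right)=\frac{1}{|G|}\cdot\frac{|Aut(G)|}{|Aut(G,\phimap)|},
\]
which is exactly the claim.

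There is essentially no main obstacle here; the only point that merits a sentence of care is the identification $|Ext(G,\Z_p)|=|G|$, which I would justify via \autoref{lem:charactersandextensions} together with the standard fact that a finite abelian group is (non-canonically) isomorphic to its Pontryagin dual $Hom(G,\Q_p/\Z_p)$. One should also note that the bijection of \autoref{lem:charactersandextensions} is $Aut(G)$-equivariant, so that orbits and stabilizers computed on the $Hom$ side agree with those on the $Ext$ side — this makes the notation $Aut(G,\phimap)$ unambiguous. Everything else is the bare orbit-stabilizer count.
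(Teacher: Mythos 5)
Your proof is correct and follows exactly the paper's own argument: express the probability as $|Aut(G)\circ\phimap|/|Ext(G,\Z_p)|$, apply orbit-stabilizer, and use $|Ext(G,\Z_p)|=|Hom(G,\Q_p/\Z_p)|=|G|$. The extra remark on $Aut(G)$-equivariance of the bijection from \autoref{lem:charactersandextensions} is a reasonable added precision but does not change the route.
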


\begin{proof}

Let $\phimap$ be an element of the $Aut(G)$ orbit of $Ext(G,\Z_p)$

$$
\probP \left( Aut(G) \circ \phimap \right)=\frac{|Aut(G) \circ \phimap|}{|Ext(G,\Z_p)|}=
$$

$$
=\frac{|Aut(G)|}{|Aut(G,\phimap)|} \frac{1}{|Ext(G,\Z_p)|}
$$
by the orbit-stabilizer formula. To conclude the result, we note that 
\[
|Ext(G,\Z_p)|=|Hom(G,\Q_p/\Z_p)|=|G|
\]
\end{proof}

\begin{corollary} (of \autoref{philemma})
\begin{equation}
\label{eqn:philemma}
\mu_0(G) \probP \left( Aut(G) \circ \phimap \right) = \frac{c_0}{|Aut(G,\phimap)||G|}=
\frac{c_0}{|Aut(H,\hmap)||H/h|}
\end{equation}
\end{corollary}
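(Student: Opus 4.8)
The plan is to deduce the first equality directly from \autoref{philemma} and then establish the second equality by constructing a canonical isomorphism between the two automorphism groups in the denominators. For the first equality I would simply substitute the definition $\mu_0(G) = c_0/|Aut(G)|$ into the conclusion of \autoref{philemma}, namely $\probP(Aut(G)\circ\phimap) = |Aut(G)|/(|Aut(G,\phimap)||G|)$. The factor $|Aut(G)|$ cancels, leaving $\mu_0(G)\,\probP(Aut(G)\circ\phimap) = c_0/(|Aut(G,\phimap)||G|)$. This step is purely formal and requires no new input.

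The substance is in the second equality, where I must show $|Aut(G,\phimap)||G| = |Aut(H,\hmap)||H/h|$. Since the exact sequence $0\to\Z_p\xrightarrow{h}H\to G\to 0$ identifies $G$ with the cokernel $H/h(\Z_p)$, the orders $|G|$ and $|H/h|$ coincide, so the whole claim reduces to $|Aut(G,\phimap)| = |Aut(H,\hmap)|$. My plan is to produce an actual isomorphism between these groups. I would define the descent homomorphism $Aut(H,\hmap)\to Aut(G)$ sending $\alpha$ (an automorphism of $H$ with $\alpha\circ h = h$) to the induced automorphism $\bar\alpha$ of the quotient $G = H/h(\Z_p)$; this is well defined because $\alpha$ fixes $h(\Z_p)$ and hence preserves the projection to $G$. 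I would then check that the image lands in the stabilizer $Aut(G,\phimap)$: such an $\alpha$ realizes an isomorphism of the extension $H$ with itself that is the identity on $\Z_p$ and induces $\bar\alpha$ on $G$, so $\bar\alpha$ fixes the extension class $\phimap$.

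The two remaining points are injectivity and surjectivity of this descent map onto $Aut(G,\phimap)$. For injectivity I would note that an element of the kernel is an $\alpha$ of the form $\alpha(x) = x + h(f(x))$ for a homomorphism $f\colon H\to\Z_p$ that vanishes on $h(\Z_p)$; such $f$ factors through the finite group $G$, and since $\Hom(G,\Z_p)=0$ for $G$ finite and $\Z_p$ torsion-free, $f=0$ and $\alpha=\mathrm{id}$. For surjectivity, given $\beta\in Aut(G,\phimap)$, the equality of extension classes $\beta^{*}\phimap = \phimap$ should furnish an isomorphism of extensions that is the identity on $\Z_p$ and induces $\beta$ on $G$, which is precisely an element of $Aut(H,\hmap)$ descending to $\beta$. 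I expect surjectivity to be the main obstacle, since it requires translating the stabilizer condition $\beta^{*}\phimap=\phimap$ in $Ext(G,\Z_p)$ into a genuine automorphism of $H$ fixing $h$; here I would invoke the functoriality of $Ext$ together with the fiber-product description of extensions in \autoref{lem:charactersandextensions} to guarantee that the lift exists and preserves the embedding. Combining the isomorphism $Aut(H,\hmap)\cong Aut(G,\phimap)$ with $|G|=|H/h|$ then yields the second equality.
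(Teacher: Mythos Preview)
Your proposal is correct and follows essentially the same approach as the paper: the first equality is immediate from $\mu_0(G)=c_0/|Aut(G)|$, and the second reduces to $|G|=|H/h|$ together with $Aut(G,\phimap)\cong Aut(H,\hmap)$. The paper phrases the latter isomorphism slightly more concisely by noting that both groups are in bijection with the automorphism group of the exact sequence $0\to\Z_p\xrightarrow{h}H\to G\to 0$, whereas you construct the descent map $Aut(H,\hmap)\to Aut(G,\phimap)$ directly and verify injectivity and surjectivity; these are two presentations of the same argument.
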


\begin{proof}(of Corollary)
We need to show the second equality in the corollary, i.e.
\[
|Aut(G,\phimap)||G| = |Aut(H,\hmap)||H/h|
\]
\begin{itemize}
\item By definition, $H/h \cong G$. Hence $|G| =|H/h|$
\item Both $Aut(H,\hmap)$ and $Aut(G,\phimap)$ is in bijection with the group of automorphisms of
\begin{diagram}
0 \arrow[r] &\Z_p  \arrow[r,"h"] & H \arrow[r] & G \arrow[r] & 0  \\
\end{diagram}
\begin{mysection}
i.e with diagrams
\begin{diagram}
0 \arrow[r] &\Z_p \arrow[d, equal] \arrow[r,"h"] & H \arrow[d] \arrow[r] & G \arrow[d] \arrow[r] & 0  \\
0 \arrow[r] &\Z_p  \arrow[r,"h"] & H \arrow[r] & G \arrow[r] & 0  \\
\end{diagram}
\end{mysection}

Hence $Aut(H,\hmap) \cong Aut(G,\phimap)$. \marginparr{ADD MORE DETAIL LATER}

\end{itemize}
\end{proof}

\begin{lemma}
\label{hlemma}

\[
\probP \left( Aut(H) \circ \hmap \right) = 
\]

\[
=\frac{|H_{tors}| |Aut(H_{tors})|} {|Aut(H,\hmap)||H/h|} (1-\frac{1}{p})
\]

\end{lemma}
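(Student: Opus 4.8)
The plan is to compute $\probP(Aut(H)\circ h)$, which by definition is the Haar probability measure in $H$ of the orbit $Aut(H)\circ h$, via a $p$-adic form of the orbit-stabilizer formula. First I would fix a (non-canonical) splitting $H\cong\Z_p\oplus T$ with $T\defeq H_{tors}$, write the chosen element as $h=(a,t)$, and note that a Haar-random $h$ has $a\neq0$ almost surely; so I may assume $a\neq0$ and set $v\defeq\mathrm{val}_p(a)$. A determinant computation on a presentation matrix of $H/\langle h\rangle$ gives $|H/h|=p^{v}\,|T|$, which I will use at the very end. Since $\Hom(T,\Z_p)=0$, every automorphism of $H$ has the block form $(x,y)\mapsto(ux,\ xs+\theta y)$ with $u\in\Z_p^{\times}$, $s\in\Hom(\Z_p,T)\cong T$, $\theta\in Aut(T)$; hence $Aut(H)$ is a \emph{compact} group, which as a measure space I identify with $\Z_p^{\times}\times T\times Aut(T)$ carrying $(\text{Haar on }\Z_p^{\times})\otimes(\text{counting})\otimes(\text{counting})$ --- a short check shows this product measure is translation-invariant --- and which acts on $H$ preserving Haar measure (multiplication by a unit on the $\Z_p$-factor, and arbitrary bijections of the finite group $T$, are measure-preserving).

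Next I would realize the orbit $O\defeq Aut(H)\circ h$ as the homogeneous space $Aut(H)/Aut(H,h)$ and compare invariant measures on it. The stabilizer $Aut(H,h)$ is finite: fixing $h=(a,t)$ with $a\neq0$ forces $u=1$, leaving only finitely many $(s,\theta)$; so the orbit map $\Phi\colon Aut(H)\to O$, $\alpha\mapsto\alpha(h)$, is everywhere $|Aut(H,h)|$-to-one. Both $\Phi_{*}(\text{Haar probability on }Aut(H))$ and the normalized restriction $\tfrac{1}{\mathrm{Haar}_H(O)}\mathrm{Haar}_H|_{O}$ are $Aut(H)$-invariant probability measures on the single orbit $O$, hence they coincide. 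So the task reduces to pinning down the constant $\mathrm{Haar}_H(O)$.

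To do that I would match densities at the point $h$ itself. Near $h$, in the coordinates $\Z_p\oplus T$, the orbit is $\{x\in\Z_p:\mathrm{val}_p(x)=v\}\times\{t\}$, an open subset of $\Z_p\oplus T$ on which $\mathrm{Haar}_H$ has density $1/|T|$. On the other side, the $\Phi$-preimage of a small ball $B_{\Z_p}(a,\varepsilon)\times\{t\}$ consists of the $u$ with $au\in B_{\Z_p}(a,\varepsilon)$ --- a set of Haar measure $\varepsilon/|a|_p=p^{v}\varepsilon$ in $\Z_p$, so $p^{v}\varepsilon/(1-p^{-1})$ for the probability Haar on $\Z_p^{\times}$ --- times the $|Aut(H,h)|$ solutions $(s,\theta)$ of $as+\theta t=t$. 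Comparing the two masses of this set yields
\[
\frac{1}{\mathrm{Haar}_H(O)}=\frac{p^{v}\,|Aut(H,h)|}{(1-p^{-1})\,|Aut(T)|},
\qquad\text{so}\qquad
\mathrm{Haar}_H(O)=\frac{(1-p^{-1})\,|Aut(T)|}{p^{v}\,|Aut(H,h)|},
\]
and substituting $p^{v}=|H/h|/|T|$ gives the claimed value $\probP(Aut(H)\circ h)=\dfrac{|T|\,|Aut(T)|}{|Aut(H,h)|\,|H/h|}\bigl(1-\tfrac1p\bigr)$. (As a sanity check: the corollary to \autoref{philemma} together with $c_0/c_1=1-\tfrac1p$ forces $\mu_1(H)\probP(Aut(H)\circ h)=\mu_0(G)\probP(Aut(G)\circ\phi)$ to take exactly this value, which is also how \autoref{hlemma} will feed into \autoref{auxiliarygeneralselfadjtheorem}.)

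I expect the genuine obstacle to be the last step. The naive orbit-stabilizer count ``$|O|=|Aut(H)|/|Aut(H,h)|$'' is meaningless here because $Aut(H)$ is infinite and $H$ is not discrete, so everything must be routed through Haar measures and the uniqueness of the invariant measure on $Aut(H)/Aut(H,h)$, with the normalization supplied by the $p$-adic ``Jacobian'' $|a|_p^{-1}=p^{v}$ of the map $u\mapsto au$ on $\Z_p^{\times}$. This is precisely where the factor $p^{v}$ (hence the $|H/h|$ in the denominator) and the factor $1-\tfrac1p$ (the measure of $\Z_p^{\times}$ inside $\Z_p$) enter; everything else is formal.
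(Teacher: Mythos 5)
Your proposal is correct and follows essentially the same route as the paper: an orbit--stabilizer formula for Haar measures (the orbit is a homogeneous space of the compact group $Aut(H)$, so the pushforward of Haar measure equals the normalized restriction of Haar measure to the orbit), combined with a local ``Jacobian'' computation that uses the splitting $H \cong H_{tors} \times \Z_p$ and the resulting description of $Aut(H)$. The only difference is one of packaging: the paper extracts the density $\frac{dH}{d(Aut(H))}$ as a limit over the congruence subgroups $U_l \subset GL_1(\Z_p)$, computing $\nu(U_l \circ h) = |H/p^l h|^{-1}$ and $|Aut(H)/U_l|$, whereas you match densities at the point $h = (a,t)$ directly via small balls and the factor $|a|_p^{-1} = p^{v} = |H/h|/|H_{tors}|$ --- the same computation in different coordinates.
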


Before giving the proof of \autoref{hlemma}, we give two corollaries:

\begin{corollary}
\begin{equation}
\label{eqn:hlemma}
\mu_{1}(H)\probP \left( Aut(H) \circ \hmap \right) = \frac{c_0}{|Aut(H,\hmap)||H/h|}=
\frac{c_0}{|Aut(G,\phimap)||G|}
\end{equation}
Hence, combining (\ref{eqn:philemma}) and (\ref{eqn:hlemma}), we get \autoref{auxiliarygeneralselfadjtheorem}.
\end{corollary}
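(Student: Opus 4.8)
The plan is to compute the Haar measure $\probP(Aut(H)\circ h)$ of the orbit directly, by an orbit--stabilizer argument for the action of the compact group $Aut(H)$ on $H$ that keeps track of the change-of-measure factor (the ``Jacobian''), as indicated in the strategy above. Since $H\in X_1$ has $\Q_p$-rank $1$ and $h$ is non-torsion (otherwise the cokernel $H/h$ would be infinite and the exact sequence $0\to\Z_p\xrightarrow{h}H\to H/h\to 0$ would not make sense), we fix an isomorphism $H\cong\Z_p\oplus H_{tors}$ under which $h=(x,y)$ with $x\ne 0$, and set $k\defeq v_p(x)$; this is independent of the chosen isomorphism since any automorphism multiplies the first coordinate of $h$ by a unit. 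First I would record the structure of $Aut(H)$: an endomorphism of $\Z_p\oplus H_{tors}$ is a triple $(A,C,D)$ with $A\in\mathrm{End}(\Z_p)$, $C\in\Hom(\Z_p,H_{tors})$ and $D\in\mathrm{End}(H_{tors})$ (the component in $\Hom(H_{tors},\Z_p)$ vanishes because $\Z_p$ is torsion-free), and it is an automorphism iff $A\in\Z_p^\times$ and $D\in Aut(H_{tors})$. Thus $Aut(H)$ is, as a topological space, $\Z_p^\times\times\Hom(\Z_p,H_{tors})\times Aut(H_{tors})$: a profinite, hence compact, group whose Haar probability measure is the product of the Haar measure on $\Z_p^\times$ with the uniform measures on the two finite factors, where $|\Hom(\Z_p,H_{tors})|=|H_{tors}|$.

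Next I would carry out the orbit--stabilizer computation for the orbit map $\Phi\colon Aut(H)\to H$, $\sigma\mapsto\sigma(h)$. Two facts make it run. (i) $\Phi$ is a local analytic isomorphism onto a clopen subset of $H$: its stabilizer $Aut(H,h)$ is finite (being isomorphic, by the corollary to \autoref{philemma}, to $Aut(G,\phimap)\le Aut(G)$ with $G$ finite), both $Aut(H)$ and $H$ are $1$-dimensional, and the derivative of $\Phi$ is nowhere zero --- near the identity $\Phi$ sends $a\mapsto(ax,y)$, whose derivative is multiplication by a nonzero element, and the general point follows by left translation. (ii) $\Phi\circ L_\tau=\tau\circ\Phi$ for every $\tau\in Aut(H)$, where $L_\tau$ is left translation; as $L_\tau$ preserves the Haar measure of $Aut(H)$ and every automorphism of the compact group $H$ preserves its Haar probability measure, the local Jacobian of $\Phi$ is one constant $J$, the same at every point. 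Since each fibre of $\Phi$ is a coset of $Aut(H,h)$, pushing the identity $\int_{Aut(H)}1=1$ forward along $\Phi$ gives
\[
\probP(Aut(H)\circ h)=\frac{1}{|Aut(H,h)|\,J}.
\]

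It then remains to evaluate $J$, which I would read off at the identity. A small enough neighbourhood of $\mathrm{id}$ in $Aut(H)$ is of the form $\{(a,0,\mathrm{id}_{H_{tors}}):a\in 1+p^N\Z_p\}$, of Haar measure $\big(p^N(1-1/p)\,|H_{tors}|\,|Aut(H_{tors})|\big)^{-1}$, and $\Phi$ sends it bijectively onto $\{(w,y):w\in x+p^{N+k}\Z_p\}\subseteq\Z_p\oplus H_{tors}$, which has Haar measure $\big(p^{N+k}\,|H_{tors}|\big)^{-1}$ (a coset of $p^{N+k}\Z_p$ in the first coordinate, a single point in the second). Dividing, $J=p^{k}/\big((1-1/p)\,|Aut(H_{tors})|\big)$, and therefore
\[
\probP(Aut(H)\circ h)=\frac{(1-1/p)\,|Aut(H_{tors})|}{|Aut(H,h)|\,p^{k}}.
\]

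Finally I would match this with the form stated in \autoref{hlemma} using the identity $|H/h|=p^{k}|H_{tors}|$. This comes from the two-step filtration of $H/\langle h\rangle$ by the image of $\Z_p\oplus 0$: writing $p^{m}$ for the order of $y$, one has $(\Z_p\oplus 0)\cap\langle h\rangle=p^{m+k}\Z_p\oplus 0$ and $\Z_p\oplus 0+\langle h\rangle=\Z_p\oplus\langle y\rangle$, so $|H/\langle h\rangle|=p^{m+k}\cdot(|H_{tors}|/p^{m})=p^{k}|H_{tors}|$. Substituting $p^{k}=|H/h|/|H_{tors}|$ into the last display gives exactly
\[
\probP(Aut(H)\circ h)=\frac{|H_{tors}|\,|Aut(H_{tors})|}{|Aut(H,h)|\,|H/h|}\Big(1-\frac1p\Big),
\]
which is the claim; the subsequent corollary (\ref{eqn:hlemma}) then follows on multiplying by $\mu_1(H)=c_1/(|H_{tors}||Aut(H_{tors})|)$ and using $c_1(1-1/p)=c_0$, together with the identifications $|Aut(H,h)|=|Aut(G,\phimap)|$ and $|H/h|=|G|$ from the corollary to \autoref{philemma}. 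I expect the genuine obstacle to be fact (i) above --- making precise that $\Phi$ is a local analytic isomorphism with a well-defined, constant Jacobian (equivalently, that $\Phi$ factors through an open embedding $Aut(H)/Aut(H,h)\hookrightarrow H$), so that the change-of-variables is licensed; after that everything is bookkeeping, and the torsion-free case $H=\Z_p$ already worked out in the text provides a useful check.
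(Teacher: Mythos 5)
Your proof is correct and follows essentially the same route as the paper: an orbit--stabilizer argument for the $Aut(H)$-action on $H$, with the change-of-measure factor computed on a small neighbourhood of the identity (your $\{(a,0,\mathrm{id}):a\in 1+p^N\Z_p\}$ is exactly the paper's $U_l$, and your identity $|H/h|=p^k|H_{tors}|$ is what the paper absorbs into $\nu(U_l\circ h)=|H/p^l h|^{-1}$). The only substantive difference is that you explicitly justify the constancy of the Jacobian via left-translation equivariance and the Haar-invariance of automorphisms of $H$, a point the paper leaves implicit.
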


The proof of \autoref{hlemma} is a little trickier because $H$ is pro-finite. Let $\nu$ be the Haar probability measure on $Hom(\Z_p,H)$. Tautologically,
\[
\probP \left( Aut(H) \circ \hmap \right) = \nu(Aut(H) \circ \hmap) 
\]

Let $d(Aut(H))$ be the Haar probability measure on $Aut(H)$. $Aut(H,h)$, the stabilizer of $h$, is a finite subgroup of $Aut(H)$, and we have an orbit-stabilizer formula: 
\marginparr{ADD MORE DETAIL}
\[
\nu(Aut(H) \circ \hmap) = \frac{1}{|Aut(H,h)|}\int\limits_{Aut(H)} \frac{dH}{d(Aut(H))}  d(Aut(H))
\]

\autoref{hlemma} would follow from the following.
\begin{lemma}
\label{lem:differential}
\[
\frac{dH}{d(Aut(H))}= \frac{p-1}{p} \frac{|Aut(H_{tors})||H_{tors}|}{| H/h |}
\]
\end{lemma}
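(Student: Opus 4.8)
We need to compute the Radon–Nikodym derivative (the "Jacobian") of the map $\mathrm{Aut}(H) \to H$, $g \mapsto g(h)$, where $h \in \mathrm{Hom}(\mathbb{Z}_p, H) = H$ is a fixed element, and where both $\mathrm{Aut}(H)$ and $H$ carry their Haar probability measures. The claimed value is $\frac{p-1}{p} \cdot \frac{|\mathrm{Aut}(H_{tors})|\,|H_{tors}|}{|H/h|}$. Here $H \cong H_{tors} \times \mathbb{Z}_p$ by the classification, and $h$, being a nonzero element (the extension $0 \to \mathbb{Z}_p \xrightarrow{h} H \to G \to 0$ forces $h$ to generate a copy of $\mathbb{Z}_p$ inside $H$), has a well-defined "direction."

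**Plan.** The plan is to reduce everything to a local computation near a fixed $h$, using that the orbit map is a local isomorphism of $p$-adic analytic manifolds (this was asserted in the outline of the proof of \autoref{thm:hmeasure}). First I would fix the splitting $H \cong H_{tors} \times \mathbb{Z}_p$ and write $h = (t_0, \lambda_0 e)$ where $e$ generates the $\mathbb{Z}_p$ factor; since $h$ is part of an exact sequence with $\mathbb{Z}_p$ on the left and a finite cokernel on the right, the projection of $h$ to the free part must be a nonzero element $\lambda_0 e$ with $\lambda_0 \in \mathbb{Z}_p \setminus \{0\}$, and $|H/h| = |H_{tors}/t_0|\cdot p^{v(\lambda_0)}$ up to the obvious identification — this is where the denominator $|H/h|$ will come from. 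Next I would describe $\mathrm{Aut}(H)$ concretely in block form with respect to the splitting: an automorphism is a matrix $\begin{pmatrix} \alpha & \beta \\ \gamma & \delta \end{pmatrix}$ with $\alpha \in \mathrm{Aut}(H_{tors})$, $\delta \in \mathbb{Z}_p^\times$, $\beta \in \mathrm{Hom}(\mathbb{Z}_p, H_{tors}) \cong H_{tors}$, and $\gamma \in \mathrm{Hom}(H_{tors}, \mathbb{Z}_p) = 0$. So $\mathrm{Aut}(H) \cong \mathrm{Aut}(H_{tors}) \times H_{tors} \times \mathbb{Z}_p^\times$ as a $p$-adic manifold, of "dimension" $1$ (only the $\mathbb{Z}_p^\times$ factor is positive-dimensional), and its Haar probability measure is the product of counting-normalized measures on the first two factors and Haar probability on $\mathbb{Z}_p^\times$.

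**The Jacobian computation.** Then I would compute the image of $h=(t_0,\lambda_0 e)$ under $g = (\alpha,\beta,\delta)$: it is $(\alpha t_0 + \lambda_0\beta e,\ \delta\lambda_0 e)$ — wait, more precisely $g(h) = (\alpha(t_0) + \beta(\lambda_0 e),\ \delta(\lambda_0 e))$. The orbit $\mathrm{Aut}(H)\circ h$ is thus $\{(s, \mu e) : s \in H_{tors},\ \mu \in \lambda_0 \mathbb{Z}_p^\times\}$ provided $\alpha(t_0) + \lambda_0\beta$ ranges over all of $H_{tors}$ as $(\alpha,\beta)$ vary — which it does, since $\beta$ alone already ranges over $\lambda_0 H_{tors}$ and combined with $\alpha$ this is everything (one should check this surjectivity carefully; it uses that $\lambda_0 H_{tors}$ together with the $\mathrm{Aut}(H_{tors})$-orbit of $t_0$ generates $H_{tors}$, or simply that $\beta \mapsto \lambda_0\beta$ has image of index $p^{v(\lambda_0)}$ but translating by $\alpha(t_0)$ and averaging over $\alpha$ fills in — actually the cleanest route is: the map $(\alpha,\beta,\delta)\mapsto g(h)$ has image the full "sphere" $H_{tors}\times \lambda_0\mathbb{Z}_p^\times$, and one computes the pushforward measure directly). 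The pushforward of Haar measure on $\mathrm{Aut}(H)$ under this map: in the $H_{tors}$-coordinate it is uniform on $H_{tors}$ (each fiber over a point of $H_{tors}$ has the same size, by homogeneity under the $\beta$-translation action), contributing the constant $1$ relative to uniform measure on $H_{tors}$, i.e.\ the orbit carries $H_{tors}$-mass $1$; in the $\mathbb{Z}_p$-coordinate the map is $\delta \mapsto \delta\lambda_0 e$ from $\mathbb{Z}_p^\times$ (Haar prob.) to $\mathbb{Z}_p$, whose image $\lambda_0\mathbb{Z}_p^\times$ has Haar measure $\frac{p-1}{p}p^{-v(\lambda_0)}$ in $\mathbb{Z}_p$ and on which the pushforward is the normalized restriction — so $\frac{dH}{d(\mathrm{Aut}(H))}$ equals $|H_{tors}|\cdot |\mathrm{Aut}(H_{tors})| \cdot \frac{p-1}{p} p^{-v(\lambda_0)}$, which is exactly $\frac{p-1}{p}\frac{|\mathrm{Aut}(H_{tors})||H_{tors}|}{|H/h|}$ once we identify $|H/h| = |H_{tors}|\cdot p^{v(\lambda_0)}$ — I need to double-check: $|H/h|$, where $h$ generates $\langle h\rangle \cong \mathbb{Z}_p$; then $H/h \cong H_{tors}/(\text{image}) \times \mathbb{Z}_p/\lambda_0\mathbb{Z}_p$... this requires $t_0$ to be adjusted by the splitting, and the honest statement is $|H/\langle h\rangle| = p^{v(\lambda_0)}\cdot|H_{tors}|$ when $h$ projects to a generator of its rank-one part up to $p^{v(\lambda_0)}$ — I would verify this via Smith normal form of the inclusion $\mathbb{Z}_p \xrightarrow{h} H_{tors}\oplus\mathbb{Z}_p$.

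**Main obstacle.** The hard part will be setting up the orbit-stabilizer/Jacobian formalism for the non-compact-dimensional group $\mathrm{Aut}(H)$ rigorously — i.e.\ justifying that $\nu(\mathrm{Aut}(H)\circ h) = \frac{1}{|\mathrm{Aut}(H,h)|}\int_{\mathrm{Aut}(H)} \frac{dH}{d(\mathrm{Aut}(H))}\,d(\mathrm{Aut}(H))$ makes sense and that the integrand is the genuine Radon–Nikodym derivative of the (local) pushforward — and then carefully tracking the $p$-adic valuations so that the factor $p^{-v(\lambda_0)}$ assembles correctly into $1/|H/h|$. The algebra (block decomposition of $\mathrm{Aut}(H)$, surjectivity of the translation action of $\beta$) is routine; the measure-theoretic bookkeeping on $\mathbb{Z}_p^\times \to \mathbb{Z}_p$, $\delta \mapsto \delta \lambda_0$, and the verification $|H/h| = p^{v(\lambda_0)}|H_{tors}|$, are the two places where a sign/index error would creep in, so those are where I would be most careful.
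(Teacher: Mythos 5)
Your overall strategy---decompose $Aut(H)\cong Aut(H_{tors})\times Hom(\Z_p,H_{tors})\times GL_1(\Z_p)$, write $h=(t_0,\lambda_0)$ with respect to a splitting $H\cong H_{tors}\times\Z_p$, and extract the Jacobian from a measure computation---is the same one the paper uses, and your identity $|H/h|=|H_{tors}|\,p^{v(\lambda_0)}$ is correct and is exactly how the denominator appears. But your execution has a genuine error, caused by mixing the \emph{local} Jacobian with the \emph{global} orbit measure. First, the orbit is not $H_{tors}\times\lambda_0\Z_p^{\times}$ in general: the first coordinate of $g(h)$ is $\alpha(t_0)+\lambda_0\beta(1)$, which ranges only over a union of cosets of $\lambda_0 H_{tors}=p^{v(\lambda_0)}H_{tors}$. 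For $H=\Z/p^2\times\Z_p$ and $h=(0,p)$ the orbit is $p\Z/p^2\times p\Z_p^{\times}$, a proper subset of your claimed orbit, so the "$H_{tors}$-mass is $1$" step fails. Second, and independently, your final expression $|H_{tors}|\,|Aut(H_{tors})|\,\tfrac{p-1}{p}\,p^{-v(\lambda_0)}$ does not equal the target: substituting your own formula $|H/h|=|H_{tors}|\,p^{v(\lambda_0)}$ into $\tfrac{p-1}{p}\tfrac{|Aut(H_{tors})||H_{tors}|}{|H/h|}$ gives $\tfrac{p-1}{p}\,|Aut(H_{tors})|\,p^{-v(\lambda_0)}$, so you are off by a factor of $|H_{tors}|$. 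The missing cancellation is this: in the ratio $\nu(U\circ h)/\mu_{Aut(H)}(U)$ for a small neighbourhood $U$ of the identity, the denominator carries a factor $\tfrac{1}{|Aut(H_{tors})||H_{tors}|}$ because $Aut(H)$ has a large finite part, but the numerator carries a factor $\tfrac{1}{|H_{tors}|}$ because the image $U\circ h$ is concentrated at the \emph{single} point $t_0$ of the $H_{tors}$-coordinate; these two occurrences of $|H_{tors}|$ cancel, leaving only $|Aut(H_{tors})|$. You kept the first factor but replaced the second by mass $1$.

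The paper avoids both pitfalls by never describing the global orbit or the stabilizer at this stage: it takes $U_l=\{u\in GL_1(\Z_p): u\equiv 1 \bmod p^l\}$ sitting inside $Aut(H)$, observes that $U_l\circ h$ is simply a coset of $\langle p^l h\rangle$, so that $\nu(U_l\circ h)=|H/p^l h|^{-1}=\bigl(|H/h|\,p^l\bigr)^{-1}$ with no case analysis, computes $|Aut(H)/U_l|=|H_{tors}|\,|Aut(H_{tors})|\,|GL_1(\Z/p^l\Z)|$ from the block decomposition, and takes the ratio; the factor $\tfrac{p-1}{p}$ falls out as $|GL_1(\Z/p^l\Z)|/p^l$. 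If you want to salvage your global route, you must compute $|Aut(H,h)|$ and the genuine orbit measure and check that their product reproduces $\tfrac{p-1}{p}\,|Aut(H_{tors})|\,p^{-v(\lambda_0)}$; the purely local computation is substantially cleaner.
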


We choose a sequence of open neighbourhoods of $0$, $U_l$, defined as follows:

\[
U_l \coloneqq \Big\{ u \in Aut(Z_p)=GL_1(\Z_p) \Big| u \equiv id \mod p^l \Big\}
\]

\begin{remark}
If $l$ is large enough relative to $H$, the $U_l$ are normal subgroups.
\end{remark}

\[
\frac{dH}{d(Aut(H))}=\lim_{l\rightarrow \infty} \frac{\nu(U_l \circ h)}{ |Aut(H)/U_l|^{-1}}
\]

\begin{mysection}
We let $U_\mathcal{I}$ be a neighborhood of the identity in $Aut(H)$, defined as the set of all automorphisms that are congruent to the identity $\mod \mathcal{I}$.

\begin{itemize}
\item The index of $U_\mathcal{I}$ in $Aut(H)$ is $Aut(\Z_p/\mathcal{I}\Z_p) |Aut(H_{tors})| |H_{tors}|$.
\item The index of $U_\mathcal{I} \circ h$ in $H$ is $(h/\mathcal{I}h)(H/h)$
\end{itemize}
\end{mysection}

\begin{mysection}
The subgroup $Aut(H, \hmap)$ is a finite subgroup of $Aut(H)$, and hence it is a discrete subgroup. Therefore, we have:

$$
\nu(Aut(H) \circ \hmap) = \nu(Aut(H) \circ U_l\hmap) =  \frac{|Aut(H)/U_l|}{|Aut(H,\hmap)|}\nu(U_l\hmap)
$$
\end{mysection}

\begin{lemma}
\begin{equation}
\label{sublemmaexpression1}
\nu(U_l \circ \hmap) =\frac{1}{|H/{p^{l} \hmap}|} =\frac{1}{|H/h||\Z/p^l \Z|} 
\end{equation}
\end{lemma}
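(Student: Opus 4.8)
The statement to prove is the identity
\[
\nu(U_l \circ h) = \frac{1}{|H/p^l h|} = \frac{1}{|H/h|\,|\Z/p^l\Z|},
\]
where $\nu$ is the Haar probability measure on $\Hom(\Z_p, H) \cong H$ and $U_l = \{u \in GL_1(\Z_p) : u \equiv \mathrm{id} \bmod p^l\}$. The plan is to identify the orbit $U_l \circ h$ explicitly as a coset (or union of cosets) inside $H$ and then just read off its Haar measure. Since $h \in \Hom(\Z_p, H)$ is determined by the image of $1$, I will conflate $h$ with the element $h(1) \in H$; then $U_l$ acts by $u \cdot h = h \circ u$, i.e. by multiplying $h$ by a unit $u \equiv 1 \bmod p^l$.

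**Identifying the orbit.**
First I would observe that $u \equiv 1 \bmod p^l$ means $u = 1 + p^l t$ for some $t \in \Z_p$, so $u \cdot h = h + p^l t \, h$. As $t$ ranges over $\Z_p$, the element $p^l t\, h$ ranges exactly over the cyclic subgroup $p^l \langle h \rangle = \langle p^l h \rangle \subseteq H$. Hence
\[
U_l \circ h = h + \langle p^l h\rangle,
\]
a coset of the finite subgroup $\langle p^l h \rangle$. Therefore $\nu(U_l \circ h) = \nu(\langle p^l h\rangle) = 1/[H : \langle p^l h\rangle] = 1/|H/p^l h|$, using that $\nu$ is translation-invariant and that for a closed (here finite-index, since $H/\langle h\rangle$ is finite) subgroup the Haar measure of the subgroup is the reciprocal of its index. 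This gives the first equality.

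**The second equality.**
For the second equality I need $|H/p^l h| = |H/h|\cdot|\Z/p^l\Z| = p^l\,|H/h|$. This follows from the short exact sequence of finite abelian groups
\[
0 \to \langle h \rangle / \langle p^l h\rangle \to H/\langle p^l h\rangle \to H/\langle h\rangle \to 0,
\]
together with the fact that $h$ has infinite order (since $H$ has $\Q_p$-rank $1$ and — in the situation of \autoref{hlemma} — $h$ generates a $\Z_p$), so $\langle h \rangle \cong \Z_p$ and $\langle h\rangle/\langle p^l h\rangle \cong \Z_p/p^l\Z_p \cong \Z/p^l\Z$. Multiplying indices gives the claim.

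**Main obstacle.**
The only genuine subtlety is justifying the orbit-measure formula $\nu(\langle p^l h\rangle) = 1/[H:\langle p^l h\rangle]$ rigorously: $H$ is profinite, $\nu$ is its normalized Haar measure, and $\langle p^l h\rangle$ is an open (equivalently, finite-index) subgroup precisely because $H/\langle h \rangle \cong G$ is finite, so $\langle p^l h\rangle$ has finite index $p^l|G|$ in $H$. Once openness is in hand, the identity is the standard fact that normalized Haar measure assigns an open subgroup the reciprocal of its index, and translation-invariance handles the coset. I would also remark that one should check $h$ is not torsion — this is where the hypothesis that we are looking at $\Z_p$-extensions (so the $\Z_p$ in $0 \to \Z_p \xrightarrow{h} H \to G \to 0$ embeds as a direct summand, cf. \autoref{lem:charactersandextensions}) is used; for a torsion $h$ the second equality would fail.
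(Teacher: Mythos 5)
Your proof is correct and follows essentially the same route as the paper's: translation-invariance of Haar measure identifies $\nu(U_l\circ h)$ with the measure of the open subgroup $p^l\Z_p h$, which is the reciprocal of its index, and the index factors as $|H/h|\,|\Z/p^l\Z|$ because $h$ is non-torsion. (One inessential quibble: the image of $h$ need not be a direct summand of $H$ — injectivity of $\Z_p\xrightarrow{h}H$, forced by exactness, is all your second equality actually uses.)
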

\marginparr{changed $m$ to $l$. Is this correct?}

\begin{proof}

By the fact that $\nu$ is a Haar measure,

$$
\nu \Big( U_l \circ \hmap \Big)=\nu \Big(   (U_l-1) \circ \hmap \Big) = \nu \Big(p^l \hmap \Big)=
|H/p^{l}h|^{-1}=\frac{1}{|H/h||\Z/p^l \Z|}
$$

\end{proof}

\begin{lemma}
\label{lem:sublemmaexpression2}
For large $l$, the index of $U_l$ in $Aut(H)$ is
\begin{equation}
\label{sublemmaexpression2}
|Aut(H)/U_l|=|H_{tors}| |Aut(H_{tors})| |GL_1(\Z/p^l\Z)|
\end{equation}
\end{lemma}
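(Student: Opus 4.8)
\begin{hproof}
The plan is to work out $Aut(H)$ and the subgroup $U_l$ completely explicitly, using the structure of a finitely generated $\Z_p$-module of $\Q_p$-rank $1$. Write $H \cong \Z_p \oplus H_{tors}$, where $H_{tors}$ is the (canonical) torsion submodule, a finite abelian $p$-group, and such a splitting exists because $\Z_p$ is free; fix one. Assume $l$ is large enough that $p^l H_{tors} = 0$ — this is the meaning of ``for large $l$''. Since $\Hom(H_{tors},\Z_p)=0$, every endomorphism of $H$ has the lower-triangular block form $\begin{pmatrix} a & 0 \\ c & d\end{pmatrix}$ with $a \in \Z_p$, $c \in \Hom(\Z_p,H_{tors}) \cong H_{tors}$, and $d \in \mathrm{End}(H_{tors})$; such a block endomorphism is bijective exactly when $a \in \Z_p^{\times} = GL_1(\Z_p)$ and $d \in Aut(H_{tors})$. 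Hence $Aut(H)$ is in bijection with the set of triples $(a,c,d) \in GL_1(\Z_p) \times H_{tors} \times Aut(H_{tors})$.

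Next I would pin down $U_l$ inside this description. Viewing $U_l$ as the kernel of the reduction map $Aut(H) \to Aut(H/p^l H)$ (equivalently, as the image of $1 + p^l\Z_p \subseteq GL_1(\Z_p)$ acting on the $\Z_p$-summand, which is the subgroup written down in the definition above): because $p^l H_{tors}=0$ we have $H/p^l H \cong (\Z/p^l\Z) \oplus H_{tors}$ with $H_{tors}$ mapping isomorphically onto the torsion part, so chasing the block form shows $(a,c,d)$ reduces to the identity on $H/p^l H$ iff $a \equiv 1 \bmod p^l$, $c = 0$, and $d = \mathrm{id}$. Thus $U_l = \{(a,0,\mathrm{id}) : a \in 1 + p^l\Z_p\}$.

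Finally I would count cosets: from the two previous paragraphs, $Aut(H)/U_l$ is in bijection with $\big(\Z_p^{\times}/(1+p^l\Z_p)\big) \times H_{tors} \times Aut(H_{tors})$ (the only point to check is that, since $p^l$ kills $H_{tors}$, the lower-left entry is unchanged modulo $p^l$ so it is genuinely a free coordinate), and reduction modulo $p^l$ identifies $\Z_p^{\times}/(1+p^l\Z_p) \cong (\Z/p^l\Z)^{\times} = GL_1(\Z/p^l\Z)$. Therefore $|Aut(H)/U_l| = |GL_1(\Z/p^l\Z)|\,|H_{tors}|\,|Aut(H_{tors})|$. There is no serious obstacle; the argument is bookkeeping resting on two facts, namely $\Hom(H_{tors},\Z_p)=0$ (which forces the block-triangular shape of $Aut(H)$) and the standing hypothesis $p^l H_{tors}=0$ (used to compute $H/p^lH$ and to kill cross terms modulo $p^l$), so the only care needed is to keep the ``large $l$'' assumption in force throughout.
\end{hproof}
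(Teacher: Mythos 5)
Your proof is correct and follows essentially the same route as the paper: both split $H \cong H_{tors} \times \Z_p$, identify $Aut(H)$ with $GL_1(\Z_p) \times \Hom(\Z_p,H_{tors}) \times Aut(H_{tors})$, and then reduce modulo $p^l$ under the standing assumption that $p^l$ kills $H_{tors}$. Your version is somewhat more careful than the paper's (which compresses the block-triangular analysis into ``a straightforward consequence'' and writes the quotient loosely as $Aut(H)\otimes \Z/p^l\Z$), in particular in verifying that $U_l$ is exactly the kernel of $Aut(H)\to Aut(H/p^lH)$ and that the $\Hom(\Z_p,H_{tors})$ coordinate survives intact in the quotient.
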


\begin{corollary}
\[
\frac{\nu(U_l \circ h)}{ |Aut(H)/U_l|^{-1}}
=\frac{p-1}{p}\frac{|H_{tors}| |Aut(H_{tors})|}{|H/h|}
\]
\end{corollary}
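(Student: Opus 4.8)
The plan is to combine the two lemmas immediately preceding the corollary and then carry out the elementary count of units modulo $p^l$. Concretely, the equation \eqref{sublemmaexpression1} gives $\nu(U_l \circ h) = \frac{1}{|H/h|\,|\Z/p^l\Z|}$, and \autoref{lem:sublemmaexpression2} gives $|Aut(H)/U_l| = |H_{tors}|\,|Aut(H_{tors})|\,|GL_1(\Z/p^l\Z)|$ for $l$ large. Substituting both into the left-hand side of the corollary, I would write
\[
\frac{\nu(U_l \circ h)}{|Aut(H)/U_l|^{-1}} = \nu(U_l \circ h)\cdot |Aut(H)/U_l| = \frac{|H_{tors}|\,|Aut(H_{tors})|\,|GL_1(\Z/p^l\Z)|}{|H/h|\,|\Z/p^l\Z|}.
\]

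It then remains only to evaluate the ratio $|GL_1(\Z/p^l\Z)|/|\Z/p^l\Z|$. Since $GL_1(\Z/p^l\Z) = (\Z/p^l\Z)^{\times}$ is the set of residues prime to $p$, we have $|GL_1(\Z/p^l\Z)| = p^l - p^{l-1}$ while $|\Z/p^l\Z| = p^l$, so this ratio equals $1 - \frac1p = \frac{p-1}{p}$, independently of $l$. Plugging this in yields exactly the asserted identity
\[
\frac{\nu(U_l \circ h)}{|Aut(H)/U_l|^{-1}} = \frac{p-1}{p}\,\frac{|H_{tors}|\,|Aut(H_{tors})|}{|H/h|}.
\]

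I do not expect any real obstacle at this stage: all of the substantive work has already been done, namely the orbit--stabilizer formula for the profinite group $Aut(H)$ and the index computations in \eqref{sublemmaexpression1} and \autoref{lem:sublemmaexpression2}. The only point worth emphasizing is that the final expression does not depend on $l$; this is precisely why the limit defining $\frac{dH}{d(Aut(H))}$ converges, so that the corollary immediately gives \autoref{lem:differential}, which in turn closes the proof of \autoref{hlemma}.
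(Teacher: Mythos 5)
Your proof is correct and is exactly the argument the paper intends: the corollary is stated as an immediate consequence of combining \eqref{sublemmaexpression1} with \autoref{lem:sublemmaexpression2}, and the only computation left is $|GL_1(\Z/p^l\Z)|/|\Z/p^l\Z| = (p-1)/p$, which you carry out correctly. Your closing remark that the result is independent of $l$, which is what makes the limit defining $dH/d(Aut(H))$ converge, is also precisely how the paper uses the corollary to conclude \autoref{lem:differential}.
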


The corollary implies
\[
\frac{dH}{d(Aut(H))}
=\frac{p-1}{p}\frac{|H_{tors}| |Aut(H_{tors})|}{|H/h|}
\]
proving \autoref{lem:differential}.

\begin{proof}(Of \autoref{lem:sublemmaexpression2})
In fact, more is true.  $U_l$ is a normal subgroup of $Aut(H)$ and, for large $L$, we have an isomorphism of groups:
\[
Aut(H)/U_l \cong H_{tors} \times Aut(H_{tors}) \times GL_1(\Z/p^l\Z)
\]

We verify this below. First of all,
\[
H \cong H_{tors} \times \Z_p
\]
It is a straightforward consequence of this that:
\[
Aut(H) \cong Aut(H_{tors}) \times Hom(\Z_p, H_{tors}) \times GL_1(\Z_p)
\]
Hence, for large $l$,
\[
Aut(H) \otimes \Z / p^l \Z \cong H_{tors} \times Aut(H_{tors}) \times GL_1(\Z/p^l\Z) 
\]
But $Aut(H) \otimes \Z / p^l \Z$ is precisely
\[
Aut(H) / U_l.
\]
\end{proof}

\marginparr{Here, there is a mysection about isomorphisms of automorphisms groups}
\begin{mysection}
\begin{lemma}
$$
Aut(H,\hmap) \cong Aut(G,\phimap)
$$
\end{lemma} 

\begin{proof}
There is a map from $Aut(H,\hmap)$ to $Aut(G,\phimap)$. It is surjective and we need only show that it is injective. But the difference between the identity 

and any element of 
$$ker\Big( Aut(H,\hmap) \rightarrow Aut(G,\phimap) \Big)$$
must factor through 
$$
Hom(G, \Z_p)
$$
which is $0$.
\end{proof}
\end{mysection}

\subsection{The operator $\Delta_0$}
 
   The fact that $\dstar$ is the adjoint of $\doper$ yields two self-adjoint operators:
   \begin{itemize}
   \item The operator $d d^{*}$ is self-adjoint with respect to $\mu_0$.
   \item The operator $d^{*} d$ is self-adjoint with respect to $\mu_1$.
   \end{itemize}
   
   The first of these operators is what we have called $\Delta_0$.
   \marginparr{The first of these operators is what we have called $\Delta_0$}
   \newline
   \newline
   It is a self-adjoint operator; we will later determine its spectrum.

\subsection{Proof that $(\dk)^T=\dkstar$}

\begin{theorem}
\label{kgeneralselfadjtheorem}
$$
\mu_{0}(G) \probP (G \xrightarrow{\dkstar} H) = \mu_{k}(H) \probP (H \xrightarrow{\dk} G)
$$
\end{theorem}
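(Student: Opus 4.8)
The plan is to transcribe, step for step, the proof of \autoref{generalselfadjtheorem} (the case $k=1$), everywhere replacing $\Z_p$ by $\Z_p^k$. As in that case, it suffices to prove the refined statement at the level of automorphism orbits: to every $Aut(H)$-orbit of \emph{injective} homomorphisms $h \in \Hom(\Z_p^k,H)$ with finite cokernel $\coker(h)\cong G$ (the non-injective maps, and those with infinite cokernel, form a $\nu$-null set in $\Hom(\Z_p^k,H)$ and are ignored) there corresponds an $Aut(G)$-orbit of $\phi\in\Ext(G,\Z_p^k)$, namely the class of $0\to\Z_p^k\xrightarrow{h}H\to G\to 0$; and I will show
\[
\mu_0(G)\,\probP\!\left(Aut(G)\circ\phi\right)=\mu_k(H)\,\probP\!\left(Aut(H)\circ h\right)=\frac{c_0}{|G|^k\,|Aut(H,h)|}.
\]
Then \autoref{kgeneralselfadjtheorem} follows by summing over orbits, exactly as \autoref{generalselfadjtheorem} follows from \autoref{auxiliarygeneralselfadjtheorem}.

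The left-hand side is the easy half and follows the pattern of \autoref{philemma}. By the orbit--stabilizer formula, $\probP(Aut(G)\circ\phi)=|Aut(G)|\big/\big(|Aut(G,\phi)|\,|\Ext(G,\Z_p^k)|\big)$. Since $\Ext$ is additive in the second variable, $\Ext(G,\Z_p^k)\cong\Ext(G,\Z_p)^k\cong\Hom(G,\Q_p/\Z_p)^k$ by \autoref{lem:charactersandextensions}, so $|\Ext(G,\Z_p^k)|=|G|^k$. Multiplying by $\mu_0(G)=c_0/|Aut(G)|$ gives $\mu_0(G)\probP(Aut(G)\circ\phi)=c_0\big/\big(|Aut(G,\phi)|\,|G|^k\big)$. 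Finally $|Aut(G,\phi)|=|Aut(H,h)|$: both groups are canonically identified with the automorphism group of the exact sequence $0\to\Z_p^k\xrightarrow{h}H\to G\to 0$, the comparison map $Aut(H,h)\to Aut(G,\phi)$ being an isomorphism because its kernel is a quotient of $\Hom(G,\Z_p^k)=\Hom(G,\Z_p)^k=0$.

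The right-hand side is the substantive part, generalizing \autoref{hlemma}, \autoref{lem:differential} and \autoref{lem:sublemmaexpression2}. Fixing $H\cong H_{tors}\oplus\Z_p^k$ identifies $Aut(H)$ with $Aut(H_{tors})\times\Hom(\Z_p^k,H_{tors})\times GL_k(\Z_p)$ and $\Hom(\Z_p^k,H)$ with $\Hom(\Z_p^k,H_{tors})\times M_k(\Z_p)$, under which $h$ becomes a pair $(h_{tors},h_{free})$ with $\det h_{free}\neq 0$. I would compute $\probP(Aut(H)\circ h)=\nu(Aut(H)\circ h)$ by the orbit--stabilizer formula for the compact group $Aut(H)$ as in \autoref{hlemma}, using the congruence subgroups $U_l=\{c\in GL_k(\Z_p):c\equiv I \bmod p^l\}$: then $\nu(Aut(H)\circ h)=|Aut(H,h)|^{-1}\lim_{l\to\infty}|Aut(H)/U_l|\cdot\nu(U_l\circ h)$. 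Here $U_l$ acts only through the $GL_k(\Z_p)$ factor, so $U_l\circ h=\{h_{tors}\}\times(I+p^lM_k(\Z_p))h_{free}$; the Haar measure of $(I+p^lM_k(\Z_p))h_{free}$ in $M_k(\Z_p)$ is $p^{-lk^2}\,p^{-k\,v_p(\det h_{free})}$, and since $|H/h|=|H_{tors}|\cdot p^{v_p(\det h_{free})}$ (from $0\to\im h\to H\to H/\im h\to 0$ together with $H_{tors}\cap\im h=0$), this gives $\nu(U_l\circ h)=p^{-lk^2}/|G|^k$. On the other side $|Aut(H)/U_l|=|Aut(H_{tors})|\,|H_{tors}|^k\,|GL_k(\Z/p^l\Z)|$, and $|GL_k(\Z/p^l\Z)|=p^{lk^2}\prod_{i=1}^{k}(1-p^{-i})$. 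Passing to the limit yields the ``Jacobian''
\[
\frac{d\,\Hom(\Z_p^k,H)}{d\,(Aut(H))}=\frac{|Aut(H_{tors})|\,|H_{tors}|^k\prod_{i=1}^{k}(1-p^{-i})}{|G|^k}.
\]
Multiplying by $\mu_k(H)\,|Aut(H,h)|^{-1}=c_k\big/\big(|H_{tors}|^k\,|Aut(H_{tors})|\,|Aut(H,h)|\big)$ and using the telescoping identity $c_k\prod_{i=1}^{k}(1-p^{-i})=\prod_{i=1}^{\infty}(1-p^{-i})=c_0$ gives $\mu_k(H)\probP(Aut(H)\circ h)=c_0\big/\big(|G|^k\,|Aut(H,h)|\big)$, matching the left-hand side.

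The main obstacle is the measure-theoretic computation on the $H$-side: normalizing Haar measure on $\Hom(\Z_p^k,H)$ correctly and evaluating the change-of-measure factor for the orbit map, that is, producing the factor $\prod_{i=1}^{k}(1-p^{-i})$ (the rank-$k$ analog of the $\tfrac{p-1}{p}$ that appears for $k=1$) and checking that it recombines with $c_k$ to give precisely $c_0$. Everything else is a routine transcription of the $k=1$ argument, with $|\Ext(G,\Z_p)|=|G|$ replaced by $|\Ext(G,\Z_p^k)|=|G|^k$ and $GL_1$ replaced by $GL_k$.
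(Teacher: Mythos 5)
Your proposal is correct, and its overall skeleton coincides with the paper's: reduce to the orbit-level identity $\mu_0(G)\probP(Aut(G)\circ\phi)=\mu_k(H)\probP(Aut(H)\circ h)=c_0/(|G|^k|Aut(H,h)|)$, handle the $G$-side by the orbit--stabilizer formula together with $|\Ext(G,\Z_p^k)|=|G|^k$ and $Aut(G,\phi)\cong Aut(H,h)$, and handle the $H$-side by an orbit--stabilizer formula for the compact group $Aut(H)$ evaluated on the congruence subgroups $U_l$. The one place where you genuinely diverge is the key technical step, the evaluation of $\nu(U_l\circ h)$. The paper instead computes the measure of the \emph{precomposition} orbit $h\circ U_l$ (interpreting it as the probability that a random element of $\Hom(\Z_p^k,H)$ factors through $p^l h$, computed in two stages), and then must prove the separate claim $\nu(U_l\circ h)=\nu(h\circ U_l)$; this is the step the paper flags as requiring more work, and it is settled there by a transpose argument resting on $|\coker(h^T)|=|\coker(h)|$ plus a reduction to the case $H_{tors}=0$. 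You compute $\nu(U_l\circ h)$ \emph{directly}: after splitting $H\cong H_{tors}\oplus\Z_p^k$ you identify $U_l\circ h$ with the coset $\{h_{tors}\}\times\bigl(h_{free}+p^lM_k(\Z_p)h_{free}\bigr)$ and read off its measure from the index of right multiplication by $h_{free}$ on $M_k(\Z_p)$, namely $p^{kv_p(\det h_{free})}$. This yields the same value $p^{-lk^2}/|G|^k$ as the paper's $\bigl(|H/\im(h)|^k|M_k(\Z/p^l\Z)|\bigr)^{-1}$, but bypasses the left-versus-right orbit comparison entirely, which is arguably cleaner; the cost is that the argument is tied to a chosen splitting of $H$ and an explicit matrix model, whereas the paper's factoring-through argument is coordinate-free on the source side. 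The remaining bookkeeping ($|Aut(H)/U_l|=|Aut(H_{tors})||H_{tors}|^k|GL_k(\Z/p^l\Z)|$ and the telescoping identity $c_k\prod_{i=1}^{k}(1-p^{-i})=c_0$) agrees with the paper. One cosmetic point: the kernel of $Aut(H,h)\to Aut(G,\phi)$ \emph{injects into} $\Hom(G,\Z_p^k)$ (via $\alpha\mapsto\alpha-\mathrm{id}$, which factors through $G$ and lands in $\im(h)$) rather than being a quotient of it; the conclusion that it vanishes is of course unaffected.
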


To every $Aut(H)$ orbit of elements
$$
\hmap \in Hom(\Z_p^k,H) 
$$
such that $\coker(\hmap) \cong G$, we can associate an $Aut(G)$ orbit of elements

$$
\phimap \in Ext(G,\Z_p^k)
$$
such that the extension associated to $\phi$ is isomorphic to $(H, \hmap)$.

Therefore, to prove \autoref{kgeneralselfadjtheorem}, it is sufficient to prove: 
\begin{theorem}
\label{kauxiliarygeneralselfadjtheorem}
\begin{equation}
\label{eqn:weightstwok}
\mu_0(G) \probP \left( Aut(G) \circ \phimap \right) = \mu_k(H) \probP \left( Aut(H) \circ \hmap \right).
\end{equation}
Furthermore, both sides are equal to 
\[
\frac{c_0}{|H/im(h)|^k|Aut(H,h)|}
\]
\end{theorem}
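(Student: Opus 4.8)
The plan is to mimic, step by step, the proof of \autoref{auxiliarygeneralselfadjtheorem} (the $k=1$ case), replacing $\Z_p$ by $\Z_p^k$ throughout while carefully tracking the normalizing constants $c_0$ and $c_k$. As there, it suffices to compute the two sides of \eqref{eqn:weightstwok} separately and check each equals $\frac{c_0}{|H/im(h)|^k|Aut(H,h)|}$. For the left-hand side, the orbit--stabilizer theorem (exactly as in \autoref{philemma}) gives $\probP(Aut(G)\circ\phimap) = \frac{|Aut(G)|}{|Aut(G,\phimap)|\,|Ext(G,\Z_p^k)|}$, and the analogue of \autoref{lem:charactersandextensions} identifies $Ext(G,\Z_p^k)\cong Hom(G,(\Q_p/\Z_p)^k)$, of order $|G|^k$; hence $\mu_0(G)\probP(Aut(G)\circ\phimap)=\frac{c_0}{|Aut(G,\phimap)|\,|G|^k}$. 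As in the corollary to \autoref{philemma}, $Aut(G,\phimap)$ and $Aut(H,h)$ are both naturally the automorphism group of the extension $0\to\Z_p^k\xrightarrow{h}H\to G\to 0$ (the obstruction to injectivity of $Aut(H,h)\to Aut(G,\phimap)$ factors through $Hom(G,\Z_p^k)=0$), and $|G|=|H/im(h)|$; so the left-hand side equals $\frac{c_0}{|Aut(H,h)|\,|H/im(h)|^k}$.

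The real work is the right-hand side, generalizing \autoref{hlemma}. Let $\nu$ be Haar probability measure on the pro-finite set $Hom(\Z_p^k,H)$. Since every automorphism of a free $\Z_p$-module has unit determinant, $Aut(H)$ acts measure-preservingly, the map $Aut(H)\to Hom(\Z_p^k,H)$, $g\mapsto g\circ h$, is a local isomorphism onto the orbit, and its stabilizer $Aut(H,h)$ is finite. The orbit--stabilizer formula for this action then reads $\nu(Aut(H)\circ h)=\frac{1}{|Aut(H,h)|}\int_{Aut(H)}J\,d\mu_{Aut(H)}$, where $J=\frac{d\,Hom(\Z_p^k,H)}{d\,Aut(H)}$ is the change-of-measure factor, constant by left-invariance of both Haar measures. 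To compute $J$ I would use the congruence subgroups $U_l=\{g\in Aut(H):g\equiv id\bmod p^l\}$, so that $J=\lim_{l\to\infty}[Aut(H):U_l]\cdot\nu(U_l\circ h)$.

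Two computations feed in. First, the analogue of \autoref{lem:sublemmaexpression2}: from $H\cong H_{tors}\times\Z_p^k$ one gets $Aut(H)\cong Aut(H_{tors})\times Hom(\Z_p^k,H_{tors})\times GL_k(\Z_p)$, hence for large $l$, $Aut(H)/U_l\cong H_{tors}^k\times Aut(H_{tors})\times GL_k(\Z/p^l\Z)$ and $[Aut(H):U_l]=|H_{tors}|^k|Aut(H_{tors})|\,|GL_k(\Z/p^l\Z)|$. Second, the analogue of \eqref{sublemmaexpression1}: writing $h=(h_1,h_2)$ with $h_2\in M_k(\Z_p)$ invertible over $\Q_p$ (since $\coker h$ is finite), one checks $U_l\circ h=\{h_1\}\times(h_2+p^lM_k(\Z_p)h_2)$, a coset of $p^lM_k(\Z_p)h_2$ in $M_k(\Z_p)$; computing its index via $[M_k(\Z_p):M_k(\Z_p)h_2]=|\det h_2|_p^{-k}$ and the identity $|H/im(h)|=|H_{tors}|\cdot[\Z_p^k:h_2\Z_p^k]=|H_{tors}|\cdot|\det h_2|_p^{-1}$ yields $\nu(U_l\circ h)=\frac{1}{|\Z/p^l\Z|^{k^2}\,|H/im(h)|^k}$. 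Combining these with the elementary count $|GL_k(\Z/p^l\Z)|=p^{lk^2}\prod_{i=1}^k(1-p^{-i})=p^{lk^2}\,c_0/c_k$, the factors of $p^{lk^2}$ cancel and $J=\frac{c_0}{c_k}\cdot\frac{|H_{tors}|^k|Aut(H_{tors})|}{|H/im(h)|^k}$. Therefore $\mu_k(H)\probP(Aut(H)\circ h)=\frac{c_k}{|H_{tors}|^k|Aut(H_{tors})|}\cdot\frac{J}{|Aut(H,h)|}=\frac{c_0}{|H/im(h)|^k|Aut(H,h)|}$, matching the left-hand side.

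The main obstacle is the $p$-adic change-of-variables in the Jacobian step: pinning down $U_l\circ h$ as a coset of $p^lM_k(\Z_p)h_2$ and computing its volume, and carefully distinguishing the torsion contribution $|H_{tors}|$ from the lattice index $[\Z_p^k:h_2\Z_p^k]$ in the formula for $|\coker h|$ — for $k>1$ one must see the product $\prod_{i=1}^k(1-p^{-i})$ emerge from $|GL_k(\Z/p^l\Z)|$ and cancel precisely against $c_0/c_k$, whereas in the $k=1$ case a single factor $\frac{p-1}{p}$ absorbed everything. Making the orbit--stabilizer formula for the infinite group $Aut(H)$ fully rigorous (constancy of $J$, the limit over $U_l$) is the other technical chore, but it is formally identical to the $k=1$ argument already carried out.
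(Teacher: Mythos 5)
Your proposal is correct and follows the same overall skeleton as the paper's argument: reduce to the orbit computations on each side, handle the $G$-side by the finite orbit--stabilizer theorem together with $|Ext(G,\Z_p^k)|=|G|^k$ and $Aut(G,\phimap)\cong Aut(H,\hmap)$, and handle the $H$-side by the orbit--stabilizer formula for $Aut(H)$ with the change-of-measure factor computed as $\lim_l [Aut(H):U_l]\,\nu(U_l\circ h)$, using $[Aut(H):U_l]=|H_{tors}|^k|Aut(H_{tors})||GL_k(\Z/p^l\Z)|$. The one step where you genuinely diverge is the evaluation of $\nu(U_l\circ h)$, which is precisely the step the paper singles out as requiring extra work for $k>1$. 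The paper first computes the measure of the \emph{precomposition} orbit $h\circ U_l$ probabilistically (the chance that a random homomorphism factors through $p^l h$, giving $|H/\mathrm{im}(h)|^{-k}|M_k(\Z/p^l\Z)|^{-1}$) and then must prove the non-obvious identity $\nu(U_l\circ h)=\nu(h\circ U_l)$ by a transpose argument using $|\coker(h^T)|=|\coker(h)|$, extended from the torsion-free case to general $H$. You instead compute $\nu(U_l\circ h)$ directly: writing $h=(h_1,h_2)$ relative to $H\cong H_{tors}\times\Z_p^k$ and observing that for large $l$ the set $U_l\circ h$ is the coset $\{h_1\}\times(h_2+p^lM_k(\Z_p)h_2)$, whose volume is $|H_{tors}|^{-k}\,p^{-lk^2}\,|\det h_2|_p^{k}$, and then using $|H/\mathrm{im}(h)|=|H_{tors}|\,|\det h_2|_p^{-1}$. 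This yields the same value $|H/\mathrm{im}(h)|^{-k}|M_k(\Z/p^l\Z)|^{-1}$ while avoiding the left-versus-right comparison altogether, which is arguably cleaner; the small price is that you must justify that $U_l=1+p^l\mathrm{End}(H)$ for large $l$ and that the $Hom(H_{tors},\Z_p^k)=0$ and $p^lH_{tors}=0$ vanishings really do reduce $p^l\psi\circ h$ to the $M_k(\Z_p)$-block acting on $h_2$ --- which your sketch correctly identifies. The remaining bookkeeping ($|GL_k(\Z/p^l\Z)|=p^{lk^2}\prod_{i=1}^k(1-p^{-i})$ and $\prod_{i=1}^k(1-p^{-i})=c_0/c_k$) matches the paper's, and the final assembly of both sides to $c_0/(|H/\mathrm{im}(h)|^k|Aut(H,h)|)$ is correct.
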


\begin{corollary}
\autoref{kgeneralselfadjtheorem} follows from (\ref{eqn:weightstwok}) by summing over automorphism orbits. 
\end{corollary}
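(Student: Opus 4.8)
The plan is to show that each side of \autoref{kgeneralselfadjtheorem} decomposes as a sum of the corresponding side of (\ref{eqn:weightstwok}) over a common index set, so that the term-by-term identity (\ref{eqn:weightstwok}) yields the global identity after summation.

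First I would decompose the left-hand side. Since $\dkstar(G)$ is the extension of $G$ attached to a uniformly random element of $\Ext(G,\Z_p^k)$, and since elements lying in a single $Aut(G)$-orbit of $\Ext(G,\Z_p^k)$ yield isomorphic extensions, the event that the resulting extension has total space isomorphic to $H$ is the disjoint union of those orbits $Aut(G)\circ\phimap$ whose associated extension is isomorphic to $H$. Hence
\[
\mu_0(G)\,\probP(G \xrightarrow{\dkstar} H) = \sum_{\phimap} \mu_0(G)\,\probP\big(Aut(G)\circ\phimap\big),
\]
the sum running over these orbits. Symmetrically, since $\dk(H)$ is the quotient of $H$ by a Haar-random element of $\Hom(\Z_p^k,H)$, and elements of a single $Aut(H)$-orbit have isomorphic cokernels, the event that the quotient is isomorphic to $G$ is the disjoint union of the orbits $Aut(H)\circ\hmap$ with $\coker(\hmap)\cong G$, giving
\[
\mu_k(H)\,\probP(H \xrightarrow{\dk} G) = \sum_{\hmap} \mu_k(H)\,\probP\big(Aut(H)\circ\hmap\big).
\]

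The final step is to identify the two index sets. As recorded above, passing to the associated extension class sets up a bijection between the orbits $Aut(G)\circ\phimap$ appearing in the first sum (those whose extension is isomorphic to $H$) and the orbits $Aut(H)\circ\hmap$ appearing in the second (those with $\coker(\hmap)\cong G$); concretely, both collections are indexed by the isomorphism classes of exact sequences $0 \to \Z_p^k \to H \to G \to 0$. Under this bijection (\ref{eqn:weightstwok}) states exactly that the summand $\mu_0(G)\,\probP(Aut(G)\circ\phimap)$ equals the matching summand $\mu_k(H)\,\probP(Aut(H)\circ\hmap)$, so summing over the common index set equates the two displayed expressions, which is the assertion of \autoref{kgeneralselfadjtheorem}.

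The deduction is otherwise routine; the one point I would take care with is to confirm that the bijection used to pair the two sums is the very bijection for which (\ref{eqn:weightstwok}) was proved, so that corresponding terms genuinely correspond. Once this compatibility is in place the summation is immediate.
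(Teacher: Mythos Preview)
Your proposal is correct and is exactly the argument the paper intends: the paper establishes the bijection between $Aut(H)$-orbits of $\hmap$ with $\coker(\hmap)\cong G$ and $Aut(G)$-orbits of $\phimap$ with associated extension isomorphic to $(H,\hmap)$ just before stating (\ref{eqn:weightstwok}), and the corollary is then simply the observation that summing the orbit-by-orbit equality over this common index set yields \autoref{kgeneralselfadjtheorem}. Your care about checking that the bijection used is the one for which (\ref{eqn:weightstwok}) is asserted is well placed, and the paper's presentation makes this compatibility explicit.
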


\begin{corollary}
We can represent $(\doper,\dstar)$ as a random walk on the weighted bipartite graph on the vertex set $X_0 \cup X_1$, whose edges are exact sequences
\[
0 \rightarrow \Z_p^k \xrightarrow{h} H \rightarrow G \rightarrow 0
\]
with weight 
\[
\frac{c_0}{|H/im(h)|^k|Aut(H,h)|}=\frac{c_0}{|G|^k|Aut(G,\phi)|}
\]
\end{corollary}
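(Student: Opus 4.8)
The plan is to deduce this Corollary from \autoref{kauxiliarygeneralselfadjtheorem}, following the route by which \autoref{cor:weightedgraph} was obtained from \autoref{auxiliarygeneralselfadjtheorem} in the rank-one case. By \autoref{kgeneralselfadjtheorem} the Markov chain $(\dk,\dkstar)$ is reversible with stationary measure $(\mu_0,\mu_k)$, and a reversible Markov chain on a countable state space is precisely a random walk on a weighted graph with edge weights $\mu_0(G)\probP(G\xrightarrow{\dkstar}H)=\mu_k(H)\probP(H\xrightarrow{\dk}G)$. To pass to a bipartite graph whose edges are individual exact sequences $0\to\Z_p^k\xrightarrow{h}H\to G\to 0$, one splits each transition probability over $Aut$-orbits: $\probP(G\xrightarrow{\dkstar}H)$ is the sum of $\probP(Aut(G)\circ\phimap)$ over the orbits of those $\phimap\in Ext(G,\Z_p^k)$ whose extension realizes the given sequence, and likewise $\probP(H\xrightarrow{\dk}G)$ is a sum of $\probP(Aut(H)\circ\hmap)$; so the weight to attach to the sequence is $\mu_0(G)\probP(Aut(G)\circ\phimap)=\mu_k(H)\probP(Aut(H)\circ\hmap)$, which by \autoref{kauxiliarygeneralselfadjtheorem} equals $\tfrac{c_0}{|H/\im(h)|^k|Aut(H,h)|}$. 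The second displayed form of the weight then requires only $|H/\im(h)|^k|Aut(H,h)|=|G|^k|Aut(G,\phimap)|$; here $|H/\im(h)|=|G|$ is immediate from exactness, and $Aut(H,h)\cong Aut(G,\phimap)$ by the same argument as in the rank-one case --- the restriction map $Aut(H,h)\to Aut(G,\phimap)$ is surjective and any element of its kernel differs from the identity by a map factoring through $\Hom(G,\Z_p^k)=\Hom(G,\Z_p)^k=0$, as $G$ is finite.

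This reduces the Corollary to \autoref{kauxiliarygeneralselfadjtheorem}, which I would prove by treating its two sides separately, in exact parallel with \autoref{philemma} and \autoref{hlemma}. On the extension side the argument is formal: orbit--stabilizer gives $\probP(Aut(G)\circ\phimap)=\tfrac{|Aut(G)|}{|Aut(G,\phimap)|\,|Ext(G,\Z_p^k)|}$, and applying \autoref{lem:charactersandextensions} coordinatewise gives $|Ext(G,\Z_p^k)|=|\Hom(G,(\Q_p/\Z_p)^k)|=|G|^k$, so $\mu_0(G)\probP(Aut(G)\circ\phimap)=\tfrac{c_0}{|Aut(G,\phimap)|\,|G|^k}$. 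On the quotient side one uses the pro-finite orbit--stabilizer formula behind \autoref{lem:differential}: with $\nu$ the Haar probability measure on $\Hom(\Z_p^k,H)\cong H^k$ and $Aut(H,h)$ a finite subgroup of $Aut(H)$, $\nu(Aut(H)\circ h)=\tfrac{1}{|Aut(H,h)|}\cdot J$, where $J=\lim_{l\to\infty}\nu(U_l\circ h)\,|Aut(H)/U_l|$ is the change-of-measure factor along the filtration $U_l=\{u\equiv\mathrm{id}\bmod p^l\}\subseteq GL_k(\Z_p)\subseteq Aut(H)$. Since $\coker(h)=G$ is finite while $H$ has $\Q_p$-rank $k$, the map $h$ is injective and $U_l\circ h=h+p^l\im(h)^k$ inside $H^k$, whence $\nu(U_l\circ h)=|H^k/p^l\im(h)^k|^{-1}=(p^{lk}|H/\im(h)|)^{-k}$; and, exactly as in \autoref{lem:sublemmaexpression2}, $|Aut(H)/U_l|=|H_{tors}|^k|Aut(H_{tors})||GL_k(\Z/p^l\Z)|$, using $H\cong H_{tors}\times\Z_p^k$ and the block description of $Aut(H)$. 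As $|GL_k(\Z/p^l\Z)|=p^{lk^2}\prod_{i=1}^k(1-p^{-i})$, the powers of $p$ cancel, giving $J=\tfrac{|H_{tors}|^k|Aut(H_{tors})|}{|H/\im(h)|^k}\prod_{i=1}^k(1-p^{-i})$; multiplying by $\mu_k(H)=\tfrac{c_k}{|H_{tors}|^k|Aut(H_{tors})|}$ and using $c_k\prod_{i=1}^k(1-p^{-i})=c_0$ yields $\mu_k(H)\probP(Aut(H)\circ h)=\tfrac{c_0}{|H/\im(h)|^k|Aut(H,h)|}$, matching the extension side and completing \autoref{kauxiliarygeneralselfadjtheorem}.

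The main obstacle is the quotient-side computation, i.e. making the pro-finite orbit--stabilizer argument rigorous. One must choose the $U_l$ so that they are genuinely normal in $Aut(H)$ and the limit defining $J$ stabilizes for $l$ large (this needs $p^l$ to annihilate both $H_{tors}$ and $\Hom(\Z_p^k,H_{tors})$), identify the orbit $U_l\circ h$ precisely as a coset in $H^k$, and keep the $k$-dependent bookkeeping of $p$-powers and of the two normalization constants $c_0,c_k$ under control. Everything else in the argument is formal and essentially identical to the rank-one case already carried out in the preceding sections.
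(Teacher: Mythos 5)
Your first paragraph --- deducing the corollary from \autoref{kauxiliarygeneralselfadjtheorem} by splitting the transition probabilities of the reversible chain $(\dk,\dkstar)$ over $Aut$-orbits, and obtaining the second form of the weight from $|H/\im(\hmap)|=|G|$ together with $Aut(H,\hmap)\cong Aut(G,\phimap)$ (kernel of the restriction map factoring through $\Hom(G,\Z_p^k)=0$) --- is exactly how the paper obtains this corollary, and the extension-side half of your proof of \autoref{kauxiliarygeneralselfadjtheorem} (orbit--stabilizer plus $|Ext(G,\Z_p^k)|=|G|^k$) also coincides with the paper's.

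The gap is on the quotient side, at precisely the step the paper singles out as the only one requiring modification from the rank-one case. Your identity $U_l\circ\hmap=\hmap+p^l\,\im(\hmap)^k$ is false for $k\ge 2$. Writing $H\cong H_{tors}\times\Z_p^k$ and $\hmap=(h_1,\dots,h_k)\in H^k$, the coset $\hmap+p^l\im(\hmap)^k$ is the orbit of $\hmap$ under \emph{right} composition with $U_l\subseteq GL_k(\Z_p)=Aut(\Z_p^k)$, i.e.\ it is $\hmap\circ U_l$; the orbit--stabilizer integral over $Aut(H)$, however, requires the \emph{left} orbit, which for large $l$ is $\hmap+p^l\{(vh_1,\dots,vh_k):v\in M_k(\Z_p)\}$ --- the same $v$ acting on all $k$ coordinates simultaneously --- and this is not a product set. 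Concretely, for $H=\Z_p^2$ and $\hmap=\mathrm{diag}(1,p)$, the left orbit imposes divisibility conditions on the \emph{columns} of the matrix $\hmap'-\hmap$, while $\hmap+p^l\im(\hmap)^2$ imposes them on the \emph{rows}; the two cosets are distinct sublattice translates (they coincide only when $k=1$, since $GL_1$ is central, which is why the issue is invisible in the rank-one case you are copying). The two cosets do have equal Haar measure, so your final formula for $\nu(U_l\circ\hmap)$ and everything downstream of it is correct --- but that equality of measures is exactly what the paper proves as a separate Claim, via a transpose argument using $|\coker(h^T)|=|\coker(h)|$ in the torsion-free case and a reduction to it. Your closing paragraph flags ``identify the orbit $U_l\circ h$ precisely as a coset'' as an obstacle, but the body of your argument resolves it by asserting a set identity that does not hold. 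To repair the proposal you need either the paper's claim $\nu(U_l\circ\hmap)=\nu(\hmap\circ U_l)$, or a direct index count showing the left coset also has index $p^{lk^2}|H/\im(\hmap)|^{k}$ in $\Hom(\Z_p^k,H)$.
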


Most of the proofs carry over from the next section and will not be repeated. The only proof that requires modification is the computation of $\probP \left( Aut(H) \circ \hmap \right)$, where $H \cong H_{tors} \times \Z_p^k$ and $\hmap \in Hom(\Z_p^k,H)$.

\begin{lemma}
\[
\nu(  Aut(H)  \circ  h  )  =  \frac{c_0}{c_k} \frac{|Aut(H_{tors})||H_{tors}|^k}{|Aut(H,h)|| H/im(h) |^k} 
\]
\end{lemma}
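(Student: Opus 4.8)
The plan is to imitate, with the obvious dimensional changes, the proof of the rank-one case \autoref{hlemma}: reduce $\nu(Aut(H)\circ h)$ to two explicit quantities — the index of a congruence subgroup of $Aut(H)$ and the $\nu$-mass of one small orbit — and then combine them. First I would fix the decomposition $H\cong H_{tors}\times\Z_p^k$. Since $\Hom(H_{tors},\Z_p^k)=0$, every automorphism of $H$ is block upper-triangular, so
\[
Aut(H)\;\cong\;Aut(H_{tors})\;\times\;\Hom(\Z_p^k,H_{tors})\;\times\;GL_k(\Z_p),
\]
a $p$-adic analytic group of dimension $k^2$; likewise $\Hom(\Z_p^k,H)=H^k\cong (H_{tors})^k\times M_k(\Z_p)$ is a compact group of dimension $k^2$ carrying the Haar probability measure $\nu$, and $Aut(H)$ acts on it by post-composition, i.e.\ through measure-preserving group automorphisms of $H^k$. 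Because $H/im(h)$ is finite (as in the application, where $coker(h)\cong G$), $h$ is injective with $im(h)\cong\Z_p^k$ of finite index, so $a:=(\text{projection }H\to\Z_p^k)\circ h\in M_k(\Z_p)$ has nonzero determinant and $Aut(H,h)$ is finite; hence the orbit $Aut(H)\circ h$ has full dimension $k^2$ and positive $\nu$-mass, which is what makes the computation possible.

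Next I would introduce the congruence subgroups $U_l:=\{u\in Aut(H): u\equiv\mathrm{id}\bmod p^l\}$. For $l$ large enough that $p^l$ kills $H_{tors}$, one checks that $U_l=\{\mathrm{id}_{H_{tors}}\}\times\{0\}\times(1+p^lM_k(\Z_p))$, that $U_l$ is \emph{normal} in $Aut(H)$ (conjugating a $U_l$-element by an arbitrary automorphism produces an off-diagonal entry lying in $p^l\Hom(\Z_p^k,H_{tors})=0$), and that $Aut(H,h)\cap U_l=\{1\}$. Since $\nu$ is $Aut(H)$-invariant and $U_l$ is normal, $Aut(H)\circ h$ breaks into exactly $|Aut(H)/U_l|/|Aut(H,h)|$ translates $gU_l\circ h$, all of equal $\nu$-mass, giving the exact identity
\[
\nu(Aut(H)\circ h)=\frac{|Aut(H)/U_l|}{|Aut(H,h)|}\,\nu(U_l\circ h),\qquad |Aut(H)/U_l|=|Aut(H_{tors})|\,|H_{tors}|^k\,|GL_k(\Z/p^l\Z)|.
\]

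It then remains to compute $\nu(U_l\circ h)$. Writing $h=(t,a)$ with $t\in\Hom(\Z_p^k,H_{tors})$ and $a\in M_k(\Z_p)$, an element $u=1+p^l\chi_0$ of $U_l$ (with $\chi_0\in M_k(\Z_p)$ arbitrary, since $1+p^l\chi_0$ is automatically invertible) sends $h$ to $(t,\;a+p^l\chi_0 a)$: the $H_{tors}$-coordinate is frozen, and the $\Z_p^k$-coordinate ranges over the coset $a+p^lM_k(\Z_p)a$. As right multiplication by $a$ is injective with cokernel of size $|coker(a)|^k$, this coset has index $|coker(a)|^k\,p^{lk^2}$ in $M_k(\Z_p)$, so $\nu(U_l\circ h)=|H_{tors}|^{-k}\,|coker(a)|^{-k}\,p^{-lk^2}$. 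Substituting into the boxed identity, the $|H_{tors}|^k$ factors cancel; using $|GL_k(\Z/p^l\Z)|=p^{lk^2}\prod_{i=1}^k(1-p^{-i})$ the $p^{lk^2}$ cancels and leaves $\prod_{i=1}^k(1-p^{-i})=c_0/c_k$; and the exact sequence $0\to H_{tors}\to H/im(h)\to coker(a)\to 0$ gives $|coker(a)|=|H/im(h)|/|H_{tors}|$. This turns the answer into $\tfrac{c_0}{c_k}\cdot\tfrac{|Aut(H_{tors})|\,|H_{tors}|^k}{|Aut(H,h)|\,|H/im(h)|^k}$, as claimed (and for $k=1$ this reproduces \autoref{hlemma}, since $c_0/c_1=1-1/p$).

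The main obstacle is the profinite orbit–stabilizer bookkeeping rather than any single hard estimate: one must describe $Aut(H)$ and its congruence subgroups $U_l$ precisely enough to see simultaneously that the orbit decomposes into finitely many $U_l$-translates of equal $\nu$-mass, that $U_l$ is eventually normal, and — crucially — that $\nu(U_l\circ h)$ carries exactly the factor $|coker(a)|^{-k}p^{-lk^2}$ needed to cancel $|GL_k(\Z/p^l\Z)|$ and deliver the constant $c_0/c_k$. The remaining ingredients (the cokernel of right multiplication by $a$, the order of $GL_k(\Z/p^l\Z)$, and the short exact sequence relating $coker(a)$, $H_{tors}$ and $H/im(h)$) are routine.
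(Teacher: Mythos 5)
Your proposal is correct, and its overall skeleton is the one the paper uses: decompose the orbit into translates of a congruence subgroup $U_l \subset Aut(H)$, compute the index $|Aut(H)/U_l| = |Aut(H_{tors})|\,|H_{tors}|^k\,|GL_k(\Z/p^l\Z)|$, compute $\nu(U_l\circ h)$, and multiply. The difference lies entirely in the one step the paper itself flags as requiring extra work, namely the evaluation of $\nu(U_l\circ h)$. The paper first computes the \emph{precomposition} orbit $\nu(h\circ U_l)$ for the congruence subgroup of $GL_k(\Z_p)=Aut(\Z_p^k)$ acting on the source --- interpreting it as the probability that a Haar-random element of $Hom(\Z_p^k,H)$ factors through $p^l h$, which splits into the two conditional probabilities $|Hom(\Z_p^k,H/im(h))|^{-1}$ and $|M_k(\Z/p^l\Z)|^{-1}$ --- and then proves the separate claim that $\nu(U_l\circ h)=\nu(h\circ U_l)$ by reducing to the torsion-free case and taking transposes. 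You instead compute the \emph{postcomposition} orbit directly: using the block decomposition $Aut(H)\cong Aut(H_{tors})\times Hom(\Z_p^k,H_{tors})\times GL_k(\Z_p)$ you identify $U_l\circ h$ as the single coset $\{t\}\times\bigl(a+p^l M_k(\Z_p)a\bigr)$ and read off its index as $|H_{tors}|^k\,|coker(a)|^k\,p^{lk^2}$, with $|coker(a)|=|H/im(h)|/|H_{tors}|$ supplied by the short exact sequence relating $H_{tors}$, $H/im(h)$ and $coker(a)$. This buys you a cleaner argument with no pre/post-composition comparison (the transpose fact $|coker(a^T)|=|coker(a)|$ is absorbed into the index computation for right multiplication by $a$), at the cost of having to verify the explicit block form of $Aut(H)$ and of $U_l$, which the paper only needs for the index count. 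Both routes cancel the same factors and land on $c_0/c_k=\prod_{i=1}^k(1-p^{-i})$; your normality and stabilizer checks ($U_l\cap Aut(H,h)=\{1\}$ for large $l$) are also sound and are stated more carefully than in the paper.
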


Again, we can express $\nu( Aut(H) \circ h )$ as an integral:
\[
\nu(Aut(H) \circ \hmap) = \frac{1}{|Aut(H,h)|}\int\limits_{Aut(H)} \frac{dH}{d(Aut(H))}  d(Aut(H))
\]

\autoref{hlemma} would follow from the following.
\begin{lemma}
\label{lem:differentialk}
\[
\frac{dH}{d(Aut(H))}= \frac{c_0}{c_k} \frac{|Aut(H_{tors})||H_{tors}|^k}{| H/h |}
\]
\end{lemma}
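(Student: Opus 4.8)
The plan is to carry out the rank-$k$ version of the proof of \autoref{lem:differential}, reusing the Haar-measure orbit--stabilizer formalism from the $k=1$ case and only redoing the three quantitative inputs: the congruence quotients of $\mathrm{Aut}(H)$, the Haar measure of a small orbit $U_l\circ h$, and the limit. Write $H\cong H_{tors}\oplus\Z_p^k$ and identify $\Hom(\Z_p^k,H)\cong H_{tors}^k\oplus M_k(\Z_p)$, so that $h=(h_t,h_f)$ with $h_t\in H_{tors}^k$ and $h_f\in M_k(\Z_p)$; since $\coker(h)=G$ is finite, $h_f$ is nonsingular. Under this identification $\nu$ is the product of the uniform measure on the finite group $H_{tors}^k$ with the normalized additive Haar measure on $M_k(\Z_p)\cong\Z_p^{k^2}$, and it suffices to compute the local scaling factor $\tfrac{dH}{d(\mathrm{Aut}(H))}$, which by homogeneity is constant along the orbit, via $\tfrac{dH}{d(\mathrm{Aut}(H))}=\lim_{l\to\infty}\nu(U_l\circ h)\,|\mathrm{Aut}(H)/U_l|$, where $U_l=\{u\in\mathrm{Aut}(H):u\equiv \mathrm{id}\bmod p^l\}$.

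Next I would record the structure of $\mathrm{Aut}(H)$: every automorphism preserves torsion, hence is block upper-triangular $\begin{pmatrix} a & b \\ 0 & d\end{pmatrix}$ with $a\in\mathrm{Aut}(H_{tors})$, $b\in\Hom(\Z_p^k,H_{tors})$, $d\in GL_k(\Z_p)$, so $\mathrm{Aut}(H)\cong \mathrm{Aut}(H_{tors})\times\Hom(\Z_p^k,H_{tors})\times GL_k(\Z_p)$ as a measure space. For $l$ large enough that $p^lH_{tors}=0$, the first two factors are killed by the congruence condition, so $U_l$ is exactly the principal congruence subgroup $\mathrm{id}+p^lM_k(\Z_p)\subseteq GL_k(\Z_p)$, and, exactly as in \autoref{lem:sublemmaexpression2} (via $\mathrm{Aut}(H)\otimes\Z/p^l\Z$),
\[
|\mathrm{Aut}(H)/U_l|=|\mathrm{Aut}(H_{tors})|\,|H_{tors}|^k\,|GL_k(\Z/p^l\Z)|.
\]

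Then the orbit computation: since $u\in U_l$ acts only on the free part, $U_l\circ h=\{h_t\}\times\big(h_f+p^l\,M_k(\Z_p)h_f\big)$, and using the Smith normal form $h_f=UDV$ with $D=\mathrm{diag}(p^{a_1},\dots,p^{a_k})$ one gets $[M_k(\Z_p):M_k(\Z_p)h_f]=p^{k\sum a_i}=p^{k\,v_p(\det h_f)}$, hence $\nu(U_l\circ h)=|H_{tors}|^{-k}\,p^{-lk^2}\,p^{-k\,v_p(\det h_f)}$. Combining with $|GL_k(\Z/p^l\Z)|=p^{lk^2}\prod_{i=1}^{k}(1-p^{-i})$ and taking the limit,
\[
\frac{dH}{d(\mathrm{Aut}(H))}=|\mathrm{Aut}(H_{tors})|\,\Big(\prod_{i=1}^{k}(1-p^{-i})\Big)\,p^{-k\,v_p(\det h_f)}.
\]
Finally I would identify the two constants: $\prod_{i=1}^{k}(1-p^{-i})=c_0/c_k$ straight from the product formulas for $c_0$ and $c_k$; and, writing $H_{tors}=\coker(P)$ for a nonsingular $P$ and lifting $h_t$ to $\tilde h_t$, one has $H/\im(h)=\coker\begin{pmatrix} P & \tilde h_t \\ 0 & h_f\end{pmatrix}$, so $|H/\im(h)|=|H_{tors}|\cdot p^{v_p(\det h_f)}$, i.e. $p^{v_p(\det h_f)}=|H/\im(h)|/|H_{tors}|$. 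Substituting yields $\tfrac{dH}{d(\mathrm{Aut}(H))}=\tfrac{c_0}{c_k}\,\tfrac{|\mathrm{Aut}(H_{tors})|\,|H_{tors}|^k}{|H/\im(h)|^{k}}$, which is the asserted formula (with $|H/h|$ in the statement read as $|\coker(h)|^{k}$, consistently with the preceding lemma and the $k=1$ normalization).

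The Smith-normal-form index count and the order of $GL_k(\Z/p^l\Z)$ are routine. The one genuinely delicate point — already present at $k=1$ — is the orbit--stabilizer identity $\nu(\mathrm{Aut}(H)\circ h)=\frac{1}{|\mathrm{Aut}(H,h)|}\int_{\mathrm{Aut}(H)}\frac{dH}{d(\mathrm{Aut}(H))}\,d(\mathrm{Aut}(H))$ for the compact group $\mathrm{Aut}(H)$ acting on $\Hom(\Z_p^k,H)$: one must know that the orbit map $u\mapsto uh$ has a well-defined local change-of-measure factor that is constant along the orbit. This is inherited verbatim from the rank-one argument, since $\mathrm{Aut}(H,h)$ is again finite and the $U_l$ still form a neighbourhood basis of the identity, so no new analytic input is needed.
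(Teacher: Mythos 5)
Your proof is correct, but it takes a genuinely different route through the one step the paper itself flags as requiring extra work. Both you and the paper reduce the lemma to the same two quantitative inputs, $|Aut(H)/U_l|$ and $\nu(U_l\circ h)$, and compute the first identically (via $Aut(H)\cong Aut(H_{tors})\times \Hom(\Z_p^k,H_{tors})\times GL_k(\Z_p)$, as in \autoref{lem:sublemmaexpression2}). For the second, the paper instead computes the \emph{precomposition} orbit $\nu(h\circ U_l)$, interpreting it as the probability that a Haar-random element of $\Hom(\Z_p^k,H)$ factors through $p^l h$, and then must separately prove the non-obvious identity $\nu(U_l\circ h)=\nu(h\circ U_l)$, which it does by a transpose argument ($|\coker(h^T)|=|\coker(h)|$) in the torsion-free case followed by a reduction to that case. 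You avoid this left-versus-right comparison entirely by computing $\nu(U_l\circ h)$ head-on: after splitting $H\cong H_{tors}\oplus\Z_p^k$ you observe that for large $l$ the congruence subgroup $U_l$ acts only through $\mathrm{id}+p^lM_k(\Z_p)$ on the free part, so the orbit is the explicit coset $\{h_t\}\times(h_f+p^lM_k(\Z_p)h_f)$, whose measure is an index count done by Smith normal form. What your approach buys is the elimination of the most delicate claim in the paper's proof (the equality of the two orbit measures, whose general-case justification in the paper is rather compressed); what it costs is a choice of splitting and coordinates, which the paper's factoring-through argument does not need (though the paper uses the splitting anyway in its reduction step). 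Your identifications $\prod_{i=1}^k(1-p^{-i})=c_0/c_k$ and $p^{v_p(\det h_f)}=|H/\im(h)|/|H_{tors}|$ agree with the paper, and you are right that the denominator in the statement should be read as $|H/\im(h)|^k$, as in the unnumbered lemma immediately preceding it; as written, $|H/h|$ is a typo carried over from the $k=1$ case.
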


First, we show that this equality holds in a neighborhood of the identity of $Aut(H)$.
We choose a sequence of open neighbourhoods of $0$, $U_l$, defined as follows:

\[
U_l \coloneqq \{ u \in Aut(Z_p)=GL_1(\Z_p) \Big| u \equiv id \mod p^l \}
\]

\begin{remark}
If $l$ is large enough relative to $H$, the $U_l$ are normal subgroups.
\end{remark}

\[
\frac{dH}{d(Aut(H))}=\lim_{l\rightarrow \infty} \frac{\nu(U_l \circ h)}{ |Aut(H)/U_l|^{-1}}
\]

As before, we can show that for sufficiently large $l$,
\[
|Aut(H)/U_l|=|Aut(H_{tors})||H_{tors}|^k|GL_k(\Z/p^l\Z)|
\]
However the following lemma requires more work
\begin{lemma}
\[
\nu(U_l \circ h)=\frac{1}{|H/im(h)|^k|M_k(\Z/p^l\Z)|}
\]
\end{lemma}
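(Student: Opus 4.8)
The plan is to compute the orbit $U_l \circ h$ by hand, recognize it as a coset of a compact open subgroup of $\Hom(\Z_p^k,H)$, and then evaluate the index of that subgroup. First I would fix an isomorphism $H \cong H_{tors} \times \Z_p^k$, which identifies $\Hom(\Z_p^k,H) \cong H_{tors}^k \times M_k(\Z_p)$; under this identification $\nu$ becomes the product of the uniform measure on the finite group $H_{tors}^k$ and the normalized Haar measure on $M_k(\Z_p)$. Recalling the structure of $Aut(H)$ used earlier, $GL_k(\Z_p)$ sits inside $Aut(H)$ as the automorphisms fixing $H_{tors}$ pointwise and acting on the free part tautologically, so that $U_l \le Aut(H)$ is exactly $\{\, u = I + p^l A : A \in M_k(\Z_p)\,\}$, acting on $(s,w)\in H_{tors}\times\Z_p^k$ by $(s,w)\mapsto(s,(I+p^lA)w)$.

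Next I would carry out the orbit computation. Writing $h \leftrightarrow (h_t,h_f) \in H_{tors}^k \times M_k(\Z_p)$, post-composition with $u = I+p^lA$ sends $h$ to $(h_t,\, h_f + p^l A h_f)$, so $U_l\circ h$ corresponds to $\{h_t\}\times\big(h_f + p^l M_k(\Z_p)\,h_f\big)$. This is precisely the coset through $h$ of the subgroup $N_l \defeq \{0\}\times p^l M_k(\Z_p)\,h_f$ of $\Hom(\Z_p^k,H)$. Since $\coker(h)\cong G$ is finite, $h_f$ is nonsingular over $\Q_p$, so $N_l$ has finite index, and hence $\nu(U_l\circ h) = \nu(N_l) = [\Hom(\Z_p^k,H):N_l]^{-1}$.

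It then remains to evaluate this index. It factors as $|H_{tors}|^k\cdot[M_k(\Z_p):p^l M_k(\Z_p)\,h_f]$, and the second factor splits further as $[M_k(\Z_p):p^l M_k(\Z_p)]\cdot[p^l M_k(\Z_p):p^l M_k(\Z_p)\,h_f] = |M_k(\Z/p^l\Z)|\cdot[M_k(\Z_p):M_k(\Z_p)\,h_f]$, using that multiplication by $p^l$ is an isomorphism of $\Z_p$-modules. By Smith normal form $[M_k(\Z_p):M_k(\Z_p)\,h_f] = |\coker h_f|^k$. Finally, from the exact sequence $0 \to H_{tors} \to H/\im h \to \Z_p^k/h_f\Z_p^k \to 0$ one reads off $|H/\im h| = |H_{tors}|\,|\coker h_f|$, hence $|H_{tors}|^k|\coker h_f|^k = |H/\im h|^k$. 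Combining,
\[
\nu(U_l\circ h) = \frac{1}{|H/\im h|^k\,|M_k(\Z/p^l\Z)|},
\]
as claimed.

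The hard part will be the orbit computation in the second paragraph — pinning $U_l\circ h$ down as a translate of exactly $p^l M_k(\Z_p)\,h_f$ rather than something coarser — together with reconciling the two a priori different expressions $|H_{tors}|^k|\coker h_f|^k$ and $|H/\im h|^k$ for the relevant index. Both reduce to Smith normal form, but one must be careful that everything takes place at full rank, which is guaranteed precisely because $G = \coker(h)$ is finite.
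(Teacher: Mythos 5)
Your argument is correct, but it takes a genuinely different route from the paper's. The paper does not compute $\nu(U_l \circ h)$ directly: it first computes the measure of the \emph{pre}-composition orbit $h \circ U_l$, by observing that $h \circ (U_l - 1)$ is exactly the set of homomorphisms $\Z_p^k \to H$ that factor through $p^l h$, and then splitting that event into ``factors through $h$'' (probability $|H/\text{im}(h)|^{-k}$) and ``factors through $p^l h$ given that it factors through $h$'' (probability $|M_k(\Z/p^l\Z)|^{-1}$). It must then prove the separate claim $\nu(U_l \circ h) = \nu(h \circ U_l)$, which it does by a transpose argument using $|\coker(h^T)| = |\coker(h)|$ in the torsion-free case, followed by an additional reduction for general $H_{tors}$; this is the least transparent step of the paper's proof. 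You bypass that step entirely: by writing $h = (h_t, h_f)$ you identify the post-composition orbit as the coset $h + \big(\{0\} \times p^l M_k(\Z_p)h_f\big)$, and translation invariance of Haar measure reduces everything to an index computation, with the Smith-normal-form bookkeeping $[M_k(\Z_p) : M_k(\Z_p)h_f] = |\coker(h_f)|^k$ and $|H/\text{im}(h)| = |H_{tors}|\,|\coker(h_f)|$ playing the role of the paper's two-factor conditional-probability decomposition. Both proofs rest on the same inputs (the splitting $H \cong H_{tors} \times \Z_p^k$, nonsingularity of $h_f$ coming from finiteness of $\coker(h)$, and Haar invariance), but yours keeps the composition on the side the statement actually requires throughout, which makes it cleaner and arguably closes a gap in the paper's own exposition.
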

First we show the following:
\begin{claim}
\[
\nu( h \circ U_l ) = \frac{1}{|H/im(h)|^k|M_k(\Z / p^l\Z)|}
\]
\end{claim}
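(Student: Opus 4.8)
The plan is to identify the set $h \circ U_l$ explicitly as a coset of a closed, finite-index subgroup of a compact abelian group, at which point its measure is just the reciprocal of the index. Recall $U_l = I + p^l M_k(\Z_p) \subseteq GL_k(\Z_p) = Aut(\Z_p^k)$ (the matrices congruent to the identity mod $p^l$; this does lie in $GL_k(\Z_p)$ for $l \geq 1$, since the determinant is $\equiv 1 \bmod p$, and $B \mapsto I + p^l B$ is a bijection $M_k(\Z_p) \to U_l$). I identify $\Hom(\Z_p^k, H) \cong H^k$ by evaluating a homomorphism on the standard basis $e_1,\dots,e_k$, writing $h \leftrightarrow (h_1,\dots,h_k)$ with $h_i = h(e_i)$; under this identification $\nu$ is the Haar probability measure on the compact group $H^k$ (recall $H \cong \Z_p^k \times H_{tors}$). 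For $u = I + p^l B \in U_l$ one computes $(h\circ u)(e_j) = h_j + p^l\sum_i B_{ij}h_i$; as $B$ runs over $M_k(\Z_p)$ each column of $B$ runs freely and independently over $\Z_p^k$, so the $j$-th coordinate of $h\circ u - h$ runs over all $\Z_p$-linear combinations of $h_1,\dots,h_k$, that is over $\im(h)$. Hence
\[
h \circ U_l = (h_1,\dots,h_k) + \bigl(p^l\im(h)\bigr)^k \subseteq H^k,
\]
a coset of the subgroup $N := \bigl(p^l\im(h)\bigr)^k$.

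It then remains to compute $[H^k : N] = [H : p^l\im(h)]^k$. We may assume $\coker(h) = H/\im(h)$ is finite (otherwise $\im(h)$ has rank $< k$, $N$ has infinite index in $H^k$, so $\nu(h\circ U_l)=0$ and the claimed identity holds trivially). The key structural observation is that in this case $\im(h) \cong \Z_p^k$: indeed $\im(h)$ is generated by the $\le k$ elements $h_1,\dots,h_k$, and since $H/\im(h)$ is finite $\im(h)$ has $\Q_p$-rank $k$; a finitely generated $\Z_p$-module of rank $k$ admitting $k$ generators is free, since any torsion would force $\dim_{\F_p}\im(h)/p\im(h) > k$. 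Therefore $[\im(h) : p^l\im(h)] = [\Z_p^k : p^l\Z_p^k] = p^{lk}$, so
\[
[H : p^l\im(h)] = [H : \im(h)]\,[\im(h) : p^l\im(h)] = |H/\im(h)|\cdot p^{lk},
\]
and raising to the $k$-th power gives $[H^k : N] = |H/\im(h)|^k\, p^{lk^2} = |H/\im(h)|^k\,|M_k(\Z/p^l\Z)|$, which is exactly the reciprocal of the asserted value of $\nu(h\circ U_l)$.

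The technical points are minor: $\im(h)$, being the continuous image of the compact group $\Z_p^k$, is a closed subgroup of $H$, hence — once finite index is known — an open subgroup, so $N$ is a closed open subgroup of $H^k$, its cosets are $\nu$-measurable, and any coset has measure $1/[H^k:N]$; and the elementary module fact that $\im(h)$ is free of rank $k$. The only genuine step is the first paragraph's identification of the orbit $h\circ U_l$ with the explicit coset $h + \bigl(p^l\im(h)\bigr)^k$; after that everything is bookkeeping with indices of $\Z_p$-submodules, and the ensuing Lemma for $\nu(U_l\circ h)$ follows at once, the relevant orbit being the same set.
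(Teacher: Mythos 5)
Your proof is correct and is essentially the paper's argument in different clothing: both reduce to observing that $h\circ U_l$ is a coset of the subgroup $h\circ(U_l-1)=(p^l\,im(h))^k$ inside $Hom(\Z_p^k,H)\cong H^k$ and then computing its Haar measure as the reciprocal of the index, split as $|H/im(h)|^k\cdot p^{lk^2}$ (the paper phrases this splitting as a product of two conditional probabilities of "factoring through $h$" and then "through $p^l h$", you phrase it as a product of subgroup indices). You are somewhat more careful than the paper in justifying the intermediate step that $im(h)$ is free of rank $k$ when the cokernel is finite, and in disposing of the infinite-cokernel case.
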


\begin{proof}
Indeed, 
\[
\nu( h \circ U_l )=\nu( h \circ (U_l-1) )
\]
But the expression on the right is the probability that the image of a uniformly random element of $Hom(\Z_p^k,H)$ factors through $p^l h$.
\begin{itemize}
\item The probability that a uniformly random homomorphism from $\Z_p^k$ factors through $h$ is 
\[
\frac{1}{|Hom(\Z_p^k,H/im(h))|}
\]
\item The probability that a uniformly random homomorphism factors through $p^l h$, \textit{given that it factors through $h$} is:
\[
\frac{1}{|Hom(\Z_p^k,(\Z/p^l\Z)^k)|}=\frac{1}{|M_k(\Z/p^l\Z)|}
\]
\end{itemize}
Hence 
\[
\nu( h \circ (U_l) )= \frac{1}{|H/im(h)|^k|M_k(\Z/p^l\Z)|}
\]
\end{proof}

It remains to show that 
\[
\nu(U_l \circ h) = \nu(h \circ U_l)
\]

\begin{claim}
For sufficiently large $l$, 
$$\nu(U_l \circ \hmap)=\nu(\hmap \circ U_l).$$
\end{claim}

\begin{proof}

This is true if $H_{tors}=0$. In this case, this is the probability that a uniformly chosen random matrix lies inside

$$\nu(U_l \circ \hmap).$$

But then, we can get the result by taking the transpose, and using the fact that

$$
|coker(h^T)|=|coker(h)|
$$

To get the conclusion in the general case, we note that 

$$
H \cong H_{tors} \times \Z_p^k
$$

and that, for large $l$,

$$
\hmap \circ (U_l -  1)
$$

is a set of maps from $\Z_p^k$ to the $\Z_p^k$ factor above.

If we compute

$$
\frac{\nu \Big( \hmap \circ (U_l -  1) \Big)}{\nu \Big( Hom(\Z_p^k, \Z_p^k) \Big)} \fr{\nu(Hom(\Z_p^k, \Z_p^k))}{\nu(Hom(\Z_p^k, H))}
$$

By the above, the left term is 

$$
|\Z_p^k/im(p^{l}\hmap)|^{-k} 
$$

While the right term is

$$
|H/\Z_p^k|^{-k}
$$

Their product is evidently
$$
|H/im(p^{l}\hmap)|^{-k}
$$
\end{proof}

\begin{mysection}
..........................................................
\begin{lemma}
\label{philemma}
$$
\probP \left( Aut(G) \circ \phimap \right) =\frac{|Aut(G)|}{|Aut(G,\phimap)||G|^k}  
$$
\end{lemma}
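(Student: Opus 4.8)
The plan is to reproduce, \emph{mutatis mutandis}, the argument used for the rank-one version of this lemma, \autoref{philemma}, replacing $\Z_p$ by $\Z_p^k$ throughout. The finite group $Aut(G)$ acts on the finite set $Ext(G,\Z_p^k)$, and by hypothesis $\phimap$ represents one of its orbits. By the definition of $\dkstar$, the extension class produced by $\dkstar(G)$ is a uniformly random element of $Ext(G,\Z_p^k)$, so the probability of landing in the orbit $Aut(G)\circ\phimap$ is $|Aut(G)\circ\phimap|/|Ext(G,\Z_p^k)|$.

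The first step is to apply the orbit--stabilizer theorem to the action of the finite group $Aut(G)$, which gives $|Aut(G)\circ\phimap| = |Aut(G)|/|Aut(G,\phimap)|$. Substituting, the probability equals
\[
\probP\left(Aut(G)\circ\phimap\right) = \frac{|Aut(G)|}{|Aut(G,\phimap)|\cdot|Ext(G,\Z_p^k)|}.
\]
The second step is to evaluate $|Ext(G,\Z_p^k)|$. Since $Ext(G,-)$ is additive and $\Z_p^k$ is a finite direct sum of copies of $\Z_p$, one has $Ext(G,\Z_p^k)\cong Ext(G,\Z_p)^{k}$, so $|Ext(G,\Z_p^k)| = |Ext(G,\Z_p)|^{k}$; and by \autoref{lem:charactersandextensions}, $Ext(G,\Z_p)$ is in bijection with $Hom(G,\Q_p/\Z_p)$, which has the same cardinality as $G$ because $G$ is a finite abelian $p$-group. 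Hence $|Ext(G,\Z_p^k)| = |G|^{k}$, and plugging this in gives the asserted formula.

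I do not expect a genuine obstacle here: the content is just the orbit--stabilizer identity for a finite group acting on a finite set, together with the elementary cardinality count $|Ext(G,\Z_p^k)| = |G|^{k}$. The only point where anything beyond pure formalism enters is that last count, which rests on the structural description of $\Z_p^k$-extensions --- the evident $\Z_p^k$-analogue of \autoref{lem:charactersandextensions}, whose proof is identical to the rank-one case.
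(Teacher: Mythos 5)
Your proof is correct and follows essentially the same route as the paper: the probability of the isomorphism class is the orbit size over $|Ext(G,\Z_p^k)|$, the orbit--stabilizer theorem converts the orbit size to $|Aut(G)|/|Aut(G,\phimap)|$, and the count $|Ext(G,\Z_p^k)|=|G|^k$ finishes the computation. The only difference is cosmetic: you justify $|Ext(G,\Z_p^k)|=|G|^k$ explicitly via additivity of $Ext$ and the identification $Ext(G,\Z_p)\cong Hom(G,\Q_p/\Z_p)$, whereas the paper simply asserts this cardinality in the rank-$k$ case.
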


\begin{proof}

Let $\phimap$ be an element of the $Aut(G)$ orbit of $Ext(G,\Z_p^k)$

$$
\probP \left( Aut(G) \circ \phimap \right)=\frac{|Aut(G) \circ \phimap|}{|Ext(G,\Z_p^k)|}=
$$

$$
=\frac{|Aut(G)|}{|Aut(G,\phimap)|} \frac{1}{|G|^k}
$$
by the orbit-stabilizer formula.
\end{proof}

\begin{lemma}
\label{hlemma}
$$
\probP \left( Aut(H) \circ \hmap \right) = 
$$

$$
=\frac{|Hom(\Z_p^k, H_{tors})| |Aut(H_{tors})|} {|Aut(H,\hmap)||G|^k} \prod_{i=1}^{k}(1-\frac{1}{p^{i}})
$$
\end{lemma}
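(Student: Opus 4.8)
The plan is to run the orbit-stabilizer computation of the rank-one case (the proof of \autoref{lem:differential} and the surrounding material) while tracking the extra powers of $p$ contributed by the $k$-dimensional kernel. Write $\nu$ for the Haar probability measure on $\Hom(\Z_p^k,H)\cong H^k$, on which $Aut(H)$ acts measure-preservingly by post-composition. Since $Aut(H,h)$ is finite — hence discrete — in the profinite group $Aut(H)$, the orbit-stabilizer formula for profinite group actions already set up in the text yields
\[
\probP\bigl(Aut(H)\circ h\bigr)=\nu\bigl(Aut(H)\circ h\bigr)=\frac{1}{|Aut(H,h)|}\,\frac{dH}{d(Aut(H))},
\]
where the (constant) change-of-measure factor is $\frac{dH}{d(Aut(H))}=\lim_{l\to\infty}\nu(U_l\circ h)\,|Aut(H)/U_l|$, with $U_l$ the principal congruence neighbourhoods of the identity in $Aut(H)$. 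Everything thus reduces to evaluating this limit.

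For the index I would use $H\cong H_{tors}\times\Z_p^k$, which gives $Aut(H)\cong Aut(H_{tors})\times\Hom(\Z_p^k,H_{tors})\times GL_k(\Z_p)$ as a set — the lower-left block $\Hom(H_{tors},\Z_p^k)$ vanishes because $\Z_p^k$ is torsion-free — so for $l$ large $|Aut(H)/U_l|=|Aut(H_{tors})|\,|H_{tors}|^k\,|GL_k(\Z/p^l\Z)|$, using that $\Hom(\Z_p^k,H_{tors})$ is already finite of order $|H_{tors}|^k$. For the numerator, I would first show $\nu(U_l\circ h)=\nu(h\circ U_l)$ for $l$ large: this is immediate when $H_{tors}=0$ by transposing the matrix of $h$ and invoking $|\coker(h^{T})|=|\coker(h)|$, and it is reduced to that case in general by noting that for $l$ large every increment $h\circ(u-1)$, $u\in U_l$, lands in the $\Z_p^k$-summand, so the computation factors through $\Hom(\Z_p^k,\Z_p^k)$. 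Granting this, $\nu(h\circ U_l)=\nu(h\circ(U_l-1))$ by translation invariance, and $h\circ(U_l-1)$ is precisely the set of homomorphisms factoring as $\Z_p^k\to\Z_p^k\to H$ through $p^l h$; decomposing "factors through $h$" (probability $|\Hom(\Z_p^k,H/\im h)|^{-1}=|G|^{-k}$, since the surjection $\Hom(\Z_p^k,H)\twoheadrightarrow\Hom(\Z_p^k,G)$ pushes Haar to Haar) against "the factoring map is then divisible by $p^l$" (conditional probability $|M_k(\Z/p^l\Z)|^{-1}$) gives $\nu(U_l\circ h)=|G|^{-k}\,|M_k(\Z/p^l\Z)|^{-1}$.

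Assembling, the ratio is
\[
\nu(U_l\circ h)\,|Aut(H)/U_l|=\frac{|Aut(H_{tors})|\,|H_{tors}|^k}{|G|^k}\cdot\frac{|GL_k(\Z/p^l\Z)|}{|M_k(\Z/p^l\Z)|},
\]
and since both $|GL_k(\Z/p^l\Z)|$ and $|M_k(\Z/p^l\Z)|$ exceed their $\F_p$-counterparts by the same factor $p^{(l-1)k^2}$, this ratio equals $|GL_k(\F_p)|/|M_k(\F_p)|=\prod_{i=1}^{k}(1-p^{-i})$ for every $l\ge1$; hence $\frac{dH}{d(Aut(H))}=\frac{|Aut(H_{tors})|\,|H_{tors}|^k}{|G|^k}\prod_{i=1}^{k}(1-p^{-i})$. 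Dividing by $|Aut(H,h)|$ and rewriting $|H_{tors}|^k=|\Hom(\Z_p^k,H_{tors})|$ (and $|G|=|H/\im h|$) produces exactly the asserted expression. The only step that is not bookkeeping is the identity $\nu(U_l\circ h)=\nu(h\circ U_l)$ when $H_{tors}\ne0$: the transpose argument applies verbatim only in the torsion-free case, and the reduction — making precise that the relevant increments all live in the free summand of $H$ — is where I expect the real work to be, exactly as the text flags.
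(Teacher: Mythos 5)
Your proposal follows the paper's own argument essentially step for step: the same profinite orbit--stabilizer setup with the congruence neighbourhoods $U_l$, the same index computation $|Aut(H)/U_l|=|Aut(H_{tors})|\,|H_{tors}|^k\,|GL_k(\Z/p^l\Z)|$, the same two-stage factoring computation giving $\nu(h\circ U_l)=|G|^{-k}|M_k(\Z/p^l\Z)|^{-1}$, the same transpose-plus-reduction argument for $\nu(U_l\circ h)=\nu(h\circ U_l)$, and the same final identification of $|GL_k(\Z/p^l\Z)|/|M_k(\Z/p^l\Z)|$ with $\prod_{i=1}^{k}(1-p^{-i})$. The proof is correct and matches the paper's route, including its honest flagging of the $H_{tors}\neq 0$ reduction as the only nontrivial step.
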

This is a little trickier as $H$ is profinite. Let $\nu$ be the Haar probability measure on $Hom(\Z_p^k,H)$. Tautologically,
$$
\probP \left( Aut(H) \circ \hmap \right) = \nu(Aut(H) \circ \hmap) 
$$

We choose a sequence of open neighbourhoods of $0$, $U_l$, defined as follows:

$$
U_l \coloneqq \{ u \in Aut(\Z_p^k)=GL_k(\Z_p) \Big| u \equiv id \mod p^l \}
$$

If $l$ is large enough relative to $H$, the $U_l$ are normal subgroups.

The subgroup $Aut(H, \hmap)$ is a finite subgroup of $Aut(H)$, and hence it is a discrete subgroup. Therefore, we have:

$$
\nu(Aut(H) \circ \hmap) = \nu(Aut(H) \circ U_l\hmap) =  \frac{|Aut(H)/U_l|}{|Aut(H,\hmap)|}\nu(U_l\hmap)
$$

\begin{sublemma}
\begin{equation}
\label{sublemmaexpression1}
\nu(U_l\hmap) =\frac{1}{|coker(p^{m} \hmap|^k} = p^{-{lk^2}}\frac{1}{|G|^k} 
\end{equation}
\end{sublemma}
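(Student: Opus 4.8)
The plan is to reduce the Haar‑measure computation for $\nu(U_l\hmap)$ to the measure of an explicit compact open subgroup of $Hom(\Z_p^k,H)$, and then evaluate an index.

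First I would record that $\hmap$ is injective: since $\coker(\hmap)\cong G$ is finite, $\ker(\hmap)$ is a torsion submodule of the torsion‑free module $\Z_p^k$, hence $\ker(\hmap)=0$ and $\im(\hmap)\cong\Z_p^k$. Then I would write $U_l = 1+p^l M_k(\Z_p)\subseteq GL_k(\Z_p)$, so that
\[
\hmap\circ U_l \;=\; \hmap + p^l\bigl(\hmap\circ M_k(\Z_p)\bigr)
\]
is a coset of the subgroup $S_l \defeq \{\,p^l(\hmap\circ m) : m\in M_k(\Z_p)\,\}$ of $Hom(\Z_p^k,H)$ (it is a subgroup since it is closed under addition and negation, and it is compact as a continuous image of $M_k(\Z_p)$). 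By translation‑invariance of the Haar measure, $\nu(U_l\hmap)=\nu(S_l)$.

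Next I would identify $Hom(\Z_p^k,H)\cong H^k$ via evaluation on a basis $e_1,\dots,e_k$. Under this identification the $j$-th coordinate of $\hmap\circ m$ is $\sum_i m_{ij}\hmap(e_i)$, which sweeps out the $\Z_p$-span $\im(\hmap)$ as the $j$-th column of $m$ varies, independently over $j$; hence $\hmap\circ M_k(\Z_p)$ corresponds to $\im(\hmap)^k$ and $S_l$ to $\bigl(p^l\im(\hmap)\bigr)^k=\im(p^l\hmap)^k$. Since $\coker(\hmap)$ is finite, $\im(p^l\hmap)$ is open in $H$ with $|H/\im(p^l\hmap)|=|\coker(p^l\hmap)|$, so $S_l$ is open of index $|\coker(p^l\hmap)|^k$ in $H^k$, giving $\nu(U_l\hmap)=|\coker(p^l\hmap)|^{-k}$.

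Finally I would evaluate the index by multiplicativity of subgroup indices: $[H:p^l\im(\hmap)]=[H:\im(\hmap)]\cdot[\im(\hmap):p^l\im(\hmap)]=|G|\cdot p^{lk}$, the second factor because $\im(\hmap)\cong\Z_p^k$. Substituting yields $\nu(U_l\hmap)=(|G|p^{lk})^{-k}=p^{-lk^2}|G|^{-k}$, as claimed. There is no real obstacle here; the only point that deserves care is the concrete identification of $\hmap\circ U_l$ with a coset of $\im(p^l\hmap)^k$ inside $H^k$, after which everything is index arithmetic over $\Z_p$ using $H\cong H_{tors}\times\Z_p^k$ and the finiteness of $\coker(\hmap)$.
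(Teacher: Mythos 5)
Your computation is correct for the quantity you actually compute, namely $\nu(\hmap\circ U_l)$ with $U_l=1+p^lM_k(\Z_p)\subseteq GL_k(\Z_p)$ acting by \emph{pre}composition, and it matches the first step of the paper's proof almost verbatim: the paper phrases the measure of the coset $\hmap+\im(p^l\hmap)^k$ as the probability that a Haar-random homomorphism factors through $p^l\hmap$, and arrives at the same index $|\coker(p^l\hmap)|^k=p^{lk^2}|G|^k$. The gap is that this is not the set whose measure the sublemma needs. The sublemma is fed into the orbit--stabilizer decomposition $\nu(Aut(H)\circ\hmap)=\frac{|Aut(H)/U_l|}{|Aut(H,\hmap)|}\,\nu(U_l\hmap)$, so $U_l\hmap$ must be the orbit $\{u\circ\hmap : u\in U_l\}$ of $\hmap$ under a small subgroup $U_l$ of $Aut(H)$ (automorphisms congruent to the identity mod $p^l$) acting by \emph{post}composition; otherwise the $Aut(H)$-translates of $U_l\hmap$ do not tile the orbit and the count $|Aut(H)/U_l|$ is meaningless. (The displayed definition of $U_l$ as a subset of $Aut(\Z_p^k)$ is a slip in the source; the computation of $|Aut(H)/U_l|$ a few lines later only makes sense for $U_l\subseteq Aut(H)$.)

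The paper's proof therefore has a second half that your proposal omits entirely: after performing exactly your computation ``when $\hmap$ is on the other side of $U_l$,'' it proves, as a separate claim, that $\nu(U_l\circ\hmap)=\nu(\hmap\circ U_l)$ for all sufficiently large $l$. This is where the actual content lies: when $H_{tors}=0$ both sets are cosets of lattices in $M_k(\Z_p)$ obtained by left, respectively right, multiplication by the matrix of $\hmap$, and the equality of their measures comes down to $|\coker(A)|=|\coker(A^T)|$; the general case is then reduced to this by writing $H\cong H_{tors}\times\Z_p^k$ and observing that for large $l$ the difference set $U_l\circ\hmap-\hmap$ lands in the $\Z_p^k$ factor. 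Without this bridge, your argument establishes the stated numerical value for a different set than the one the orbit--stabilizer argument requires.
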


\begin{proof}

First we treat the case when $\hmap$ is on the other side of $U_l$.

By the fact that $\nu$ is a Haar measure,

$$
\nu \Big( \hmap \circ U_l \Big)=\nu \Big( \hmap \circ (U_l-1) \Big)
$$

But this is the probability that the image of a random map $\Z_p^k \rightarrow H$ is contained inside
$$p^l im(h)$$

This probability is equal to

$$
\frac{1}{|coker(p^{l}h)|^{k}}=p^{-lk^2}|coker(h)|^{-k}
$$

It remains to prove the following:
\begin{claim}
For sufficiently large $l$, 
$$\nu(U_l \circ \hmap)=\nu(\hmap \circ U_l).$$
\end{claim}

This is true if $H_{tors}=0$. In this case, this is the probability that a uniformly chosen random matrix lies inside

$$\nu(U_l \circ \hmap).$$

But then, we can get the result by taking the transpose, and using the fact that

$$
|coker(h^T)|=|coker(h)^{\vee}|=|coker(h)|
$$

To get the conclusion in the general case, we note that 

$$
H \cong H_{tors} \times \Z_p^k
$$

and that, for large $l$,

$$
\hmap \circ (U_l -  1)
$$

is a set of maps from $\Z_p^k$ to the $\Z_p^k$ factor above.

If we compute

$$
\frac{\nu \Big( \hmap \circ (U_l -  1) \Big)}{\nu \Big( Hom(\Z_p^k, \Z_p^k) \Big)} \fr{\nu(Hom(\Z_p^k, \Z_p^k))}{\nu(Hom(\Z_p^k, H))}
$$

By the above, the left term is 

$$
|\Z_p^k/im(p^{l}\hmap)|^{-k} 
$$

While the right term is

$$
|H/\Z_p^k|^{-k}
$$

Their product is evidently
$$
|H/im(p^{l}\hmap)|^{-k}
$$
\end{proof}

\begin{sublemma}
For large $l$,
\begin{equation}
\label{sublemmaexpression2}
|Aut(H)/U_l|=|Hom(\Z_p^l, H_{tors})| |Aut(H_{tors})| |GL_k(\Z/p^l\Z)|
\end{equation}
\end{sublemma}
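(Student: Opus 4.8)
The plan is to imitate the proof of \autoref{lem:sublemmaexpression2}, which is exactly the case $k=1$, simply replacing $GL_1$ by $GL_k$ throughout; everything reduces to an explicit description of $Aut(H)$ for $H$ a finitely generated $\Z_p$-module of $\Q_p$-rank $k$.

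First I would fix a splitting $H \cong H_{tors} \times \Z_p^k$. Because $H_{tors}$ is torsion and $\Z_p^k$ is torsion-free, $\Hom(H_{tors},\Z_p^k)=0$, so, writing endomorphisms of $H$ as $2\times 2$ blocks relative to this splitting, every endomorphism is ``block upper triangular'': as an additive group $\mathrm{End}(H)\cong \mathrm{End}(H_{tors})\times \Hom(\Z_p^k,H_{tors})\times \mathrm{End}(\Z_p^k)$, with composition the usual upper-triangular block product. An endomorphism $(a,b,d)$ is invertible exactly when $a\in Aut(H_{tors})$ and $d\in GL_k(\Z_p)$, with $b$ unconstrained; hence as a set
\[
Aut(H)=Aut(H_{tors})\times \Hom(\Z_p^k,H_{tors})\times GL_k(\Z_p),
\]
and in fact $Aut(H)=F\rtimes GL_k(\Z_p)$, where $F=\{(a,b,1)\}$ is a finite group of order $|Aut(H_{tors})|\cdot|\Hom(\Z_p^k,H_{tors})|$ and $GL_k(\Z_p)=\{(1,0,d)\}$ is the ``diagonal'' copy.

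Next, $U_l$ is the congruence subgroup $\ker\bigl(GL_k(\Z_p)\to GL_k(\Z/p^l\Z)\bigr)$, viewed inside $Aut(H)$ as $\{(1,0,u):u\equiv \mathrm{id}\bmod p^l\}$. The one point that needs care is normality of $U_l$ in $Aut(H)$ for large $l$. Conjugating $(1,0,u)$ by $(1,0,d)$ stays in $U_l$ since $U_l\trianglelefteq GL_k(\Z_p)$; conjugating $(1,0,u)$ by $(a,b,1)$ gives, after a short block computation, $(1,\,b\circ(u-1),\,u)$, and for $l$ at least the $p$-exponent of $H_{tors}$ the composite $b\circ(u-1)$ vanishes, its image lying in $p^l\,b(\Z_p^k)\subseteq p^lH_{tors}=0$. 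So for such $l$ we get $U_l\trianglelefteq Aut(H)$ and
\[
Aut(H)/U_l\;\cong\;F\rtimes\bigl(GL_k(\Z_p)/U_l\bigr)\;\cong\;F\rtimes GL_k(\Z/p^l\Z).
\]
Taking cardinalities yields $|Aut(H)/U_l|=|F|\cdot|GL_k(\Z/p^l\Z)|=|Aut(H_{tors})|\cdot|\Hom(\Z_p^k,H_{tors})|\cdot|GL_k(\Z/p^l\Z)|$, which is the asserted formula (here $|\Hom(\Z_p^k,H_{tors})|=|H_{tors}|^k$; the exponent $l$ in the displayed statement should read $k$).

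I do not foresee a genuine obstacle: the content is the elementary input $\Hom(H_{tors},\Z_p^k)=0$, which splits off the $GL_k$-block cleanly, together with the observation that $p^lH_{tors}=0$ for $l\gg 0$, which is exactly what makes $U_l$ normal. The only things to get right are the semidirect-product bookkeeping for $Aut(H)$ and the surjectivity of reduction $GL_k(\Z_p)\to GL_k(\Z/p^l\Z)$ with kernel $U_l$, both standard.
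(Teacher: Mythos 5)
Your proof is correct and follows essentially the same route as the paper's: both rest on the decomposition $Aut(H) \cong Aut(H_{tors}) \times Hom(\Z_p^k, H_{tors}) \times GL_k(\Z_p)$ obtained from the key fact $Hom(H_{tors},\Z_p^k)=0$, the only cosmetic difference being that the paper passes to the quotient via the reduction homomorphism $Aut(H) \to Aut(H/p^lH)$ (whose kernel is $U_l$ for large $l$), whereas you verify normality of $U_l$ by a direct block-conjugation computation using $p^lH_{tors}=0$. Your explicit normality check, and your observation that $\Z_p^l$ in the displayed formula should read $\Z_p^k$ (so that the factor is $|H_{tors}|^k$), are both correct.
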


\begin{proof}
The term on the left counts the number of automorphisms of $H / p^l H$ that come from automorphisms of $H$. So first, we need to understand $Aut(H)$:

$$
H \cong H_{tors} \times \Z_p^k
$$
Therefore,
$$
Aut(H) \subset Hom( H_{tors} \times \Z_p^k , H_{tors} \times \Z_p^k) = Hom( H_{tors}, H_{tors} \times \Z_p^k) \times Hom(\Z_p^k , H_{tors} \times \Z_p^k)
$$
$$
=Hom( H_{tors}, H_{tors}) \times Hom(\Z_p^k , H_{tors}) \times Hom(\Z_p^k,\Z_p^k)
$$
In order for a homomorphism in 
$$
Hom( H_{tors}, H_{tors}) \times Hom(\Z_p^k , H_{tors}) \times Hom(\Z_p^k,\Z_p^k)
$$
to lie in $Aut(H)$, it is necessary that it be injective on the left $H_{tors}$ factor
 and surjective on the right $\Z_p^k$ factor.
Therefore,
$$
Aut(H) \subset 
Inj( H_{tors}) \times Hom(\Z_p^k , H_{tors}) \times Sur(\Z_p^k ,\Z_p^k)=Aut( H_{tors}) \times Hom(\Z_p^k , H_{tors}) \times Aut(\Z_p^k , \Z_p^k)
$$
Conversely, any element of the group on the right lies in $Aut(H)$. 
To get the lemma, we observe that for large $l$,
$$
Aut(H) \rightarrow Aut \left( H / p^l H \right)
$$
is a homomorphism with kernel $U_l$. Therefore, \autoref{sublemmaexpression2} follows by reducing modulo $p^l$.
\end{proof}

We multiply \autoref{sublemmaexpression1} and \autoref{sublemmaexpression2} and observe that:
$$
|GL_k(\Z/p^l\Z|p^{-{lk^2}}=\fr{|GL_k(\Z/p^l\Z|}{|Hom\left((\Z/p^l\Z)^k,(\Z/p^l\Z)^k\right)|}
$$
is the probability that a $k \times k$ matrix is invertible. This probability is 
$$
\prod_{i=1}^{k}(1-\frac{1}{p^{i}})
$$

Together, this gives us \autoref{hlemma}.
\newline
\newline
Finally to deduce \autoref{kauxiliarygeneralselfadjtheorem}, we compute the ratio of the quantities in \autoref{philemma} and \autoref{hlemma}:
$$
\frac{|\left( Aut(G) \circ \phimap \right)|} {|\left( Aut(H) \circ \hmap \right)|}
=
$$

$$
=\frac{|Hom(\Z_p^k, H_{tors})| |Aut(H_{tors})|} {|Aut(H,\hmap)||G|^k} \frac{|Aut(G,\phimap)||G|^k}{|Aut(G)|} \prod_{i=1}^{k}(1-\frac{1}{p^{i}})=
$$

$$
= \frac{|Aut(G,\phimap)|}{|Aut(H,\hmap)|} \frac{|H_{tors}|^k |Aut(H_{tors})|}{|Aut(G)|} \fr{c_0}{c_k}
$$
$$
=\frac{|Aut(G,\phimap)|}{|Aut(H,\hmap)|}\fr{\mu_0(G)}{\mu_k(H)}
$$

Thus to conclude the proof of \autoref{kauxiliarygeneralselfadjtheorem}, we need only to show the following isomorphism:
\begin{lemma}
$$
Aut(H,\hmap) \cong Aut(G,\phimap)
$$
\end{lemma} 

\begin{proof}
There is a map from $Aut(H,\hmap)$ to $Aut(G,\phimap)$. It is surjective and we need only show that it is injective. But the difference between the identity
and any element of 
$$ker\left( Aut(H,\hmap) \rightarrow Aut(G,\phimap) \right)$$
must factor through 
$$
Hom(G, \Z_p^k)
$$
which is $0$.
\end{proof}
\end{mysection}

   \subsection{The operator $\Delta_0^{\Z_p^k}$ and  composability}

   The fact that $\dkstar$ is the adjoint of $\dk$ yields two self-adjoint operators:
   \begin{itemize}
   \item The operator $\dk \dkstar$ is self-adjoint with respect to $\mu_0$.
   \item The operator $\dkstar \dk$ is self-adjoint with respect to $\mu_k$.
   \end{itemize}
   We will call the first of these operators $\Delta_0^{\Z_p^k}$.  
   \newline
   \newline
   We will prove in what follows the following composability statement:
   \begin{theorem}
   \label{thm:composability}
   For any $k$ and $l$,
   \[
   \Delta_0^{\Z_p^k} \circ \Delta_0^{\Z_p^l} = \Delta_0^{\Z_p^{k+l}}
   \]
   \end{theorem}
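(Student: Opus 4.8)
The plan is to deduce the composability statement from the matrix models for $\Delta_0^{\Z_p^k}$ obtained above. Fix $G \in X_0$ and write $G \cong \coker(M)$ with $M$ an $n \times n$ matrix over $\Z_p$ — concretely, if $G \cong \bigoplus_{i=1}^{n} \Z/p^{a_i}\Z$ take $M = \mathrm{diag}(p^{a_1},\dots,p^{a_n})$, which is invertible over $\Q_p$. By the matrix description of $\Delta_0^{\Z_p^l}$, the random group $\Delta_0^{\Z_p^l}(G)$ has the same law as $\coker\!\big(M^{(l)}\big)$, where
\[
M^{(l)} \defeq \begin{mat}{cc} M & B \\ C & D \end{mat}
\]
and the entries of $B$ ($n\times l$), $C$ ($l\times n$) and $D$ ($l\times l$) are jointly independent and Haar-distributed on $\Z_p$. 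Note that $M^{(l)}$ is again a square matrix, and, since $\Delta_0^{\Z_p^l}(G)$ lies in $X_0$ almost surely, $M^{(l)}$ is almost surely invertible over $\Q_p$.

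Next I would apply the matrix description of $\Delta_0^{\Z_p^k}$ a second time, now to the random square matrix $M^{(l)}$. Realising everything on a single probability space, I adjoin to $M^{(l)}$ a fresh $k$-fold border — independent of $(B,C,D)$ — to form the $(n+l+k)\times(n+l+k)$ matrix $M^{(l,k)}$ obtained by placing $M^{(l)}$ in the top-left block and filling the last $k$ rows and last $k$ columns with independent Haar $\Z_p$-entries. Conditionally on $M^{(l)}$, the matrix description gives that $\coker\!\big(M^{(l,k)}\big)$ is distributed as $\Delta_0^{\Z_p^k}\!\big(\coker M^{(l)}\big)$; averaging over $M^{(l)}$, whose law is that of $\Delta_0^{\Z_p^l}(G)$, shows that $\coker\!\big(M^{(l,k)}\big)$ has the law of $\Delta_0^{\Z_p^k}\!\big(\Delta_0^{\Z_p^l}(G)\big)$. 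On the other hand, unravelling the block structure, $M^{(l,k)}$ is simply $M$ with its last $k+l$ rows and last $k+l$ columns filled by jointly independent Haar $\Z_p$-entries — the "old" entries $(B,C,D)$ together with the "new" $k$-border form one independent $(k+l)$-fold border of $M$. Hence, by the matrix description of $\Delta_0^{\Z_p^{k+l}}$, $\coker\!\big(M^{(l,k)}\big)$ also has the law of $\Delta_0^{\Z_p^{k+l}}(G)$.

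Comparing the two descriptions of the law of $\coker\!\big(M^{(l,k)}\big)$ yields $\Delta_0^{\Z_p^k}\!\big(\Delta_0^{\Z_p^l}(G)\big) = \Delta_0^{\Z_p^{k+l}}(G)$ as distributions for every $G \in X_0$; since both sides are linear in the input measure and the Dirac masses $\delta_G$ span, this gives $\Delta_0^{\Z_p^k}\circ\Delta_0^{\Z_p^l} = \Delta_0^{\Z_p^{k+l}}$. (Alternatively and more briefly: \eqref{eqn:composability} identifies $\Delta_0^{\Z_p^k} = \dk\dkstar = (\doper\dstar)^k = \Delta_0^{\,k}$, after which composability is just additivity of exponents.) The one point that needs care is the conditioning step in the second paragraph: one must know that the law of $\Delta_0^{\Z_p^k}$ applied to a group depends only on that group and not on the chosen square presentation of it, so that it is legitimate to take the presentation $M^{(l)}$ and to adjoin a $k$-border independent of $M^{(l)}$. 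This is precisely what the matrix description asserts, since it holds for an arbitrary square presentation, so the argument goes through.
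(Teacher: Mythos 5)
Your proof is correct and follows essentially the same route as the paper: both arguments realize $\Delta_0^{\Z_p^l}$ and then $\Delta_0^{\Z_p^k}$ as successive independent Haar-random borderings of a square presentation $M$ of $G$, observe that the doubly bordered matrix is exactly a single $(k+l)$-fold bordering of $M$, and conclude by the representability of every finite abelian $p$-group as $\coker(M)$ together with linearity. Your explicit treatment of the conditioning step (and the remark that the matrix description must hold for an arbitrary square presentation of the intermediate cokernel) is a point the paper leaves implicit in the phrase ``it is also evidently $\Delta_0^{\Z_p^{l}}\Delta_0^{\Z_p^{k}}\coker(\probM_{n,n})$,'' but it is the same argument.
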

   \begin{Corollary}
   \label{cor:composability}
   \[
   \Delta_0^{\Z_p^k}=\Big( \Delta_0^{\Z_p} \Big)^{k} = \Big( \Delta_0 \Big)^{k}
   \]
   \end{Corollary}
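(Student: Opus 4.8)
The plan is to deduce everything from the random-matrix descriptions of $\dk$ and $\dkstar$ established in Section~1, exactly as in the derivation of the one-parameter relation $(\doper\dstar)^{k}=\dk\dkstar$ there, but carrying two indices at once. Fix a finite abelian $p$-group $G$ and represent it as $G\cong\coker(M)$ for a \emph{square, invertible} matrix $M=M_{n,n}$; this is always possible (take $M$ diagonal with prime-power entries). By the random-matrix description of $\Delta_0^{\Z_p^{l}}$, the random group $\Delta_0^{\Z_p^{l}}(G)$ is distributed as $\coker(M')$, where $M'$ is the $(n+l)\times(n+l)$ matrix obtained from $M$ by adjoining $l$ independent Haar-random rows and $l$ independent Haar-random columns.

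First I would check that $M'$ is invertible over $\Q_p$ with probability $1$: $\det M'$ is a polynomial in the adjoined entries that is not identically zero (setting the new off-diagonal blocks to $0$ and the new diagonal block to $p\,\mathrm{Id}_{l}$ gives $\det M'=p^{l}\det M\neq 0$), hence it vanishes only on a Haar-null set. Consequently, conditionally on $\Delta_0^{\Z_p^{l}}(G)=\coker(M')$, we may apply the matrix description once more, now to $\Delta_0^{\Z_p^{k}}$ with the a.s.\ invertible representative $M'$: conditionally on $M'$, the group $\Delta_0^{\Z_p^{k}}\!\bigl(\Delta_0^{\Z_p^{l}}(G)\bigr)$ is distributed as $\coker(M'')$, where $M''$ is $M'$ bordered by $k$ \emph{fresh} independent Haar-random rows and $k$ fresh independent Haar-random columns. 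Here I use that composing the random operators means composing the corresponding Markov kernels, so the randomness of the second application is independent of that of the first; a tower-of-conditioning (Fubini) argument makes this precise.

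Now the matrix $M''$ has $M$ in its top-left $n\times n$ block, and all of its remaining entries — the $l$-border together with the $k$-border — are i.i.d.\ Haar-random elements of $\Z_p$. Hence $M''$ has exactly the same distribution as the $(n+k+l)\times(n+k+l)$ matrix obtained from $M$ by adjoining $k+l$ Haar-random rows and $k+l$ Haar-random columns all at once; by the matrix description of $\Delta_0^{\Z_p^{k+l}}$, the resulting law of $\coker(M'')$ is precisely that of $\Delta_0^{\Z_p^{k+l}}(G)$. As $G$ was arbitrary, this proves $\Delta_0^{\Z_p^{k}}\circ\Delta_0^{\Z_p^{l}}=\Delta_0^{\Z_p^{k+l}}$, i.e.\ Theorem~\ref{thm:composability}. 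Corollary~\ref{cor:composability} then follows by induction on $k$: the base case is $\Delta_0^{\Z_p}=\doper\dstar=\Delta_0$ by definition, and the inductive step is $\Delta_0^{\Z_p^{k}}=\Delta_0^{\Z_p^{k-1}}\circ\Delta_0^{\Z_p}=(\Delta_0)^{k-1}\circ\Delta_0=(\Delta_0)^{k}$.

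I expect the only real obstacle to be bookkeeping rather than anything conceptual: one must be sure the matrix-representation lemma genuinely applies to the \emph{random} intermediate matrix $M'$ (this is the point of the a.s.-invertibility check, needed because the representation of the $\dstar$-type operators was proved for invertible matrices), and one must set up ``apply $\Delta_0^{\Z_p^{k}}$ to the output of $\Delta_0^{\Z_p^{l}}$'' with genuinely fresh, independent randomness so that the two borders merge into a single uniform border. Once those points are pinned down the identity is immediate; as a byproduct the argument reproves the one-parameter relation $(\doper\dstar)^{k}=\dk\dkstar$ used earlier in the text.
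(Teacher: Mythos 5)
Your argument is essentially the paper's own: represent $G$ as the cokernel of a square matrix, realize $\Delta_0^{\Z_p^{l}}$, $\Delta_0^{\Z_p^{k}}$ and $\Delta_0^{\Z_p^{k+l}}$ as cokernels of a nested sequence of bordered Haar-random matrices, observe that the two successive borders have the same joint law as a single $(k+l)$-border, and then deduce the corollary from Theorem~\ref{thm:composability} by induction, using that every finite abelian $p$-group is a cokernel of a square matrix (Lemma~\ref{firstrepresentabilitylemma}). Your explicit check that the intermediate bordered matrix is almost surely invertible, and your remark that the second application must use fresh independent randomness, are welcome refinements of points the paper leaves implicit, but they do not change the route.
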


\begin{mysection}
   \begin{remark}
   It is tantalizing to think that there might be a more general family of operators 
   $\Delta_0^G$, parametrized by abelian $p-groups$, such that
   \[
   \Delta_0^{G_1} \circ \Delta_0^{G_2} = \Delta_0^{G_1 \times G_2}
   \]
   However, this would imply that all such operators would commute. Therefore, all such hypothetical $\Delta_0^{G}$ would have the same stationary measure.
   \end{remark} 
\end{mysection}

   \subsubsection{Proof of composability}

   We will first prove the following:

   \begin{lemma}
   \label{firstrepresentabilitylemma}
   For any group $G$, there exists a square matrix $M$ such that $coker(M)=G$.
   \end{lemma}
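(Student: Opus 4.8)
The plan is to reduce immediately to the structure theorem for finite abelian $p$-groups and then write $M$ down by hand as a diagonal matrix. First I would invoke the classification: every $G \in X_0$ is isomorphic to $\bigoplus_{i=1}^{n} \Z/p^{a_i}\Z$ for some integers $a_1 \geq a_2 \geq \cdots \geq a_n \geq 1$ (with $n = 0$, the empty sum, when $G$ is trivial). Then I would set $M \defeq \mathrm{diag}(p^{a_1}, p^{a_2}, \ldots, p^{a_n})$, regarded as an $n \times n$ matrix with entries in $\Z_p$.

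The verification that this $M$ works is then a one-line computation: since $M$ is diagonal, $\coker(M) = \Z_p^n / M\Z_p^n \cong \bigoplus_{i=1}^{n} \Z_p / p^{a_i}\Z_p \cong \bigoplus_{i=1}^{n} \Z/p^{a_i}\Z \cong G$, using only that the cokernel functor commutes with finite direct sums and that $\Z_p/p^{a}\Z_p \cong \Z/p^{a}\Z$. In the degenerate case $G = 0$ one should take $M$ to be the $1 \times 1$ identity matrix (whose cokernel is $0$) rather than the empty matrix, so that $M$ is genuinely a square matrix.

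There is essentially no obstacle here; the lemma is a packaging of the structure theorem, included because the proof of \autoref{thm:composability} needs every $G$ to be realized as $\coker(M)$ for a \emph{square} $M$ in order to iterate the composability identity \eqref{eqn:composability}. If it is convenient later, I would also record that this $M$ is automatically invertible over $\Q_p$: each diagonal entry $p^{a_i}$ is a nonzero element of $\Q_p$ (finite since $G$ is finite), so $M$ has nonzero determinant in $\Q_p$, which is exactly the hypothesis needed to apply \autoref{thm:operatorsandrandommatrices} when building the random-matrix model for $\Delta_0^{\Z_p^k}$.
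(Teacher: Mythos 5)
Your proof is correct and follows the same route the paper indicates: the paper's own proof simply says the lemma "follows from the structure theorem for abelian groups," and you have carried out exactly that argument by taking $M$ to be the diagonal matrix of the elementary divisors $p^{a_i}$. The extra observations (handling the trivial group with a $1\times 1$ identity and noting that $M$ is invertible over $\Q_p$) are harmless and consistent with how the lemma is used.
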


   \begin{proof}
   There are various ways to see this. For example, this follows from the structure theorem for abelian groups.
   \end{proof}

   \begin{theorem*} (\ref{thm:composability})
   For any $k$ and $l$,
   \[
   \Delta_0^{\Z_p^{k+l}} = \Delta_0^{\Z_p^{l}} \circ \Delta_0^{\Z_p^{k}} 
   \]
   \end{theorem*}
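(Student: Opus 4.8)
The plan is to deduce the identity from the matrix descriptions of $\dk$ and $\dkstar$, using \textbf{Lemma~\ref{firstrepresentabilitylemma}}. Since $\Delta_0^{\Z_p^k}$, $\Delta_0^{\Z_p^l}$ and $\Delta_0^{\Z_p^{k+l}}$ are linear (and continuous) operators on measures on $X_0$, it is enough to check that the two composite random operators have the same law on the point mass at an arbitrary finite abelian $p$-group $G$. By Lemma~\ref{firstrepresentabilitylemma} we may write $G = \coker(M)$ with $M$ a square $n \times n$ matrix over $\Z_p$; since $G$ is finite we have $\det M \neq 0$, so $M$ is invertible over $\Q_p$, and the same will hold automatically for every square matrix produced below whose cokernel is finite, so the hypothesis of Theorem~\ref{thm:randomextclass} is always in force.

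First I would apply the matrix description of $\dk\dkstar = \Delta_0^{\Z_p^k}$ (the $k$-variable analogue of Theorem~\ref{thm:operatorsandrandommatrices}(c)): the random group $\Delta_0^{\Z_p^k}(G)$ is distributed as $\coker(M_1)$, where
\[
M_1 \;=\; \begin{mat}{c|c} M & * \\ \hline * & * \end{mat}
\]
is the $(n+k)\times(n+k)$ matrix obtained by bordering $M$ with $k$ new rows, $k$ new columns and a $k\times k$ corner block, all of whose entries are independent Haar-random elements of $\Z_p$. Because $\coker(M_1)$ is again finite, the same description applies to it: sampling from $\Delta_0^{\Z_p^l}\Big(\coker(M_1)\Big)$ amounts to passing to $\coker(M_2)$, where $M_2$ is $M_1$ bordered by a further $l$ rows, $l$ columns and an $l\times l$ corner block, with all these new entries independent Haar-random and, crucially, independent of the entries introduced in the first step — which is exactly what composition of the two random operators provides.

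Assembling the two borders, $M_2$ is an $(n+k+l)\times(n+k+l)$ matrix with $M$ in its top-left $n\times n$ corner and all of its remaining $(k+l)$ rows and $(k+l)$ columns filled with independent Haar-random elements of $\Z_p$:
\[
M_2 \;=\; \begin{mat}{c|c|c} M & * & * \\ \hline * & * & * \\ \hline * & * & * \end{mat}
\]
By the matrix description of $\dkstar$ followed by $\dk$ with parameter $k+l$, this is precisely the law of $\Delta_0^{\Z_p^{k+l}}(G)$. Hence $\Delta_0^{\Z_p^l}\circ\Delta_0^{\Z_p^k}(G)$ and $\Delta_0^{\Z_p^{k+l}}(G)$ have the same distribution; as $G$ was arbitrary and the operators are linear and continuous, $\Delta_0^{\Z_p^l}\circ\Delta_0^{\Z_p^k} = \Delta_0^{\Z_p^{k+l}}$. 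Corollary~\ref{cor:composability} then follows by induction on $k$, taking $l=1$ at each step.

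The only point that genuinely requires care is the independence bookkeeping across the two applications: one must be sure that the border introduced by $\Delta_0^{\Z_p^l}$ is sampled independently of the border introduced by $\Delta_0^{\Z_p^k}$, so that together they form a single block of independent Haar-random entries surrounding $M$. This is immediate from the definition of the composite Markov kernel, and once it is noted the remainder of the argument is just recognizing the shape of $M_2$; there is no computation to perform.
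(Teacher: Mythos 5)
Your proposal is correct and is essentially the paper's own argument: both present $G$ as $\coker(M)$ via Lemma~\ref{firstrepresentabilitylemma}, identify the cokernels of the nested sequence of bordered random matrices with $\Delta_0^{\Z_p^k}(G)$, $\Delta_0^{\Z_p^l}\Delta_0^{\Z_p^k}(G)$ and $\Delta_0^{\Z_p^{k+l}}(G)$, and conclude by linearity. Your explicit attention to the independence of the two borders is a point the paper leaves implicit, but it is the same proof.
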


   \begin{proof}
   Consider the nested sequence of random matrices:
   \begin{enumerate}[label=(\alph*)]
   \item The $n \times n$ matrix \[ \probM_{n,n} \]
   \item The $(n+k) \times (n+k)$ matrix   \[
   \begin{mat}{ccc|ccc}
   &  & & * & \hdots & * \\
   & \probM_{n,n} & & \vdots & \ddots & \vdots \\ 
   &  & & * & \hdots & * \\ \hline
   * & \hdots & * & * & \hdots & * \\
   \vdots & \ddots & \vdots & \vdots & \ddots & \vdots \\
   * & \hdots & * & * & \hdots & *
   \end{mat}
   \]
   
   \item The $(n+k+l) \times (n+k+l)$ matrix   \[
   \begin{mat}{ccc|ccc|ccc}
   &  & & * & \hdots & *& * & \hdots & * \\
   & \probM_{n,n} & & \vdots & \ddots & \vdots & \vdots & \ddots & \vdots \\ 
   &  & & * & \hdots & * & * & \hdots & * \\ \hline
   * & \hdots & * & * & \hdots & * & * & \hdots & * \\
   \vdots & \ddots & \vdots & \vdots & \ddots & \vdots & \vdots & \ddots & \vdots  \\
   * & \hdots & * & * & \hdots & * & * & \hdots & * \\ \hline
   * & \hdots & * & * & \hdots & * & * & \hdots & * \\
   \vdots & \ddots & \vdots & \vdots & \ddots & \vdots & \vdots & \ddots & \vdots  \\
   * & \hdots & * & * & \hdots & * & * & \hdots & *
   \end{mat}
   \]
   \end{enumerate}

   \begin{enumerate}[label=(\alph*)]
   \item The cokernel of this random matrix is \[ \coker(\probM_{n,n}) \]
   \item The cokernel of this random matrix is \[ \Delta_0^{\Z_p^k}\coker(\probM_{n,n}) \]
   \item The cokernel of this random matrix is
   \[
   \Delta_0^{\Z_p^{k+l}}\coker(\probM_{n,n}). 
   \] 
   But it is also evidently
   \[ \Delta_0^{\Z_p^{l}} \Delta_0^{\Z_p^{k}}\coker(\probM_{n,n} )\]
   \end{enumerate}
   Hence, we have 
   \begin{equation}
   \label{composabilitytwo}
   \Delta_0^{\Z_p^{l}} \Delta_0^{\Z_p^{k}}(G) = \Delta_0^{\Z_p^{k+l}}(G)
   \end{equation}
   for any $p$-group $G$ that can be represented as the cokernel of a square matrix. 
   By \autoref{firstrepresentabilitylemma}, it follows that \autoref{composabilitytwo} holds for all groups $G$. By linearity, it follows that 
   \[
   \Delta_0^{\Z_p^{l}} \Delta_0^{\Z_p^{k}} = \Delta_0^{\Z_p^{k+l}}
   \]
   holds in general, applied to any probability measure on $X_0$.
   \end{proof}   

\newpage

\section{The Spectrum of $\Delta_0$}
\subsection{Survey}

The main object of study in this section will be $\ltwostar$, together with the action of $\Delta_0$.

\subsubsection{General remarks about $\Ds$ and $\lltwoCL$}

\begin{definition}
We recall the definition of $\lltwoCL$ given in the introduction: $\lltwoCL$ contains all measures $\nu$ on $X_0$, for which
\[
\sum_G \nu(G)^2\#Aut(G) < \infty
\]
$\lltwoCL$ is equipped with the following inner product:
\[
\Big< \nu_1 \, , \nu_2 \Big> \, \defeq \, c_0^{-1} \sum_G \nu_1(G)\nu_2(G)\#Aut(G) =
\]
where $c_0$ is a normalization coefficient, explicitly given by:
\begin{equation}
\label{eqn:cconstant}
c_0 \defeq  \prod_{j=1}^{\infty} (1-\frac{1}{p^j})
\end{equation}
\end{definition}

One of the theorems in the appendix of this section, \autoref{thm:boundedoperator}, states that $\Ds$ is a bounded operator on $\lltwoCL$.

By the results in Section $2$, $\Delta_0$ is reversible. As an immediate consequence,  $\Ds$ is a self-adjoint operator on $\lltwoCL$.

\mmarginpar{This mysection remark might be useful.}
\begin{mysection}
\begin{remark}
It is in fact a compact operator, as will follow from the results of this section.
\end{remark}
\end{mysection}

Self-adjoint operators satisfy a spectral theorem. The main goal of this section is to completely determine the spectrum of $\Ds$.

\subsubsection{Eigenfunctions of $\Ds$: an explicit formula}

Firstly, we will construct a certain set of eigenfunctions of $\Ds$ that lie in $\lltwoCL$. This construction is surprisingly explicit. We will first make a seemingly unrelated aside.

Note that the finitely supported measures:
\[
\frac{\# Sur ( F , \emptydot)}{\#Aut(\et)}
\]
form a basis for all finitely supported measures and hence a basis for $\lltwoCL$.

\begin{lemma}
There is a unique linear continuous operator $\lltwoCL \rightarrow \lltwoCL$ that takes the measure
\[
\frac{\# Sur ( F , \emptydot)}{\#Aut(\et)}
\]
to the measure
\[
\sqrt{c_0} \frac{ \# Sur ( \emptydot, F)}{\#Aut(\et)} 
\]
where $c_0$ is the normalization constant (\ref{eqn:cconstant}), above. This operator is \textbf{unitary}.
\end{lemma}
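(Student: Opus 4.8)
The plan is to build $\fouriermap$ by prescribing it on a dense subspace and then extending by continuity, with every nontrivial ingredient coming from the curious formula in the Remark above. Write $m_F$ for the finitely supported measure $\#Sur(F,\et)/\#Aut(\et)$ and $n_F$ for the measure $\#Sur(\et,F)/\#Aut(\et)$; the prescribed assignment is $m_F\mapsto\sqrt{c_0}\,n_F$. As already recorded just before the lemma, $\{m_F\}_F$ is a basis of the space of finitely supported measures (the matrix $m_F(G)$ is triangular with respect to the ordering $|G|\le|F|$, with $1$'s on the diagonal since $\#Sur(F,F)=\#Aut(F)$), and that space is dense in $\lltwoCL$. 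So it suffices to check that $m_F\mapsto\sqrt{c_0}\,n_F$ extends to a well-defined linear \emph{isometry} of $\mathrm{span}\{m_F\}$: a linear isometry on a dense subspace of a Hilbert space has a unique continuous extension, which is again a linear isometry.

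The first point is that $n_F\in\lltwoCL$: by definition $\|n_F\|^2=c_0^{-1}\sum_G \#Sur(G,F)^2/\#Aut(G)$, and the curious formula with $F_1=F_2=F$ shows that $c_0$ times this sum equals $\sum_G \#Sur(F,G)^2/\#Aut(G)$, a \emph{finite} sum because $\#Sur(F,G)\ne 0$ only for the finitely many quotients $G$ of $F$. So each $\sqrt{c_0}\,n_F$ genuinely lies in $\lltwoCL$, and $\fouriermap$ is at least defined on $\mathrm{span}\{m_F\}$. For the isometry, a direct computation gives, for all $F_1,F_2$,
\[
\lb\sqrt{c_0}\,n_{F_1},\ \sqrt{c_0}\,n_{F_2}\rb=\sum_G\frac{\#Sur(G,F_1)\,\#Sur(G,F_2)}{\#Aut(G)},\qquad \lb m_{F_1},\ m_{F_2}\rb=\frac1{c_0}\sum_G\frac{\#Sur(F_1,G)\,\#Sur(F_2,G)}{\#Aut(G)},
\]
and the curious formula says exactly that these two numbers agree. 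Hence the prescribed map carries the Gram matrix of $\{m_F\}$ to itself, which simultaneously shows it is well defined (if $\sum_i a_i m_{F_i}=0$ then $\|\sum_i a_i\sqrt{c_0}\,n_{F_i}\|^2=\|\sum_i a_i m_{F_i}\|^2=0$) and, by bilinearity, norm-preserving on $\mathrm{span}\{m_F\}$. Extending by continuity produces the desired bounded operator $\fouriermap\colon\lltwoCL\to\lltwoCL$, which is an isometry, and it is unique since its values are fixed on a dense set.

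What remains — and what I expect to be the main obstacle — is upgrading ``isometry'' to ``unitary'', i.e. surjectivity. The image of $\fouriermap$ is the closed linear span of $\{n_F\}_F$, so unitarity is equivalent to totality of that family: no nonzero $\psi\in\lltwoCL$ has $\lb n_F,\psi\rb=0$ for all $F$, that is, $\sum_G\#Sur(G,F)\psi(G)=0$ for every finite abelian $p$-group $F$ (the sum converges absolutely by Cauchy--Schwarz, since $\|n_F\|,\|\psi\|<\infty$). This is a moment-determinacy statement: a square-summable signed measure on $X_0$ must be determined by its ``moments'' $\sum_G\#Sur(G,F)\psi(G)$. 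I would establish it by Möbius inversion on the lattice of finite abelian $p$-groups, expressing the pairing of $\psi$ against a point mass $\delta_{G_0}$ as an alternating series in those moments, with the square-summability of $\psi$ (fed into Cauchy--Schwarz against $\|\psi\|$) being exactly what forces the series to converge and vanish. Alternatively, the triangularity in the corollaries to \autoref{basis} gives $\mathrm{span}\{n_F\}=\mathrm{span}\{E_F\}$, so totality of $\{n_F\}$ is the same as completeness of the eigenfunctions $E_F$, i.e. $\ker(\Delta_0)=0$, which can instead be extracted from the spectral analysis of the final section (being careful that this route is not circular with \autoref{lem:surjectivitydelta:intro}). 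Everything up to this totality step is routine bookkeeping around the curious formula; the totality step is the crux.
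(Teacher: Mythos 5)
Your first two paragraphs are correct and are essentially the paper's own argument: well-definedness from the linear independence (triangularity) of the measures $\# Sur(F,\et)/\#Aut(\et)$, norm preservation on their span via the curious formula applied to the two Gram matrices, and unique continuous extension by density of finitely supported measures. The inner-product computation and the observation that $\# Sur(\et,F)/\#Aut(\et)$ lies in $\ltwostar$ (finiteness of the right-hand side of the curious formula with $F_1=F_2=F$) are exactly right and are the whole content of the paper's proof.

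The problem is your final paragraph. The surjectivity you single out as ``the crux'' is not something the paper proves, and in fact the paper explicitly leaves it open: the remark following \autoref{lem:surjectivitydelta} states that it is not known whether $\ker(\Delta_0)\cap\ltwostar=0$, equivalently whether $\fouriermap$ is surjective. ``Unitary'' in this lemma is being used in the weaker sense of a norm-preserving operator (unitary onto its image, the closed span of the measures $\# Sur(\et,F)/\#Aut(\et)$); the paper's proof establishes exactly this, and the Second Main Theorem then identifies $im(\fouriermap)$ as $\ker(\Delta_0)^{\perp}$ rather than as all of $\ltwostar$. Consequently neither of your proposed routes to totality can be expected to close the gap: the M\"obius-inversion / moment-determinacy argument for an arbitrary square-summable signed measure is precisely the open problem (the known determinacy results need growth hypotheses on the measure that a general element of $\ltwostar$ does not satisfy, and your alternating series has no a priori reason to converge), while the route through completeness of the $E_F$ is not merely at risk of circularity --- by \autoref{lem:surjectivitydelta} it is literally equivalent to the statement you are trying to prove. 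If you drop that last paragraph and read ``unitary'' as ``isometric,'' your proof is complete and coincides with the paper's.
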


\begin{remark}
The existence of this operator and its unitarity both follow from the curious formula below:
\[
c_0 \sum_G \frac{\# Sur ( G, F_1)\# Sur ( G, F_2)}{\# Aut(G)}= 
\]
\[
=\sum_G \frac{\# Sur ( F_2 , G)\# Sur (F_1, G)|}{\# Aut(G)} \hspace{0.5in}
\forall \, F_1, F_2
\]
For the proof of this formula, see \autoref{thm:curioussum:mainbody}.
\end{remark}

Let us denote this linear operator as $\fouriermap$. Our first main result is the following:

\begin{theorem}
\label{thm:eqnfundamental}
 $\fouriermap$ satisfies the relation:
\begin{equation}
\label{eqn:fundamental}
\Ds \, \fouriermap \, ( \, f  \, ) = \fouriermap \Big( |\#G|^{-1} f  \Big)
\end{equation}
\end{theorem}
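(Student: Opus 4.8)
The plan is to check \eqref{eqn:fundamental} on the spanning family $m_F \defeq \frac{\#Sur(F,\et)}{\#Aut(\et)}$, with $F$ ranging over finite abelian $p$-groups, and then extend to all of $\lltwoCL$ by continuity. This reduction is legitimate: the $m_F$ span the (dense) subspace of finitely supported measures, and all three operators in sight are bounded --- $\Ds$ by \autoref{thm:boundedoperator}, $\fouriermap$ because it is unitary, and the diagonal multiplier $|\#G|^{-1}$ because it is a contraction (every $|G|\ge 1$). So I would reduce to proving $\Ds\,\fouriermap(m_F) = \fouriermap\bigl(|\#G|^{-1} m_F\bigr)$ for each $F$.

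Comparing the definition of $\fouriermap$ with that of the Moment measures gives $\fouriermap(m_F) = \sqrt{c_0}\,\frac{\#Sur(\et,F)}{\#Aut(\et)} = c_0^{-1/2}\,\mo[F]$, and likewise with any quotient of $F$ in place of $F$. Invoking \autoref{basis}, and identifying $Hom(\Z_p,F)$ with $F$ so that $\coker(\Z_p\xrightarrow{a}F)=F/\langle a\rangle$, the left-hand side unwinds to
\[
\Ds\,\fouriermap(m_F)=c_0^{-1/2}\,\Ds\,\mo[F]=\frac{c_0^{-1/2}}{|F|}\sum_{a\in F}\mo[F/\langle a\rangle]=\fouriermap\!\Bigl(\tfrac{1}{|F|}\sum_{a\in F}m_{F/\langle a\rangle}\Bigr).
\]
Hence the theorem becomes equivalent to the ``dual'' identity of finitely supported measures $|\#G|^{-1} m_F = \frac{1}{|F|}\sum_{a\in F}m_{F/\langle a\rangle}$, which, after cancelling the common factor $\#Aut(G)^{-1}$, is the combinatorial statement
\[
|F|\cdot\#Sur(F,G) \;=\; |G|\sum_{a\in F}\#Sur(F/\langle a\rangle,G)\qquad\text{for every }G\in X_0.
\]

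For this last identity I would double-count pairs $(a,\pi)$ with $a\in F$ and $\pi\in Sur(F,G)$: giving a surjection $F/\langle a\rangle\twoheadrightarrow G$ is the same as giving a surjection $\pi\colon F\twoheadrightarrow G$ with $a\in\ker\pi$, so
\[
\sum_{a\in F}\#Sur(F/\langle a\rangle,G)=\sum_{\pi\in Sur(F,G)}|\ker\pi|=\#Sur(F,G)\cdot\frac{|F|}{|G|},
\]
using $|\ker\pi|=|F|/|G|$ for any surjection; multiplying through by $|G|$ yields the claim.

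I expect the only genuinely delicate point to be the limiting step in the first paragraph --- promoting an identity verified on the dense span $\{m_F\}$ to all of $\lltwoCL$, which rests on the boundedness of $\Ds$ established in the appendix (\autoref{thm:boundedoperator}); everything else is bookkeeping around \autoref{basis}. It is worth noting that the displayed combinatorial identity is precisely the image of \autoref{basis} under $\fouriermap$, and is the mechanism converting the eigenvalue $|F|^{-1}$ into the diagonal multiplier $|\#G|^{-1}$.
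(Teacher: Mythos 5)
Your proof is correct, but it takes a genuinely different route from the paper's. The paper never computes the off-diagonal coefficients appearing in \autoref{basis}: it uses only that $\Delta_0$ acts upper-triangularly on the moment measures with leading coefficient $|F|^{-1}$ (\autoref{lem:momentdelta}), and then invokes the self-adjointness of $\Delta_0$ --- i.e.\ the reversibility theorem of Section 2 --- together with the fact that $\fouriermap$ is an isometry to force the matrix of $\Delta_0$ in the orthogonal system $\fouriermap(1_F)$ to be diagonal (equivalently, in the main-body version, it identifies $\fouriermap(1_G)$ with the eigenfunction $E_G$ spanning the one-dimensional orthogonal complement of $V^{<G}$ in $V^{\leq G}$). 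You instead take the exact form of \autoref{basis} and verify the identity on the nose on the spanning measures $\frac{\#Sur(F,\et)}{\#Aut(\et)}$, reducing it to the double-counting identity $|F|\,\#Sur(F,G)=|G|\sum_{a\in F}\#Sur(F/\langle a\rangle,G)$, whose proof via $\sum_{\pi}|\ker\pi|$ is correct. What your argument buys is that it is entirely elementary past \autoref{basis} and makes no use of the self-adjointness of $\Delta_0$; it also exhibits concretely how the eigenvalue $|F|^{-1}$ of the triangular action turns into the diagonal multiplier $|\#G|^{-1}$. What the paper's argument buys is softness: it needs only the diagonal entry of the triangular action, not the full distribution of $\coker(\Z_p \rightarrow F)$. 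Your handling of the extension step (density of the span of the $\frac{\#Sur(F,\et)}{\#Aut(\et)}$ plus boundedness of $\Ds$, $\fouriermap$ and the multiplier) is if anything more careful than the paper's closing ``by linearity''. One small point: the form of \autoref{basis} you use is proved in the main body as \autoref{relationfrompresentation}, phrased via the kernel of a uniformly random character of $F$ rather than the quotient by a uniformly random element of $F$; these agree as distributions on isomorphism classes by Pontryagin duality, but it is worth saying so when you invoke the identification $Hom(\Z_p,F)\cong F$.
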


\begin{definition}
We define the measure $1_F$ as the unique measure that is positive, supported on $F$ and has unit norm in $\lltwoCL$. 
\end{definition}
By relation \ref{eqn:fundamental}, $\fouriermap$ takes eigenfunctions of $|\#G|^{-1}$ to eigenfunctions of $\Ds$. The functions $1_F$ form a natural orthonormal basis of eigenfunctions of $|\#G|^{-1}$. The $\fouriermap(1_F)$ are hence orthonormal eigenfunctions of $\Ds$, which span $im(\fouriermap)$.

\subsubsection{The spectrum of $\Ds$ and the image of $\fouriermap$}

Therefore, the set of eigenvalues of $|\#G|^{-1}$ are mapped to a subset of the eigenvalues of $\Ds$. If we knew that $\fouriermap$ was surjective, this would completely determine the spectrum of $\Ds$. We have a partial result in this direction, which is our second main result of this section:

\mmarginpar{Maybe improve the formulation}

\begin{theorem}
\label{lem:surjectivitydelta}
 \[
  im(\fouriermap) = ker(\Ds)^{\perp}.
 \]
\end{theorem}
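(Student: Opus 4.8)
The plan is to prove the two inclusions separately, with the inclusion $\im(\fouriermap) \subseteq \ker(\Ds)^{\perp}$ being the easy one and $\ker(\Ds)^{\perp} \subseteq \im(\fouriermap)$ the substantive one. For the easy inclusion: since $\fouriermap$ is unitary, $\im(\fouriermap)$ is a closed subspace, and by the fundamental relation~(\ref{eqn:fundamental}) we have $\Ds\,\fouriermap(f) = \fouriermap(|\#G|^{-1}f)$. The operator $|\#G|^{-1}$ is injective (it is diagonal with strictly positive entries $|\#G|^{-1}$ on the basis $1_F$), hence $\Ds$ restricted to $\im(\fouriermap)$ is injective; moreover $\im(\fouriermap)$ is $\Ds$-invariant. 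Because $\Ds$ is self-adjoint, $\ker(\Ds) = \im(\Ds)^{\perp}$ and a self-adjoint operator preserves the orthogonal complement of any invariant closed subspace; combining these, any vector in $\im(\fouriermap)$ is orthogonal to $\ker(\Ds)$. (Equivalently: if $g \in \ker(\Ds)$ and $\fouriermap(f) \in \im(\fouriermap)$, then $\langle g, \fouriermap(f)\rangle$ can be related to $\langle g, \Ds(\cdot)\rangle = \langle \Ds g, \cdot\rangle = 0$ after writing $\fouriermap(f)$ in the eigenbasis $\fouriermap(1_F)$, each of which lies in $\im(\Ds)$ since $\Ds\fouriermap(1_F) = |F|^{-1}\fouriermap(1_F)$ with $|F|^{-1} \neq 0$.)

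For the hard inclusion, the natural strategy is to show $\ker(\Ds) \supseteq \im(\fouriermap)^{\perp}$, which is the contrapositive formulation: every vector orthogonal to all the $\fouriermap(1_F)$ is killed by $\Ds$. Since $\{\fouriermap(1_F)\}_F$ is an orthonormal system (images of an orthonormal system under a unitary) spanning $\im(\fouriermap)$, a vector $\nu \in \im(\fouriermap)^{\perp}$ satisfies $\langle \nu, \fouriermap(1_F)\rangle = 0$ for all $F$. I would translate this into concrete conditions on the measure $\nu$ by expanding $1_F$ in the spanning set $\tfrac{\#\mathrm{Sur}(F,\et)}{\#\mathrm{Aut}(\et)}$ and using the explicit description of $\fouriermap$, so that the orthogonality conditions become statements about the "dual moments" $\sum_G \nu(G)\tfrac{\#\mathrm{Sur}(F,G)}{\#\mathrm{Aut}(G)}\#\mathrm{Aut}(G) = \sum_G \nu(G)\#\mathrm{Sur}(F,G)$ vanishing for all $F$ — i.e., all the "$\mathrm{Sur}(F,-)$-moments" of $\nu$ vanish. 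Then I must show: a square-summable measure all of whose surjection-moments $\sum_G \nu(G)\#\mathrm{Sur}(F,G)$ vanish must satisfy $\Ds\nu = 0$. Using Theorem~\ref{basis} (the action of $\Ds$ on $\moment[F]$) together with the duality encoded in the curious formula, I expect $\Ds\nu$ to be expressible through exactly these moments of $\nu$, so their vanishing forces $\Ds\nu = 0$; alternatively, one shows directly that $\im(\Ds) \subseteq \overline{\mathrm{span}}\{\fouriermap(1_F)\} = \im(\fouriermap)$ (this is the stated Corollary), whence $\im(\fouriermap)^{\perp} \subseteq \im(\Ds)^{\perp} = \ker(\Ds)$.

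The main obstacle I anticipate is the analytic bookkeeping in the infinite-dimensional setting: the set $\{\fouriermap(1_F)\}$ need not be a complete orthonormal basis of all of $\lltwoCL$ (that is precisely what fails — $\ker(\Ds)$ is the gap), so I cannot simply expand arbitrary vectors. I will need to be careful that the expansion of $1_F$ in terms of $\tfrac{\#\mathrm{Sur}(F',\et)}{\#\mathrm{Aut}(\et)}$ converges in $\lltwoCL$, that the interchange of sums defining $\langle\nu,\fouriermap(1_F)\rangle$ is justified (using the curious formula and the boundedness of $\Ds$ from Theorem~\ref{thm:boundedoperator}), and that "all surjection-moments vanish $\Rightarrow \Ds\nu=0$" is proved as an honest identity in $\lltwoCL$ rather than just formally. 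Controlling these convergence issues — essentially, showing the moment functionals separate $\im(\Ds)$ — is where the real work lies; once that is in place, both inclusions close up and the Corollary $\im(\Ds) \subseteq \im(\fouriermap)$ is immediate from $\im(\Ds) = \ker(\Ds)^{\perp\perp}\cap\overline{\im(\Ds)}$ together with self-adjointness.
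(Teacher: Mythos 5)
Your handling of the easy inclusion is fine: each $\fouriermap(1_F)$ is an eigenvector of $\Ds$ with nonzero eigenvalue $|F|^{-1}$ by \autoref{thm:eqnfundamental}, hence lies in $im(\Ds)\subseteq ker(\Ds)^{\perp}$, and the latter is closed, so $im(\fouriermap)\subseteq ker(\Ds)^{\perp}$. You have also correctly identified what the hard inclusion amounts to: $\nu\perp im(\fouriermap)$ is exactly the condition that all moments $\sum_G \nu(G)\,\#Sur(G,F)$ vanish, and one must show such a $\nu$ satisfies $\Ds\nu=0$.

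At precisely that point, however, your proposal has a genuine gap: neither of your two suggested routes proves the key claim. Theorem~\ref{basis} and the curious formula only describe how $\Ds$ acts \emph{on} the span of the moment measures (i.e.\ on $im(\fouriermap)$) and give the Gram matrix of that family; they carry no information about $\Ds$ on $im(\fouriermap)^{\perp}$, so there is nothing from which to ``express $\Ds\nu$ through the moments of $\nu$.'' Your alternative --- invoking $im(\Ds)\subseteq im(\fouriermap)$ --- is circular: in the paper that containment is a \emph{corollary} of the theorem you are proving, and a direct one-step proof is not available, since a single application gives $\Ds\, 1_{G}=\doper\big(\del_0(G)\times\Z_p\big)$, which is not visibly in the closed span of the $\mo[F]$; only the limit $\dk(B\times\Z_p^{k})\to\mo(B)$ produces moment measures. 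The paper's actual mechanism is to iterate: using the composability $(\doper\dstar)^{k}=\dk\dkstar$, the triangular diagonalization of $\del_0$, and the $\ltwostar$-convergence $\dk(B\times\Z_p^{k})\to\mo(B)$, it shows (\autoref{thm:leadingtermone:mainbody}) that for every finitely supported $\nu$ the leading term of $\Ds^{N}\nu$ is $\lambda^{-N}$ times a combination of the $E_F=\fouriermap(1_F)$; then the spectral-theorem input (\autoref{lem:hilbert:appendix}) converts $\lambda^{N}\Ds^{N}\nu\to 0$ on a dense subset of $\Hil_{\lambda}=span\{\fouriermap(1_F)\,:\,|F|\le\lambda\}^{\perp}$ into the operator-norm bound $\|\Ds|_{\Hil_{\lambda}}\|\le\lambda^{-1}$, and intersecting over all $\lambda$ forces $\Ds=0$ on $im(\fouriermap)^{\perp}$. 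Without this asymptotic/spectral step, or a genuine substitute for it, your argument does not establish the theorem.
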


\begin{corollary}
The operators $\Ds$ and $|\#G|^{-1}$ have the same spectrum.
\end{corollary}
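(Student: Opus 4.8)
The plan is to leverage the two preceding theorems, which together say that $\fouriermap$ conjugates the diagonal operator $|\#G|^{-1}$ into $\Ds$ on a large invariant subspace. Write $M$ for the multiplication (``diagonal'') operator $|\#G|^{-1}$ on $\lltwoCL$; by \autoref{thm:boundedoperator} and reversibility, $\Ds$ is a bounded self-adjoint operator, and $M$ is manifestly bounded and self-adjoint, so the ordinary spectral bookkeeping for self-adjoint operators applies. By \autoref{thm:eqnfundamental} we have the intertwining relation $\Ds \fouriermap = \fouriermap M$, and by the unitarity lemma together with \autoref{lem:surjectivitydelta}, $\fouriermap$ is an isometry of $\lltwoCL$ whose image is exactly the closed subspace $ker(\Ds)^{\perp}$. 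Hence $\fouriermap$ is a unitary isomorphism from $\lltwoCL$ onto $ker(\Ds)^{\perp}$.

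The heart of the argument is a two-block decomposition. Since $\Ds$ is self-adjoint, both $ker(\Ds)$ and its orthogonal complement $ker(\Ds)^{\perp}$ are $\Ds$-invariant, and the spectrum of a bounded self-adjoint operator is the union of the spectra of its restrictions to an orthogonal pair of invariant subspaces:
\[
\sigma(\Ds) = \sigma\!\left(\Ds|_{ker(\Ds)}\right) \cup \sigma\!\left(\Ds|_{ker(\Ds)^{\perp}}\right).
\]
On $ker(\Ds)$ the operator is identically zero, so the first piece contributes only the point $0$ (and nothing at all if $ker(\Ds)=0$). On $ker(\Ds)^{\perp}$, writing $g = \fouriermap f$ for $g \in im(\fouriermap)$ and applying the intertwining relation gives $\Ds g = \Ds \fouriermap f = \fouriermap M f = \fouriermap M \fouriermap^{-1} g$, so $\Ds|_{ker(\Ds)^{\perp}} = \fouriermap \, M \, \fouriermap^{-1}$ is unitarily equivalent to $M$, whence $\sigma\!\left(\Ds|_{ker(\Ds)^{\perp}}\right) = \sigma(M)$.

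It then remains to compute $\sigma(M)$ and to reconcile the possible extra point coming from the kernel block. The operator $M$ is diagonal in the orthonormal basis $\{1_F\}$ with eigenvalues $|\#G|^{-1} \in \{\, p^{-k} : k \geq 0 \,\}$, each of finite multiplicity (there are only finitely many abelian $p$-groups of a given order); these eigenvalues accumulate only at $0$, so $\sigma(M) = \{\, p^{-k} : k \geq 0 \,\} \cup \{0\}$. In particular $0 \in \sigma(M)$ already, as a limit point of eigenvalues, even though $0$ is not itself an eigenvalue of $M$. Consequently, whether or not $ker(\Ds)$ is trivial, the point $0$ contributed by the kernel block is absorbed into $\sigma(M)$, and we conclude $\sigma(\Ds) = \sigma(M)$. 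I expect the only genuine subtlety to be precisely this last point: one must observe that $ker(\Ds)$ — which is exactly the part of $\lltwoCL$ not reached by the non-surjective isometry $\fouriermap$ — contributes nothing beyond the $0$ already present in $\sigma(M)$, so the failure of $\fouriermap$ to be globally surjective does not enlarge the spectrum of $\Ds$.
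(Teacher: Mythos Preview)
Your proof is correct and takes essentially the same approach as the paper: use the intertwining relation and the identification $im(\fouriermap)=ker(\Ds)^{\perp}$ to match the spectra away from $0$, then observe that $0$ is already a limit point of $\sigma(|\#G|^{-1})$ so nothing is lost or gained at $0$. The paper's version is compressed into three sentences, whereas you spell out the block decomposition $ker(\Ds)\oplus ker(\Ds)^{\perp}$ and the explicit computation of $\sigma(M)$, but the underlying argument is identical.
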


This follows because, by \autoref{lem:surjectivitydelta} and \autoref{thm:eqnfundamental}, the two operators have the same spectrum away from $0$. But $0$ is a limit point of the spectrum of $|\#G|^{-1}$. Since the spectrum is closed, this proves the corollary.

\begin{mysection}
\subsubsection{Question}
\end{mysection}

\begin{remark}
We do not yet know the answer to the following:
\[
ker(\Ds) \cap \lltwoCL \questioneq \emptyset
\]
i.e. we do not yet know whether $\fouriermap$ is surjective. The surjectivity of $\fouriermap$ would imply that the $E_F$ span $\lltwoCL$.

\begin{mysection}
Note that (MAYBE this is unnecessary.)
\[
ker(\Delta_0)= ker(\del_0) 
\]
\end{mysection}

\begin{mysection}
Some other remarks about this question are given at the end of this survey.
\end{mysection}

\end{remark}

\subsubsection{Proof Strategy}

\paragraph{Proof Strategy for \autoref{thm:eqnfundamental}.}
The main ingredient in the proof of \autoref{thm:eqnfundamental} will be \autoref{lem:momentdelta}:

\begin{lemma}
\label{lem:momentdelta}
For every $G$, there exist coefficients $b_F$ such that
\[
\Ds \frac{\# Sur( \emptydot , G)}{\#Aut(\et)} = \frac{1}{|\#G|} \frac{\# Sur( \emptydot , G)}{\#Aut(\et)}+ \sum_{F<G} b_F 
\frac{\# Sur( \emptydot , F)}{\#Aut(\et)}
\]
where $F<G$ means that there exists a surjection from $F$ to $G$.
\end{lemma}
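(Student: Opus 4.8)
The plan is to deduce this directly from \autoref{basis}, which already describes the action of $\Delta_0$ on the measures $\text{\textbf{Moment}}[F]$. First I would record the elementary identification $\frac{\#Sur(\et,G)}{\#Aut(\et)} = c_0^{-1}\,\text{\textbf{Moment}}[G]$, which holds because $\text{\textbf{Moment}}[G]$ evaluated at a group $H$ equals $\#Sur(H,G)\,\mu_0(H) = c_0\,\#Sur(H,G)/\#Aut(H)$. Since $\Ds=\Delta_0$ is linear, the lemma is then equivalent to the assertion that $\Delta_0(\text{\textbf{Moment}}[G]) = \tfrac{1}{|G|}\text{\textbf{Moment}}[G] + \sum_{F} b_F\,\text{\textbf{Moment}}[F]$, the sum ranging over the proper quotients $F$ of $G$.

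Next I would apply \autoref{basis}, which gives $\Delta_0(\text{\textbf{Moment}}[G]) = \frac{1}{|G|}\sum_{\phi\in\Hom(\Z_p,G)}\text{\textbf{Moment}}[\coker(\Z_p\xrightarrow{\phi}G)]$. Since $G$ is a finite abelian $p$-group, $\Hom(\Z_p,G)$ is canonically identified with $G$ via $\phi\mapsto\phi(1)$, so there are exactly $|G|$ summands. The one point that genuinely needs an argument is to isolate the zero homomorphism and to check it is the only summand equal to $\text{\textbf{Moment}}[G]$: for $\phi=0$ one has $\coker(\phi)=G$, while for $\phi\ne 0$ the image $\langle\phi(1)\rangle$ is a nontrivial subgroup, so $\coker(\phi)=G/\langle\phi(1)\rangle$ has order strictly less than $|G|$ and in particular is not isomorphic to $G$. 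Hence exactly one of the $|G|$ terms equals $\text{\textbf{Moment}}[G]$, producing the leading coefficient $\frac{1}{|G|}$, while every other term is $\text{\textbf{Moment}}[\coker(\phi)]$ with $\coker(\phi)$ a proper quotient of $G$, i.e. strictly below $G$ in the surjection ordering and therefore among the correction terms $F<G$.

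Collecting the remaining summands gives $b_F = \frac{1}{|G|}\,\#\{\phi\in\Hom(\Z_p,G) : \phi\ne 0,\ \coker(\phi)\cong F\}\ge 0$ for each proper quotient $F$ of $G$, and $b_F=0$ otherwise; dividing through by $c_0$ to translate back from $\text{\textbf{Moment}}[\cdot]$ to $\frac{\#Sur(\et,\cdot)}{\#Aut(\et)}$ yields the stated identity, with eigenvalue $\frac{1}{|G|}$ on the leading term. I do not expect a real obstacle here once \autoref{basis} is available; the only substantive content is the order-counting step that pins the leading eigenvalue to \emph{exactly} $\frac{1}{|G|}$, with no hidden contributions from nonzero $\phi$. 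In a fully self-contained treatment the genuine work would be in \autoref{basis} itself — unwinding $\Delta_0=d\,d^{*}$ through the descriptions of $d$ and $d^{*}$ (equivalently, through the random-matrix model of \autoref{thm:operatorsandrandommatrices}) together with the combinatorics of $\#Sur$ — but that is a result we are entitled to assume.
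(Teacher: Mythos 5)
Your proposal is correct and follows essentially the same route as the paper: the paper obtains this lemma by combining \autoref{relationfrompresentation} (the identity $\Delta_0 \circ \text{\textbf{Moment}} = \text{\textbf{Moment}} \circ \delta_0$, which is the main-body form of \autoref{basis}) with \autoref{lem:delistriangular}, the observation that $\delta_0$ is upper-triangular with diagonal entry $\frac{1}{|G|}$ because only the trivial character returns $G$ itself. Your step of isolating the zero homomorphism in $\Hom(\Z_p,G)$ is exactly that observation, phrased via cokernels of maps from $\Z_p$ rather than kernels of characters of $G$; the two formulations agree since a finite abelian $p$-group is isomorphic to its Pontryagin dual, and the real content indeed lives in \autoref{basis}, as you note.
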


Granting the lemma, the deduction of the first main theorem is short enough to be given here in its entirety. The idea is that we first prove that the action of $\Delta_0$ on $\fouriermap(1_G)$ must be upper-diagonal. Then, by symmetry, we conclude that it is diagonal.

\begin{proof} (of \autoref{thm:eqnfundamental} using \autoref{lem:momentdelta})
\[
\Ds \frac{\# Sur( \emptydot , G)}{\#Aut(\et)} = 
\frac{1}{|\#G|} \frac{\# Sur( \emptydot , G)}{\#Aut(\et)}+ \text{ lower order terms }
\]
Therefore, by the definition of $\fouriermap$,
\[
\Ds \fouriermap \left( \frac{\# Sur( G , \emptydot )}{\#Aut(\et)} \right) = 
\frac{1}{|\#G|} \fouriermap \left( \frac{\# Sur(G , \emptydot )}{\#Aut(\et)}\right) + \text{ lower order terms }
\]
\begin{mysection}
\[
\Ds\Big[ \fouriermap( 1_G) + \text{ lower order terms } \Big]= 
\frac{1}{|\#G|} \fouriermap (1_G) + \text{ lower order terms }
\]
\end{mysection}
Hence,
\[
\Ds \fouriermap( 1_G) = 
\frac{1}{|\#G|} \fouriermap (1_G) + \text{ lower order terms }
\]
Therefore,
\[
\lb \fouriermap (1_F), \Ds \fouriermap (1_G) \rb=
 \lb \Ds \fouriermap (1_F),  \fouriermap (1_G) \rb=
\begin{cases}
0 \text{ if } F<G\\
0 \text{ if } F>G\\
|\#G|^{-1} \text{ if } F=G
\end{cases}
\]

\end{proof}

\paragraph{Proof strategy for \autoref{lem:surjectivitydelta}.}

We will say a few words about the proof strategy for \autoref{lem:surjectivitydelta}: we will repeatedly use the following fact:

\begin{theorem}
\label{thm:spectralinput}
Suppose $T$ is a bounded self-adjoint operator acting on a Hilbert space $\Hil$. If
\[
\lim_{k \rightarrow \infty} T^k ( h ) = 0 \hspace{0.5in} \forall \, h \text{ in a dense subset of $\Hil$}
\]
then the operator norm of $T$ is at most $1$.
\end{theorem}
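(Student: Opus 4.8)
The plan is to argue by contradiction, using the spectral theorem for bounded self-adjoint operators. Suppose $\|T\| = c > 1$. Since $T$ is self-adjoint, $\sigma(T)$ is a compact subset of $[-c, c]$ with $\sup_{\lambda \in \sigma(T)} |\lambda| = \|T\| = c$, so some $\lambda_0 \in \sigma(T)$ has $|\lambda_0| = c > 1$. Fix $r$ with $1 < r < c$, choose a small open interval $U \ni \lambda_0$ with $|\mu| \ge r$ for all $\mu \in \overline U$, and let $P$ be the spectral projection of $T$ associated to $U$. Since $\lambda_0 \in \sigma(T)$ and $U$ is a neighbourhood of $\lambda_0$, the projection $P$ is nonzero.

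I then record the two properties of $P$ that the argument needs, both immediate from the functional calculus: $P$ commutes with every power $T^k$, and $\|T^k w\| \ge r^k \|w\|$ for every $w \in \operatorname{im}(P)$, because the spectral measure attached to such a $w$ is carried by $\overline U \subseteq \{|\mu| \ge r\}$. Now the hypothesis enters, and this is the only place it is used: $\ker P$ is a \emph{closed, proper} subspace of $\Hil$ — proper because $P \neq 0$ — so it cannot contain the dense set $D$ on which $T^k h \to 0$. Choosing $h \in D$ with $Ph \neq 0$ and using that $P$ is an orthogonal projection commuting with $T^k$,
\[
\|T^k h\| \;\ge\; \|P\,T^k h\| \;=\; \|T^k(Ph)\| \;\ge\; r^k \|Ph\| \;\longrightarrow\; \infty ,
\]
which contradicts $T^k h \to 0$; hence $\|T\| \le 1$.

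I do not expect a genuine obstacle. The facts I would lean on — the spectral theorem, the identity $\|T\| = \sup\{|\lambda| : \lambda \in \sigma(T)\}$ for self-adjoint $T$, and the nonvanishing of spectral projections over neighbourhoods of points of $\sigma(T)$ — are standard, and I would simply cite them. The one point worth flagging is that a Banach--Steinhaus argument applied directly to $D$ would \emph{not} work, since a dense set can be meagre; what makes density enough is exactly the elementary observation above that a dense set cannot lie inside the closed proper subspace $\ker P$, and it is that observation which upgrades ``$T^k h \to 0$ on a dense set'' to a contradiction with $\|T\| > 1$.
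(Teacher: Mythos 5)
Your proof is correct and rests on the same foundation as the paper's: the paper disposes of the lemma in one line by invoking the spectral theorem to reduce to a multiplication operator, and your spectral-projection argument is precisely that reduction carried out in detail. The one point you rightly make explicit — that a dense set cannot lie in the proper closed subspace $\ker P$, which is what lets density of $D$ defeat $\|T\|>1$ — is exactly the step the paper leaves implicit.
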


\autoref{thm:spectralinput} is deduced in the appendix from the Spectral Theorem. We will apply this theorem to the Hilbert space:

\[
\Hil_{\lambda} \defeq span\Big\{ \fouriermap(1_F)   \Big|   |F|<\lambda \Big\}^{\perp}
\]

and the operator $T \defeq \lambda \Ds$. The dense subset of $\Hil$ will be set of finitely supported functions. The hypothesis of \autoref{thm:spectralinput} is verified by \autoref{thm:leadingtermone} below.
\marginparr{A comment about finitely supported functions.}

\begin{mysection}
\paragraph{Proof Strategy: Conclusion.} 
\end{mysection}

Hence, the restriction of $\Ds$ to $\Hil_{\lambda}$ has operator norm at most $ \lambda^{-1}.
$

\begin{definition}
Define 
\[
\Hil_{\infty} \defeq \bigcap_{\lambda} \Hil_{\lambda}
\]
Alternatively, $\Hil_{\infty}$ is the orthogonal complement of the image of $\fouriermap$.
\end{definition}

We can now conclude the following: $\Ds$, acting on $\Hil_{\infty}$ has operator norm at most:  
\[
\inf \lambda^{-1} = 0
\]
 In other words, 
\[
\Hil_{\infty} = ker(\Ds) \cap \lltwoCL,
\]
 This shows \autoref{lem:surjectivitydelta}.

\paragraph{Proof strategy for \autoref{lem:surjectivitydelta}: main technical tool}

\begin{theorem}
\label{thm:leadingtermone}
For any finitely supported measure $\nu$, there exist $\llambda \in \mathbb{R}$ and $a \in \ltwostar$ such that 
\[
\Ds{}^k \nu = \frac{1}{\llambda^k} \fouriermap(a) + o\left(\frac{1}{\llambda^k}\right) 
\]
\end{theorem}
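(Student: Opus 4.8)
The plan is to analyze how $\Ds$ acts on the basis of measures $m_G \defeq \frac{\# Sur(\emptydot, G)}{\#Aut(\et)}$, using \autoref{lem:momentdelta}. Fix a finitely supported measure $\nu$, and write it in this basis as $\nu = \sum_{G \in S} a_G\, m_G$ for a finite set $S$. By \autoref{lem:momentdelta}, $\Ds$ acts triangularly with respect to the partial order $<$ (where $F < G$ iff there is a surjection $F \twoheadrightarrow G$): it sends $m_G$ to $\frac{1}{|\#G|} m_G$ plus terms $m_F$ with $F < G$. Consequently, the $\Ds$-invariant subspace generated by $\nu$ is contained in the finite-dimensional span of $\{ m_F : F \leq G \text{ for some } G \in S\}$, and on this finite-dimensional space $\Ds$ is represented by an upper-triangular matrix whose diagonal entries are the values $|\#G|^{-1}$ for the groups $G$ appearing.

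First I would identify $\lambda := \max\{ |\#G|^{-1} : a_G \neq 0 \text{ in some sense relevant to the top stratum}\}$ — more precisely, among all $G$ that are maximal (in the order $<$) in the support expansion of $\nu$ with nonzero coefficient, pick the largest eigenvalue $\frac{1}{|\#G_0|}$, and set $\llambda = |\#G_0|$. Since $\Ds$ restricted to this finite-dimensional invariant subspace is upper-triangular, its iterates $\Ds^k \nu$ have a well-understood asymptotic expansion in powers of the eigenvalues: $\Ds^k \nu = \llambda^{-k}\big(\text{eigenvector part}\big) + o(\llambda^{-k})$, where the leading eigenvector is a genuine eigenfunction of $\Ds$ with eigenvalue $\llambda^{-1}$ lying in $\lltwoCL$. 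By \autoref{thm:eqnfundamental} and the construction of the eigenfunctions $\fouriermap(1_F)$, every eigenfunction of $\Ds$ with eigenvalue $|\#F|^{-1}$ that lies in the relevant span is (a combination of) $\fouriermap(1_F)$; hence the leading term can be written as $\fouriermap(a)$ for a suitable $a \in \ltwostar$. This gives exactly the claimed expansion $\Ds^k\nu = \llambda^{-k}\fouriermap(a) + o(\llambda^{-k})$.

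The main subtlety — and the step I expect to require the most care — is handling the case of \emph{degenerate eigenvalues} in the upper-triangular matrix: distinct groups $G, G'$ with the same order $|\#G| = |\#G'|$ (e.g. $\Z/p^2\Z$ and $(\Z/p\Z)^2$) contribute the same diagonal entry, so the leading asymptotics of $\Ds^k\nu$ is governed by the eigenspace for that eigenvalue, which may be multi-dimensional, and one must also rule out the appearance of polynomial-in-$k$ factors (Jordan blocks) that would spoil the clean $o(\llambda^{-k})$ bound. The resolution is that $\Ds$ is self-adjoint on $\lltwoCL$, hence diagonalizable with no nontrivial Jordan blocks, so on each eigenspace the action is genuinely scalar; the leading term is then the orthogonal projection of $\nu$ onto the top eigenspace, which is a finite linear combination of the $\fouriermap(1_F)$ with $|F| = \llambda$, and is therefore of the form $\fouriermap(a)$. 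One must also check that $a \neq 0$ — i.e. that $\nu$ genuinely has a nonzero component at the top stratum — which follows from the triangularity: the coefficient of $m_{G_0}$ in the eigenvector expansion is (up to a nonzero factor) the coefficient $a_{G_0}$ of $\nu$, which is nonzero by choice of $G_0$. A final bookkeeping point is confirming that all error terms, coming from strictly smaller eigenvalues $|\#F|^{-1} < \llambda^{-1}$, are genuinely $o(\llambda^{-k})$ in the $\lltwoCL$-norm; this is immediate since there are finitely many of them and each decays at a strictly faster geometric rate.
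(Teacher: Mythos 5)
Your argument breaks at the very first step: a finitely supported measure $\nu$ cannot be written as a \emph{finite} linear combination of the measures $m_G=\#Sur(\emptydot,G)/\#Aut(\et)$. These are the moment measures, and each has \emph{infinite} support ($m_G$ is strictly positive on every group that surjects onto $G$, e.g.\ on all $G\times(\Z/p)^n$); moreover no nonzero finite linear combination of them is finitely supported (evaluating $\sum_{F\in S}a_F\#Sur(G',F)$ on groups $G'=(\Z/p^N)^r$ of growing rank and exponent produces infinitely many independent constraints that force all $a_F$ to vanish). This is exactly why the paper is careful to use the \emph{other} family, $\#Sur(F,\emptydot)/\#Aut(\et)$ — supported on the finitely many quotients of $F$ — as its basis of finitely supported measures, and why $\fouriermap$ is a nontrivial operator rather than a change of basis inside the finitely supported measures. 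Since \autoref{lem:momentdelta} only describes the action of $\Ds$ on the moment family (describing it on the finitely supported family is essentially the content of \autoref{thm:eqnfundamental}, not available here), the finite-dimensional $\Ds$-invariant subspace containing $\nu$ on which you want to run the triangular/spectral asymptotics simply does not exist, and the choice of $\llambda$, the top-eigenspace projection, and the $o(\llambda^{-k})$ error estimate have nothing to stand on. Note also that a "span of moments is dense, so it suffices to treat that case" repair is not available: the theorem is later applied to finitely supported elements of $\Hil_\lambda$, and finite combinations of moments are \emph{not} dense there (their closure is $im(\fouriermap)$, whose complement is the very space $\Hil_\infty$ under study). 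A secondary slip is the direction of the order: the correction terms in \autoref{lem:momentdelta} are indexed by quotients $F$ of $G$, which have $|F|<|G|$ and hence \emph{larger} diagonal entries $|F|^{-1}$, so the leading eigenvalue comes from the smallest group with a surviving coefficient, not from a maximal one.

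The paper avoids all of this by never iterating $\Ds$ on the moment basis. It introduces the auxiliary operator $\del_0$ (kernel of a uniformly random character $G\to\Q_p/\Z_p$), which is upper-triangular on the \emph{singleton} basis $\one_F$; that basis does give finite expansions of finitely supported measures, so $\del_0^N\nu=\sum_F a_F|F|^{-N}e_F$ is an honest finite sum (\autoref{lem:explicitdel}). The intertwining relation of \autoref{actionofdelta} then yields $\Delta_0^N\nu=\sum_F a_F|F|^{-N}d^N(e_F\times\Z_p^N)$, and the $\ltwostar$-limit $d^N(e_F\times\Z_p^N)=\mo(e_F)+o(1)$ of \autoref{limitlemma} turns each summand into $E_F+o(1)$; setting $\llambda=\min\{|F| : a_F\neq 0\}$ gives the claimed expansion with $\fouriermap(a)$ a combination of the $E_F=\fouriermap(1_F)$ with $|F|=\llambda$. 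If you want to keep your triangularity strategy, it has to be run for $\del_0$ on the singletons and then transported to $\Ds$ by such an intertwining-plus-limit argument, not run for $\Ds$ on the moments directly.
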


\begin{proof}
The proof exploits the identity for $\Delta_0^k$ given in Chapter 2. See \S \ref{sec:alleigenfunctions}.
\end{proof}

\mmarginpar{Maybe correct references in the preceding "proof" section.}

\begin{mysection}
\subsubsection{Historical Remark}
\begin{remark}
We make an important observation. Chronologically, a variant of \autoref{thm:leadingtermone} was the first theorem to be proven, in a slightly weaker form than given here. This theorem alone was sufficient to determine the spectrum of $\Delta_0$.  
The main observation was that the limit
\[
\lim_{k\rightarrow \infty} \lambda^k \Delta_0^k f,
\]
when it exists, is an eigenfunction of $\Delta_0$ with eigenvalue $\lambda$.
\end{remark}
\end{mysection}

\begin{mysection}
\begin{remark}
To summarize, we can always extract the leading term of $\Delta_0^k f$ when $f$ is finitely supported; the leading term will be an eigenfunction of $\Delta_0$. This fact can be used repeatedly to find eigenfunctions, then pass to the orthogonal complement of the eigenfunctions we found and look at the next leading term...
\end{remark}
\end{mysection}

\subsection{Action of $\Delta_0$ on moments}
\begin{mysection}
In this chapter, we will show that the eigenfunctions of $\Delta_0$ that are square-summable, with respect to the measure $\mu_0$, can all be expressed as finite sums of moment measures.

\begin{remark}
This is reminiscent of how the eigenfunctions of the simple harmonic oscillator can all be expressed as the product of a polynomial and a Gaussian.
\end{remark}
\end{mysection}

Central to the proofs of the first main theorem in this chapter will be a formula for the action of $\Delta_0$ on \textit{moment measures}.

We first recall the definition of moment measures: 
\marginparr{We first recall the definition of moment measure from the introduction.}

\begin{definition}
   For an abelian group $F \in \Xzero$, we let $\text{\textbf{Moment}}(F)$ denote the measure on $X_0$, whose value on $G$ is
   \[
   |\# Sur(G,F)| \mu_0(G).
   \]  
 \end{definition}

We begin by relating moment measures to certain probability distributions coming from random matrices.

 \begin{mysection}
\begin{lemma}
\label{lem:momentindependence}
The measures \mo(F) are linearly independent.
\end{lemma}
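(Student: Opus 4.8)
The plan is to exploit the ``triangularity'' of the moment measures with respect to the partial order $F' \leq F$ ($F$ surjects onto $F'$) already introduced above. The one elementary input needed is that $\# Sur(G,F) \neq 0$ if and only if $G$ surjects onto $F$, i.e.\ $F \leq G$, together with the trivial facts $\# Sur(F,F) = \# Aut(F) > 0$ and $\mu_0(G) > 0$ for every $G \in X_0$.

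First I would take a putative linear relation $\sum_{i=1}^{n} a_i \, \mo(F_i) = 0$ among distinct finite abelian $p$-groups $F_1, \dots, F_n$, and evaluate the measure on the left-hand side at $G = F_j$, where $F_j$ is chosen to be a minimal element of $\{F_1,\dots,F_n\}$ with respect to $\leq$ (such an element exists since the set is finite). For every $i \neq j$ we have $F_i \not\leq F_j$, so $\mo(F_i)(F_j) = \# Sur(F_j,F_i)\,\mu_0(F_j) = 0$, and the relation collapses to $a_j \, \# Aut(F_j)\, \mu_0(F_j) = 0$, whence $a_j = 0$. Deleting $F_j$ from the list and iterating (equivalently, a downward induction on $n$) forces every coefficient to vanish, which is the assertion.

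An alternative packaging, perhaps cleaner for exposition: order the $F_i$ so that $F_a \leq F_b$ implies $a \leq b$; then the $n \times n$ matrix $\big(\# Sur(F_a, F_b)\big)_{a,b}$ is triangular with diagonal entries $\# Aut(F_a) \neq 0$, hence invertible. This already shows that the \emph{restrictions} of $\mo(F_1),\dots,\mo(F_n)$ to the finite subset $\{F_1,\dots,F_n\} \subset X_0$ are linearly independent, and a fortiori so are the measures themselves.

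I do not anticipate a genuine obstacle here; the only points that merit a word of care are that $\leq$ is an honest partial order on isomorphism classes (mutual surjections between finite groups force equal orders, hence are isomorphisms), and that it suffices to test a candidate relation on the finitely many groups $F_i$ rather than on all of $X_0$ — which is precisely what makes the finite triangular-matrix argument legitimate.
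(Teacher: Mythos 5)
Your proof is correct and follows essentially the same route as the paper's: both evaluate a putative relation at a group that is minimal for the surjection order, observe that $\#Sur(F_j,F_i)=0$ for the other terms, and conclude the corresponding coefficient vanishes, then iterate. The extra remarks (triangularity of the matrix $\big(\#Sur(F_a,F_b)\big)$, antisymmetry of $\leq$ on isomorphism classes) are harmless elaborations of the same idea.
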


\begin{proof}
First, we observe that the evaluation of $\mo[F]$ is zero on groups that do not admit a surjection to $F$. Now suppose that there was a linear relation
\begin{equation}
\label{eqn:momentsum}
\sum_{F} a_{F} \mo(F)=0
\end{equation}

Let $F^{min}$ be a group $F$ that appears that does not admit a surjection to any other group that appears in this sum. Evaluating (\ref{eqn:momentsum}) on $F^{min}$ shows that the coefficient of $F^{min}$ must be 0. This gives a contradiction.
\end{proof}

\end{mysection}

\subsection{Cokernels of some random matrices}

\begin{lemma}
\label{matrixlemma}
Suppose $M$ is an $n \times n$ matrix. Then the distribution of the cokernel of the $(n+k) \times (n+k)$ matrix
$$
\begin{mat}{cccccc}
&&&\tempzero&\hdots & 0 \\
&M&&\tempvdots& \ddots & \vdots \\
&&&\tempzero& \hdots & 0 \\ \hline
*&\multicolumn{4}{c}{\hdots}&* \\
\vdots&\multicolumn{4}{c}{\ddots}&\vdots \\
*& \multicolumn{4}{c}{\hdots}&* \\
\end{mat}
$$
is
\begin{equation}
\label{fundamentalexpression}
d_{k}(coker(M) \times \Z_{p}^k)
\end{equation}
\end{lemma}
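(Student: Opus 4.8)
The plan is to transpose the displayed matrix and then recognize the result as an instance of the relation between $d_k$ and random matrices proved earlier. Write $N=\begin{pmatrix} M & 0\\ A & B\end{pmatrix}$ for that $(n+k)\times(n+k)$ matrix, where $A$ (of size $k\times n$) and $B$ (of size $k\times k$) have independent Haar-random $\Z_p$-entries. Since $\Z_p$ is a PID, Smith normal form gives $\coker(P)\cong\coker(P^{T})$ for every square matrix $P$ over $\Z_p$; applying this to each realization of $(A,B)$, the distribution of $\coker(N)$ equals that of $\coker(N^{T})=\coker\begin{pmatrix} M^{T} & A^{T}\\ 0 & B^{T}\end{pmatrix}$, and here $A^{T}$ (size $n\times k$) and $B^{T}$ (size $k\times k$) are still independent and Haar of the transposed shapes.

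Next I would observe that $\begin{pmatrix} M^{T} & A^{T}\\ 0 & B^{T}\end{pmatrix}$ is precisely the $(n+k)\times n$ matrix $\binom{M^{T}}{0}$ (with $0$ the $k\times n$ zero block) with the $(n+k)\times k$ Haar-random block $\binom{A^{T}}{B^{T}}$ adjoined on the right. The cokernel of $\binom{M^{T}}{0}$ is $(\Z_p^{n}\oplus\Z_p^{k})/(M^{T}\Z_p^{n}\oplus 0)=\coker(M^{T})\times\Z_p^{k}\in X_k$. Hence the earlier lemma — that for an $(n+k)\times n$ matrix $M'$ the group $d_k(\coker(M'))$ is distributed as the cokernel of $M'$ with $k$ independent Haar-random columns adjoined — applies verbatim and yields that $\coker(N^{T})$ is distributed as $d_k\!\big(\coker(M^{T})\times\Z_p^{k}\big)$.

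To finish, $\coker(M^{T})\cong\coker(M)$ (Smith normal form again) and $d_k$ depends only on the isomorphism class of its argument, so $d_k\!\big(\coker(M^{T})\times\Z_p^{k}\big)=d_k\!\big(\coker(M)\times\Z_p^{k}\big)$, which is (\ref{fundamentalexpression}). I do not expect a genuinely hard step: the content is just the two bookkeeping facts (transpose-invariance of cokernels of square matrices over $\Z_p$, and the identification $\coker\binom{M^{T}}{0}=\coker(M^{T})\times\Z_p^k$) together with the invocation of the previously proved $d_k$-to-matrices relation. The one mildly non-obvious point is that transposing produces $M^{T}$ rather than $M$, which is harmless since $\coker(M^{T})\cong\coker(M)$; and no invertibility hypothesis on $M$ is needed — if $M$ is singular the same argument goes through, with $\coker(M)\times\Z_p^k$ then of $\Q_p$-rank exceeding $k$ and $d_k$ of it carrying a nonzero free part, consistently with $\det N=\det M\cdot\det B$ vanishing.
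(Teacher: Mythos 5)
Your proof is correct, and it arrives at the same underlying identity as the paper but by a transposed route. The paper reads the displayed matrix row by row: the top $n$ rows $[\,M \mid 0\,]$ are taken to present $\coker(M)\times\Z_p^k$ (rows as relations inside $\Z_p^{n+k}$), and adjoining the $k$ Haar-random bottom rows then quotients by a uniformly random homomorphism from $\Z_p^k$, which is precisely $d_k$. You instead transpose the whole square matrix first, using $\coker(P)\cong\coker(P^{T})$ realization by realization, so that the random data sits in the last $k$ columns, and you can then quote the earlier column-adjoining description of $d_k$ (the $k$-analogue of part (b) of \autoref{thm:operatorsandrandommatrices}) verbatim, finishing with $\coker(M^{T})\cong\coker(M)$. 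Your version buys a little extra rigor: the paper's row-by-row reading silently switches to the ``rows are relations'' convention for the intermediate non-square matrix $[\,M\mid 0\,]$, which is opposite to the column convention used elsewhere (under the column convention that intermediate cokernel would be $\coker(M)$, not $\coker(M)\times\Z_p^k$), and your explicit transposition is exactly what legitimizes that switch. One small caveat on your closing remark: as defined in the paper, $d_k$ acts only on $X_k$, the modules of $\Q_p$-rank exactly $k$, so for singular $M$ the expression $d_k(\coker(M)\times\Z_p^k)$ is not actually defined; the lemma implicitly assumes $\coker(M)$ is finite, and your aside about the singular case, while harmless, extends the statement beyond the paper's framework rather than being needed for it.
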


\begin{proof}
\begin{itemize}
\item First of all, the cokernel of 

$$
\begin{mat}{cccccc}
&&&\tempzero&\hdots & 0 \\
&M&&\tempvdots& \ddots & \vdots \\
&&&\tempzero& \hdots & 0 \\
\end{mat}
$$

is the trivial $\Z_p^k$-extension of $B$, i.e. $B \times \Z_p^k$.
\item Secondly, the cokernel of 

$$
\begin{mat}{cccccc}
&&&\tempzero&\hdots & 0 \\
&M&&\tempvdots& \ddots & \vdots \\
&&&\tempzero& \hdots & 0 \\ \hline
*&\multicolumn{4}{c}{\hdots}&* \\
\vdots&\multicolumn{4}{c}{\ddots}&\vdots \\
*& \multicolumn{4}{c}{\hdots}&* \\
\end{mat}
$$

is the quotient of $B \times Z_p^k$ by a uniformly random homomorphism from $\Z_p^k$, i.e. $\dk(B \times \Z_p^k)$
\end{itemize}
\end{proof}

\subsection{Limit formula for $d_{k}(B \times \Z_p^{k})$ in terms of moments.}
We relate $d_{k}(B \times \Z_p^{k})$ to moments.
\begin{theorem}
\label{limitlemma}
We have the following limit:
$$\lim_{k \rightarrow \infty} d_{k}(B \times \Z_p^{k}) = \mo(B)$$

This limit holds
\begin{itemize}
 \item pointwise,
 \item in $L^1(X_0)$ ,
 \item in $\ltwostar$.
 \end{itemize}
\end{theorem}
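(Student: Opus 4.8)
The plan is to derive an exact closed formula for the mass $d_k(B\times\Z_p^k)(G)$ assigned to each finite abelian $p$-group $G$, and then let $k\to\infty$. By definition $d_k(B\times\Z_p^k)(G)=\probP\big[\coker(\phi)\cong G\big]$ for $\phi\in\Hom(\Z_p^k,B\times\Z_p^k)$ Haar-random. First I would note that $\coker(\phi)$ is finite for Haar-almost every $\phi$ (the rank of $\phi$ equals that of its $\Z_p^k\to\Z_p^k$ component, which is full except on the measure-zero locus $\{\det=0\}$), so $d_k(B\times\Z_p^k)$ really is a probability measure on $X_0$; and when $\coker(\phi)$ is finite, $\im(\phi)$ is a rank-$k$ submodule of $B\times\Z_p^k$ generated by at most $k$ elements, hence torsion-free, hence free of rank $k$ over $\Z_p$. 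Therefore
\[
d_k(B\times\Z_p^k)(G)=\sum_{K}\probP\big[\im(\phi)=K\big],
\]
the sum over free rank-$k$ submodules $K\subseteq B\times\Z_p^k$ with $(B\times\Z_p^k)/K\cong G$.

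Next come two elementary computations. For a single such $K$: since $K$ is open of index $|G|$, $\probP[\im(\phi)\subseteq K]=|G|^{-k}$, and conditioned on that event $\phi$ is uniform in $\Hom(\Z_p^k,K)\cong M_k(\Z_p)$, with $\im(\phi)=K$ iff the corresponding matrix is invertible; hence $\probP[\im(\phi)=K]=|G|^{-k}\prod_{i=1}^k(1-p^{-i})$, the same for every such $K$. To count the $K$'s, observe that a free rank-$k$ submodule with quotient $\cong G$ is the same as a surjection $s\colon B\times\Z_p^k\twoheadrightarrow G$ with free (equivalently, torsion-free) kernel, taken up to $Aut(G)$; and $\ker(s)$ is torsion-free exactly when $s$ restricted to the torsion part $B$ is injective. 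Counting such $s$ by choosing $s|_B\in Inj(B,G)$ and then $s|_{\Z_p^k}$ with $\im(s|_B)+\im(s|_{\Z_p^k})=G$ gives $\# Aut(B)\,|B|^k\sum_{G_0}\# Sur(\Z_p^k,G/G_0)$, where $G_0$ runs over subgroups of $G$ isomorphic to $B$. Combining the two computations (and absorbing $|B|^k|G|^{-k}=|G/G_0|^{-k}$ into the sum) yields
\[
d_k(B\times\Z_p^k)(G)=\frac{\# Aut(B)}{\# Aut(G)}\prod_{i=1}^k(1-p^{-i})\sum_{\substack{G_0\subseteq G\\ G_0\cong B}}\frac{\# Sur(\Z_p^k,\,G/G_0)}{|G/G_0|^{k}} .
\]
(The same identity can instead be read off from \autoref{matrixlemma}.)

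The limit is now immediate. For a fixed finite group $Q$ one has $\# Sur(\Z_p^k,Q)/|Q|^k=\prod_{i=0}^{d-1}(1-p^{i-k})\to1$ as $k\to\infty$ (with $d=\dim_{\F_p}Q/pQ$), and $\prod_{i=1}^k(1-p^{-i})\to c_0$; since the sum above is finite, indexed by subgroups of $G$,
\[
d_k(B\times\Z_p^k)(G)\ \longrightarrow\ \frac{c_0}{\# Aut(G)}\#\{G_0\subseteq G:G_0\cong B\}=\frac{c_0\,\# Inj(B,G)}{\# Aut(G)}=\frac{c_0\,\# Sur(G,B)}{\# Aut(G)}=\mo(B)(G),
\]
using $\# Inj(B,G)=\# Sur(G,B)$ (Pontryagin self-duality of finite abelian $p$-groups) and $\mu_0(G)=c_0/\# Aut(G)$. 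This is the pointwise statement.

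It remains to upgrade this, which the same formula makes easy. Since $\prod_{i=1}^k(1-p^{-i})\le1$ and $\# Sur(\Z_p^k,Q)\le|Q|^k$, the formula gives, uniformly in $k$,
\[
d_k(B\times\Z_p^k)(G)\ \le\ \frac{\# Aut(B)}{\# Aut(G)}\#\{G_0\subseteq G:G_0\cong B\}=\frac{\# Sur(G,B)}{\# Aut(G)}=c_0^{-1}\,\mo(B)(G).
\]
Now $\mo(B)\in\ltwostar$: its squared norm is $\sum_G\# Sur(G,B)^2/\# Aut(G)$, which the curious formula \autoref{thm:curioussum:mainbody} (applied with $F_1=F_2=B$) rewrites as $\sum_G\# Sur(B,G)^2/\# Aut(G)$, a finite sum over quotients of $B$. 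Hence the majorant $(1+c_0^{-1})\,\mo(B)$ lies in $\ltwostar$, and dominated convergence promotes the pointwise limit to convergence in $\ltwostar$. Convergence in $L^1(X_0)$ then follows because $\|f\|_{L^1}\le\|f\|_{\ltwostar}$ by Cauchy--Schwarz (using $\sum_G\# Aut(G)^{-1}=c_0^{-1}<\infty$); alternatively it follows straight from the pointwise limit by Scheff\'e's lemma, both measures having total mass $1$ (the limit because $\sum_G\# Sur(G,B)\mu_0(G)=1$, the classical fact that all Cohen--Lenstra moments equal $1$). The only genuinely delicate point is the combinatorial bookkeeping behind the displayed formula --- in particular, recognizing that $\im(\phi)$ is forced to be free and keeping all the automorphism normalizations straight --- together with the routine measure-theoretic identification $\Hom(\Z_p^k,K)\cong M_k(\Z_p)$ used to evaluate $\probP[\im(\phi)=K]$.
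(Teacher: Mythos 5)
Your proof is correct, but it takes a genuinely different route from the paper's. The paper never computes $d_k(B\times\Z_p^k)(G)$ directly on the $\dk$ side: it first invokes the adjointness relation $(\dk)^T=\dkstar$ from Section 2, converts the quantity into $\probP(G\xrightarrow{\dkstar}B\times\Z_p^k)$, identifies that with the probability that a Haar-random character $G\to(\Q_p/\Z_p)^k$ has kernel $\cong B$, and expands that as a sum over $\alpha\in Inj(B,G)$ of $|Inj(G/\alpha(B),(\Q_p/\Z_p)^k)|/|G/\alpha(B)|^k$; the limit then follows from $c_k\to1$ and the monotone convergence of these injectivity ratios to $1$. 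You instead work entirely on the source side, decomposing the event $\coker(\phi)\cong G$ over the (necessarily free, open) rank-$k$ submodules $K=\im(\phi)$, evaluating $\probP[\im(\phi)=K]=|G|^{-k}\prod_{i=1}^k(1-p^{-i})$ and counting the $K$'s via surjections with torsion-free kernel. What your approach buys is self-containedness (no appeal to the reversibility theorem \autoref{kgeneralselfadjtheorem}) and a single closed formula with a uniform majorant $c_0^{-1}\mo(B)$ that dispatches all three modes of convergence at once; what the paper's approach buys is that the intermediate identity immediately yields the corollary $\sum_G\mu_0(G)|Sur(G,B)|=1$ and reuses machinery already built. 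For the $\ltwostar$ step the paper proves the needed square-summability $\sum_G|Inj(B,G)|^2/\#Aut(G)<\infty$ by the elementary chain $|Inj(B,G)|^2\le|\Hom(B^2,G)|$ plus the moment identity, whereas you route it through \autoref{thm:curioussum:mainbody}; since that theorem is proved in the paper using the moment identity, which in turn is a corollary of the pointwise part of this very lemma, you should either order the steps explicitly (pointwise limit first, then the moment identity, then the curious formula) or just cite the elementary bound (\ref{l2bound}) to avoid any appearance of circularity.

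One cosmetic slip: in your limit display the intermediate expression $\frac{c_0}{\#Aut(G)}\#\{G_0\subseteq G:G_0\cong B\}$ has dropped the factor $\#Aut(B)$ carried over from your closed formula, and the next equality silently reinstates it via $\#Aut(B)\cdot\#\{G_0\cong B\}=\#Inj(B,G)$; the endpoints are correct, but the middle term as written is off by $\#Aut(B)$.
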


The rest of this section will be devoted to the proof of \autoref{limitlemma}.
\autoref{limitlemma} will be used twice: in the proof of \autoref{relationfrompresentation} and \autoref{thm:leadingtermone:mainbody}.

For the definition of the $L^1(X_0)$ norm, see \S \ref{sec:lone} in the appendix to this chapter.

\subsubsection{Point-wise limit}

The main tool will be the relation
$$(\dk)^T=\dkstar$$
proven earlier in \autoref{kgeneralselfadjtheorem}. We use the explicit form:

\begin{equation}
\label{generaladjointnessrelation}
\mu_k(B \times \Z_p^{k}) \probP(B \times \Z_p^{k} \xrightarrow{\dk} G)=\mu_0(G) \probP(G \xrightarrow{\dkstar} B \times \Z_p^{k} )
\end{equation}
In this expression, 
$$
\probP(B \times \Z_p^{k} \xrightarrow{\dk} G)
$$
is the expression we are interested in. It is precisely the evaluation of the measure \autoref{fundamentalexpression} at $G$.

\begin{sublemma}
$$
\probP \Big( G \xrightarrow{\dkstar} B \times \Z_p^{k} \Big)=\probP \Big( ker(G\rightarrow (\Q_p / \Z_p)^k ) \cong B \Big)
$$
where the homomorphism
$$
G\rightarrow \left( \ssfrac{\Q_p}{\Z_p} \right)^k 
$$
is chosen to be Haar random.
\end{sublemma}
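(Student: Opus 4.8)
The plan is to deduce the sublemma directly from \autoref{lem:charactersandextensions}, applied in the rank-$k$ setting. First I would record the evident $\Z_p^k$-analogue of that lemma: since $\Ext$ is additive in the second argument, $\Ext(G,\Z_p^k)\cong\Ext(G,\Z_p)^k\cong\Hom(G,\Q_p/\Z_p)^k\cong\Hom\big(G,(\Q_p/\Z_p)^k\big)$, and under this identification an element $\phi\in\Hom\big(G,(\Q_p/\Z_p)^k\big)$ corresponds to the extension obtained by pulling back the standard sequence
\[
0\to\Z_p^k\to\Q_p^k\to(\Q_p/\Z_p)^k\to 0
\]
along $\phi$. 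The proof is word for word the one given for $k=1$, since this sequence is a terminal object among $\Z_p^k$-extensions of finite groups.

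Next I would identify the abelian group underlying this extension. Writing $H_\phi = G\times_{(\Q_p/\Z_p)^k}\Q_p^k$ for the pullback, I would observe that the torsion subgroup of $H_\phi$ consists exactly of the pairs $(g,v)$ with $v=0$, and that the projection to $G$ carries it isomorphically onto $\ker(\phi)\subseteq G$. The quotient $H_\phi/(H_\phi)_{tors}$ is finitely generated, torsion-free of $\Q_p$-rank $k$, hence $\cong\Z_p^k$; since $\Z_p^k$ is free, the sequence $0\to (H_\phi)_{tors}\to H_\phi\to\Z_p^k\to 0$ splits, and therefore $H_\phi\cong\ker(\phi)\times\Z_p^k$ as abelian groups, exactly as in the final assertion of \autoref{lem:charactersandextensions}.

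Finally I would assemble the probabilities. By definition, $\dkstar(G)$ is the extension attached to a uniformly random element of the \emph{finite} group $\Ext(G,\Z_p^k)$; transported through the bijection above, this is a uniformly random $\phi\in\Hom\big(G,(\Q_p/\Z_p)^k\big)$, and since that group is finite, "uniformly random" coincides with "Haar random". By the previous paragraph, $\dkstar(G)\cong B\times\Z_p^k$ if and only if $\ker(\phi)\cong B$, which is precisely the event described on the right-hand side. Hence the two probabilities are equal.

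I do not expect a genuine obstacle here: the statement is essentially bookkeeping built on \autoref{lem:charactersandextensions}. The one point that merits a line of care is the identification $(H_\phi)_{tors}\cong\ker(\phi)$ together with the splitting of $0\to (H_\phi)_{tors}\to H_\phi\to\Z_p^k\to 0$; the rest is the functoriality of $\Ext$ and the remark that uniform equals Haar on a finite group.
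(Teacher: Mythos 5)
Your proof is correct and follows the same route the paper intends: the paper disposes of this sublemma by simply citing the character--extension correspondence of \autoref{lem:charactersandextensions}, which is exactly the bijection $\Ext(G,\Z_p^k)\cong \Hom\big(G,(\Q_p/\Z_p)^k\big)$ and the identification of the extension group as $\ker(\phi)\times\Z_p^k$ that you spell out. Your version just supplies the rank-$k$ bookkeeping (additivity of $\Ext$, the torsion/splitting argument, uniform equals Haar on a finite group) that the paper leaves implicit.
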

This follows from the relation between characters and extensions in \S  \ref{sec:charactersandextensions}.


\marginparr{Check this reference - does not entirely follow}

Granting this, we calculate:

$$\probP \Big( ker(G\rightarrow (\Q_p / \Z_p)^k ) \cong B \Big)$$
To every $G\rightarrow (\Q_p / \Z_p)^k$ whose kernel is isomorphic to $B$, there is obviously an associated injection.
$$
B \hookrightarrow G \in Inj(B,G)
$$
determined up to an element of $Aut(B)$. We can think of this as associating to $G\rightarrow (\Q_p / \Z_p)^k$ an $Aut(B)$-orbit in $Inj(B,G)$, or alternatively a \textit{random choice of an element} in this orbit, where each of these equivalent possibilities is given weight 
\[
\frac{1}{|Aut(B)|}.
\] 


Thus, we can write: 
$$\probP \Big( ker(G \rightarrow ( \Q_p / \Z_p)^k ) \cong B \Big)=$$
$$ 
=\frac{1}{|G|^k} \sum_{\alpha \in Inj(B,G)} \frac{\Big|Inj \Big(\GB, (\Q_p / \Z_p)^k \Big)\Big|}{|Aut(B)|}
$$
where $\GB$ denotes the quotient of $G$ by the image of $\alpha$.
We rewrite this expression in the form
$$ 
\frac{1}{|Aut(B)||B|^k} \sum_{\alpha \in Inj(B,G)} \frac{\Big|Inj \Big(\GB, (\Q_p / \Z_p)^k \Big)\Big|}{|\GB|^k}.
$$
Substituting the other terms into 
\autoref{generaladjointnessrelation} now gives us the equality
$$\mu_k(B \times \Z_p^{k}) \probP(B \times \Z_p^{k} \xrightarrow{\dk} G)=$$
$$=\mu_0(G) \frac{1}{|Aut(B)||B|^k} \sum_{\alpha \in Inj(B,G)}\frac{|Inj \Big(\GB, (\Q_p / \Z_p)^k \Big)|}{|\GB|^k}$$

Recalling from \S \ref{sec:measures}, that  
$$\mu_k(B \times \Z_p^{k})=\frac{c_k}{|Aut(B)||B|^k}$$
where $c_k$ is a normalization constant, we obtain:
\begin{equation}
\label{limiteqmoment}
\probP(B \times \Z_p^{k} \xrightarrow{\dk} G)= \frac{\mu_0(G)}{c_k} \sum_{\alpha \in Inj(B,G)} \frac{|Inj \Big(\GB, (\Q_p / \Z_p)^k \Big)|}{|\GB|^k}.
\end{equation}

Finally, we can simplify this expression with the following two lemmas:

\begin{sublemma}
As $k \rightarrow \infty$,
$$c_k \rightarrow 1$$
\end{sublemma}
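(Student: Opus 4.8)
The plan is to treat this as an elementary statement about infinite products. Recall that $c_k$ is defined as the tail product
\[
c_k = \prod_{i=k+1}^{\infty}\Big(1-\frac{1}{p^i}\Big),
\]
and that this product converges to a strictly positive limit because $\sum_{i=1}^{\infty} p^{-i} < \infty$ (comparison with a geometric series). First I would record the trivial upper bound: each factor satisfies $0 < 1 - p^{-i} < 1$, so $c_k \leq 1$ for every $k$.

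For the matching lower bound I would invoke the standard Weierstrass inequality, namely that for reals $x_i \in [0,1]$ one has $\prod_i (1-x_i) \geq 1 - \sum_i x_i$ (immediate by induction on the number of factors). Applying this with $x_i = p^{-i}$ for $i > k$ and summing the geometric tail gives
\[
1 \;\geq\; c_k \;\geq\; 1 - \sum_{i=k+1}^{\infty} \frac{1}{p^i} \;=\; 1 - \frac{1}{p^{k}(p-1)}.
\]
Letting $k \to \infty$, the right-hand side tends to $1$, so the squeeze theorem yields $c_k \to 1$.

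An alternative, perhaps cleaner presentation, since $c_0$ has already been introduced: write $c_k = c_0 \big/ \prod_{i=1}^{k}(1-p^{-i})$, and observe that the denominator is a partial product of the convergent product defining $c_0$, hence tends to $c_0 \neq 0$; the ratio therefore tends to $1$. Either way, the only thing to be careful about is bookkeeping — confirming the convergence of the defining product, which is immediate — so there is no real obstacle here; the lemma is a soft fact used only to simplify the limit in \eqref{limiteqmoment}.
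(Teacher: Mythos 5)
Your proof is correct and takes essentially the same route as the paper, which simply asserts that the limit follows from the explicit tail-product form of $c_k$; you have merely supplied the elementary estimate (the Weierstrass inequality and the geometric tail bound $1 - \tfrac{1}{p^k(p-1)} \leq c_k \leq 1$) that the paper leaves implicit. Both of your presentations are fine; no issues.
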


\begin{proof} 
This follows from the explicit from of $c_k$ in \S \ref{sec:measures}. 
\end{proof}

\begin{mymysection}
\begin{remark} There is moreover a natural way to see this: from its definition, the coefficient $c_k$ is the probability that 
$$
\dkstar \mu_0
$$
has no torsion as $k \rightarrow \infty$. This latter statement is an easy consequence of \autoref{thm:charactersandextensions} and \autoref{lem:charactersandextensions:isomorphism}.
\end{remark}
\end{mymysection}
\marginparr{Maybe Improve Formulation}

\begin{sublemma}
\label{lem:injequation}
Let $F$ be any finite group. Then,
$$\frac{\Big|Inj \Big(F, (\Q_p / \Z_p)^k \Big)\Big|}{|F|^k} \rightarrow 1$$
\end{sublemma}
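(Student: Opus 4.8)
The plan is to reduce the statement to a finite counting problem and then apply a crude union bound. First I would observe that, writing $p^m$ for the exponent of the finite abelian $p$-group $F$, every homomorphism $F\to(\Q_p/\Z_p)^k$ has image inside the $p^m$-torsion subgroup $(\frac{1}{p^m}\Z_p/\Z_p)^k\cong(\Z/p^m\Z)^k$, and such a homomorphism is injective as a map into $(\Q_p/\Z_p)^k$ exactly when it is injective as a map into $(\Z/p^m\Z)^k$. Hence it suffices to prove
\[
\frac{|\text{Inj}(F,(\Z/p^m\Z)^k)|}{|F|^k}\to 1,
\]
and we also record that $|\Hom(F,(\Z/p^m\Z)^k)|=|\Hom(F,\Z/p^m\Z)|^k=|F|^k$, since $\Hom(F,\Z/p^m\Z)\cong F$ whenever $F$ has exponent dividing $p^m$.

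Next I would use the key structural fact that a homomorphism out of a finite abelian $p$-group is injective if and only if it is injective on the socle $F[p]$, because every nontrivial subgroup of $F$ meets $F[p]$ nontrivially. Therefore a homomorphism $\phi\colon F\to(\Q_p/\Z_p)^k$ fails to be injective precisely when $\phi$ vanishes on some nonzero $x\in F[p]$, equivalently when $\phi$ vanishes on some one-dimensional $\F_p$-subspace (``line'') of $F[p]\cong(\Z/p\Z)^{d}$, where $d$ is the rank of $F$.

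Finally I would apply a union bound over these lines: for a fixed line $\ell=\langle x\rangle\subseteq F[p]$, the set of homomorphisms vanishing on $x$ is $\Hom(F/\langle x\rangle,(\Q_p/\Z_p)^k)$, which has size $(|F|/p)^k$, and there are $(p^d-1)/(p-1)$ such lines. Consequently
\[
0\le 1-\frac{|\text{Inj}(F,(\Q_p/\Z_p)^k)|}{|F|^k}\le\frac{p^d-1}{p-1}\cdot\frac{(|F|/p)^k}{|F|^k}=\frac{p^d-1}{p-1}\,p^{-k},
\]
which tends to $0$ as $k\to\infty$, proving the sublemma. I do not expect any serious obstacle here: the only points that need a moment's care are the reduction to the finite target $(\Z/p^m\Z)^k$ and the socle criterion for injectivity, both of which are standard; one could even extract the exact count by inclusion–exclusion over subspaces of $F[p]$, but the union bound already yields the claimed limit.
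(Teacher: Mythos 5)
Your proof is correct, and it is considerably more complete than the one in the paper. The paper's proof consists of a single observation: since $|\Hom(F,(\Q_p/\Z_p)^k)|=|F|^k$, the ratio is the probability that a uniformly random homomorphism $F\to(\Q_p/\Z_p)^k$ is injective, and this is then simply asserted to tend to $1$ monotonically in $k$. You supply the missing quantitative argument: the reduction to the finite target $(\Z/p^m\Z)^k$, the socle criterion (injectivity is detected on $F[p]$ because every nontrivial subgroup of a finite $p$-group meets the $p$-torsion), and the union bound over the $(p^d-1)/(p-1)$ lines of $F[p]$, each contributing $(|F|/p)^k$ non-injective homomorphisms, giving the explicit error bound $\frac{p^d-1}{p-1}p^{-k}$. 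This is exactly the kind of estimate the paper leaves to the reader, and it buys an effective rate of convergence rather than a bare limit. One small remark: the paper's claim of \emph{monotone} convergence is used later to justify interchanging a sum and a limit (via monotone/dominated convergence); your union bound does not directly yield monotonicity, but it follows immediately from the probabilistic framing, since a homomorphism into $(\Q_p/\Z_p)^{k+1}$ is injective whenever its projection onto the first $k$ coordinates is, so the injectivity probability is nondecreasing in $k$. If you intend your argument to replace the paper's proof wholesale, it is worth recording that one extra line.
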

\begin{proof}
We can rewrite the left hand side as:
\[
\frac{\Big|Inj \Big(F, (\Q_p / \Z_p)^k \Big)\Big|}{\Big|Hom \Big(F, (\Q_p / \Z_p)^k \Big)\Big|}
\]is the probability that a uniformly random homomorphism from $F$ to $(\Q_p / \Z_p)^k$ is injective. This probability tends to $1$ monotonically as $k \rightarrow \infty$. 
\end{proof}

\begin{mymysection}
\begin{sublemma}
\label{lem:injequation}
Let $\alpha$ be a fixed injection $B \hookrightarrow G$. Then,
$$\frac{\Big|Inj \Big(\GB, (\Q_p / \Z_p)^k \Big)\Big|}{|\GB|^k} \rightarrow 1$$
\end{sublemma}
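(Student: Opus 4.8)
The plan is to deduce this from the preceding sublemma applied with $F = \GB$, and I would recall the short argument for completeness, since once $\alpha\colon B \hookrightarrow G$ is fixed the quotient $\GB$ is simply an arbitrary finite abelian $p$-group. First I would record that for any finite abelian $p$-group $F$ one has $|\Hom(F,(\Q_p/\Z_p)^k)| = |F|^k$: the group $\Hom(F,\Q_p/\Z_p)$ is the Pontryagin dual of $F$, hence has order $|F|$, and $\Hom(F,(\Q_p/\Z_p)^k)\cong\Hom(F,\Q_p/\Z_p)^k$. Consequently the ratio $|Inj(F,(\Q_p/\Z_p)^k)|/|F|^k$ is exactly the probability that a Haar-random homomorphism $F \to (\Q_p/\Z_p)^k$ is injective, so it suffices to show this probability tends to $1$ as $k \to \infty$.

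Next I would reduce injectivity to the socle. A homomorphism out of a finite abelian $p$-group $F$ is injective if and only if its restriction to the $p$-torsion subgroup $F[p]$ is injective, because every nonzero subgroup of $F$ meets $F[p]$ nontrivially. Since $\Q_p/\Z_p$ is divisible, hence an injective $\Z$-module, the restriction map
\[
\Hom\!\big(F,(\Q_p/\Z_p)^k\big) \longrightarrow \Hom\!\big(F[p],(\Q_p/\Z_p)^k[p]\big)
\]
is a \emph{surjective} homomorphism of finite abelian groups, so it carries the uniform measure to the uniform measure. Identifying $(\Q_p/\Z_p)^k[p] \cong \F_p^k$ and $F[p] \cong \F_p^r$, where $r$ is the minimal number of generators of $F$, the probability becomes the probability that a uniformly random $\F_p$-linear map $\F_p^r \to \F_p^k$ is injective, namely $\prod_{i=0}^{r-1}\big(1 - p^{i-k}\big)$. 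With $r$ fixed and $k \to \infty$ each factor tends to $1$ monotonically from below, so the product does too, which is the claim.

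Finally I would apply this with $F = \GB$, which is legitimate precisely because $\alpha$ is injective, so $\GB$ is a finite abelian $p$-group. I do not expect a genuine obstacle here: the only point requiring a moment's care is the assertion that restricting to the socle equidistributes the uniform measure, and this is purely formal once one knows the restriction map is surjective, i.e. once one invokes the injectivity (divisibility) of $\Q_p/\Z_p$ as a $\Z$-module. In fact the statement is nothing but the previous sublemma specialized to $F = \GB$, so strictly speaking the one-line proof ``apply the previous sublemma to $\GB$'' already suffices.
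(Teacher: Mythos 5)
Your proposal is correct and follows the same route as the paper: the paper's proof likewise identifies the denominator with $\big|Hom\big(\GB,(\Q_p/\Z_p)^k\big)\big|$ via duality and reads the ratio as the probability that a uniformly random homomorphism is injective, tending to $1$ monotonically. The only difference is that you supply the computation the paper merely asserts --- the reduction to the socle and the explicit product $\prod_{i=0}^{r-1}(1-p^{i-k})$ --- which also justifies the monotonicity that the paper invokes later when interchanging the sum and the limit.
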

\begin{proof}
This expression is the probability that a uniformly random homomorphism from $\GB$ to $(\Q_p / \Z_p)^k$ is injective. This is $1$ as $k \rightarrow \infty$. 
\end{proof}
\end{mymysection}

Thus, we get 

\[
 \lim_{k \rightarrow \infty} \probP(B \times \Z_p^{k} \xrightarrow{\dk} G)= \mu_0(G) \sum_{\alpha \in Inj(B,G)} 1=
\]
\[
= \mu_0(G) |Inj(B,G)|=\mu_0(G) |Sur(G,B)|.
\]

\begin{corollary}
We get another proof of the well-known fact that 
\begin{equation}
\label{eqn:momentone}
\sum_{G} \mu_0(G) |Sur(G,B)|=1= \sum_{G} \mu_0(G) |Inj(B,G)|
\end{equation}
\end{corollary}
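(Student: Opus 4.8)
The plan is to read the corollary straight off \autoref{limitlemma}, using only that the prelimit objects are bona fide probability measures. First I would observe that for each $k$ a Haar-random element of $\Hom(\Z_p^k, B \times \Z_p^k)$ has full rank with probability one, so $\dk(B \times \Z_p^k)$ is almost surely finite; hence $\dk(B \times \Z_p^k)$ is a probability measure on $X_0$ and in particular $\sum_{G} \probP(B \times \Z_p^k \xrightarrow{\dk} G) = 1$ for every $k$. The computation leading to the corollary identifies the pointwise limit of $\probP(B \times \Z_p^k \xrightarrow{\dk} G)$ as $\mu_0(G)|Sur(G,B)| = \mu_0(G)|Inj(B,G)|$, and \autoref{limitlemma} upgrades this to convergence in $\lone$. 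Since the $\lone$-norm of a nonnegative measure is its total mass, and total mass is continuous along $\lone$-convergent sequences, passing to the limit gives
\[
\sum_{G} |Sur(G,B)|\,\mu_0(G) = \sum_{G} |Inj(B,G)|\,\mu_0(G) = \lim_{k \to \infty} \sum_{G} \probP\big(B \times \Z_p^k \xrightarrow{\dk} G\big) = 1,
\]
which is precisely (\ref{eqn:momentone}).

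The only genuinely delicate point is this interchange of the limit with the infinite sum over $G$ — that is, ruling out escape of mass to infinity — which is exactly what the $\lone$ (as opposed to merely pointwise) assertion of \autoref{limitlemma} is designed to handle. If I wanted an argument that quotes only the pointwise limit, I would combine Fatou's lemma, which gives $\sum_{G} |Sur(G,B)|\mu_0(G) \le \liminf_{k} \sum_{G} \probP(B \times \Z_p^k \xrightarrow{\dk} G) = 1$, with the explicit formula (\ref{limiteqmoment}): every ratio $|Inj(\GB, (\Q_p/\Z_p)^k)|/|\GB|^k$ appearing there is at most $1$, so $\probP(B \times \Z_p^k \xrightarrow{\dk} G) \le c_k^{-1}\mu_0(G)|Inj(B,G)|$; summing over $G$ gives $1 \le c_k^{-1}\sum_{G} \mu_0(G)|Inj(B,G)|$, and letting $k \to \infty$ with $c_k \to 1$ supplies the reverse inequality, forcing equality.

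For completeness, the termwise identity $|Sur(G,B)| = |Inj(B,G)|$ used above holds for all finite abelian groups: Pontryagin duality gives a bijection $\Hom(G,B) \cong \Hom(B^{\vee}, G^{\vee})$ carrying surjections to injections, so $|Sur(G,B)| = |Inj(B^{\vee}, G^{\vee})|$, and every finite abelian group is (non-canonically) isomorphic to its own dual; hence the two sums in (\ref{eqn:momentone}) coincide and are both equal to $1$.
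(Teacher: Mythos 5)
Your argument is correct, and your fallback version is essentially the paper's proof in different clothing. The paper also starts from the observation that $\dk(B\times\Z_p^k)$ is a probability measure, sums (\ref{limiteqmoment}) over $G$ to get $c_k=\sum_G\mu_0(G)\sum_{\alpha\in Inj(B,G)}|Inj(\GB,(\Q_p/\Z_p)^k)|/|\GB|^k$, and then passes to the limit under the sum; the only difference is that the paper justifies the interchange by monotone convergence (each ratio increases to $1$ in $k$), whereas you use Fatou for one inequality and the bound ``each ratio is at most $1$'' for the other. Both are fine and cost the same. The one thing to be careful about is your \emph{primary} route, which quotes the $\lone$ assertion of \autoref{limitlemma}: in the paper that $L^1$ statement is proved \emph{after} this corollary, by dominated convergence with dominating function $\mu_0(G)|Inj(B,G)|$, whose summability is exactly the content of (\ref{eqn:momentone}). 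So invoking the $L^1$ convergence here is circular as the paper is organized; your self-contained fallback (or the paper's monotone-convergence version) is the argument that actually has to carry the weight. Your justification of $|Sur(G,B)|=|Inj(B,G)|$ via Pontryagin duality is correct and is more than the paper bothers to say.
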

\begin{proof}
From the fact that 
$$
\dk (B \times \Z_p^{k})
$$
is a probability measure, we can conclude by (\ref{limiteqmoment}) that
$$
c_k = \sum_{G} \mu_0(G) \sum_{\alpha \in Inj(B,G)} \frac{|Inj(\GB, \Q_p / \Z_p)^k|}{|\GB|^k}
$$
Taking the limit $k\rightarrow \infty$ on both sides and using \autoref{lem:injequation}, we get
\[
1 = \sum_G \mu_0(G) \sum_{\alpha \in Inj(B,G)} 1 =  \sum_{G} \mu_0(G) |Inj(B,G)|
\]
\begin{remark} 
The interchange of the sum and the limit is justified by the fact that \[\frac{|Inj(\GB, \Q_p / \Z_p)^k|}{|\GB|^k}\] is monotonically increasing in $k$.
\end{remark}
\end{proof}

\subsubsection{$L^1$ limit}

\begin{theorem}
The limit in \autoref{limitlemma} holds in $L^1(X_0)$, i.e.
\[
\lim_{k \rightarrow \infty}\sum_G \Big| \probP(B \times \Z_p^{k} \xrightarrow{\dk} G)\, - \, \mu_0(G) |Inj(B,G)| \Big|=0
\]
\end{theorem}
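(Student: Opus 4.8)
The plan is to promote the pointwise convergence, already established in the previous subsection, to convergence in $L^1(X_0)$ by invoking Scheffé's lemma, viewing the sum over the countable set $X_0$ as integration against counting measure. Recall the statement of Scheffé: if $f_k \ge 0$ converge pointwise to $f \ge 0$ and $\sum_G f_k(G) \to \sum_G f(G) < \infty$, then $\sum_G |f_k(G) - f(G)| \to 0$. This is a two-line deduction from dominated convergence applied to $(f - f_k)^{+} \le f$ (using that $(f-f_k)^{+}\to 0$ pointwise and $f$ is summable), combined with the algebraic identity $|f_k - f| = (f_k - f) + 2(f-f_k)^{+}$.

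Concretely, the first step is to set $f_k(G) \defeq \probP(B \times \Z_p^{k} \xrightarrow{\dk} G)$ and $f(G) \defeq \mu_0(G)|Inj(B,G)|$, both manifestly nonnegative, and then to verify the three hypotheses. Pointwise convergence $f_k \to f$ is the content of the previous subsection. For the total masses, I would observe that $\sum_G f_k(G) = 1$ because $\dk(B \times \Z_p^k)$ is a genuine probability measure on $X_0$: the quotient of $B \times \Z_p^k$ by a Haar-random element of $Hom(\Z_p^k, B \times \Z_p^k)$ is finite for all but a Haar-null set of homomorphisms, the bad set being where the associated $k \times k$ matrix over $\Z_p$ is singular, i.e.\ where a nonzero polynomial (the determinant) vanishes. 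And, crucially, $\sum_G f(G) = 1$ is precisely the identity (\ref{eqn:momentone}) proved in the Corollary just above. With these three facts in hand, Scheffé's lemma yields the claim at once.

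As an alternative to Scheffé, I would note that one can run a direct dominated convergence argument instead: starting from formula (\ref{limiteqmoment}), the trivial bound $|Inj(\GB,(\Q_p/\Z_p)^k)|/|\GB|^k \le 1$ together with $c_k > c_0$ gives the uniform domination $f_k(G) \le c_0^{-1}\mu_0(G)|Inj(B,G)| = c_0^{-1}f(G)$, whose right-hand side is summable over $X_0$; dominated convergence then applies directly to $f_k - f$. Either route works and both are short.

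I do not expect a genuine obstacle here, since all the substantive work is already contained in the pointwise limit computed above. The one point that requires care is the identification of the limiting total mass with exactly $1$, and that is supplied verbatim by (\ref{eqn:momentone}); once this is observed, together with the (routine) fact that each $\dk(B \times \Z_p^k)$ is an honest probability measure, either Scheffé's lemma or dominated convergence closes the argument.
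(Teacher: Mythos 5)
Your proposal is correct. Your ``alternative'' route is essentially the paper's own proof: the paper first disposes of the factor $c_k$ via $\sum_G |c_k-1|\,\probP(\cdot)=|c_k-1|\to 0$, then uses (\ref{limiteqmoment}) to write the remaining difference exactly as $\mu_0(G)\sum_{\alpha\in Inj(B,G)}\bigl(\tfrac{|Inj(\GB,(\Q_p/\Z_p)^k)|}{|\GB|^k}-1\bigr)$, dominates it by $\mu_0(G)|Inj(B,G)|$ (summable by (\ref{eqn:momentone})), and applies dominated convergence --- the same domination you get by absorbing $c_k^{-1}\le c_0^{-1}$ into the dominating function. Your primary route via Scheff\'e is a genuinely different packaging: it bypasses the explicit formula (\ref{limiteqmoment}) entirely and needs only three inputs --- nonnegativity, the pointwise limit, and the equality of total masses $\sum_G f_k = \sum_G f = 1$. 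The cost is that you must separately justify that $\dk(B\times\Z_p^k)$ is an honest probability measure (your determinant/Haar-null argument is fine) and you lean on (\ref{eqn:momentone}), which the paper proves just above by monotone convergence, so there is no circularity. What Scheff\'e buys is robustness (no explicit formula needed); what the paper's DCT argument buys is that the identical domination (\ref{l2bound}) is reused immediately afterward for the $\ltwostar$ limit, where Scheff\'e has no analogue. Both arguments are complete.
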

\begin{proof}

First we note that 
\[
\Big| c_k \probP(B \times \Z_p^{k} \xrightarrow{\dk} G) - \probP(B \times \Z_p^{k} \xrightarrow{\dk} G)\Big| \leq |c_k-1| \rightarrow  0
\] 
Hence it is sufficient to show:
\[
c_k \probP(B \times \Z_p^{k} \xrightarrow{\dk} G) \rightarrow \mu_0(G) |Inj(B,G)|
\]
in $L^1(X_0)$. By (\ref{limiteqmoment}),
\[
c_k \probP(B \times \Z_p^{k} \xrightarrow{\dk} G) - \mu_0(G) |Inj(B,G)| = \mu_0(G) \sum_{\alpha \in Inj(B,G)} \left(\frac{|Inj \Big(\GB, (\Q_p / \Z_p)^k \Big)|}{|\GB|^k} - 1 \right).
\]
Now, we apply the dominated convergence theorem and use the fact that
$$0 \leq  1-\frac{|Inj \Big(\GB, (\Q_p / \Z_p)^k \Big)|}{|\GB|^k} \leq 1$$
\end{proof}

\subsubsection{$\ltwostar$ limit}

\begin{lemma}
Convergence holds in $\ltwostar$.
\end{lemma}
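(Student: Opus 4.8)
The plan is to bootstrap the pointwise convergence already proven in \autoref{limitlemma} to convergence in $\ltwostar$ by a dominated-convergence argument; the dominating function will be read off from the explicit formula (\ref{limiteqmoment}) together with the first-moment identity (\ref{eqn:momentone}). Throughout, write $a_k(G) \defeq \probP\big(B\times\Z_p^{k}\xrightarrow{\dk} G\big)$ and $m(G) \defeq \mu_0(G)\,|Sur(G,B)|$, so that $m = \mo(B)$ is the claimed limit measure.

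First I would check that $m$ actually lies in $\ltwostar$, which amounts to $\sum_G \mu_0(G)\,|Sur(G,B)|^2 < \infty$ (equivalently $\langle m, m\rangle_{X_0} < \infty$). To see this, observe that a pair of surjections $G \twoheadrightarrow B$, $G \twoheadrightarrow B$ is the same datum as a homomorphism $G \to B\times B$ whose image $C \le B\times B$ surjects onto both factors; sorting such pairs by their image yields, for every $G$,
\[
|Sur(G,B)|^2 \;=\; \sum_{\substack{C\le B\times B\\ \pi_1(C)=\pi_2(C)=B}} |Sur(G,C)| .
\]
Summing over $G$ against $\mu_0$ and interchanging the sums (legitimate since all terms are non-negative and the inner sum is finite), each inner sum equals $1$ by (\ref{eqn:momentone}), so $\sum_G \mu_0(G)\,|Sur(G,B)|^2$ equals the \emph{finite} number of such subgroups $C$ of $B\times B$. (Alternatively, this finiteness follows at once from the curious formula of \autoref{thm:curioussum:mainbody} evaluated at $F_1=F_2=B$, whose right-hand side is a finite sum.)

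Next I would establish a bound on $a_k(G)$ uniform in $k$. From the formula (\ref{limiteqmoment}), the fact that each ratio $\frac{|Inj(F,(\Q_p/\Z_p)^k)|}{|F|^k}$ is a probability and hence at most $1$ (cf.\ the proof of \autoref{lem:injequation}), and the inequality $c_k = \prod_{i>k}(1-p^{-i}) \ge \prod_{i\ge 1}(1-p^{-i}) = c_0$, one gets
\[
0 \;\le\; a_k(G) \;\le\; \frac{1}{c_k}\,\mu_0(G)\,|Sur(G,B)| \;\le\; \frac{1}{c_0}\,m(G) \qquad \text{for all } G \text{ and all } k .
\]
Since $a_k(G), m(G) \ge 0$ and $c_0 \le 1$, this gives $|a_k(G) - m(G)| \le c_0^{-1}\,m(G)$, hence the pointwise bound
\[
\frac{\big(a_k(G) - m(G)\big)^2}{\mu_0(G)} \;\le\; \frac{1}{c_0^{2}}\,\frac{m(G)^2}{\mu_0(G)} \;=\; \frac{1}{c_0^{2}}\,\mu_0(G)\,|Sur(G,B)|^2 ,
\]
which is summable in $G$ by the first paragraph and independent of $k$.

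Finally, the pointwise part of \autoref{limitlemma} gives $a_k(G) \to m(G)$ for each fixed $G$, so the left-hand side of the last display tends to $0$ pointwise; dominated convergence for sums then yields
\[
\| a_k - m \|_{\ltwostar}^{2} \;=\; \sum_G \frac{\big(a_k(G) - m(G)\big)^2}{\mu_0(G)} \;\longrightarrow\; 0 ,
\]
which is exactly convergence in $\ltwostar$. The only step carrying real content is the finiteness of the second moment $\sum_G \mu_0(G)\,|Sur(G,B)|^2$ in the first paragraph, itself a short consequence of the already-proven moment identity (\ref{eqn:momentone}); the rest is the uniform bound extracted from (\ref{limiteqmoment}) plus dominated convergence.
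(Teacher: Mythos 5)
Your proof is correct and follows essentially the same route as the paper's: both reduce the $\ltwostar$ convergence to dominated convergence with dominating function (a constant times) $\mu_0(G)\,|Sur(G,B)|^2$, read the uniform bound off the formula (\ref{limiteqmoment}) using that each ratio $|Inj(\cdot,(\Q_p/\Z_p)^k)|/|\cdot|^k$ is at most $1$, and establish the key finiteness $\sum_G \mu_0(G)|Sur(G,B)|^2<\infty$ by sorting pairs of maps to $B$ by their image in $B\times B$ and invoking the moment identity (\ref{eqn:momentone}). The only (cosmetic) differences are that you bound $|a_k-m|$ by $c_0^{-1}m$ directly rather than by the bracketed difference $1-\text{ratio}\le 1$, and your count of subgroups of $B\times B$ surjecting onto both factors is a slightly sharper version of the paper's bound via quotients of $B^2$.
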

Proceeding as in the proof of $L^1$ convergence, we show

\[
\mu_0(G) \sum_{\alpha \in Inj(B,G)} \frac{|Inj \Big(\GB, (\Q_p / \Z_p)^k \Big)|}{|\GB|^k}\rightarrow \mu_0(G) |Inj(B,G)|
\]
 in $\ltwostar$, or that 
$$\sum_{G} \mu_0(G) \left( \sum_{\alpha \in Inj(B,G)} \left(1-\frac{|Inj \Big(\GB, (\Q_p / \Z_p)^k \Big)|}{|\GB|^k} \right) \right)^2$$
tends to 0.

This expression under the first summation sign is dominated by 

$$\mu_0(G) |Inj(B,G)|^2$$

Therefore, we can conclude the result from the dominated convergence theorem, if we can show that 

\begin{equation}
\label{l2bound}
 \sum_{G} \mu_0(G) |Inj(B,G)|^2 < \infty
\end{equation}

\begin{proof}
But 
$$
\sum_{G} \frac{|Inj(B,G)|^2}{|Aut(G)|} \leq \sum_{G} \frac{|Hom(B,G)|^2}{|Aut(G)|} \leq \sum_{G} \frac{|Hom(B^2,G)|}{|Aut(G)|} \leq 
$$

$$
\leq \sum_{F\text{ a quotient of }B^2} \sum_{G} \frac{|Inj(F,G)|}{|Aut(G)|}=\sum_{F\text{ a quotient of }B^2} 1 \leq \infty
$$
\end{proof}

\subsection{The fundamental relation}

\begin{definition}
We define a random operation on groups as follows. Let $\del_0(G)$ be the kernel of a uniformly random element of $Hom(G, \Q_p/\Z_p)$. We extend $\del_0$ to an operator from probability distributions on $X_0$ to probability distributions on $X_0$, by linearity.
\end{definition}

\begin{mysection}
\begin{remark}
Because a finite abelian group is isomorphic to its dual, we could have equivalently defined $\del_0(G)$ to be the quotient of $G$ by a uniformly random element of $Hom(\Z_p,G)$. However the definition above agrees well with some other choices we make, in particular in \autoref{extcharlemma}.
\end{remark}
\end{mysection}

\begin{lemma}[The main relation]
\label{actionofdelta}
$$
\Delta_0(d^k(G \times \Z_p^k))=d^{k+1}(  \Z_p^{k+1} \times \del_0 G)
$$
Second version:
\[
\Delta_0(d^k(G \times \Z_p^k))=d^{k+1}( \Z_p^{k} \times \dstar G)
\]
\end{lemma}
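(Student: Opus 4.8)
The plan is to realise both sides as distributions of cokernels of explicit random matrices over $\Z_p$ and to observe that these two matrices are transposes of one another, up to an irrelevant reordering of rows --- the same device used for \autoref{thm:composability} and \autoref{limitlemma}. It is enough to prove the second version: by \autoref{lem:charactersandextensions} the random group $\dstar G$ is isomorphic to $\del_0 G\times\Z_p$ with matching distributions, so $\Z_p^{k}\times\dstar G$ and $\Z_p^{k+1}\times\del_0 G$ have the same law, and applying $d^{k+1}$ preserves this. Using \autoref{firstrepresentabilitylemma}, fix a presentation $G=\coker(M)$ with $M$ a \emph{nonsingular} $n\times n$ matrix (say $M$ diagonal with $p$-power entries; nonsingularity is what \autoref{thm:randomextclass} requires). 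Below, $*$ always denotes a block of fresh, mutually independent Haar-random $\Z_p$-entries.

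For the left side, \autoref{matrixlemma} gives $d^{k}(G\times\Z_p^{k})\overset{d}{=}\coker N$; conditioning on $G$ and applying $\Delta_0$, part (c) of \autoref{thm:operatorsandrandommatrices} then gives $\Delta_0\big(d^{k}(G\times\Z_p^{k})\big)\overset{d}{=}\coker A$, where $A$ is obtained from $N$ by appending one fresh random row and one fresh random column:
\[
N=\begin{pmatrix}M & 0_{n\times k}\\ *_{k\times n}& *_{k\times k}\end{pmatrix},\qquad A=\begin{pmatrix}M & 0_{n\times k}& *_{n\times1}\\ *_{k\times n}& *_{k\times k}& *_{k\times1}\\ *_{1\times n}& *_{1\times k}& *_{1\times1}\end{pmatrix}.
\]
For the right side, \autoref{thm:randomextclass} gives $\dstar G\overset{d}{=}\coker\begin{pmatrix}M\\ *_{1\times n}\end{pmatrix}$; presenting $\Z_p^{k}$ by $k$ rows of zeros and then realising $d^{k+1}$ by appending $k+1$ fresh random columns (the matrix description of these operators already used in \autoref{matrixlemma}), we get $d^{k+1}(\Z_p^{k}\times\dstar G)\overset{d}{=}\coker B$ with
\[
B=\begin{pmatrix}0_{k\times n}& *_{k\times(k+1)}\\ M & *_{n\times(k+1)}\\ *_{1\times n}& *_{1\times(k+1)}\end{pmatrix}.
\]
Since this derivation works for any nonsingular presentation of $G$, running it with $M^{T}$ instead of $M$ also gives $d^{k+1}(\Z_p^{k}\times\dstar G)\overset{d}{=}\coker B'$, where $B'$ is $B$ with $M^{T}$ in place of $M$.

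To finish, transposition does not change the isomorphism class of a cokernel (Smith normal form), so $\coker A\cong\coker A^{T}$ for each realisation, and
\[
A^{T}=\begin{pmatrix}M^{T}& *_{n\times k}& *_{n\times1}\\ 0_{k\times n}& *_{k\times k}& *_{k\times1}\\ *_{1\times n}& *_{1\times k}& *_{1\times1}\end{pmatrix},
\]
where the starred blocks, being transposes of independent uniform blocks, are themselves independent and uniform. Merging the last $k+1$ columns into one block, $A^{T}$ becomes precisely $B'$ once the $n$ rows starting with $M^{T}$ and the $k$ rows starting with $0_{k\times n}$ are swapped; swapping rows leaves the cokernel unchanged, so $\coker A^{T}$ and $\coker B'$ have the same law. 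Chaining the identifications,
\[
\Delta_0\big(d^{k}(G\times\Z_p^{k})\big)\overset{d}{=}\coker A\cong\coker A^{T}\overset{d}{=}\coker B'\overset{d}{=}d^{k+1}(\Z_p^{k}\times\dstar G)=d^{k+1}(\Z_p^{k+1}\times\del_0 G).
\]

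The only real work is the bookkeeping: confirming that the Haar blocks produced by $\Delta_0$, by the column-appending step for $d^{k+1}$, and by \autoref{thm:randomextclass} are genuinely independent and line up correctly under transposition, and keeping the cokernel conventions consistent across \autoref{matrixlemma}, \autoref{thm:operatorsandrandommatrices} and \autoref{thm:randomextclass}. The conceptual content --- that $\Delta_0$ advances the family $\{\,d^{k}(\,\cdot\,\times\Z_p^{k})\,\}$ by $k\mapsto k+1$ while applying $\del_0$ to the finite-group argument --- is then just the transpose symmetry $\coker A\cong\coker A^{T}$, together with the freedom to present $G$ by $M$ or by $M^{T}$.
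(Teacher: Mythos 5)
Your argument is correct and follows essentially the same route as the paper: both realise $\Delta_0\bigl(d^k(G\times\Z_p^k)\bigr)$ as the cokernel of a single $(n+k+1)\times(n+k+1)$ Haar-bordered matrix built on a square presentation of $G$, and then re-read that same matrix as presenting $d^{k+1}(\Z_p^{k}\times \dstar G)=d^{k+1}(\Z_p^{k+1}\times\del_0 G)$ via \autoref{lem:charactersandextensions}. The only difference is presentational: the paper carries out the re-reading by permuting columns and interpreting nested minors with rows as relations (leaving the transpose implicit in its cokernel conventions), whereas you make the step $\coker A\cong\coker A^{T}$, the row swap, and the distributional invariance of the independent Haar blocks under transposition explicit --- a useful clarification of the same proof rather than a different one.
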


\begin{proof}(of \autoref{actionofdelta})

We are interested in the cokernel of the $n+k+1 \times n+k+1$ random matrix:

$$
\begin{mat}{ccccccc}
 &&&\tempzero&\hdots & 0 & \tempstar \\
&M&&\tempvdots& \ddots & \vdots & \tempvdots \\
&&&\tempzero& \hdots & 0 & \tempstar \\ \hline
*&\multicolumn{4}{c}{\hdots}&* & \tempstar \\
\vdots&\multicolumn{4}{c}{\ddots}&\vdots & \tempvdots \\
*& \multicolumn{4}{c}{\hdots}&* & \tempstar\\  \hline
*& \multicolumn{4}{c}{\hdots}&* & \tempstar
\end{mat}
$$

Evidently, this is the same as the cokernel of the $n+k+1 \times n+k+1$ random matrix:

$$
\begin{mat}{ccccccc}
&&& \tempstar &\tempzero&\hdots & 0  \\
&M&& \tempvdots&\tempvdots& \ddots & \vdots  \\
&&& \tempstar&\tempzero& \hdots & 0  \\ \hline
*&\multicolumn{5}{c}{\hdots}&*  \\
\vdots&\multicolumn{5}{c}{\ddots}&\vdots  \\
*& \multicolumn{5}{c}{\hdots}&* \\
*& \multicolumn{5}{c}{\hdots}&* 
\end{mat}
$$

We proceed to understand the cokernel of this matrix by understanding the cokernels of a sequence of its minors: 
\begin{enumerate}
\item The cokernel of

$$
\begin{mat}{ccccccc}
&&& \tempstar \\
&M&& \tempvdots \\
&&& \tempstar \\ 
\end{mat}
$$

is 
$$d^{*}(B)$$
\item The cokernel of

$$
\begin{mat}{ccccccc}
&&& \tempstar &\tempzero&\hdots & 0  \\
&M&& \tempvdots&\tempvdots& \ddots & \vdots  \\
&&& \tempstar&\tempzero& \hdots & 0 
\end{mat}
$$

is 
$$d^{*}(B) \times \Z_p^k.$$

\item Finally, the cokernel of

$$
\begin{mat}{ccccccc}
&&& \tempstar &\tempzero&\hdots & 0  \\
&M&& \tempvdots&\tempvdots& \ddots & \vdots  \\
&&& \tempstar&\tempzero& \hdots & 0  \\ \hline
*&\multicolumn{5}{c}{\hdots}&*  \\
\vdots&\multicolumn{5}{c}{\ddots}&\vdots  \\
*& \multicolumn{5}{c}{\hdots}&* \\
*& \multicolumn{5}{c}{\hdots}&* 
\end{mat}
$$

is the quotient of $d^{*}(B) \times \Z_p^k$ by a random homomorphism from $\Z_p^{k+1}$, or
$$\dkplusone(d^{*}(B) \times \Z_p^k)$$
\end{enumerate}
Thus to prove \autoref{actionofdelta}, it remains to show
\begin{sublemma}
\label{extcharlemma}
$$d^{*}(B)= \del(B) \times \Z_p$$
\end{sublemma}

\marginparr{Extra Work? More intrinsic proof?}

This follows from the discussion on characters and extensions in \S \ref{sec2} and from \autoref{lem:charactersandextensions} \commm{Maybe improve}.
\end{proof}

\subsubsection{Action of $\Delta_0$ on moments}

Below, we will take the limit of the relation in \autoref{actionofdelta} to derive \autoref{relationfrompresentation}, which will play an important role in the sequel.
\marginparr{Be more careful with this}

\begin{definition}
We define the operator $\mo$ on \textit{signed finitely supported measures} on groups as follows:
$$ \mo(G) $$ is the same distribution as previously defined, and we extend to signed finitely supported measures by linearity.
\end{definition}

\begin{remark}
In fact, $\mo$ can be extended to an operator from $L^1(X_0)$ to $L^1(X_0)$, but this will not be necessary for the sequel.
\end{remark}

\begin{mysection}
\begin{remark}
It is well-known, and follows from \autoref{limitlemma}, that for every $G$, $\mo(G)$ is a probability measure. Hence, in the above definition, we can replace the condition of finitely supported by something more general. However, in all the cases we consider, we will not need anything more than in the definition above.
\end{remark}
\end{mysection}

We can take the limit of \autoref{actionofdelta} to get the following important relation:

\begin{theorem}
\label{relationfrompresentation}
The relation
\[
\Delta_0 \Big( \mo(\emptydot) \Big)=\mo \Big( \del(\emptydot) \Big)
\]
holds for all finitely supported measures ($\emptydot$).
\end{theorem}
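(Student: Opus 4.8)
The plan is to deduce \autoref{relationfrompresentation} by passing to the limit $k\to\infty$ in the exact identity of \autoref{actionofdelta}, carrying out the limit in the Hilbert space $\ltwostar$. By linearity in the measure argument it suffices to treat the case of a point mass, i.e.\ to prove that for every finite abelian $p$-group $G$ one has $\Delta_0\bigl(\mo(G)\bigr)=\mo(\del_0 G)$. Fixing such a $G$, \autoref{actionofdelta} gives, for every $k\geq 0$, the equality
\[
\Delta_0\bigl(d^k(G\times\Z_p^k)\bigr)=d^{k+1}\bigl(\Z_p^{k+1}\times\del_0 G\bigr),
\]
and the entire proof consists of identifying the $k\to\infty$ limits of the two sides.

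For the left-hand side, \autoref{limitlemma} asserts that $d^k(G\times\Z_p^k)\to\mo(G)$ in $\ltwostar$ (in particular this shows $\mo(G)\in\ltwostar$). Since $\Delta_0$ is a bounded, hence continuous, operator on $\ltwostar$ by \autoref{thm:boundedoperator}, it follows that $\Delta_0\bigl(d^k(G\times\Z_p^k)\bigr)\to\Delta_0\bigl(\mo(G)\bigr)$ in $\ltwostar$. For the right-hand side, observe that the probability measure $\del_0 G$ is supported on the finitely many subgroups of $G$ occurring as kernels of homomorphisms $G\to\Q_p/\Z_p$; writing $\del_0(G)(B)$ for the weight it assigns to $B$, linearity of $d^{k+1}$ gives
\[
d^{k+1}\bigl(\Z_p^{k+1}\times\del_0 G\bigr)=\sum_{B}\del_0(G)(B)\,d^{k+1}\bigl(B\times\Z_p^{k+1}\bigr).
\]
Each summand converges in $\ltwostar$ to $\del_0(G)(B)\,\mo(B)$ by \autoref{limitlemma}, and since the sum is finite the whole expression converges in $\ltwostar$ to $\sum_B\del_0(G)(B)\,\mo(B)=\mo(\del_0 G)$. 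Comparing the two limits yields $\Delta_0\bigl(\mo(G)\bigr)=\mo(\del_0 G)$; extending by linearity over finite signed combinations of point masses gives the theorem in the stated generality.

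The only genuinely delicate point is the interchange of the limit with the operator $\Delta_0$ on the left-hand side: this is precisely where one needs convergence in $\ltwostar$ rather than merely pointwise or $L^1$ convergence, together with the boundedness of $\Delta_0$ on $\ltwostar$. Both of these inputs are supplied by results stated earlier (\autoref{limitlemma} and \autoref{thm:boundedoperator}), so the remaining work is routine; in particular the manipulation on the right-hand side is harmless because $\del_0 G$ has finite support. I therefore expect no serious obstacle beyond correctly invoking the $\ltwostar$-convergence in \autoref{limitlemma}, which is the technical heart proved in the preceding subsection.
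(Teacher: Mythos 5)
Your proof is correct and follows essentially the same route as the paper: both pass to the limit $k\to\infty$ in the identity of \autoref{actionofdelta}, using the convergence $d^k(B\times\Z_p^k)\to\mo(B)$ from \autoref{limitlemma} together with continuity of $\Delta_0$ to interchange limit and operator. The only (immaterial) difference is that the paper carries out the limit in $L^1(X_0)$, invoking boundedness of $\Delta_0$ there, whereas you work in $\ltwostar$ via \autoref{thm:boundedoperator}; your treatment of the right-hand side as a finite sum over the support of $\del_0 G$ is a welcome extra detail the paper leaves implicit.
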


\begin{proof}
We take the limit of the expression in \autoref{actionofdelta} as $k\rightarrow \infty$. We have convergence: 
\[
\lim_{k \rightarrow \infty} d_{k}(B \times \Z_p^{k}) = \mo(B)
\]
in $\lone$ and $\Delta_0$ is a bounded operator $\lone \rightarrow \lone$. Hence 
\[
\Delta_0 \Big( \mo(\emptydot) \Big)=\mo \Big( \del(\emptydot) \Big)
\]
follows.
\end{proof}

\marginparr{The following section needs to be edited}
\subsection{Eigenfunctions of $\del_0$}

\begin{definition}
We write $G' \leq G$ if there is a surjective map from $G$ to $F$. Furthermore, we write 
$F < G$ when $F \leq G$ and $F$ is not isomorphic to $G$.
\end{definition}

This is a partial order relation on $X_0$.

\begin{lemma}
\label{lem:delistriangular}
Under the above ordering on groups, the operator $\del_0$ is upper-triangular. Furthermore the diagonal entry on the row indexed by $F$, is $\frac{1}{|F|}$.
\end{lemma}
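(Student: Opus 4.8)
The plan is to compute the matrix of $\del_0$ directly in the basis of point–mass measures $\{\delta_G\}_{G \in X_0}$. By definition, $\del_0\,\delta_G = \sum_{F} p(G,F)\,\delta_F$, where $p(G,F)$ is the probability that $\ker\phi \cong F$ for a uniformly random $\phi \in \Hom(G,\Q_p/\Z_p)$; thus the entry of $\del_0$ in row $F$ and column $G$ is $p(G,F)$. So there are exactly two things to check: that $p(G,F)=0$ whenever $F \not\leq G$ (upper–triangularity), and that $p(F,F) = \frac{1}{|F|}$ (the value of the diagonal entry).

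For upper–triangularity, observe that $\ker\phi$ is a subgroup of $G$ for every $\phi \in \Hom(G,\Q_p/\Z_p)$. I would then invoke the standard fact — immediate from Pontryagin duality for finite abelian groups — that every subgroup of $G$ is isomorphic to a quotient of $G$: dualizing the inclusion $\ker\phi \hookrightarrow G$ gives a surjection $\widehat{G} \twoheadrightarrow \widehat{\ker\phi}$, and $\widehat{G} \cong G$, $\widehat{\ker\phi}\cong \ker\phi$ since these are finite abelian groups. Hence $G$ surjects onto $\ker\phi$, i.e. $\ker\phi \leq G$ in the partial order of the excerpt, so $\del_0\,\delta_G$ is supported on $\{F : F \leq G\}$.

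For the diagonal entry, note that if $\ker\phi \cong F$ in the case $G=F$, then $\ker\phi$ is a subgroup of $F$ with $|\ker\phi| = |F|$, forcing $\ker\phi = F$ and hence $\phi = 0$; conversely $\phi=0$ gives $\ker\phi = F$. So $p(F,F)$ equals the probability that a uniformly random element of $\Hom(F,\Q_p/\Z_p)$ is trivial, namely $1/|\Hom(F,\Q_p/\Z_p)|$, and $|\Hom(F,\Q_p/\Z_p)| = |\Ext(F,\Z_p)| = |F|$ by \autoref{lem:charactersandextensions}. This yields $p(F,F) = \frac{1}{|F|}$ and finishes the proof. I do not expect any genuine obstacle here; the only ingredient beyond unwinding the definition of $\del_0$ is the subgroup–versus–quotient duality for finite abelian groups, which is routine.
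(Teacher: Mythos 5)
Your proof is correct and supplies exactly the details the paper leaves implicit: the paper's entire proof of this lemma is the one-line assertion that it ``follows immediately from the definition of $\del_0$.'' The two ingredients you spell out --- that $\ker\phi$ is a subgroup of $G$ and hence a quotient of $G$ by Pontryagin duality, so the image measure is supported on $\{F : F \leq G\}$, and that the diagonal entry is the probability of the zero character, $1/|\Hom(F,\Q_p/\Z_p)| = 1/|F|$ --- are precisely the intended unwinding, so this is the same argument, just written out.
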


\begin{proof}
This follows immediately from the definition of $\del_0$.
\end{proof}
\marginparr{Maybe give more detail in the proof of this later.}

\marginparr{Possibly improve formulation of last proposition and next paragraph}

Upper diagonal operators are easy to diagonalize. First, we introduce a notation for singleton measures:
\begin{definition}
We write $\one_G$ to denote a probability measure on $X_0$, all of whose mass is concentrated on $G$.
\end{definition}

From \autoref{lem:delistriangular}, we can immediately conclude the following:
\begin{lemma}
\label{lem:explicitdel}
\begin{enumerate}
\item For every $F \in X_0$, $\del_0$ there is an eigenfunction of $\del_0$ the form:
$$ \one_{F}+\sum_{F'<F} a_{F'} \one_{F'}$$ 
We will denote this eigenfunction as $e_F$.
\item The eigenvalue associated to the eigenfunction $e_F$ is $\frac{1}{|F|}$
\item These $e_F$ span all functions with finite support.
\end{enumerate}
\end{lemma}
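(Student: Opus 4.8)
The plan is to read Lemma~\ref{lem:explicitdel} as the routine diagonalization of a triangular operator, the only genuine subtlety being that $X_0$ is infinite-dimensional. This is resolved by two elementary observations about the order relation of \autoref{lem:delistriangular}: it is \emph{locally finite below} --- for each $F \in X_0$ the set $S_F \defeq \{F' \in X_0 : F' \leq F\}$ of isomorphism classes of quotients of $F$ is finite, since a finite abelian group has only finitely many subgroups --- and a \emph{proper} quotient has strictly smaller order, so $F' < F$ forces $|F'| < |F|$. Note that $F$ is the greatest element of the finite poset $S_F$.

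For parts (1) and (2), I would fix $F$ and restrict attention to the finite-dimensional subspace $V_F \defeq \mathrm{span}\{\one_{F'} : F' \in S_F\}$. By \autoref{lem:delistriangular} we may write $\del_0 \one_{F'} = \tfrac{1}{|F'|}\one_{F'} + \sum_{F'' < F'} c_{F'F''}\one_{F''}$, and since $F'' < F' \leq F$ implies $F'' \in S_F$, the subspace $V_F$ is $\del_0$-invariant. On $V_F$ the operator $\del_0 - \tfrac{1}{|F|}I$ is triangular with diagonal entry $\tfrac{1}{|F'|} - \tfrac{1}{|F|}$ in position $F'$; this vanishes precisely when $F' = F$ and is nonzero otherwise because $|F'| < |F|$ for $F' < F$. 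I would then construct $e_F = \one_F + \sum_{F' < F} a_{F'}\one_{F'}$ by back-substitution: expanding $\del_0 e_F = \tfrac{1}{|F|}e_F$ and equating coefficients of $\one_K$ for each $K < F$ gives $a_K\bigl(\tfrac{1}{|K|} - \tfrac{1}{|F|}\bigr) = -\sum_{K < F' \leq F} a_{F'}\, c_{F'K}$, whose right-hand side involves only the $a_{F'}$ with $F' > K$. Since $S_F$ is finite, processing $K$ in any linear extension of the order downward from $F$ determines all coefficients uniquely; since $\tfrac{1}{|K|} - \tfrac{1}{|F|} \neq 0$ there is never an obstruction. This yields the desired eigenfunction $e_F$, with eigenvalue $\tfrac{1}{|F|}$.

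For part (3), I would prove by induction on $|G|$ that $\one_G \in \mathrm{span}\{e_{F'} : F' \leq G\}$. Indeed, from $e_G = \one_G + \sum_{F' < G} a_{F'}\one_{F'}$ we get $\one_G = e_G - \sum_{F' < G} a_{F'}\one_{F'}$, and each $\one_{F'}$ with $F' < G$ has $|F'| < |G|$, so by the inductive hypothesis lies in the span of the $e_{F''}$. Hence every $\one_G$, and therefore every finitely supported function, is a finite linear combination of the $e_F$. Linear independence comes for free and is worth recording: in any finite linear relation $\sum_F c_F e_F = 0$, choose $F_0$ maximal among the (finite) set of $F$ with $c_F \neq 0$; the coefficient of $\one_{F_0}$ on the left-hand side is exactly $c_{F_0}$, forcing $c_{F_0} = 0$, a contradiction. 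Thus $\{e_F\}_{F \in X_0}$ is in fact a basis of the space of finitely supported functions.

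The main obstacle here is essentially bookkeeping rather than any real difficulty: one must resist treating $X_0$ as though it were finite-dimensional. Concretely, I would make sure to state explicitly the two facts that make the triangular scheme terminate and be solvable --- finiteness of the down-set $S_F$ (finitely many quotients up to isomorphism) and the strict drop in order along $<$ (so the diagonal differences $\tfrac{1}{|K|} - \tfrac{1}{|F|}$ are invertible) --- after which the construction of $e_F$, the spanning statement, and linear independence all follow by the standard Noetherian induction on the poset carried out above.
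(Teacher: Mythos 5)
Your proposal is correct and follows exactly the route the paper intends: the paper's proof of this lemma is the one-line assertion that it is ``a straightforward consequence'' of the triangularity in \autoref{lem:delistriangular}, and your argument is precisely the careful execution of that diagonalization. The two points you isolate --- finiteness of the down-set $S_F$ and the strict inequality $|F'|<|F|$ for $F'<F$, which makes the diagonal differences invertible --- are exactly what is needed to justify the back-substitution in the infinite-dimensional setting, and your induction for part (3) and the linear-independence remark are likewise sound.
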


\begin{proof}
This is a straightforward consequence of \autoref{lem:delistriangular}.
\end{proof}

\begin{mysection}
Below, we tabulate the $e_F$ explicitly for some small groups.
\end{mysection}

\begin{mysection}
\begin{remark} 
\label{explicitdel}
In particular, it follows that for every finitely supported measure $\nu$ on $\Xzero$, we can express
$$
\del ^ N (\nu)
$$
as a finite linear combination of terms of the form
$$
\frac{1}{|F|^N}e_F
$$
\end{remark} 
\end{mysection}

\begin{remark} 
\label{explicitdel}
In particular, it follows that for every finitely supported measure $\nu$ on $\Xzero$, we can express
$$
\del ^ N (\nu)
$$
as a finite sum
$$
\sum_F a_F \frac{1}{|F|^N}e_F
$$
for some coefficients $a_F$.
\end{remark}

\subsection{Construction of eigenfunctions of $\Delta_0$}

\begin{lemma}
\begin{equation}
\label{eqn:mo}
\Delta_0 \mo(e_F)= \frac{1}{|F|} \mo(e_F)
\end{equation}
i.e. $\mo(e_F)$ is an eigenfunction of $\Delta_0$ with eigenvalue $\frac{1}{|F|}$. We will denote $\mo(e_F)$ as $E_F$.
\end{lemma}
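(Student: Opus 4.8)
The plan is to read this off immediately from \autoref{relationfrompresentation}, which records the identity $\Delta_0\big(\mo(\emptydot)\big) = \mo\big(\del(\emptydot)\big)$ for every finitely supported measure $(\emptydot)$ on $X_0$. By construction in \autoref{lem:explicitdel}, the element $e_F = \one_F + \sum_{F' < F} a_{F'}\one_{F'}$ is a finite linear combination of singleton measures, hence finitely supported, so the hypothesis of \autoref{relationfrompresentation} is met with $(\emptydot) = e_F$. Thus the whole proof is a short chain of substitutions, and the real content has already been done in establishing \autoref{relationfrompresentation} and \autoref{lem:explicitdel}.

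Concretely, first I would invoke \autoref{relationfrompresentation} to write $\Delta_0\,\mo(e_F) = \mo\big(\del_0(e_F)\big)$. Next, since \autoref{lem:explicitdel} says $e_F$ is an eigenfunction of $\del_0$ with eigenvalue $\tfrac{1}{|F|}$, we substitute $\del_0(e_F) = \tfrac{1}{|F|}\,e_F$. Finally, using that $\mo$ is linear on finitely supported signed measures (this is its definition), we pull the scalar out: $\mo\big(\tfrac{1}{|F|}e_F\big) = \tfrac{1}{|F|}\,\mo(e_F)$. Concatenating the three steps yields $\Delta_0\,\mo(e_F) = \tfrac{1}{|F|}\,\mo(e_F)$, which is exactly \eqref{eqn:mo}; one then sets $E_F \defeq \mo(e_F)$.

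There is essentially no obstacle to overcome; the only point that warrants a sentence of care is that $\del_0$ maps finitely supported measures to finitely supported measures, so that $\mo\big(\del_0(e_F)\big)$ is genuinely the image of a legitimate input to $\mo$ as defined. This is immediate from \autoref{lem:delistriangular}: $\del_0$ is upper-triangular with respect to the partial order $\le$, so $\del_0(e_F)$ is supported on $\{F' : F' \le F\}$, a finite set. (It would also be worth remarking, though it is not needed for the lemma itself, that because the $\mo(F)$ are linearly independent and $E_F = \mo(e_F)$ has leading term $\mo(F)$, the $E_F$ are automatically linearly independent eigenfunctions, recovering the corollary stated in the introduction.)
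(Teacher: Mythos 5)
Your proposal is correct and follows the same route as the paper, whose entire proof is the one-line observation that \eqref{eqn:mo} follows from \autoref{relationfrompresentation}; you have simply made explicit the three substitution steps (apply the relation, use $\del_0(e_F)=\tfrac{1}{|F|}e_F$ from \autoref{lem:explicitdel}, pull out the scalar by linearity of $\mo$) that the paper leaves implicit. The extra sentence checking that $\del_0(e_F)$ is finitely supported is a reasonable point of care that the paper omits.
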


 \begin{proof}
 (\ref{eqn:mo}) follows from the expression in \autoref{relationfrompresentation}. 
 \end{proof}

\begin{corollary}
$E_F$ is of the form
\[
\mo(F) + \sum_{F'<F} a_{F'} \mo(F')
\]
for some coefficients $a_{F'}$. Consequently, we can express
\[
\mo(F)
\]
as a finite linear combination of the $E_F$.
\end{corollary}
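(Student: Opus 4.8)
The plan is to deduce both assertions purely from the triangular structure already established for $\del_0$ in \autoref{lem:explicitdel}, together with the linearity of $\mo$ on finitely supported signed measures. First I would recall that $e_F = \one_F + \sum_{F' < F} a_{F'} \one_{F'}$, and observe that this is a \emph{finite} sum: if $F' < F$, then $F'$ is (isomorphic to) a proper quotient of $F$, and a finite abelian group has only finitely many quotients up to isomorphism, so $\{F' : F' \le F\}$ is a finite poset. Hence $e_F$ is a finitely supported signed measure, $\mo(e_F)$ is defined, and since $\mo$ is linear with $\mo(\one_G) = \mo(G)$, by definition
\[
E_F = \mo(e_F) = \mo(F) + \sum_{F' < F} a_{F'} \mo(F'),
\]
which is exactly the first assertion.

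For the second assertion I would invert this triangular system. Fixing $F$ and working inside the finite poset $\{F' \in X_0 : F' \le F\}$, I would argue by induction — on $|F|$, say, noting that $F' < F$ forces $|F'| < |F|$. In the base case $F$ is minimal ($F = 0$), so $E_F = \mo(F)$ and there is nothing to do. For the inductive step I would rearrange the displayed identity to
\[
\mo(F) = E_F - \sum_{F' < F} a_{F'} \mo(F'),
\]
and apply the inductive hypothesis to each $\mo(F')$ with $F' < F$: each is a finite linear combination of the $E_{F''}$ with $F'' \le F' < F$. Substituting and collecting terms writes $\mo(F)$ as a finite linear combination of $\{E_{F''} : F'' \le F\}$, as desired.

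I do not expect any genuine obstacle here beyond bookkeeping; the two points meriting a moment's care are (i) that $\{F' : F' \le F\}$ is finite, which makes every sum involved finite and the induction well-founded, and (ii) that $\mo$ is only \emph{a priori} defined on finitely supported signed measures, which is precisely why it matters that $e_F$ and each intermediate expression are finitely supported. Both follow at once from the fact that $F' \le F$ forces $F'$ to be a quotient of $F$.
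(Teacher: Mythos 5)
Your proposal is correct and follows the same (implicit) route as the paper: the first assertion is immediate from the definition $E_F = \mo(e_F)$ together with the triangular form of $e_F$ from Lemma \ref{lem:explicitdel}, and the second is the standard inversion of a unitriangular system over the finite poset $\{F' \le F\}$, which the paper states without writing out. Your two points of care (finiteness of the set of quotients, and $\mo$ being applied only to finitely supported signed measures) are exactly the right ones.
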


\marginparr{(An idea is given in the comments. Take note.)}

\begin{mysection}

can a posteriori be seen that this can be done, though maybe it is not a priori obvious: but there should be a way to do this... reconstruct a function from 

its moments... so we find a candidate expression, using your explicit formula, and then we show it has the right moments, and hence by uniqueness, it must be 

the correct expression.
\end{mysection}

\subsection{Alternative description of eigenfunctions of $\Delta_0$}

\begin{definition}
$V^{\leq G}$ is the linear span of 
\[
\Big\{ \mo(F)\Big| F<G \Big\}
\]
 $V^{<G}$ is the linear span of
 \[
\Big\{ \mo(F)\Big| F \leq G \Big\}
 \]
\end{definition}

\begin{lemma}
\label{lem:generator}
$E_G$ is a generator for the orthogonal complement of $V^{<G}$ in $V^{\leq G}$.
\end{lemma}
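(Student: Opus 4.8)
The plan is to turn \autoref{lem:generator} into a one‑line dimension count once three facts about $E_G$ are recorded. Recall from \autoref{relationfrompresentation} and \autoref{lem:explicitdel} that $E_G=\mo(e_G)$ with $e_G=\one_G+\sum_{F<G}a_F\one_F$, so that
\[
E_G=\mo(G)+\sum_{F<G}a_F\,\mo(F).
\]
Hence (i) $E_G\in V^{\le G}$; (ii) $E_G\neq 0$, since the coefficient of $\mo(G)$ is $1$ and the moment measures $\{\mo(F)\}$ are linearly independent (a minimal group occurring in a hypothetical linear dependence is detected by evaluating the relation at that group); and (iii) $E_G$ is square‑summable, so the orthogonal complement in the statement may be taken inside $\lltwoCL$, which is unambiguous because $V^{\le G}$ is finite dimensional.

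Next I would observe that $\{E_F:F\le G\}$ is a basis of $V^{\le G}$ and $\{E_F:F<G\}$ is a basis of $V^{<G}$. Applying the displayed relation with each $F$ in place of $G$ expresses $E_F$ through $\{\mo(F'):F'\le F\}$; since the partial order is transitive, $F'\le F<G$ forces $F'<G$ and $F'\le F\le G$ forces $F'\le G$, so passing between $\{E_F:F<G\}$ and $\{\mo(F):F<G\}$ (resp.\ the versions with ``$\le$'') is a unitriangular, hence invertible, change of basis. In particular $\dim V^{\le G}=\dim V^{<G}+1$, so the orthogonal complement $W$ of $V^{<G}$ inside $V^{\le G}$ is a line.

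It then remains to check $E_G\perp V^{<G}$, and by the previous paragraph it suffices to show $\lb E_G,E_F\rb=0$ for every $F<G$. Here I would invoke that $E_G$ and $E_F$ are eigenfunctions of $\Delta_0$ with eigenvalues $1/|G|$ and $1/|F|$, and that $\Delta_0$ is self‑adjoint on $\lltwoCL$ (the corollary in Section 2); eigenfunctions of a self‑adjoint operator attached to distinct eigenvalues are orthogonal, and $1/|G|\neq 1/|F|$ because a proper quotient $F$ of the finite group $G$ satisfies $|F|<|G|$. Combining, $E_G$ is a nonzero element of $V^{\le G}$ lying in the line $W=(V^{<G})^{\perp}\cap V^{\le G}$, hence generates it, which is \autoref{lem:generator}.

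I do not expect a genuine obstacle: the only points needing a line of care are the transitivity bookkeeping that keeps the triangular bases ``supported below $G$'' and the elementary remark that a proper quotient of a finite group has strictly smaller order (which is what makes the eigenvalues distinct). Everything else is the standard fact that eigenspaces of a self‑adjoint operator for distinct eigenvalues are orthogonal, used here inside the finite‑dimensional space $V^{\le G}$.
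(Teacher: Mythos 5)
Your proof is correct and follows essentially the same route as the paper: the paper also establishes that $\{E_F : F<G\}$ spans $V^{<G}$ by dimension counting, derives $E_G \perp V^{<G}$ from self-adjointness of $\Delta_0$ and the distinctness of the eigenvalues $1/|F|$, and concludes since the orthogonal complement of $V^{<G}$ in $V^{\leq G}$ is one-dimensional. Your write-up merely makes explicit a few points the paper leaves implicit (nonvanishing of $E_G$, linear independence of the moment measures, and the unitriangular change of basis).
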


First, we show the following:
\begin{sublemma}
\label{lem:span}
The finite-dimensional space $V^{<G}$ is spanned by $\Big\{ E_F \Big| F<G \Big\}$.
The finite-dimensional space $V^{\neq G}$ is spanned by $\Big\{ E_F \Big| F \neq G \Big\}$.
\end{sublemma}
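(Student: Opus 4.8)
The plan is to derive both assertions from a single structural fact: the moment measures and the eigenfunctions are related by a \emph{unit triangular} change of basis for the quotient order. The corollary just proved gives, for each $F$, the expansion $E_F = \mo(F) + \sum_{F' < F} a_{F'}\,\mo(F')$, and since the diagonal coefficients all equal $1$ this triangular system inverts to $\mo(F) = E_F + \sum_{F'<F} b_{F'}E_{F'}$. I will isolate the following principle and use it twice: \textbf{if $S$ is a set of groups that is downward closed for the quotient order} — meaning $F\in S$ and $F'<F$ force $F'\in S$ — \textbf{then $\mathrm{span}\{\mo(F):F\in S\}=\mathrm{span}\{E_F:F\in S\}$}. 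Both inclusions are immediate from the two triangular expansions, since each expansion only couples a group to its strict quotients, all of which remain in $S$; the moment measures being linearly independent, no degeneration occurs.

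For the first assertion I apply this principle to $S=\{F:F<G\}$. Transitivity of the quotient relation makes this set downward closed (if $F<G$ and $F'<F$ then $F'<G$), so $V^{<G}=\mathrm{span}\{\mo(F):F<G\}=\mathrm{span}\{E_F:F<G\}$, which is exactly what is claimed.

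The second assertion is where the real difficulty lies, and it is precisely the point flagged by the index set: $\{F:F\neq G\}$ is \emph{not} downward closed, since there are always groups $F>G$ (for instance $F=G\times\Z/p$), and for such an $F$ the expansion of $E_F$ genuinely contains $\mo(G)$. Hence the term-by-term argument above cannot be applied verbatim, and this is the main obstacle. The plan to overcome it is to use the finiteness of dimension asserted in the statement: $V^{\neq G}$ is to be understood inside the finite-dimensional space $V^{\le G}=\mathrm{span}\{\mo(F):F\le G\}$, and the moment generators there with $F\neq G$ are exactly those indexed by $\{F\le G : F\neq G\}=\{F<G\}$. This intersected index set \emph{is} downward closed, so the principle applies to it and shows that $V^{\neq G}$ equals $\mathrm{span}\{E_F:F\neq G\}$ (again read inside $V^{\le G}$, where this family coincides with $\{E_F:F<G\}$). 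In effect the non-downward-closed global condition $F\neq G$ contributes only the downward-closed local condition $F<G$; the one thing to verify with care is thus that no generator $\mo(F)$ with $F>G$ is meant to enter $V^{\neq G}$ — equivalently, that the ambient space is $V^{\le G}$ — after which the triangularity finishes the argument and the second assertion reduces to the first.

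Finally, these two spanning facts are exactly the input needed for Lemma~\ref{lem:generator}: the quotient $V^{\le G}/V^{<G}$ is one dimensional, and $E_G$ is an eigenfunction of $\Ds$ with eigenvalue $1/|G|$, distinct from every eigenvalue $1/|F|$ with $F<G$ because a proper quotient has strictly smaller order. Self-adjointness of $\Ds$ then forces $E_G\perp E_F$ for all $F<G$, so $E_G$ is orthogonal to $V^{<G}$ and generates its orthogonal complement in $V^{\le G}$.
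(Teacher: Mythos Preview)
Your argument for the first assertion is correct and is essentially an unpacking of what the paper means by ``dimension-counting'': the triangular expansion $E_F=\mo(F)+\sum_{F'<F}a_{F'}\mo(F')$ shows each $E_F$ with $F<G$ lies in $V^{<G}$, the $E_F$ are linearly independent (by the unit-triangular relation, or because they are eigenvectors for distinct eigenvalues together with multiplicity considerations), and the two families have the same finite cardinality. Your downward-closed principle is a clean way to phrase this, and the inversion $\mo(F)=E_F+\sum_{F'<F}b_{F'}E_{F'}$ is exactly the content of the dimension count.

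On the second assertion you are working much harder than the paper intends. The symbol $V^{\neq G}$ is never defined, and the phrase ``finite-dimensional'' rules out the literal reading $\mathrm{span}\{\mo(F):F\neq G\}$; almost certainly this line is a typo for ``$V^{\leq G}$ is spanned by $\{E_F:F\leq G\}$'', which is the exact parallel of the first assertion and follows by the identical downward-closed argument applied to $S=\{F:F\leq G\}$. Your eventual resolution---interpret everything inside $V^{\leq G}$, where $F\neq G$ collapses to $F<G$---lands in the right place, but the paragraph of worry about groups $F>G$ and non-downward-closed index sets is addressing a phantom difficulty created by the typo rather than a mathematical obstacle. The final paragraph deriving Lemma~\ref{lem:generator} is fine and matches the paper.
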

\begin{proof}
This follows by dimension-counting.
\end{proof}

$E_G$ has eigenvalue $\frac{1}{|G|}$. By self-adjointness of $\Delta_0$, it is orthogonal to  $E_F$ if $F<G$. Hence by \autoref{lem:span}, $E_G$ is orthogonal to $V^{<G}$.

\begin{proof}(Of \autoref{lem:generator})
The orthogonal complement of $V^{<G}$ in $V^{\leq G}$ is $1$-dimensional and contains $E_G$. Hence, $E_G$ is a generator for the orthogonal complement of $V^{<G}$ in $V^{\leq G}$.
\end{proof}

\begin{mysection}
\subsection{Alternative, alternative derivation of formula}

\begin{lemma}
The action of $\Delta_0$ on the basis $\fouriermap(1_F)$ is upper-diagonal. Moreover, the diagonal entries are 
\[
|F|^{-1}
\]
\end{lemma}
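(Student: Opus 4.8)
The plan is to transport the triangularity of $\Delta_0$ on moment measures through $\fouriermap$. First recall that, by its defining property, $\fouriermap$ sends the finitely supported measure $N_G\defeq\frac{\#Sur(G,\cdot)}{\#Aut(\cdot)}$ to $\sqrt{c_0}\,\frac{\#Sur(\cdot,G)}{\#Aut(\cdot)}=\frac{1}{\sqrt{c_0}}\mo(G)$, since $\mo(G)=\#Sur(\cdot,G)\,\mu_0(\cdot)=c_0\,\frac{\#Sur(\cdot,G)}{\#Aut(\cdot)}$. For each $G$ let $V^{\le G}$ denote the (finite-dimensional) space of measures supported on the quotients of $G$. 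By the standard triangular change of basis between $\{\one_F\}$ and $\{N_F\}$ (the matrix $N_F(F')$, $F,F'\le G$, is unitriangular with respect to $\le$) one has
\[
V^{\le G}=\mathrm{span}\{N_F:F\le G\}=\mathrm{span}\{\one_F:F\le G\}=\mathrm{span}\{1_F:F\le G\},
\]
the spaces $V^{\le G}$ increase with $G$, and $V^{<G}\defeq\sum_{F<G}V^{\le F}$ is the span of the measures supported on proper quotients of $G$. Since $\one_G\equiv N_G\pmod{V^{<G}}$ and $1_F=\frac{\sqrt{c_0}}{\sqrt{\#Aut(F)}}\one_F$, applying $\fouriermap$ yields the key congruence
\[
\fouriermap(1_G)\equiv\tfrac{1}{\sqrt{\#Aut(G)}}\,\mo(G)\pmod{\fouriermap(V^{<G})}.
\]

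Next I would invoke the action of $\Delta_0$ on moments. Combining \autoref{relationfrompresentation} with the upper-triangularity of $\del_0$ (\autoref{lem:delistriangular}) --- i.e. the corollary of \autoref{basis} --- gives, for every $G$, an expansion
\[
\Delta_0\,\mo(G)=\tfrac{1}{|G|}\,\mo(G)+\sum_{F<G}b_F\,\mo(F).
\]
Because $\fouriermap(V^{\le G})=\mathrm{span}\{\mo(F):F\le G\}$ and $F'\le F\le G$ forces $F'\le G$, this shows $\Delta_0$ maps each $\fouriermap(V^{\le G})$ into itself, hence also preserves $\fouriermap(V^{<G})$; moreover on the one-dimensional quotient $\fouriermap(V^{\le G})/\fouriermap(V^{<G})$ it acts as multiplication by $\tfrac{1}{|G|}$.

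To conclude, note that $\{\fouriermap(1_F)\}_F$ is an orthonormal basis of $\ltwostar$ (as $\fouriermap$ is unitary and $\{1_F\}$ is the basis of normalized point masses) which is adapted to the flag $\{\fouriermap(V^{\le G})\}$: $\fouriermap(1_G)\in\fouriermap(V^{\le G})\setminus\fouriermap(V^{<G})$ and $\fouriermap(V^{\le G})=\mathrm{span}\{\fouriermap(1_F):F\le G\}$. Therefore $\Delta_0\fouriermap(1_G)\in\mathrm{span}\{\fouriermap(1_F):F\le G\}$, which is exactly the statement that $\Delta_0$ is upper-diagonal in this basis with respect to the partial order; and reducing modulo $\fouriermap(V^{<G})$ and using the key congruence above shows the $G$-th diagonal entry is $\tfrac{1}{|G|}=|G|^{-1}$. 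The only genuine content beyond \autoref{relationfrompresentation} is the elementary verification in the first paragraph that the moment basis $\{\mo(F)\}$ and the orthonormal basis $\{1_F\}$ induce the same increasing flag $\{V^{\le G}\}$; this is the step to carry out carefully, and is the main (though modest) obstacle. Finally I would remark that the present lemma, together with the self-adjointness of $\Delta_0$, immediately recovers \autoref{thm:eqnfundamental}: an operator that is both upper-diagonal for this partial order and symmetric must be diagonal, and the diagonal entries are then the eigenvalues $|\#G|^{-1}$.
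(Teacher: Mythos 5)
Your proof is correct and is essentially the paper's own argument for \autoref{thm:eqnfundamental}: the upper-triangular action of $\Delta_0$ on the moment measures (\autoref{lem:momentdelta}, equivalently \autoref{relationfrompresentation} combined with \autoref{lem:delistriangular}) is transported through $\fouriermap$ via the unitriangular change of basis between $\{1_F\}$ and $\bigl\{\#Sur(F,\emptydot)/\#Aut(\emptydot)\bigr\}$, yielding triangularity with diagonal entries $|F|^{-1}$. The only slight overstatement is calling $\{\fouriermap(1_F)\}$ an orthonormal basis of $\ltwostar$ --- it is an orthonormal basis of $im(\fouriermap)$, whose density in $\ltwostar$ the paper leaves open --- but this is immaterial to the triangularity claim.
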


\begin{lemma}
The functions $\fouriermap(1_F)$ are mutually orthogonal.
\end{lemma}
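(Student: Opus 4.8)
The plan is to read this off immediately from the unitarity of $\fouriermap$. Recall from the survey that $\fouriermap$ is a unitary operator $\ltwostar \to \ltwostar$ — its existence and unitarity being the content of the earlier lemma, both following from the curious formula \autoref{thm:curioussum:mainbody}. A unitary operator preserves the inner product $\lb \emptydot, \emptydot \rb_{X_0}$, so it suffices to check that the family $\{1_F\}_{F \in X_0}$ is orthonormal; then $\{\fouriermap(1_F)\}$ is orthonormal as well, hence in particular mutually orthogonal.

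The orthonormality of $\{1_F\}$ is immediate from the definition of $1_F$ as the positive, unit-norm measure supported on the single group $F$: concretely $1_F = \sqrt{c_0/\#Aut(F)}\,\one_F$, where $\one_F$ denotes the point mass at $F$. For $F \neq F'$ the point masses $\one_F$ and $\one_{F'}$ have disjoint support, so
\[
\lb 1_F, 1_{F'} \rb_{X_0} = c_0^{-1}\sum_G 1_F(G)\,1_{F'}(G)\,\#Aut(G) = 0,
\]
while $\|1_F\| = 1$ by the normalization built into the definition. That is all that is needed.

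I do not expect a genuine obstacle here: once the unitarity of $\fouriermap$ is in hand the statement is essentially a one-liner, and the real work has already been carried out in establishing the curious formula. If one instead wanted an argument internal to this alternative derivation that avoids invoking unitarity as a black box, one could expand $\one_F$ as a finite linear combination of the basis measures $\frac{\#Sur(F',\emptydot)}{\#Aut(\emptydot)}$ via Möbius inversion over the poset of quotients of $F$, apply $\fouriermap$ termwise — landing in the span of the $\frac{\#Sur(\emptydot,F')}{\#Aut(\emptydot)}$, i.e. of the moment measures $\mo(F')$ — and then verify orthogonality of the resulting combinations directly from the curious formula; but this merely reproves unitarity by hand, so the first route is cleaner. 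Combined with the companion lemma (the action of $\Delta_0$ on $\{\fouriermap(1_F)\}$ is upper-triangular with diagonal entries $|F|^{-1}$) and the self-adjointness of $\Delta_0$, this orthogonality forces the strictly upper entries to vanish, which re-derives the fundamental relation (\ref{eqn:fundamental}) once more.
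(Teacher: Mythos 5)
Your proof is correct and matches the paper's own (implicit) justification: the paper states no separate proof for this lemma but derives it in the survey exactly as you do, noting that the $1_F$ form an orthonormal family in $\ltwostar$ and that $\fouriermap$ is unitary, so the $\fouriermap(1_F)$ are orthonormal. Your explicit verification that $1_F = \sqrt{c_0/\#Aut(F)}\,\one_F$ has unit norm and that distinct point masses are orthogonal is exactly the intended one-line argument.
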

\end{mysection}

\subsection{Preliminary version of the second main theorem}
\label{sec:alleigenfunctions}

\begin{lemma}
The eigenfunctions $E_{F}$ lie in $\ltwostar$.
\end{lemma}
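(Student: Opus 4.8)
The plan is to reduce the statement to estimates already in hand. First I would recall from \autoref{lem:explicitdel} that $e_F = \one_F + \sum_{F'<F} a_{F'}\one_{F'}$ is a \emph{finite} linear combination of point masses, so that $E_F = \mo(e_F) = \mo(F) + \sum_{F'<F} a_{F'}\mo(F')$ is a finite linear combination of moment measures. Since $\ltwostar$ is a linear subspace of the space of all measures, it then suffices to show that each single moment measure $\mo(F)$ lies in $\ltwostar$.

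Next I would simply write out the norm. Using $\mo(F)(G) = \#\mathrm{Sur}(G,F)\,\mu_0(G)$ and $\mu_0(G) = c_0/\#\mathrm{Aut}(G)$, one gets
\[
\|\mo(F)\|^2 \;=\; \sum_G \frac{\mo(F)(G)^2}{\mu_0(G)} \;=\; \sum_G \#\mathrm{Sur}(G,F)^2\,\mu_0(G) \;=\; c_0 \sum_G \frac{\#\mathrm{Sur}(G,F)^2}{\#\mathrm{Aut}(G)},
\]
so the whole question is the convergence of $\sum_G \#\mathrm{Sur}(G,F)^2/\#\mathrm{Aut}(G)$.

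To finish, I would bound $\#\mathrm{Sur}(G,F)^2 \le \#\Hom(G,F)^2 = \#\Hom(G,F\times F)$, decompose $\#\Hom(G,F\times F) = \sum_{F'\le F\times F}\#\mathrm{Sur}(G,F')$ according to the image of a homomorphism (a sum over the finitely many subgroups $F'$ of $F\times F$), interchange this finite sum with the sum over $G$, and invoke the moment identity $\sum_G \mu_0(G)\#\mathrm{Sur}(G,F')=1$ from (\ref{eqn:momentone}). This gives $\|\mo(F)\|^2 \le \#\{F' : F'\le F\times F\} < \infty$. (Equivalently, this is exactly the estimate (\ref{l2bound}) established inside the proof of \autoref{limitlemma}, once one uses that a finite abelian group is isomorphic to its Pontryagin dual to identify $\#\mathrm{Sur}(G,F)$ with $\#\mathrm{Inj}(F,G^\vee)$ and reindexes the sum by $G^\vee$.) There is essentially no obstacle here; the only point needing a word of justification is the interchange of summation, which is harmless by Tonelli since all terms are non-negative and each inner sum is finite by (\ref{eqn:momentone}).
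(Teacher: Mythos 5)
Your proposal is correct and follows essentially the same route as the paper: reduce to showing each single moment measure $\mo(F)$ lies in $\ltwostar$, bound squared surjection counts by homomorphism counts into a product, decompose by image, and invoke the fact that the moments of $\mu_0$ are $1$. The only cosmetic difference is that you run the estimate on the surjection side $\#Sur(G,F)$ directly, whereas the paper cites the injection-side bound (\ref{l2bound}) on $\sum_G |Inj(B,G)|^2/|Aut(G)|$; your parenthetical duality remark is actually a useful addition, since it supplies the identification $\#Sur(G,F)=\#Inj(F,G^{\vee})$ that the paper leaves implicit when it cites \autoref{l2bound} for the sum (\ref{eqn:momentltwosum}).
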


\begin{proof}

It is sufficient to show that, for a group $F$, the function

$$
\mo(F)
$$
lies in $\ltwostar$, or that 
\begin{equation}
\label{eqn:momentltwosum}
\sum_{G} \frac{|Sur(G,F)|^2}{|Aut(G)} < \infty
\end{equation}
But this follows from \autoref{l2bound}.

\end{proof}

\marginparr{Maybe erase. OPTIONAL. Comment hidden in remark.}
\begin{mysection}
\begin{remark}
Here, it suffices to give an upper down for the norm. Further, in \autoref{thm:curioussum}, we give an exact expression for the sum (\ref{eqn:momentltwosum}). 
\end{remark}
\end{mysection}

The next theorem shows that, if $\nu$ is a finitely supported measure, the leading term of 
\[
\Delta_0^N (\nu)
\]
is always a finite linear combination of the $E_F$'s:

\begin{theorem}
\label{thm:leadingtermone:mainbody}
For any finitely supported measure $\nu$, there exist $\llambda \in \mathbb{N}$, $F_i \in X_0$ and coefficients $a_{F_i} \in \mathbb{R}$ such that 
\[
\llambda = |F_i| \, \forall \, i
\]
and
\[
\Delta_0^N \nu = \frac{1}{\llambda^N} \sum_i a_{F_i} E_{F_i} + o\left(\frac{1}{\llambda^N}\right) 
\]
\end{theorem}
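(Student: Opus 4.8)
The plan is to deduce the statement from two results already in hand: the iterated form of the main relation \autoref{actionofdelta}, and the convergence statement \autoref{limitlemma}. The mechanism is that $\Delta_0^N$, applied to a finitely supported measure, can be rewritten \emph{exactly} as $d^N$ of a trivial extension of $\del_0^N\nu$, and $d^N$ of a trivial extension is precisely what converges to a moment measure.

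First I would iterate \autoref{actionofdelta} starting from the degenerate case $k=0$, where $d^0(G\times\Z_p^0)=\one_G$. Extending the identity $\Delta_0\big(d^k(\,\cdot\,\times\Z_p^k)\big)=d^{k+1}\big(\del_0(\,\cdot\,)\times\Z_p^{k+1}\big)$ by linearity from groups to finitely supported signed measures (all of $\Delta_0$, $\del_0$, $d^{\,\cdot}$ and the operation $\sigma\mapsto\sigma\times\Z_p^m$ of pushing forward along $B\mapsto B\times\Z_p^m$ being linear), one gets, for every $N\ge 0$ and every finitely supported signed measure $\rho$ on $\Xzero$,
\[
\Delta_0^N\rho \;=\; d^N\big((\del_0^N\rho)\times\Z_p^N\big).
\]

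Next I would diagonalize the right-hand side. By \autoref{lem:explicitdel} the eigenvectors $e_F$ of $\del_0$ form a basis of the finitely supported functions, so $\nu=\sum_F\beta_F e_F$ is a finite sum with coefficients $\beta_F$ independent of $N$; since $\del_0$ is upper-triangular with nonzero diagonal (\autoref{lem:delistriangular}) it is invertible on finitely supported functions, so if $\nu\neq 0$ at least one $\beta_F\neq 0$. Using $\del_0 e_F=\tfrac{1}{|F|}e_F$ gives $\del_0^N\nu=\sum_F \beta_F|F|^{-N}e_F$, hence, setting $\llambda\defeq\min\{\,|F|:\beta_F\neq 0\,\}\in\mathbb{N}$,
\[
\llambda^N\,\Delta_0^N\nu \;=\; \sum_F \beta_F\Big(\tfrac{\llambda}{|F|}\Big)^{N}\, d^N\big(e_F\times\Z_p^N\big).
\]
Now I would pass to the limit $N\to\infty$ in $\ltwostar$. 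By \autoref{limitlemma}, applied to each group in the (finite) expansion of $e_F$ and summed, $d^N(e_F\times\Z_p^N)\to \mo(e_F)=E_F$ in $\ltwostar$; in particular each sequence $\{d^N(e_F\times\Z_p^N)\}_N$ is bounded in $\ltwostar$. For the finitely many $F$ with $|F|>\llambda$ the scalar $(\llambda/|F|)^N\to 0$ annihilates the term, and for the finitely many $F$ with $|F|=\llambda$ the scalar is $1$ and the term tends to $\beta_F E_F$. Thus $\llambda^N\Delta_0^N\nu\to\sum_{|F|=\llambda,\ \beta_F\neq 0}\beta_F E_F$ in $\ltwostar$, which is the asserted expansion with $\{F_i\}=\{F:|F|=\llambda,\ \beta_F\neq 0\}$ and $a_{F_i}=\beta_{F_i}$ (the case $\nu=0$ being vacuous).

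The argument is largely bookkeeping once \autoref{actionofdelta} and \autoref{limitlemma} are granted. The one point needing care — and the only genuine obstacle — is that $N$ occurs \emph{both} in the exponent $|F|^{-N}$ and inside $d^N(e_F\times\Z_p^N)$, so one cannot pass to the limit term-by-term without a uniform bound; the $\ltwostar$-boundedness of $\{d^N(e_F\times\Z_p^N)\}_N$, a free by-product of the convergence in \autoref{limitlemma}, is exactly what handles the borderline terms $|F|=\llambda$. A secondary, purely formal matter is to fix the linear-extension conventions ($\sigma\times\Z_p^N$, applying $d^N$ to a signed measure, the $e_F$-expansion of $\nu$) so that the two cited results apply verbatim.
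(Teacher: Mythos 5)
Your proposal is correct and follows essentially the same route as the paper: expand $\nu$ in the eigenbasis $e_F$ of $\del_0$, use the iterated relation $\Delta_0^N\nu=d^N\big((\del_0^N\nu)\times\Z_p^N\big)$ from \autoref{actionofdelta}, apply \autoref{limitlemma} to each term, and extract the minimal $|F|$ as $\llambda$. Your explicit attention to the uniform boundedness needed to pass to the limit when $N$ appears both in the scalar $|F|^{-N}$ and in $d^N(e_F\times\Z_p^N)$ is a welcome refinement of a point the paper's proof leaves implicit in its $o(1)$ notation, but it is not a different argument.
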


\begin{proof}

By the remark following \autoref{lem:explicitdel},
$$\del^N(\nu)=\sum_{F} a_F \frac{e_F}{|F|^N}$$

where the sum is finite. Hence,

$$
\Delta_0^N (\nu) = \sum_{F} a_F \frac{1}{|F|^N} d^N ( e_F \times \Z_p^N)
$$

By \autoref{limitlemma}, this is 
\begin{equation}
\label{eqn:littleosum}
\sum_{F} a_F \frac{1}{|F|^N} \Big(\mo( e_F ) + o(1)\Big)
\end{equation}
where the limit, implicit in the "little o" notation, is taken in the $\ltwostar$ topology. Now define
\[
\lambda \defeq \min_{F} \Big\{ |F| \Big| a_F \neq 0 \Big\}
\]
The expression (\ref{eqn:littleosum}) can be re-written as:
\[
\sum_{\llambda=|F'|} a_{F'} \frac{1}{\llambda^N}\mo(e_F') + o\left(\frac{1}{\llambda^N}\right)=
\frac{1}{\llambda^N}\sum_{\llambda=|F'|} a_{F'} E_{F'} + o\left(\frac{1}{\llambda^N}\right)
\]
\end{proof}

\begin{remark}
With the notation as above, the inner product 
\begin{equation}
\label{eqn:orthogonality}
\Big< \sum_i a_{F_i} E_{F_i} \Big| \nu \Big> \neq 0
\end{equation}
\end{remark}

\begin{proof}
We use self-adjointness:
\[
\Big< \sum_i a_{F_i} E_{F_i} \Big| \Delta_0^N\nu \Big> =
\Big< \Delta_0^N \sum_i a_{F_i} E_{F_i} \Big| \nu \Big>
\]
The result follows by taking the limit $N \rightarrow \infty$ and using the \autoref{thm:leadingtermone:mainbody}.
\end{proof}

\begin{definition}
Let $\hil_\lambda$ be the orthogonal complement of 
\[
\Big\{ E_F \Big| F \leq \lambda \Big\}
\]
\end{definition}

\begin{lemma}
\label{lem:lambdanorm}
 The restriction of $\Delta_0$ to $\hil_\lambda$ has norm at most $\lambda^{-1}$.
\end{lemma}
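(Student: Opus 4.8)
The plan is to deduce the bound from the abstract spectral input \autoref{thm:spectralinput}. First I would record that $\hil_\lambda$ is $\Delta_0$-invariant: by definition it is the orthogonal complement of the finite-dimensional space $\mathrm{span}\{E_F : |F|\le\lambda\}$, and each $E_F$ is an eigenfunction of $\Delta_0$ (eigenvalue $|F|^{-1}$), so that span is $\Delta_0$-invariant; since $\Delta_0$ is self-adjoint (reversibility, Section~2), its orthogonal complement $\hil_\lambda$ is invariant too. Hence $T:=\lambda\,\Delta_0|_{\hil_\lambda}$ is a bounded self-adjoint operator on the Hilbert space $\hil_\lambda$, and by \autoref{thm:spectralinput} it suffices to exhibit a dense subset of $\hil_\lambda$ on which $T^k\to 0$ pointwise; this gives $\|T\|\le 1$, i.e. $\|\Delta_0|_{\hil_\lambda}\|\le\lambda^{-1}$.

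For the dense subset I would take $\{P_\lambda\nu : \nu\text{ finitely supported}\}$, where $P_\lambda$ denotes the orthogonal projection $\ltwostar\to\hil_\lambda$; this set is dense because $P_\lambda$ is continuous with range $\hil_\lambda$ and the finitely supported measures are dense in $\ltwostar$. Since $\Delta_0$ preserves both $\hil_\lambda$ and its complement $\mathrm{span}\{E_F:|F|\le\lambda\}$, the projection $P_\lambda$ commutes with $\Delta_0$, so $T^k(P_\lambda\nu)=\lambda^k P_\lambda(\Delta_0^k\nu)$, and the task becomes to show $\lambda^k P_\lambda(\Delta_0^k\nu)\to 0$ for every finitely supported $\nu$.

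This is where \autoref{thm:leadingtermone:mainbody} does the work, applied iteratively. Writing $\nu=\sum_F a_F e_F$ (a finite sum, by \autoref{lem:explicitdel}) and using $\Delta_0^k(e_F)=|F|^{-k} d^k(e_F\times\Z_p^k)$ (iterate \autoref{actionofdelta}) together with \autoref{limitlemma}, one expands $\Delta_0^k\nu$ along the scales $p^{-\ell k}$: repeatedly peel off the current leading term, which by \autoref{thm:leadingtermone:mainbody} is a linear combination of eigenfunctions $E_F$ concentrated at a single eigenvalue $p^{-\ell}$, until the leading surviving scale is strictly below $\lambda^{-1}$. Applying $P_\lambda$ then annihilates every peeled-off combination with $p^\ell\le\lambda$ (these are finite combinations of $E_F$ with $|F|\le\lambda$), and the remaining part, once multiplied by $\lambda^k$, tends to $0$ because it decays faster than $\lambda^{-k}$. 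Combining, $\lambda^k P_\lambda\Delta_0^k\nu\to 0$, and \autoref{thm:spectralinput} finishes the proof.

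The main obstacle is exactly the uniform bookkeeping of the error through this iteration: the bare statement of \autoref{thm:leadingtermone:mainbody} only records a little-$o$ remainder, while the argument above needs the remainder, after the scale-$p^\ell$ term has been stripped off, to be genuinely of the next scale. This refinement is available and is the crux of the matter — it comes from the convergence $d^k(e_F\times\Z_p^k)\to \mo(F)$ in \autoref{limitlemma} being geometric at rate $O(p^{-k})$, which one reads off from the explicit formula (\ref{limiteqmoment}) together with the elementary estimate $1-|\mathrm{Inj}(F,(\Q_p/\Z_p)^k)|/|F|^k=O(p^{-k})$, the factors $\#Aut(G)$ in the definition of the $\ltwostar$-norm ensuring that the resulting sum over $G\in X_0$ converges. (One also uses here the triangularity of $\Delta_0$ on moment measures, \autoref{relationfrompresentation}, to identify the peeled-off terms as combinations of the $E_F$.) Feeding this quantitative rate, rather than the purely qualitative \autoref{thm:leadingtermone:mainbody}, into the peeling argument is what makes the estimate close; everything else is routine.
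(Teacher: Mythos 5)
Your setup (invariance of $\hil_\lambda$, reduction to \autoref{thm:spectralinput}, density of the test set) matches the paper's skeleton, but your choice of dense subset creates a difficulty that the paper deliberately avoids, and the quantitative patch you propose does not close it. You take $P_\lambda\nu$ for arbitrary finitely supported $\nu$ and must then show $\lambda^k P_\lambda\Delta_0^k\nu\to0$. Expanding $\nu=\sum_F a_F e_F$ as in the proof of \autoref{thm:leadingtermone:mainbody}, one has $\Delta_0^k\nu=\sum_F\frac{a_F}{|F|^k}\big(E_F+\varepsilon_{F,k}\big)$ with $\|\varepsilon_{F,k}\|\to0$; the projection $P_\lambda$ kills the $E_F$ with $|F|\le\lambda$ but not the errors, so you need $\lambda^k|F|^{-k}\|P_\lambda\varepsilon_{F,k}\|\to0$ for every $F$ in the (finite) support with $|F|\le\lambda$. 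A geometric rate $\|\varepsilon_{F,k}\|=O(p^{-k})$ --- which is the true order of magnitude: for $F$ trivial, $d^k(\Z_p^k)$ is the cokernel of a random $k\times k$ matrix and its deviation from $\mu_0$ is genuinely of size $p^{-k}$, not smaller --- gives $\lambda^k|F|^{-k}p^{-k}=(\lambda/(p|F|))^k$, which diverges as soon as $\lambda>p|F|$. So already for $\lambda\ge p^2$ the contribution of the trivial group escapes your estimate, and no fixed geometric rate can rescue the peeling: you would need $\|P_\lambda\varepsilon_{F,k}\|=o((|F|/\lambda)^k)$ for arbitrarily large $\lambda$, i.e.\ super-geometric decay of the \emph{projected} error, which is essentially the statement being proved. (Separately, your claimed $\ltwostar$ bound is not automatic either: the pointwise estimate $1-|\mathrm{Inj}(G',(\Q_p/\Z_p)^k)|/|G'|^k=O(p^{-k})$ carries a constant growing with the rank of $G'$, and the crude bound by $|G'|\,p^{-k}$ leads to sums such as $\sum_G|\mathrm{Hom}(B^2\times\Z_p^2,G)|/|\mathrm{Aut}(G)|$, which diverge.)

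The paper removes the difficulty at the level of the dense subset: by \autoref{lem:density}, the finitely supported measures \emph{already lying in} $\hil_\lambda$ are dense in $\hil_\lambda$ (which has finite codimension), so no projection is ever applied. For such a $\nu$, \autoref{thm:leadingtermone:mainbody} together with the remark following it --- the leading combination $\sum_i a_{F_i}E_{F_i}$, concentrated at a single scale $T=|F_i|$, pairs nontrivially with $\nu$ --- forces $T>\lambda$, since otherwise that combination would lie in $\mathrm{span}\{E_F:|F|\le\lambda\}$, which is orthogonal to $\nu$. Then $\lambda^k\Delta_0^k\nu=(\lambda/T)^k\big(\sum_i a_{F_i}E_{F_i}+o(1)\big)\to0$ using only the qualitative little-$o$, and \autoref{lem:hilbert:appendix} finishes the proof. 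If you want to keep your argument, replace your dense subset by the paper's; the quantitative refinement you invoke is neither available at the strength you need nor necessary.
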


\begin{proof}
Suppose that $\nu$ is a \textit{finitely supported measure} in $\hil_\lambda$. Then, it follows from \autoref{thm:leadingtermone:mainbody} and the ensuing remark that:
\begin{equation}
\label{eqn:lambdalimit}
\lim_{N \rightarrow \infty} \lambda^N \Delta_0^N \nu = 0
\end{equation}
Now, given (\ref{eqn:lambdalimit}), the result follows from the Spectral Theorem. The relevant statements are in the appendix:
\begin{itemize}
\item[1)] By \autoref{lem:density} in the appendix, finitely supported measures are dense in $\hil_\lambda$. 
\item[1)] Therefore, it follows from \autoref{lem:hilbert:appendix} in the appendix that
\[
\Delta_0 \Big|_{\hil_\lambda}
\] 
has norm at most $\lambda^{-1}$.
\end{itemize}
\end{proof}

\begin{definition}
Define:
\[
\hil_{\infty} \defeq \bigcap_{\lambda} \hil_{\lambda}
\]
\end{definition}

\begin{theorem}[Second main theorem, preliminary version]
\label{thm:secondmainprothm}
\[
\hil_{\infty}=ker(\Delta_0) \, \cap \, \ltwostar
\]
\begin{mysection}
The orthogonal complement of $im(\fouriermap)$ is $ker(\Delta_0)$.
\end{mysection}
\end{theorem}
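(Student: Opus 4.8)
The plan is to prove the two inclusions separately; both are short given the machinery already assembled, since the substantive estimate has been packaged into \autoref{lem:lambdanorm}. I would begin by recording the elementary identification
\[
\hil_{\infty} \;=\; \bigcap_{\lambda} \hil_{\lambda} \;=\; \Big( \bigcup_{\lambda}\{\, E_F : |F| \le \lambda \,\} \Big)^{\perp} \;=\; \{\, E_F : F \in X_0 \,\}^{\perp},
\]
so that membership in $\hil_{\infty}$ is exactly the condition that $\nu \perp E_F$ for every finite abelian $p$-group $F$.

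For the inclusion $\hil_{\infty} \subseteq ker(\Delta_0)$, take $\nu \in \hil_{\infty}$. For each $\lambda$ we have $\nu \in \hil_{\lambda}$ (because $\nu$ lies in the intersection), and $\hil_{\lambda}$ is $\Delta_0$-invariant, being the orthogonal complement of a span of eigenvectors of the self-adjoint operator $\Delta_0$. Hence \autoref{lem:lambdanorm} gives $\lVert \Delta_0 \nu \rVert \le \lambda^{-1}\lVert \nu \rVert$. Since $\lVert \Delta_0 \nu \rVert$ is a fixed number bounded by $\lambda^{-1}\lVert\nu\rVert$ for every $\lambda$ simultaneously, letting $\lambda \to \infty$ forces $\Delta_0 \nu = 0$, i.e. $\nu \in ker(\Delta_0)$. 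Note this direction uses only the invariance of each $\hil_\lambda$, not of $\hil_\infty$ itself.

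For the reverse inclusion $ker(\Delta_0) \cap \ltwostar \subseteq \hil_{\infty}$, let $\nu \in \ltwostar$ with $\Delta_0 \nu = 0$. Each $E_F$ is an eigenfunction of $\Delta_0$ with eigenvalue $|F|^{-1} \neq 0$ (this is the eigenfunction lemma preceding the construction of the $E_F$, which follows from \autoref{relationfrompresentation}). Using self-adjointness of $\Delta_0$,
\[
\frac{1}{|F|}\,\langle \nu, E_F \rangle \;=\; \langle \nu, \Delta_0 E_F \rangle \;=\; \langle \Delta_0 \nu, E_F \rangle \;=\; 0,
\]
so $\langle \nu, E_F \rangle = 0$ for every $F$, and by the identification above $\nu \in \hil_{\infty}$. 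Combining the two inclusions yields the equality.

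The only place where genuine work is hidden is \autoref{lem:lambdanorm}, which in turn rests on \autoref{thm:leadingtermone:mainbody} together with the spectral-theoretic input from the appendix; so if there is a main obstacle, it is upstream of this statement rather than in it. In the write-up the one point worth stating explicitly is that $\nu \in \hil_\lambda$ for all $\lambda$ precisely because $\nu$ lies in the intersection $\hil_\infty$, which is what makes the passage $\lambda\to\infty$ legitimate.
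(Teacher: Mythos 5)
Your proof is correct and follows the paper's route for the substantive inclusion: both arguments rest on \autoref{lem:lambdanorm} and the observation that the operator norm of $\Delta_0$ restricted to $\bigcap_\lambda \hil_\lambda$ is at most $\inf_\lambda \lambda^{-1}=0$. You additionally spell out the reverse inclusion $ker(\Delta_0)\cap\ltwostar\subseteq\hil_\infty$ via self-adjointness and the nonvanishing eigenvalues $|F|^{-1}$ of the $E_F$, a short step the paper leaves implicit; making it explicit is an improvement rather than a divergence.
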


\begin{proof}
\begin{mysection}
The orthogonal complement of $im(\fouriermap)$ is the orthogonal complement of all the $E_F$. Hence the orthogonal complement of $im(\fouriermap)$ is 
\[
\bigcap \, \hil_{\lambda}
\]
\end{mysection}
By \autoref{lem:lambdanorm}, the restriction of $\Delta_0$ to
\[
\bigcap_{\lambda} \, \hil_{\lambda}
\] has norm at most
\[
\inf \{ \lambda^{-1} \}=0.
\] Hence
\[
\hil_{\lambda}=ker(\Delta_0) \, \cap \, \ltwostar
\]
\end{proof}

\begin{mymysection}
\section{Possibly unnecessary}
\begin{theorem}
Any eigenfunction of $\Delta_0$, that is in $\ltwostar$, is a linear combination of the $E_F$'s
\end{theorem}
... needs modification ... has non-zero eigenvalue ...

\begin{proof}
Suppose the contrary, i.e. that there exists some eigenfunction $\Elambda$, with eigenvalue $\lambda$, that lies in $\ltwostar$, but is not a linear 

combination of the $E_F$'S. Choose a finitely supported measure $\nu$ such that 
\begin{enumerate}
\item $<\nu,E_F>_{CL}=0$ whenever $\frac{1}{|F|} \geq \lambda$.

\end{enumerate}

The first condition implies that

$$
<\Delta_0^n \nu,E_F>_{CL}=0
$$
whenever $\frac{1}{|F|} \geq \lambda$.
By \autoref{leadingterm}, we have 
$$
\lim_{n \rightarrow \infty}(p^{k})^N \delta_0^N (\nu) =  \sum_{|F|=p^k} a_F  E_{F}.
$$ 
in $\ltwostar$.
This implies that $p^{-k} < \lambda$
Hence,
$$
\lim_{n \rightarrow \infty}\frac{1}{\lambda^N} \delta_0^N (\nu) =  0
$$
in $\ltwostar$.
Hence,
$$
\lim_{n \rightarrow \infty} \frac{1}{\lambda^N} <\Delta_0^N \nu,\Elambda>=0
$$

But
$$
\frac{1}{\lambda^N} <\delta_0^N \nu,\Elambda>=\frac{1}{\lambda^N} <\nu , \delta_0^N \Elambda>=<\nu , \Elambda>
$$
We summarize. Whenever a finitely supported measure $\nu$ is orthogonal to finitely many of $E_F$, then it is also orthogonal to $\Elambda$. Hence the orthogonal complement of $\Elambda$ contains the orthogonal complement of finitely many of the $E_F$, and therefore $\Elambda$ must lie in the span of finitely many $E_F$.
\end{proof}

\marginparr{Idea given in notes. Take note.}
\begin{mysection}

to take out certain modes... Then some modes appear in the main expression... but we do not have exact modes, just stuff that converges to them...


particular get a good description of the leading term using this approach.
\end{mysection}

\end{mymysection}

\subsection{Proofs of the Main Theorems}
\subsubsection{Curious Formula}

\begin{theorem}
\label{thm:curioussum:mainbody}
For any $F_1, F_2 \in X_0$, we have 
\begin{equation}
\label{eqn:curioussum:mainbody}
c_0\sum_G \frac{\#Sur( G, F_1 )\#Sur( G, F_2 )}{\#Aut(G)}=\sum_{G'}\frac{|\#Sur(F_1,G')||\#Sur(F_2,G')|}{|\#Aut(G')|}
\end{equation}
\end{theorem}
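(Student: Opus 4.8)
The plan is to show that, after clearing the factor $c_0$, both sides of (\ref{eqn:curioussum:mainbody}) count the same combinatorial object: the set of subgroups $H \le F_1 \times F_2$ that project \emph{onto} both factors. For the left-hand side I will use the moment identity (\ref{eqn:momentone}) together with an "image of a pair of surjections" argument; for the right-hand side I will use Goursat's lemma.

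For the left-hand side: given a finite abelian $p$-group $G$, a pair of surjections $\alpha\colon G \twoheadrightarrow F_1$ and $\beta\colon G \twoheadrightarrow F_2$ assembles into a homomorphism $(\alpha,\beta)\colon G \to F_1\times F_2$ whose image $H$ is a subgroup surjecting onto both factors, and the corestriction $G \twoheadrightarrow H$ is surjective; conversely, a subgroup $H\le F_1\times F_2$ surjecting onto both factors together with a surjection $G \twoheadrightarrow H$ recovers such a pair. This bijection yields, for each $G$, the identity $\#Sur(G,F_1)\,\#Sur(G,F_2) = \sum_{H} \#Sur(G,H)$, the sum running over subgroups $H\le F_1\times F_2$ surjecting onto both factors. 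Dividing by $\#Aut(G)$ and summing over $G$ — all terms being nonnegative, the finite sum over $H$ may be interchanged with the sum over $G$ — gives
\[
\sum_G \frac{\#Sur(G,F_1)\,\#Sur(G,F_2)}{\#Aut(G)} \;=\; \sum_{H}\; \sum_G \frac{\#Sur(G,H)}{\#Aut(G)}.
\]
Each inner sum equals $c_0^{-1}$: this is exactly the statement that every moment of the Cohen--Lenstra measure is $1$, i.e.\ (\ref{eqn:momentone}), rewritten via $\mu_0(G)=c_0/\#Aut(G)$. Hence the left-hand side of (\ref{eqn:curioussum:mainbody}) equals $c_0^{-1}\,\#\{H\le F_1\times F_2 : H\twoheadrightarrow F_1,\ H\twoheadrightarrow F_2\}$. (If absolute convergence is wanted before the interchange, it follows from (\ref{l2bound}) and Pontryagin duality $\#Sur(G,F)=\#Inj(F^\vee,G^\vee)$, then Cauchy--Schwarz.)

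For the right-hand side I invoke Goursat's lemma: subgroups $H\le F_1\times F_2$ surjecting onto both factors are in bijection with triples $(A\le F_1,\ B\le F_2,\ \psi\colon F_1/A \xrightarrow{\sim} F_2/B)$. Grouping this count by the isomorphism class of the common quotient $G'\cong F_1/A\cong F_2/B$, the number of $A\le F_1$ with $F_1/A\cong G'$ is $\#Sur(F_1,G')/\#Aut(G')$, likewise for $B$, and for fixed $A,B$ the isomorphisms $\psi$ form a torsor under $Aut(G')$, contributing a factor $\#Aut(G')$. Thus
\[
\#\{H\le F_1\times F_2 : H\twoheadrightarrow F_1,\ H\twoheadrightarrow F_2\} \;=\; \sum_{G'} \frac{\#Sur(F_1,G')\,\#Sur(F_2,G')}{\#Aut(G')},
\]
which is precisely the right-hand side of (\ref{eqn:curioussum:mainbody}). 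Combining the two displays and multiplying through by $c_0$ finishes the proof.

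All the ingredients are elementary and already available; there is no serious obstacle. The only delicate point is the interchange of summation on the left, which is harmless by nonnegativity, and the only genuine input beyond bookkeeping is the combination of the moment identity (\ref{eqn:momentone}) with Goursat's lemma. The real content of the statement is the observation that these two classical facts meet on the common count of subgroups of $F_1\times F_2$ surjecting onto both factors.
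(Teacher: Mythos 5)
Your proposal is correct and follows essentially the same route as the paper: rewrite the left side by factoring the pair of surjections through its image in $F_1\times F_2$, collapse the sum over $G$ with the moment identity (\ref{eqn:momentone}) to get the count of subgroups surjecting onto both factors, and then evaluate that count via common quotients $G'$. The only cosmetic difference is that you cite Goursat's lemma for the last step where the paper phrases the same bijection in terms of Cartesian squares determined by a pair of surjections $F_1\twoheadrightarrow G' \twoheadleftarrow F_2$ modulo $Aut(G')$.
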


\begin{remark}
Taking $F_1$ to be the trivial group recovers the well-known fact that all the moments of the Cohen-Lenstra measure are $1$.
\end{remark}

We begin the proof by noting that the left hand side of (\ref{eqn:curioussum:mainbody}) is a weighted sum over diagrams 
\[
\begin{tikzcd}
&G \arrow[dl, \sur] \arrow[dr, \sur]&\\
F_1&&F_2\\
\end{tikzcd}
\]

\begin{definition}
Let $Hom'(G,F_1 \times F_2)$ denote the set of homomorphisms from $G$ to $F_1 \times F_2$ that are \textit{surjective} to both factors.
\end{definition}

Then,
\[
c_0\sum_G \frac{\#Sur( G, F_1 )\#Sur( G, F_2 )}{\#Aut(G)} = \]\[
c_0\sum_G \frac{\#Hom'(G, F_1 \times F_2)}{\#Aut(G)} =\]\[
c_0\sum_{G,K} \frac{\#Sur(G,K)\#Inj'(K, F_1 \times F_2)}{\#Aut(G)\#Aut(K)}
\]

where $\#Inj'(K, F_1 \times F_2)$ denotes the number of injections from $K$ to $F_1 \times F_2$ that are surjective to both factors. Now, by (\ref{eqn:momentone}),
\[
c_0\sum_{G} \frac{\#Sur(G,K)}{\#Aut(G)}=1
\]

Hence, the above sum becomes
\begin{equation}
\label{eqn:numberofsubgroups}
\sum_{K} \frac{\#Inj'(K, F_1 \times F_2)}{\#Aut(K)}
\end{equation}

This is the number of subgroups of $F_1 \times F_2$ that project surjectively to both factors. 

\begin{lemma}
The number of subgroups of $F_1 \times F_2$ that project surjectively to both factors is:
\[
\sum_{G'} \frac{\#Sur(F_1,G')\#Sur(F_2,G')}{\#Aut(G')}
\]
\end{lemma}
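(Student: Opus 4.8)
The plan is to identify this count as an instance of Goursat's lemma. First I would recall the relevant form of Goursat's lemma for abelian groups: a subgroup $K \leq F_1 \times F_2$ whose two coordinate projections $\pi_i \colon K \to F_i$ are both surjective is precisely a ``fiber product'' subgroup. Namely, setting $N_1 = \{ f_1 \in F_1 : (f_1,0) \in K\}$ and $N_2 = \{ f_2 \in F_2 : (0,f_2) \in K\}$, one checks that $N_i \leq F_i$, that $K$ induces an isomorphism $\theta_K \colon F_1/N_1 \xrightarrow{\ \sim\ } F_2/N_2$, and that, writing $G' \defeq F_1/N_1$ and letting $\psi_1 \colon F_1 \twoheadrightarrow G'$ be the quotient map and $\psi_2 \colon F_2 \twoheadrightarrow G'$ the quotient map composed with $\theta_K^{-1}$,
\[
K = K_{\psi_1,\psi_2} \defeq \big\{ (f_1,f_2) \in F_1 \times F_2 : \psi_1(f_1) = \psi_2(f_2) \big\}.
\]
Conversely, for any finite abelian $p$-group $G'$ and any pair of surjections $\psi_i \colon F_i \twoheadrightarrow G'$, the subgroup $K_{\psi_1,\psi_2}$ projects onto both factors. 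Since $F_1, F_2 \in X_0$, the common quotient $G'$ is automatically an element of $X_0$, so this reduces the count to understanding the map $(G', \psi_1, \psi_2) \mapsto K_{\psi_1,\psi_2}$.

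Second, I would pin down its fibers. I claim $K_{\psi_1,\psi_2} = K_{\psi_1',\psi_2'}$, where $\psi_i \colon F_i \twoheadrightarrow G'$ and $\psi_i' \colon F_i \twoheadrightarrow G''$, if and only if there is an isomorphism $\theta \colon G' \xrightarrow{\ \sim\ } G''$ with $\psi_i' = \theta \circ \psi_i$ for $i = 1,2$. The ``if'' direction is immediate from the defining condition $\psi_1(f_1) = \psi_2(f_2)$. For ``only if'', the subgroups $N_i = \ker \psi_i$ are recovered from $K$ as above, so $\ker\psi_i = \ker\psi_i'$; hence the two surjections onto $G'$ and $G''$ factor through the same quotient $F_1/N_1$ and differ by a unique isomorphism, which one checks is compatible on both coordinates using that $K$ determines $\theta_K$. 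Consequently the set of subgroups we are counting is in bijection with
\[
\Bigg( \coprod_{G' \in X_0} \Sur(F_1,G') \times \Sur(F_2,G') \Bigg) \Big/ \! \sim,
\]
where, for each representative $G'$, the group $\Aut(G')$ acts on $\Sur(F_1,G') \times \Sur(F_2,G')$ diagonally by post-composition, $\theta \cdot (\psi_1,\psi_2) = (\theta\psi_1, \theta\psi_2)$.

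Third, I would note that this $\Aut(G')$-action is \emph{free}: if $\theta \in \Aut(G')$ satisfies $\theta\psi_1 = \psi_1$, then surjectivity of $\psi_1$ forces $\theta = \mathrm{id}_{G'}$. Therefore every orbit has size exactly $\#\Aut(G')$, and the number of orbits over a fixed $G'$ is $\#\Sur(F_1,G')\,\#\Sur(F_2,G')/\#\Aut(G')$. Summing over isomorphism classes $G' \in X_0$ yields
\[
\sum_{G'} \frac{\#\Sur(F_1,G')\,\#\Sur(F_2,G')}{\#\Aut(G')},
\]
which is the claimed count; together with the reduction of \eqref{eqn:numberofsubgroups} carried out above, this completes the proof of \autoref{thm:curioussum:mainbody}. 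I do not expect a serious obstacle here: the only point demanding care is the ``same subgroup $\iff$ isomorphic data'' equivalence, i.e.\ correctly extracting the isomorphism $\theta$ from $K$ and checking its compatibility on both factors; everything else is bookkeeping with the orbit–stabilizer count and the freeness observation.
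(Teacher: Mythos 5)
Your proof is correct and takes essentially the same route as the paper: the paper likewise identifies each such subgroup with a Cartesian (fiber-product) square over a common quotient $G'$ and counts pairs of surjections $F_1 \twoheadrightarrow G' \twoheadleftarrow F_2$ up to the action of $Aut(G')$, which is exactly your Goursat-style decomposition. Your explicit check that the $Aut(G')$-action on pairs of surjections is free (so each orbit has size exactly $\#Aut(G')$) is a detail the paper leaves implicit, but the argument is the same.
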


\begin{lemma}
Every such subgroup $K$ fits into a Cartesian square:
\begin{diagram}
& K \arrow[dr,\sur, "\pi_2"] \arrow[dl,\sur, "\pi_1"'] & \\
F_1 \arrow[dr,\sur] &  & F_2 \arrow[dl,\sur]\\
& G' & \\
\end{diagram}
\end{lemma}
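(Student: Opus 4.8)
The plan is to prove this by the Goursat correspondence, specialized to abelian groups: since every subgroup of an abelian group is normal, the classical statement collapses to exactly the pullback square claimed. Throughout, write $\pi_i \colon K \to F_i$ for the two projections, which are surjective by hypothesis.

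First I would introduce the two ``vertical kernels''
\[
N_1 \defeq \{\, a \in F_1 : (a,0) \in K \,\}, \qquad N_2 \defeq \{\, b \in F_2 : (0,b) \in K \,\},
\]
which are subgroups of $F_1$ and of $F_2$ with $N_1 \times N_2 \subseteq K$. The kernel of $\pi_1|_K$ is precisely $0 \times N_2$, so $\pi_1$ induces $K/(0\times N_2) \cong F_1$, hence $K/(N_1 \times N_2) \cong F_1/N_1$; symmetrically $K/(N_1\times N_2) \cong F_2/N_2$. I would then \emph{define} $G' \defeq K/(N_1 \times N_2)$, with quotient map $q \colon K \twoheadrightarrow G'$, so that by construction $G' \cong F_1/N_1 \cong F_2/N_2$; in particular both $F_1$ and $F_2$ admit surjections onto $G'$. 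Note also $|F_1|/|N_1| = |G'| = |F_2|/|N_2|$.

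Next I would check the square is Cartesian. Since $\ker(\pi_1|_K) = 0\times N_2 \subseteq N_1\times N_2 = \ker q$, the map $q$ factors uniquely through $\pi_1|_K$, giving a surjection $\psi_1 \colon F_1 \twoheadrightarrow G'$ with $q = \psi_1 \circ \pi_1|_K$; likewise a surjection $\psi_2 \colon F_2 \twoheadrightarrow G'$ with $q = \psi_2 \circ \pi_2|_K$. For any $(a,b) \in K$ we then get $\psi_1(a) = q(a,b) = \psi_2(b)$, so $K$ is contained in the fiber product $F_1 \times_{G'} F_2 \defeq \{(a,b) : \psi_1(a) = \psi_2(b)\}$. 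A count of orders finishes it: $|K| = |F_1|\cdot|\ker(\pi_1|_K)| = |F_1|\cdot|N_2|$, while, as $\psi_2$ is onto $G'$, every fiber of $F_1 \times_{G'} F_2 \to F_1$ has size $|F_2|/|G'| = |N_2|$, so $|F_1\times_{G'}F_2| = |F_1|\cdot|N_2| = |K|$. Hence the inclusion $K \hookrightarrow F_1\times_{G'}F_2$ is an equality, i.e. $K$ is the pullback of the surjections $F_1 \twoheadrightarrow G' \twoheadleftarrow F_2$, which is the assertion.

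Finally I would record how this closes out (\ref{eqn:curioussum:mainbody}). Grouping the subgroups $K \le F_1\times F_2$ with surjective projections by the isomorphism type of the bottom term $G'$, the assignment $K \mapsto (\psi_1,\psi_2)$ identifies such subgroups having bottom $\cong G'$ with $Aut(G')$-orbits of pairs of surjections $(F_1 \twoheadrightarrow G',\, F_2 \twoheadrightarrow G')$, the inverse sending a pair $(\psi_1,\psi_2)$ to $F_1 \times_{G'} F_2$; this $Aut(G')$-action is free since a surjection determines any post-composed automorphism. Hence the number of such subgroups with bottom $\cong G'$ is $\#Sur(F_1,G')\,\#Sur(F_2,G')/\#Aut(G')$, and summing over $G'$ yields $(\ref{eqn:numberofsubgroups})$, completing both the counting lemma and the curious formula. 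The one delicate point, and the place I would be most careful, is that a \emph{single} group $G'$ sits at the bottom of both legs rather than two a priori different quotients $F_1/N_1$, $F_2/N_2$; this is exactly the content of Goursat's lemma and is handled cleanly by defining $G'$ intrinsically as $K/(N_1\times N_2)$ from the start, after which everything is bookkeeping of kernels and cardinalities.
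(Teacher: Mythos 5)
Your proof is correct and takes essentially the same route as the paper: the paper's entire proof is the single line ``take $G' \cong K/(\ker(\pi_1)+\ker(\pi_2))$'' (printed with a typo repeating $\ker(\pi_1)$), which is exactly your $K/(N_1\times N_2)$ since $\ker(\pi_1|_K)=0\times N_2$ and $\ker(\pi_2|_K)=N_1\times 0$; you merely supply the Goursat-style verification and cardinality count that the paper leaves implicit. Your closing paragraph on counting subgroups via $Aut(G')$-orbits of pairs of surjections likewise matches the paper's subsequent argument.
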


\begin{proof}
Take 
\[
G' \cong K \Big/ \Big(ker(\pi_1) + ker(\pi_1) \Big)
\]
\end{proof}

Hence, the problem reduces to counting Cartesian diagrams. A Cartesian diagram is completely determined by the pair of homorphisms:
\begin{equation}
\label{eqn:bottomofcartesian}
\begin{tikzcd}
F_1 \arrow[dr, \sur] &  & F_2 \arrow[dl, \sur]\\
& G' & \\
\end{tikzcd}
\end{equation}
Two distinct diagrams (\ref{eqn:bottomofcartesian}) yield the same subgroup of $F_1 \times F_2$ if and only if they are related by an automorphism of $G'$.  This proves the lemma.

\subsubsection{Construction of $\fouriermap$}

First, we define the map that takes finite linear combinations of functions $\frac{\# Sur (F,\emptydot)}{\# Aut(\et)}$, to finite linear combinations of functions $\frac{\# Sur (\emptydot,F')}{\# Aut(\et)}$ by sending 
\[
\frac{\# Sur (F,\emptydot)}{\# Aut(\et)} \rightarrow \sqrt{c_0}\frac{\# Sur (\emptydot,F')}{\# Aut(\et)}
\]

\begin{lemma}
This map is well-defined.
\end{lemma}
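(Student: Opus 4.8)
The plan is to reduce this lemma to a single linear-algebra fact: the finitely supported measures $\frac{\# Sur(F,\emptydot)}{\#Aut(\emptydot)}$, as $F$ ranges over isomorphism classes of finite abelian $p$-groups, are linearly independent in the space of finitely supported (signed) functions on $X_0$. Granting this, there is nothing further to do: a prescription assigning to each vector of a linearly independent family a value in a target vector space extends uniquely to an $\mathbb{R}$-linear map on the span of that family, so sending $\frac{\# Sur(F,\emptydot)}{\#Aut(\emptydot)} \mapsto \sqrt{c_0}\,\frac{\# Sur(\emptydot,F)}{\#Aut(\emptydot)}$ and extending linearly is automatically well defined and linear.

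To prove the required linear independence, I would run the same argument that establishes linear independence of the moment measures $\mo(F)$. Suppose $\sum_F a_F \frac{\# Sur(F,\emptydot)}{\#Aut(\emptydot)} = 0$ is a finite linear relation. The key observation is that $\# Sur(F,G)=0$ unless $G$ is a quotient of $F$; in particular the function $\frac{\# Sur(F,\emptydot)}{\#Aut(\emptydot)}$ evaluated at a group $G$ vanishes whenever $|G|>|F|$, and also whenever $|G|=|F|$ but $G\not\cong F$, since a quotient of $F$ of order $|F|$ must be isomorphic to $F$. Now pick a group $F_0$ of maximal order among those with $a_{F_0}\neq 0$ and evaluate the relation at $G=F_0$. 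Every term with $F\not\cong F_0$ contributes $0$, leaving $a_{F_0}\,\frac{\# Sur(F_0,F_0)}{\#Aut(F_0)}=0$; since a surjective endomorphism of a finite group is an automorphism we have $\# Sur(F_0,F_0)=\#Aut(F_0)\neq 0$, forcing $a_{F_0}=0$, a contradiction. Hence all coefficients vanish.

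Putting this together: every element of the span of $\big\{\frac{\# Sur(F,\emptydot)}{\#Aut(\emptydot)}\big\}_F$ has a unique expression as a finite linear combination $\sum_i a_i \frac{\# Sur(F_i,\emptydot)}{\#Aut(\emptydot)}$, so its image $\sqrt{c_0}\sum_i a_i \frac{\# Sur(\emptydot,F_i)}{\#Aut(\emptydot)}$ is unambiguously defined, and the resulting map $\fouriermap$ on spans is linear. I would also record the parallel remark that the target functions $\frac{\# Sur(\emptydot,F)}{\#Aut(\emptydot)} = c_0^{-1}\mo(F)$ are themselves linearly independent by the same kind of evaluation argument, so in fact $\fouriermap$ restricts to a linear isomorphism between the two spans of finitely supported measures — but this is not needed for mere well-definedness, and the substantive content (continuity, unitarity, and the intertwining relation with $\Delta_0$) is deferred to the subsequent lemmas and to the curious formula \autoref{thm:curioussum:mainbody}. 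I do not anticipate any real obstacle in this step; the only points requiring care are the direction of the quotient/surjection order relation and the identity $\# Sur(F_0,F_0)=\#Aut(F_0)$.
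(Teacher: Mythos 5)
Your proposal is correct and follows the same route as the paper: the paper's proof is the one-line observation that the functions $\frac{\# Sur(F,\emptydot)}{\#Aut(\emptydot)}$ are linearly independent, which is exactly your reduction. You additionally supply the evaluation argument (maximal $|F_0|$, plus $\#Sur(F_0,F_0)=\#Aut(F_0)$) that the paper leaves implicit, and that argument is sound.
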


\begin{proof}
This is true because the functions $\frac{\# Sur (F,\emptydot)}{\# Aut(\et)}$ are linearly independent.
\end{proof}

This map is norm-preserving by the identity in \autoref{thm:curioussum:mainbody}. 

\begin{lemma}
$\fouriermap$ extends to a norm-preserving homomorphism from $\ltwostar$ to the closure of the linear span of
\[
\frac{\# Sur (\emptydot,F')}{\# Aut(\et)}
\]
\end{lemma}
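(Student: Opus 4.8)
The plan is to invoke the standard principle that a linear isometry defined on a dense linear subspace of a Hilbert space extends uniquely to a linear isometry on the whole space, whose image is the closure of the image of the subspace. Write $D \subseteq \lltwoCL$ for the linear span of the finitely supported measures $u_F \colon G \mapsto \frac{\# Sur(F,G)}{\# Aut(G)}$, $F \in X_0$, on which $\fouriermap$ has already been defined. By the two preceding lemmas, $\fouriermap$ is a well-defined linear map $D \to \lltwoCL$ which preserves the $\lltwoCL$-norm; here I would also note that the target measures $\sqrt{c_0}\,\frac{\# Sur(\emptydot,F')}{\# Aut(\et)}$ genuinely lie in $\lltwoCL$, which is the content of \autoref{l2bound}.

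The one point that requires an argument is that $D$ is dense in $\lltwoCL$. Since the finitely supported measures are dense in $\lltwoCL$ (any square-summable $\nu$ is the $\lltwoCL$-limit of its truncations to finite subsets of $X_0$, directly from the definition of the norm), it suffices to show $\one_G \in D$ for every $G$. Fix $G$ and restrict attention to the finite set $\{F : F \leq G\}$ of quotients of $G$. The square matrix $\bigl(u_F(G')\bigr)_{F,\,G' \leq G}$ has its $(F,G')$-entry equal to zero unless $G' \leq F$, while its diagonal entries are $u_F(F) = \#Sur(F,F)/\#Aut(F) = 1$; hence it is triangular with respect to the partial order and has nonzero diagonal, so it is invertible. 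Inverting it expresses each $\one_{G'}$ with $G' \leq G$ — in particular $\one_G$ — as a finite linear combination of the $u_F$. (This is the same triangularity that underlies the linear-independence statement already recorded in the text.)

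It then remains to extend and to identify the image. A norm-preserving linear map on a dense subspace of a complete space is Lipschitz, hence uniformly continuous, so it extends uniquely to a continuous linear map $\fouriermap \colon \lltwoCL \to \lltwoCL$; passing to the limit in the identity $\|\fouriermap x\| = \|x\|$ shows the extension is still norm-preserving. Its image is the isometric image of a complete space, hence complete, hence closed in $\lltwoCL$; it contains $\fouriermap(D)$, which is the linear span of the $\frac{\# Sur(\emptydot,F')}{\# Aut(\et)}$ (the scalar $\sqrt{c_0}$ being immaterial for a linear span), and by continuity it is contained in the closure of that span. A closed set lying between a subspace and its closure equals that closure, which yields $\im(\fouriermap)$ = closure of the span of $\frac{\# Sur(\emptydot,F')}{\# Aut(\et)}$, as claimed. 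I do not expect any genuine obstacle: the only substantive ingredient, the isometry property, is precisely the curious formula \autoref{thm:curioussum:mainbody}, and the density of $D$ reduces to the triangularity above, so the remainder is routine functional analysis.
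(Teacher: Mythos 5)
Your proposal is correct and follows essentially the same route as the paper: extend the isometry from the dense span of the measures $\frac{\# Sur(F,\emptydot)}{\# Aut(\et)}$ to all of $\ltwostar$, where density is established by noting that this span contains all finitely supported measures (the paper asserts this without proof; your triangularity argument with unit diagonal $\#Sur(F,F)/\#Aut(F)=1$ supplies the missing detail). No gaps.
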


\begin{proof}
$\fouriermap$ extends to a norm-preserving homomorphism from the closure of the linear span of
\[
\frac{\# Sur (F,\emptydot)}{\# Aut(\et)}
\] 
 to the closure of the linear span of
\[
\frac{\# Sur (\emptydot,F')}{\# Aut(\et)}
\]
But every finitely supported function can be expressed as a linear combination of functions of the form $
\frac{\# Sur (F,\emptydot)}{\# Aut(\et)}
$ Finitely supported functions are dense in $L^2(X_0,\mu)$. Hence, the closure of the linear span of 
$
\frac{\# Sur (F,\emptydot)}{\# Aut(\et)}
$ is $L^2(X_0,\mu)$
\end{proof}

\subsubsection{A proof of the first main theorem}

We begin with the following lemma:
\begin{lemma}
$1_G$ is in the orthogonal complement of
\[
span \Big\{ \frac{\#Sur(F,\emptydot)}{\#Aut(\et)} \Big| F < G \Big\}
\]
in
\[
span \Big\{ \frac{\#Sur(F,\emptydot)}{\#Aut(\et)} \Big| F \leq G \Big\}
\]
\end{lemma}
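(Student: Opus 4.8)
The plan is to check the two conditions implicit in the statement: that $1_G$ lies in $\mathrm{span}\{\frac{\#Sur(F,\et)}{\#Aut(\et)}\mid F\le G\}$, and that it is orthogonal to every $\frac{\#Sur(F,\et)}{\#Aut(\et)}$ with $F<G$; together with a dimension count these will show that $1_G$ in fact generates the orthogonal complement, exactly paralleling the argument for \autoref{lem:generator} on the moment side. Throughout I will use that, by definition, $1_G$ is a positive multiple of the point mass at $G$: since $\lb \one_G,\one_G\rb = c_0^{-1}\#Aut(G)$, one has $1_G=\sqrt{\mu_0(G)}\,\one_G$, though only positivity of the scalar will matter.

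For the orthogonality, I would compute directly: for any $F$,
\[
\lb 1_G,\ \tfrac{\#Sur(F,\et)}{\#Aut(\et)}\rb \;=\; c_0^{-1}\sqrt{\mu_0(G)}\;\#Aut(G)\cdot\frac{\#Sur(F,G)}{\#Aut(G)} \;=\; c_0^{-1}\sqrt{\mu_0(G)}\;\#Sur(F,G),
\]
which vanishes unless $F$ surjects onto $G$. If $F<G$, then $G\twoheadrightarrow F$ with $G\not\cong F$, so $|G|>|F|$, hence there is no surjection $F\twoheadrightarrow G$ and the inner product is $0$. This disposes of the orthogonality half.

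For membership, I would note that each $\frac{\#Sur(F,\et)}{\#Aut(\et)}$ with $F\le G$ is supported on the finite set $Q(G):=\{F'\in X_0 : F'\le G\}$ of isomorphism classes of quotients of $G$ — its value at $G'$ is nonzero only when $F\twoheadrightarrow G'$, and then $G'\le F\le G$. Fixing a linear extension of the partial order on $Q(G)$ that lists larger groups first, the $Q(G)\times Q(G)$ matrix with entries $\frac{\#Sur(F,G')}{\#Aut(G')}$ is upper triangular in this order (since $\#Sur(F,G')\neq 0$ forces $G'\le F$) with diagonal entries $\frac{\#Sur(F,F)}{\#Aut(F)}=1$, hence invertible. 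Thus $\{\frac{\#Sur(F,\et)}{\#Aut(\et)}\mid F\le G\}$ is a basis for the space of all functions supported on $Q(G)$, which contains $1_G$; and the span of the $|Q(G)|-1$ functions with $F<G$, being spanned by a proper sub-basis, has codimension one inside it. Combining this with the orthogonality computed above yields that $1_G$ spans the orthogonal complement of $\mathrm{span}\{\frac{\#Sur(F,\et)}{\#Aut(\et)}\mid F<G\}$ in $\mathrm{span}\{\frac{\#Sur(F,\et)}{\#Aut(\et)}\mid F\le G\}$.

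The only real care needed — the "obstacle," such as it is — is keeping the two orientations of the order straight: $\frac{\#Sur(F,\et)}{\#Aut(\et)}$ is supported on the groups \emph{below} $F$ (its quotients), whereas orthogonality to $1_G$ is controlled by whether $F$ itself maps \emph{onto} $G$. Once the triangular matrix is set up with the correct orientation, the rest is a routine finite-dimensional linear-algebra argument.
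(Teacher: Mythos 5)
Your proposal is correct and follows essentially the same route as the paper: orthogonality via the observation that $\lb 1_G,\#Sur(F,\et)/\#Aut(\et)\rb$ is a positive multiple of $\#Sur(F,G)$, which vanishes for $F<G$, and membership of $1_G$ in the larger span via triangularity of the matrix $\bigl(\#Sur(F,G')/\#Aut(G')\bigr)$ over the quotients of $G$. The paper merely asserts the orthogonality and leaves the membership to an ``induction on $G$'' whose details are omitted; your triangular-matrix argument is exactly that induction made explicit, so you have simply supplied the details the paper leaves to the reader.
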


\begin{proof}
$1_G$ is orthogonal to all $\frac{\#Sur(F,\emptydot)}{\#Aut(\et)}$, for $F < G$. Hence, all that needs to be shown is:
\[
1_G \in span\Big\{ \frac{\#Sur(F,\emptydot)}{\#Aut(\et)} \Big| F \leq G \Big\}
\]
This can be shown by induction on $G$. The details are left to the reader.
\end{proof}

Now we apply $\fouriermap$ to get
\begin{lemma}
$\fouriermap(1_G)$ is in the orthogonal complement of
\[
span \left\{ \frac{\#Sur(\emptydot,F)}{\#Aut(\et)} \Big| F < G \right\} 
\]
in
\[
span \left\{ \frac{\#Sur(\emptydot,F)}{\#Aut(\et)} \Big| F \leq G \right\} 
\]
\end{lemma}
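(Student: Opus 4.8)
The plan is to obtain this lemma directly from the preceding one (that $1_G$ lies in the orthogonal complement of $\operatorname{span}\big\{\, \tfrac{\#Sur(F,\et)}{\#Aut(\et)} : F < G \,\big\}$ inside $\operatorname{span}\big\{\, \tfrac{\#Sur(F,\et)}{\#Aut(\et)} : F \leq G \,\big\}$) together with the unitarity of $\fouriermap$. No genuinely new idea is required: the statement is obtained simply by transporting the orthogonality relation along $\fouriermap$.

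First I would record that the index set $\{F \in X_0 : F \leq G\}$ is finite, since $F \leq G$ forces $|F| \leq |G|$ and only finitely many isomorphism classes of finite abelian groups have order at most $|G|$. Consequently all four spans appearing in the two lemmas are honest finite-dimensional subspaces of $\ltwostar$, so the notion of a vector lying in the orthogonal complement of one subspace inside another behaves exactly as in ordinary finite-dimensional linear algebra and is preserved by any inner-product-preserving bijection.

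Next I would invoke the defining property of $\fouriermap$: it sends $\dfrac{\#Sur(F,\et)}{\#Aut(\et)}$ to $\sqrt{c_0}\,\dfrac{\#Sur(\et,F)}{\#Aut(\et)}$. Since multiplication by the nonzero scalar $\sqrt{c_0}$ does not change a linear span, $\fouriermap$ maps $\operatorname{span}\big\{\, \tfrac{\#Sur(F,\et)}{\#Aut(\et)} : F < G \,\big\}$ bijectively onto $\operatorname{span}\big\{\, \tfrac{\#Sur(\et,F)}{\#Aut(\et)} : F < G \,\big\}$, and $\operatorname{span}\big\{\, \tfrac{\#Sur(F,\et)}{\#Aut(\et)} : F \leq G \,\big\}$ bijectively onto $\operatorname{span}\big\{\, \tfrac{\#Sur(\et,F)}{\#Aut(\et)} : F \leq G \,\big\}$. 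Being unitary, $\fouriermap$ preserves inner products, so it carries the orthogonal complement of the smaller of these two subspaces inside the larger onto the orthogonal complement of their images. Applying $\fouriermap$ to $1_G$ and using the previous lemma then yields precisely the asserted statement about $\fouriermap(1_G)$.

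I do not expect a real obstacle: the argument is a pure transport of structure along $\fouriermap$, and the only point needing a word of care is the finiteness remark above. Should one wish to avoid even that, one can argue directly with inner products: for $F < G$ one has $\langle \fouriermap(1_G),\, \tfrac{\#Sur(\et,F)}{\#Aut(\et)} \rangle = \tfrac{1}{\sqrt{c_0}}\,\langle 1_G,\, \tfrac{\#Sur(F,\et)}{\#Aut(\et)} \rangle = 0$ by the previous lemma, while membership of $\fouriermap(1_G)$ in the larger span follows by applying $\fouriermap$ term by term to the expansion of $1_G$ in the functions $\tfrac{\#Sur(F,\et)}{\#Aut(\et)}$ with $F \leq G$.
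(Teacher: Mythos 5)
Your proof is correct and is essentially identical to the paper's: the paper likewise obtains this lemma by applying the unitary $\fouriermap$ to the preceding lemma about $1_G$, using that $\fouriermap$ carries $\frac{\#Sur(F,\et)}{\#Aut(\et)}$ to $\sqrt{c_0}\,\frac{\#Sur(\et,F)}{\#Aut(\et)}$ and hence maps the relevant spans onto one another while preserving inner products. Your added remarks on finiteness of $\{F : F \leq G\}$ and the direct inner-product computation are fine but not needed beyond what the paper's one-line deduction already uses.
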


Hence $\fouriermap(1_G)$ is in the orthogonal complement of
\[
span \Big\{ \mo[F] \Big| F < G \Big\} = V^{<G}
\]
in
\[
span \Big\{ \mo[F] \Big| F \leq G \Big\} = V^{\leq G}
\]

Therefore, $\fouriermap(1_G)$ is a multiple of $E_G$. Hence:
\[
\Delta_0 \fouriermap(1_G)  = |\#G|^{-1} \fouriermap(1_G)
\]
Or alternatively,
\[
\Delta_0 \fouriermap(1_G)  = \fouriermap\left(|\#G|^{-1} 1_G\right)
\]

By linearity, we get:

\begin{theorem*}[The First Main Theorem]
\[
\Delta_0 \fouriermap\left( \nu \right) = \fouriermap\left( |\#G|^{-1} \nu \right)
\]
\end{theorem*}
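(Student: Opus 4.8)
The plan is to prove the identity first for the unit-norm singleton measures $1_G$ and then pass to arbitrary $\nu$ by linearity and continuity, using that the $1_G$ span a dense subspace of $\lltwoCL$ and that $\Delta_0$ and $\fouriermap$ are bounded. For a fixed $G$ the whole problem reduces to showing that $\fouriermap(1_G)$ is a scalar multiple of the eigenfunction $E_G$. Granting this, and recalling that $E_G$ is an eigenfunction of $\Delta_0$ with eigenvalue $|\#G|^{-1}$ while $1_G$ is an eigenfunction of the diagonal operator $|\#G|^{-1}$ with the same eigenvalue, we obtain
\[
\Delta_0 \fouriermap(1_G) = |\#G|^{-1}\fouriermap(1_G) = \fouriermap\!\left(|\#G|^{-1} 1_G\right),
\]
which is the asserted relation on $1_G$.

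To pin down $\fouriermap(1_G)$ I would exploit the triangular structure of two bases of the finitely supported measures. On the source side the measures $\frac{\# Sur(F,\emptydot)}{\#Aut(\et)}$ form a basis, and I claim $1_G$ is orthogonal to those with $F<G$ and lies in the span of those with $F\le G$. Orthogonality holds since $\lb 1_G, \frac{\# Sur(F,\emptydot)}{\#Aut(\et)} \rb$ is proportional to $\# Sur(F,G)$, which vanishes unless $G\le F$, and $G\le F$ together with $F<G$ is impossible for finite groups. Membership in the span is proved by induction on $G$ along the partial order $\le$ (well-founded on $X_0$, since there are only finitely many isomorphism classes of $p$-group of bounded order): the vector $\frac{\# Sur(G,\emptydot)}{\#Aut(\et)}$ takes the value $1$ at $G$ and vanishes outside $\{F: F\le G\}$, so the change of basis from $\{\frac{\# Sur(F,\emptydot)}{\#Aut(\et)}\}_{F\le G}$ to the point masses is unitriangular and $\delta_G$, hence $1_G$, can be solved for.

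Now apply $\fouriermap$. By its defining property it sends $\frac{\# Sur(F,\emptydot)}{\#Aut(\et)}$ to a scalar multiple of $\frac{\# Sur(\emptydot,F)}{\#Aut(\et)}$, which is a scalar multiple of $\mo(F)$; hence $\fouriermap$ carries $span\{\frac{\# Sur(F,\emptydot)}{\#Aut(\et)}: F\le G\}$ onto $V^{\leq G}$ and $span\{\frac{\# Sur(F,\emptydot)}{\#Aut(\et)}: F<G\}$ onto $V^{<G}$. Since $\fouriermap$ is unitary — this is exactly where the curious formula \autoref{thm:curioussum:mainbody} enters — it carries orthogonal complements to orthogonal complements, so $\fouriermap(1_G)$ lies in the orthogonal complement of $V^{<G}$ inside $V^{\leq G}$. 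By \autoref{lem:generator} that complement is one-dimensional and spanned by $E_G$, so $\fouriermap(1_G)$ is a (nonzero, by injectivity of $\fouriermap$) multiple of $E_G$, which is what was needed.

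I expect the only real obstacle to be careful bookkeeping rather than anything conceptual: one must verify that $\le$ is genuinely well-founded so the induction in the second step terminates, and that unitarity transports the finite-dimensional orthogonal-complement relation correctly — which is immediate here because $1_G$ and its image already live inside finite-dimensional subspaces, so no limiting argument beyond the boundedness of $\Delta_0$ and $\fouriermap$ (already recorded) is involved.
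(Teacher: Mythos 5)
Your argument is correct and follows essentially the same route as the paper: reduce to showing $\fouriermap(1_G)$ lies in the orthogonal complement of $V^{<G}$ inside $V^{\leq G}$ by transporting, via the unitarity of $\fouriermap$, the corresponding statement about $1_G$ and the measures $\#Sur(F,\emptydot)/\#Aut(\emptydot)$, then invoke the one-dimensionality of that complement and its generator $E_G$. You in fact supply more detail than the paper does on the two steps it leaves to the reader (the unitriangular change-of-basis induction showing $1_G$ lies in the relevant span, and the density-plus-boundedness argument for extending from the $1_G$ to general $\nu$).
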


\begin{mysection}

\subsubsection{Passing to $\ltwoCL$}

\begin{lemma}
\[
\fouriermap(1_G)
\]
lies in 
\[
V^{\leq G}
\]
\end{lemma}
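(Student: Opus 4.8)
The plan is to obtain this as an immediate consequence of the two lemmas proved just above, once we unwind what $\fouriermap$ does to the spanning measures $\frac{\# Sur(F,\emptydot)}{\# Aut(\et)}$. By definition, $\fouriermap$ sends $\frac{\# Sur(F,\emptydot)}{\# Aut(\et)}$ to $\sqrt{c_0}\,\frac{\# Sur(\emptydot,F)}{\# Aut(\et)}$; and since $\mo(F)$ evaluated at a group $G$ equals $\# Sur(G,F)\,\mu_0(G) = c_0\,\frac{\# Sur(G,F)}{\# Aut(G)}$, we have the identity of measures $\frac{\# Sur(\emptydot,F)}{\# Aut(\et)} = c_0^{-1}\,\mo(F)$. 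Hence $\fouriermap$ carries $\frac{\# Sur(F,\emptydot)}{\# Aut(\et)}$ to the nonzero scalar multiple $c_0^{-1/2}\,\mo(F)$.

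Next I would invoke the preceding lemma, which says precisely that $1_G$ lies in $span\{\,\frac{\# Sur(F,\emptydot)}{\# Aut(\et)} : F \leq G\,\}$: indeed $1_G$ is orthogonal to each $\frac{\# Sur(F,\emptydot)}{\# Aut(\et)}$ with $F < G$, and the induction on $G$ establishes membership in the span of those with $F \leq G$. Because $G$ is finite there are only finitely many $F$ with $F \leq G$, so this is a genuine finite linear combination, say $1_G = \sum_{F \leq G} c_F\,\frac{\# Sur(F,\emptydot)}{\# Aut(\et)}$.

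Finally I would apply the linear operator $\fouriermap$ term by term — legitimate since $\fouriermap$ is defined on all of $\ltwostar$ and in particular on the finite-dimensional span containing $1_G$ — to get $\fouriermap(1_G) = \sum_{F \leq G} c_F\,\fouriermap\!\left(\frac{\# Sur(F,\emptydot)}{\# Aut(\et)}\right) = c_0^{-1/2}\sum_{F \leq G} c_F\,\mo(F)$, which by definition lies in $V^{\leq G} = span\{\mo(F) : F \leq G\}$. There is no real obstacle here: the entire content is the bookkeeping of the normalization constant $c_0$ and the observation — already recorded a few lines above — that $\fouriermap(1_G)$ in fact lies in the orthogonal complement of $V^{<G}$ inside $V^{\leq G}$, hence a fortiori in $V^{\leq G}$.
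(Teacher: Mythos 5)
Your proof is correct and follows essentially the same route as the paper's: both reduce the claim, via the identification $\fouriermap\bigl(\frac{\# Sur(F,\emptydot)}{\# Aut(\et)}\bigr) = c_0^{-1/2}\,\mo(F)$, to the fact that $1_G$ lies in $span\bigl\{ \frac{\# Sur(F,\emptydot)}{\# Aut(\et)} \,\big|\, F \leq G \bigr\}$, which is established by induction on $G$. Your version merely makes the normalization constant and the finiteness of the set $\{F \mid F \leq G\}$ explicit, which the paper leaves implicit.
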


\begin{proof}
$V^{\leq G}$ is the linear span of 
\[
\Big\{ \mo(F)\Big| F<G \Big\}
\]
Applying the isometry $\fouriermap$,  it is sufficient to show that 
\[
1_G
\]
is in the linear span of the functions
\[
|\#Sur(F \leq G)|
\]
This can be shown by induction on $G$..........................
\end{proof}

\begin{lemma}
\[
span \Big\{ \fouriermap(1_F')   \Big|   F'<F   \Big\}
\]
spans 
\[
V^{< G}\]
\end{lemma}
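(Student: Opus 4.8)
The plan is to obtain this immediately from facts already in hand. First I would recall from the proof of the First Main Theorem that, for every $F' \in X_0$, the vector $\fouriermap(1_{F'})$ lies in the orthogonal complement of $V^{<F'}$ inside $V^{\leq F'}$, and is therefore a scalar multiple of $E_{F'}$ by \autoref{lem:generator}; moreover this scalar $c_{F'}$ is \emph{nonzero}, since $\fouriermap$ is unitary, hence injective, while $1_{F'} \neq 0$. Writing $\fouriermap(1_{F'}) = c_{F'} E_{F'}$ with $c_{F'} \neq 0$, we get at once
\[
\mathrm{span}\{\fouriermap(1_{F'}) \mid F' < G\} \;=\; \mathrm{span}\{E_{F'} \mid F' < G\}.
\]

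It then remains to identify this span with $V^{<G} = \mathrm{span}\{\mo(F') \mid F' < G\}$, which is exactly \autoref{lem:span}. For completeness I would also record the short triangular argument behind \autoref{lem:span}: the set $\{F' \mid F' < G\}$ is finite (these are the isomorphism classes of proper quotients of $G$), and each $E_{F'}$ with $F' < G$ has the shape $\mo(F') + \sum_{F'' < F'} a_{F''}\mo(F'')$; here every $F''$ appearing satisfies $F'' < G$ — if $F'' \cong G$ then $G \leq F' \leq G$, forcing $F' \cong G$ by antisymmetry of $\leq$ and contradicting $F' < G$ — so each such $E_{F'}$ lies in $V^{<G}$. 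Conversely the matrix expressing $(E_{F'})_{F' < G}$ in terms of $(\mo(F'))_{F' < G}$ is unitriangular with respect to $\leq$ and hence invertible, so each generator $\mo(F')$ of $V^{<G}$ lies in $\mathrm{span}\{E_{F'} \mid F' < G\}$ as well. The two inclusions give $V^{<G} = \mathrm{span}\{E_{F'} \mid F' < G\} = \mathrm{span}\{\fouriermap(1_{F'}) \mid F' < G\}$.

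I do not foresee a genuine obstacle: modulo \autoref{lem:generator} and \autoref{lem:span} the statement is a one-line consequence of $\fouriermap(1_{F'}) = c_{F'}E_{F'}$ with $c_{F'}\neq 0$. The only points worth checking carefully are the nonvanishing of $c_{F'}$ (injectivity of the unitary $\fouriermap$) and that $\leq$ is a genuine partial order on $X_0$, which legitimizes the unitriangular inversion above and guarantees that all of the spans involved are finite-dimensional; both are already available in the text.
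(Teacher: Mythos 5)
Your argument is correct, but it takes a genuinely different route from the paper's. The paper proves this lemma in one step by pulling everything back through the isometry $\fouriermap$: since $\fouriermap$ carries $\#Sur(F,\emptydot)/\#Aut(\emptydot)$ to a nonzero multiple of $\mo(F)$ and carries $1_{F'}$ to $\fouriermap(1_{F'})$, the claim is equivalent to the statement that $span\{\#Sur(F,\emptydot)/\#Aut(\emptydot) \mid F<G\}$ is contained in $span\{1_{F'} \mid F'<G\}$, which is immediate because $\#Sur(F,\emptydot)$ is supported on $\{F' \mid F'\leq F\}$ (the same unitriangularity you use, but applied to the $\#Sur$ basis rather than to the $E_F$'s). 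That proof uses nothing beyond the definition of $\fouriermap$ and the partial order. You instead route through the identification $\fouriermap(1_{F'})=c_{F'}E_{F'}$ with $c_{F'}\neq 0$ and then invoke the sublemma that $\{E_{F'}\mid F'<G\}$ spans $V^{<G}$; your unitriangular inversion fleshing out that sublemma, and your verification that $F''\leq F'<G$ forces $F''<G$ by antisymmetry, are both sound. What your approach costs is logical weight: the input $\fouriermap(1_{F'})\propto E_{F'}$ is essentially the First Main Theorem, whereas the present lemma sits in a passage whose purpose is to \emph{derive} that $\fouriermap(1_G)$ is a multiple of $E_G$ (the lemma is followed there by an orthogonality lemma and then that very theorem). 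Used in that position, your proof would be circular; it is rescued only because the paper also proves $\fouriermap(1_G)\propto E_G$ independently elsewhere, via the orthogonal-complement argument in the subsection on the first main theorem. The paper's proof is the one to prefer here precisely because it is self-contained and order-independent; yours buys nothing extra in exchange for the added dependency.
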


\begin{proof}
Applying the isometry $\fouriermap$, it is sufficient to show that 
\[
span \Big\{ |\#Sur(F,\cdot)\Big| F<G \Big\}
\]
is spanned by 
\[
span \Big\{ 1_F \Big| F<G \Big\}
\]
But this is evidently true, because 
\[
|\#Sur(F,\cdot)
\]
is supported on 
\[
\Big\{F' \Big| F' \leq F \Big\}
\]
\end{proof}

\begin{lemma}
\[
\fouriermap(1_F)
\]
is orthogonal to 
\[
span \Big\{ \fouriermap(1_F')   \Big|   F'<F   \Big\}
\]
\end{lemma}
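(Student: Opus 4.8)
The plan is to deduce this immediately from the previous lemma, which asserts that $\fouriermap$ is unitary — in particular an isometry — on $\ltwostar$. Since an isometry carries orthogonal vectors to orthogonal vectors, and since $\fouriermap$ is linear, so that
\[
\mathrm{span}\big\{ \fouriermap(1_{F'}) \;\big|\; F' < F \big\} = \fouriermap\Big( \mathrm{span}\big\{ 1_{F'} \;\big|\; F' < F \big\} \Big),
\]
it will suffice to show that $1_F$ is orthogonal to $\mathrm{span}\{ 1_{F'} \mid F' < F \}$ in $\ltwostar$, and then apply $\fouriermap$.

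That last fact is immediate from the definition of the inner product. For any two non-isomorphic groups $F, F' \in X_0$ the normalized Dirac measures $1_F$ and $1_{F'}$ have disjoint supports, so
\[
\big\langle 1_F, 1_{F'} \big\rangle = c_0^{-1} \sum_G 1_F(G)\, 1_{F'}(G)\, \#Aut(G) = 0 .
\]
Because the relation $F' < F$ is strict, every $F'$ appearing in the span satisfies $F' \not\cong F$, hence $1_F$ is orthogonal to each generator of the span, and therefore to the span itself. Applying the inner-product-preserving map $\fouriermap$ then yields that $\fouriermap(1_F) \perp \fouriermap(1_{F'})$ for all $F' < F$, which is the assertion.

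I do not expect any real obstacle here; the only point requiring care is that the strict relation $F' < F$ forces $F'$ and $F$ to be distinct, which is precisely what makes the supports of $1_{F'}$ and $1_F$ disjoint. As an alternative, one could bypass the isometry argument entirely and instead observe, via the two preceding lemmas, that $\fouriermap(1_F)$ lies in $V^{\leq F}$ and in the orthogonal complement of $V^{<F}$, and that each $\fouriermap(1_{F'})$ with $F' < F$ lies in $V^{<F}$; but the isometry route above is the shortest.
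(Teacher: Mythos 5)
Your proof is correct and is essentially the paper's own argument: the paper likewise reduces the claim, via the fact that $\fouriermap$ is an isometry, to the orthogonality of $1_F$ to $\mathrm{span}\{1_{F'} \mid F' < F\}$, which holds because the normalized Dirac measures have disjoint supports. (The paper's proof as printed contains a typo that makes it read circularly, but the intended reasoning is exactly what you wrote, and your version spells out the disjoint-support step that the paper leaves implicit.)
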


\begin{proof}
By the fact that $\fouriermap$ is an isometry, this follows from the fact that 
\[
\fouriermap(1_F)
\]
is orthogonal to 
\[
span \Big\{ \fouriermap(1_F')   \Big|   F'<F   \Big\}
\]
\end{proof}

\begin{theorem}
$\fouriermap(1_G)$ is a linear multiple of $E_G$.
\end{theorem}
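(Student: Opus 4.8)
The plan is to realize $\fouriermap(1_G)$ as an element of a one-dimensional subspace of $\ltwostar$ that we already know is spanned by $E_G$, namely the orthogonal complement of $V^{<G}$ inside $V^{\leq G}$ supplied by \autoref{lem:generator}. Since $\fouriermap$ is unitary, it is enough to prove the pre-image version of the statement. Let $W^{\leq G}$ and $W^{<G}$ denote the linear spans of $\{\frac{\#Sur(F,\emptydot)}{\#Aut(\emptydot)} : F\leq G\}$ and $\{\frac{\#Sur(F,\emptydot)}{\#Aut(\emptydot)} : F< G\}$ respectively; I want to show that $1_G$ lies in the orthogonal complement of $W^{<G}$ in $W^{\leq G}$. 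Granting this, one pushes forward along $\fouriermap$: because $\fouriermap$ sends $\frac{\#Sur(F,\emptydot)}{\#Aut(\emptydot)}$ to $\sqrt{c_0}\,\frac{\#Sur(\emptydot,F)}{\#Aut(\emptydot)}=c_0^{-1/2}\mo(F)$, it carries $W^{\leq G}$ onto $V^{\leq G}$ and $W^{<G}$ onto $V^{<G}$, and being unitary it preserves inner products; hence $\fouriermap(1_G)$ lies in the orthogonal complement of $V^{<G}$ in $V^{\leq G}$, which by \autoref{lem:generator} is exactly the line through $E_G$. The resulting multiple is nonzero since $1_G\neq 0$ and $\fouriermap$ is an isometry.

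All the work is therefore on the source side, and it splits into two elementary claims. The orthogonality claim, $1_G\perp W^{<G}$, is a one-line computation: writing $1_G=\sqrt{\mu_0(G)}\,\one_G$, for $F<G$ we get $\langle 1_G,\frac{\#Sur(F,\emptydot)}{\#Aut(\emptydot)}\rangle = c_0^{-1}\,1_G(G)\,\#Sur(F,G)$, and $\#Sur(F,G)=0$ because a finite abelian group cannot surject onto a group that properly surjects onto it (antisymmetry of the partial order). The membership claim, $1_G\in W^{\leq G}$, equivalently $\one_G\in W^{\leq G}$, I would prove by induction along the partial order: the function $\frac{\#Sur(G,\emptydot)}{\#Aut(\emptydot)}$ is supported on $\{G':G'\leq G\}$ and takes the value $1$ at $G$, so
\[
\one_G \;=\; \frac{\#Sur(G,\emptydot)}{\#Aut(\emptydot)} \;-\; \sum_{G'<G}\frac{\#Sur(G,G')}{\#Aut(G')}\,\one_{G'},
\]
where each $\one_{G'}$ with $G'<G$ lies in $W^{\leq G'}\subseteq W^{<G}$ by the inductive hypothesis; the base case is the trivial group $0$, for which $\frac{\#Sur(0,\emptydot)}{\#Aut(\emptydot)}=\one_0$.

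The only part I expect to need genuine care rather than bookkeeping is this inductive membership claim: it is the finite-support incarnation of the assertion that the ``co-moment'' functions $\frac{\#Sur(F,\emptydot)}{\#Aut(\emptydot)}$ span the same space as the point masses $\one_F$, and it is precisely where the triangularity of these functions with respect to the partial order (supported below $F$, value $1$ at $F$) is used. Once it is in hand, everything else is transport of structure along the isometry $\fouriermap$ together with the already-established \autoref{lem:generator}. As a consistency check, the conclusion $\fouriermap(1_G)\in\mathbb{R}\cdot E_G$ immediately yields $\Delta_0\fouriermap(1_G)=|\#G|^{-1}\fouriermap(1_G)$, since $E_G$ is an eigenfunction of $\Delta_0$ with eigenvalue $1/|G|$; this is exactly the input required for the First Main Theorem.
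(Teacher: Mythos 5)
Your proposal is correct and follows essentially the same route as the paper: show $1_G$ lies in the orthogonal complement of $\mathrm{span}\{\#Sur(F,\emptydot)/\#Aut(\emptydot) : F<G\}$ inside the span over $F\leq G$, transport this along the unitary $\fouriermap$ to the complement of $V^{<G}$ in $V^{\leq G}$, and invoke \autoref{lem:generator}. The only difference is that you supply the details (the explicit orthogonality computation and the triangular induction for $\one_G\in W^{\leq G}$) that the paper leaves to the reader, and these are carried out correctly.
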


\begin{proof}
By the above, $\fouriermap(1_G)$ lies inside the orthogonal complement of 
\[
V^{< G}
\]
in 
\[ 
V^{\leq G}
\] 
But by \autoref{...}, this is a one-dimensional space spanned by $E_G$.
\end{proof}

ADD MORE DETAIL

...main theorem...

\end{mysection}

\subsubsection{A proof of the second main theorem}

\begin{lemma}
\[
\hil_{\infty} = im(\fouriermap)^{\perp}
\]
\end{lemma}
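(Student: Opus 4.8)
The plan is to obtain this last lemma purely by unwinding definitions in the Hilbert space $\ltwostar$, using two elementary facts about orthogonal complements and the explicit construction of $\fouriermap$; no new analytic input is needed. The two facts are: for any subset $S$ of a Hilbert space, $S^{\perp} = (\overline{\operatorname{span}}\,S)^{\perp}$, and $\bigcap_i S_i^{\perp} = \big(\bigcup_i S_i\big)^{\perp}$.

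First I would rewrite the left-hand side. By definition $\hil_{\lambda}$ is the orthogonal complement of $\{E_F : |F|\le\lambda\}$ and $\hil_{\infty} = \bigcap_{\lambda}\hil_{\lambda}$. Since every $F\in X_0$ is a finite group, the sets $\{E_F : |F|\le\lambda\}$ increase to $\{E_F : F\in X_0\}$ as $\lambda\to\infty$, so
\[
\hil_{\infty} \;=\; \bigcap_{\lambda}\{E_F : |F|\le\lambda\}^{\perp} \;=\; \Big(\bigcup_{\lambda}\{E_F : |F|\le\lambda\}\Big)^{\perp} \;=\; \{E_F : F\in X_0\}^{\perp}.
\]
Applying $S^{\perp} = (\overline{\operatorname{span}}\,S)^{\perp}$ with $S = \{E_F : F\in X_0\}$ gives $\hil_{\infty} = \big(\overline{\operatorname{span}}\{E_F : F\in X_0\}\big)^{\perp}$.

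It then remains to identify $\overline{\operatorname{span}}\{E_F : F\in X_0\}$ with $im(\fouriermap)$. By the construction of $\fouriermap$ above, $im(\fouriermap)$ is precisely the closed linear span of the measures $\tfrac{\#Sur(\et,F)}{\#Aut(\et)}$, $F\in X_0$, equivalently of the moment measures $\mo(F)$, which differ from these only by the positive scalar $c_0$. The corollary of the relation $\Delta_0\,\mo(e_F) = |F|^{-1}\mo(e_F)$ exhibits $E_F = \mo(F) + \sum_{F'<F} a_{F'}\mo(F')$, a unitriangular change of coordinates with respect to the partial order on $X_0$; hence $\operatorname{span}\{E_F : F\in X_0\} = \operatorname{span}\{\mo(F) : F\in X_0\}$, and passing to closures, $\overline{\operatorname{span}}\{E_F : F\in X_0\} = im(\fouriermap)$. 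Combining the two displays yields $\hil_{\infty} = im(\fouriermap)^{\perp}$. I do not expect any genuine obstacle: the only steps needing a sentence of care are that the union over $\lambda$ exhausts $\{E_F : F\in X_0\}$ and that the $E_F$'s and the $\mo(F)$'s span the same subspace; everything else is the routine behaviour of orthogonal complements. With this lemma in hand, combining it with the preliminary second main theorem $\hil_{\infty} = ker(\Delta_0)\cap\ltwostar$ immediately gives the Second Main Theorem, $im(\fouriermap) = ker(\Delta_0)^{\perp}$.
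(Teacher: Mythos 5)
Your proof is correct and follows essentially the same route as the paper's: both reduce the claim to the observation that $\hil_{\infty}$ is the orthogonal complement of $\overline{\operatorname{span}}\{E_F\}$ and then identify that closed span with $im(\fouriermap)$. The paper makes the identification via $E_F \propto \fouriermap(\one_F)$ (established in its proof of the first main theorem), whereas you route it through the moment measures, using the unitriangular relation $E_F = \mo(F) + \sum_{F'<F} a_{F'}\mo(F')$ together with the construction of $\fouriermap$ as an isometry onto the closed span of the measures $\#Sur(\et,F)/\#Aut(\et)$; both inputs are available from earlier in the section, so the difference is cosmetic.
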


\begin{proof}
This statement is nearly tautological. Indeed,
\[
\hil_{\infty}= span( E_F)^{\perp} 
\]
Hence,
\[
\hil_{\infty}= span(E_F)^{\perp} = span\Big(\fouriermap(1_F)\Big)^{\perp}
=
im(\fouriermap)^{\perp}
\]
\end{proof}

Hence, as a corollary, we get:
\begin{theorem*}[The Second Main Theorm]
\[
im(\fouriermap)^{\perp} = ker(\Delta_0) \cap \ltwostar 
\]
\end{theorem*}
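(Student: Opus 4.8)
The plan is to obtain the statement as a short corollary of material already assembled, so most of the work is organizational. The two inputs I would invoke are: (i) the lemma just proved, which identifies $\hil_\infty = \bigcap_\lambda \hil_\lambda$ — the intersection of the orthogonal complements of $\text{span}\{E_F : |F| \le \lambda\}$ — with $\text{im}(\fouriermap)^\perp$, this being essentially bookkeeping once one observes that $\text{im}(\fouriermap)$ is the closed span of the vectors $\fouriermap(1_F)$, each a nonzero scalar multiple of $E_F$; and (ii) the preliminary form of the second main theorem, \autoref{thm:secondmainprothm}, which states $\hil_\infty = \ker(\Delta_0) \cap \ltwostar$. Chaining the two equalities gives $\text{im}(\fouriermap)^\perp = \hil_\infty = \ker(\Delta_0) \cap \ltwostar$, which is exactly the claim, so after recalling these two facts the proof is one line.

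For a self-contained account I would also recall why \autoref{thm:secondmainprothm} holds, since that is where the substance lies. One inclusion is soft: every $E_F$ is an eigenfunction of $\Delta_0$ with eigenvalue $|F|^{-1}\neq 0$, and $\Delta_0$ is bounded (\autoref{thm:boundedoperator}) and self-adjoint (from the reversibility established in Section 2), hence eigenfunctions for distinct eigenvalues are orthogonal and in particular every element of $\ker(\Delta_0)$ is orthogonal to all $E_F$, i.e. lies in $\hil_\infty$. For the reverse inclusion I would use \autoref{lem:lambdanorm}: the restriction of $\Delta_0$ to $\hil_\lambda$ has operator norm at most $\lambda^{-1}$; since $\hil_\infty \subseteq \hil_\lambda$ for every $\lambda$, the restriction of $\Delta_0$ to $\hil_\infty$ has norm at most $\inf_\lambda \lambda^{-1} = 0$, so $\Delta_0$ kills $\hil_\infty$ and $\hil_\infty \subseteq \ker(\Delta_0)\cap\ltwostar$.

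The genuinely substantive step, and the main obstacle, is not in this theorem at all but in \autoref{lem:lambdanorm}, the bound $\|\Delta_0|_{\hil_\lambda}\| \le \lambda^{-1}$. That is where the leading-term analysis is needed: by \autoref{thm:leadingtermone:mainbody} a finitely supported measure $\nu \in \hil_\lambda$ satisfies $\lambda^N \Delta_0^N \nu \to 0$ in $\ltwostar$, and combining this with the density of finitely supported measures in $\hil_\lambda$ (\autoref{lem:density}) and the spectral-theoretic input \autoref{thm:spectralinput} — a bounded self-adjoint operator that contracts to zero on a dense subset has norm at most $1$ — yields the bound. The only mild care needed downstream is to note that $\text{im}(\fouriermap)$ is genuinely closed, being the isometric image of a Hilbert space, so that $\text{im}(\fouriermap)^\perp$ agrees on the nose with $\text{span}\{E_F\}^\perp = \hil_\infty$ rather than only after passing to closures.
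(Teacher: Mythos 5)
Your proposal is correct and follows essentially the same route as the paper: the theorem is obtained by chaining the lemma $\hil_{\infty}=im(\fouriermap)^{\perp}$ with the preliminary version \autoref{thm:secondmainprothm}, the real content residing in \autoref{lem:lambdanorm} and the leading-term analysis exactly as you describe. Your added remarks (the soft inclusion $\ker(\Delta_0)\subseteq\hil_{\infty}$ via self-adjointness, and the closedness of $im(\fouriermap)$ as an isometric image) are correct fillings-in of details the paper leaves implicit.
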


\begin{proof}
Indeed, we know from \autoref{thm:secondmainprothm} that 
\[
\hil_{\infty}= ker(\Delta_0) \cap \ltwostar
\]
Hence
\[
\hil_{\infty}= ker(\Delta_0) \cap \ltwostar = ker(\Delta_0) \cap \ltwostar
\]
\end{proof}

\begin{mysection}
.....Elaborate on $\Delta_0$.....
\end{mysection}

\begin{mymysection}
\subsection{Possibly unnecessary}
\begin{theorem}
Any eigenfunction of $\Delta_0$, that is in $\ltwostar$, is a linear combination of the $E_F$'s
\end{theorem}
... needs modification ... has non-zero eigenvalue ...

\begin{proof}
Suppose the contrary, i.e. that there exists some eigenfunction $\Elambda$, with eigenvalue $\lambda$, that lies in $\ltwoCL$, but is not a linear 

combination of the $E_F$'S. Choose a finitely supported measure $\nu$ such that 
\begin{enumerate}
\item $<\nu,E_F>_{CL}=0$ whenever $\frac{1}{|F|} \geq \lambda$.

\end{enumerate}

The first condition implies that

$$
<\Delta_0^n \nu,E_F>_{CL}=0
$$
whenever $\frac{1}{|F|} \geq \lambda$.
By \autoref{leadingterm}, we have 
$$
\lim_{n \rightarrow \infty}(p^{k})^N \delta_0^N (\nu) =  \sum_{|F|=p^k} a_F  E_{F}.
$$ 
in $\ltwostar$.
This implies that $p^{-k} < \lambda$
Hence,
$$
\lim_{n \rightarrow \infty}\frac{1}{\lambda^N} \delta_0^N (\nu) =  0
$$
in $\ltwostar$.
Hence,
$$
\lim_{n \rightarrow \infty} \frac{1}{\lambda^N} <\Delta_0^N \nu,\Elambda>=0
$$

But
$$
\frac{1}{\lambda^N} <\delta_0^N \nu,\Elambda>=\frac{1}{\lambda^N} <\nu , \delta_0^N \Elambda>=<\nu , \Elambda>
$$
We summarize. Whenever a finitely supported measure $\nu$ is orthogonal to finitely many of $E_F$, then it is also orthogonal to $\Elambda$. Hence the orthogonal complement of $\Elambda$ contains the orthogonal complement of finitely many of the $E_F$, and therefore $\Elambda$ must lie in the span of finitely many $E_F$.
\end{proof}

\marginparr{Idea given in notes. Take note.}
\begin{mysection}

to take out certain modes... Then some modes appear in the main expression... but we do not have exact modes, just stuff that converges to them...


particular get a good description of the leading term using this approach.
\end{mysection}

\end{mymysection}

\subsection{Appendix}

Here, we group results which were used in this section.

\subsubsection{Definition of $L^1(X_0)$}
\label{sec:lone}
$L^1(X_0)$ is a space of measures on $X_0$. The norm of $\nu \in L^1(X_0)$ is defined as:
\[
||\nu|| \defeq \sum_{G \in X_0} |\nu(G)|
\]

\subsubsection{$\Delta_0$ is a bounded operator on $\ltwostar$}

\begin{mysection}
It is a general fact that a reversible Markov chain on a countable state space with stationary measure $\pi$ has norm $1$ as an operator on $L^2(\pi)$.
\end{mysection}

\begin{theorem}
\label{thm:boundedoperator}
$\Delta_0$ is a bounded operator on $\ltwostar$.
\end{theorem}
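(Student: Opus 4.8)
The plan is to prove the stronger statement that $\Delta_0$ has operator norm at most $1$ on $\ltwostar$, by transferring the measure-side action of $\Delta_0$ to the familiar function-side action of a Markov operator. Recall that $\ltwostar$ consists of the measures $\nu$ on $X_0$ with $\sum_G \nu(G)^2/\mu_0(G) < \infty$, with inner product $\langle \nu_1,\nu_2\rangle_{X_0} = \sum_G \nu_1(G)\nu_2(G)/\mu_0(G)$, and that $\Delta_0$, a priori defined on finitely supported measures, is the one we want to extend boundedly.

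First I would introduce the unitary $\Phi : \ltwostar \to L^2(X_0,\mu_0)$ given by $\Phi(\nu)(G) = \nu(G)/\mu_0(G)$; since $\mu_0(G)>0$ for all $G$, this is well defined, and a one-line computation gives $\|\Phi(\nu)\|_{L^2(\mu_0)}^2 = \sum_G (\nu(G)/\mu_0(G))^2\mu_0(G) = \|\nu\|_{\ltwostar}^2$, with inverse $f \mapsto f\mu_0$. Next I would compute the conjugated operator $P := \Phi\,\Delta_0\,\Phi^{-1}$ on $L^2(\mu_0)$: for $f$ finitely supported and $G'\in X_0$,
\[
(Pf)(G') \;=\; \frac{1}{\mu_0(G')}\sum_{G} f(G)\,\mu_0(G)\,\probP(G\xrightarrow{\Delta_0}G') \;=\; \sum_G \probP(G'\xrightarrow{\Delta_0}G)\,f(G),
\]
where the second equality is exactly the reversibility relation $\mu_0(G)\probP(G\xrightarrow{\Delta_0}G') = \mu_0(G')\probP(G'\xrightarrow{\Delta_0}G)$ proved in Section 2 (as a consequence of $\doper = \dstar{}^T$ and $\Delta_0 = \doper\dstar$). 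Thus $P$ is precisely the transition operator of the Markov chain $\Delta_0$ acting on functions.

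It then remains to check the standard fact that this Markov operator $P$ is a contraction on $L^2(\mu_0)$. Since $\sum_G \probP(G'\to G) = 1$ with nonnegative coefficients, $P$ fixes the constant function $1$ and satisfies $\|Pf\|_{L^\infty}\le \|f\|_{L^\infty}$; since $\mu_0$ is stationary for $\Delta_0$, $\int Pf\,d\mu_0 = \int f\,d\mu_0$, and combined with positivity ($|Pf|\le P|f|$) this gives $\|Pf\|_{L^1(\mu_0)}\le\|f\|_{L^1(\mu_0)}$. By the Riesz--Thorin interpolation theorem $\|P\|_{L^2(\mu_0)\to L^2(\mu_0)}\le 1$; extending from the dense subspace of finitely supported functions and conjugating back by the isometry $\Phi$ yields $\|\Delta_0\|_{\ltwostar\to\ltwostar}\le 1$, in particular boundedness.

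There is no serious obstacle: the only step needing care is the identification of the conjugated operator $\Phi\Delta_0\Phi^{-1}$ with the function-side Markov operator $P$, which is exactly where reversibility from Section 2 is used — for a non-reversible chain one would instead land on the adjoint of $P$, but here the two coincide. An essentially equivalent route, if one prefers to avoid invoking stationarity directly, is to write $\Delta_0 = \doper\dstar$ with $\doper = \dstar{}^T$, so that $\|\Delta_0\| = \|\dstar\|^2$, and then bound $\|\dstar : \ltwostar \to L^2(X_1,\mu_1)^*\| \le 1$ by the same transfer-to-functions argument (using that $\doper$ pushes $\mu_1$ forward to $\mu_0$, so the conjugate of $\dstar$ is the Markov operator $f\mapsto \big(H\mapsto\sum_G \probP(H\xrightarrow{\doper}G)f(G)\big)$, contractive on $L^\infty$ and on $L^1$, hence on $L^2$ by interpolation).
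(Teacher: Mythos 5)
Your proof is correct, and it proves the same sharp statement the paper does (operator norm at most $1$), but by a different route to the key contraction estimate. The paper stays on the measure side throughout: it invokes the weighted-graph representation of the reversible chain from Section 2 (symmetric edge weights $\rho(v,w)=\rho(w,v)$ with $\sum_w\rho(v,w)=\mu(v)$) and then proves Lemma \ref{lem:weightedgraphaverage} by a single application of Jensen's inequality to the probability kernel $\rho(v,\cdot)/\mu(v)$, summing over $v$ and using symmetry to collapse $\sum_v\rho(v,w)=\mu(w)$. You instead conjugate by the unitary $\nu\mapsto\nu/\mu_0$ to land on the function-side transition operator $P$, observe the trivial bounds $\|P\|_{L^\infty\to L^\infty}\le 1$ and $\|P\|_{L^1(\mu_0)\to L^1(\mu_0)}\le 1$ (the latter from stationarity, which itself follows from reversibility), and interpolate. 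Both arguments use exactly the same input from Section 2 — reversibility — and your identification $\Phi\Delta_0\Phi^{-1}=P$ is precisely where it enters, just as symmetry of $\rho$ is where it enters in the paper. What interpolation buys you is a cleaner conceptual statement (a stochastic operator with invariant measure contracts every $L^p(\mu_0)$); what the paper's direct computation buys is self-containedness — it avoids citing Riesz--Thorin, which is ordinarily stated for complex scalars (harmless here since $P$ is positivity-preserving, but a point you would want to flag or sidestep by replacing the interpolation step with the same one-line Cauchy--Schwarz the paper uses). Your alternative route via $\|\Delta_0\|=\|\dstar\|^2$ is also sound and is closer in spirit to the paper's bipartite-graph picture of the chain $(\doper,\dstar)$ on $X_0\cup X_1$.
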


We use the fact that $\Delta_0$ can be represented as a random walk on a weighted countable graph.

\autoref{thm:boundedoperator} will follow from \autoref{lem:weightedgraphaverage} below.

\paragraph{The Random Walk Operator on a Weighted Graph has norm $1$.}

Suppose we have a graph with vertex set $V$ a weighted edge set $V \times V$ and probability measures $\rho: V \times V \rightarrow \R$ and $\mu: V \rightarrow \R$ such that 
\[
\rho(v,w) = \rho(w,v)
\]
\[
\sum_{w \in W} \rho(v,w) = \mu(v)
\]

\begin{definition}
Let $L^2(V,\mu)^{*}$ be the space of measures on $V$ with norm
\[
||\nu|| \defeq \sum_{w \in V}  \frac{\nu(w)^2}{\mu(w)}
\]
\end{definition}

\begin{lemma}
\label{lem:weightedgraphaverage}
The weighted random walk operator:
\[
\nu(\et) \mapsto \sum_{w \in V}\frac{\nu(w)\rho(\et,w)}{\mu(w)}
\]
is a bounded on $L^2(V,\mu)^{*}$.
\end{lemma}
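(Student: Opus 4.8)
The plan is to reduce the statement to the elementary fact that the Markov averaging operator of a reversible chain is a contraction on $L^2$ of its stationary measure, via a change of variables. To a measure $\nu$ on $V$ I associate the function $f(w) \defeq \nu(w)/\mu(w)$; under this correspondence the norm on $L^2(V,\mu)^{*}$ becomes the ordinary $L^2(V,\mu)$ norm, since
\[
\|\nu\|^2 = \sum_{w \in V} \frac{\nu(w)^2}{\mu(w)} = \sum_{w \in V} f(w)^2 \mu(w).
\]
Setting $p(v,w) \defeq \rho(v,w)/\mu(v)$, the hypothesis $\sum_w \rho(v,w) = \mu(v)$ says exactly that $p(v,\cdot)$ is a probability measure on $V$ for each $v$. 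A direct computation shows that the weighted random walk operator $T$ corresponds, under $\nu \leftrightarrow f$, to the averaging operator $(Sf)(v) \defeq \sum_w p(v,w) f(w)$; namely $(T\nu)(v) = \mu(v)\,(Sf)(v)$, so that $\|T\nu\|^2 = \sum_v \mu(v)\,(Sf)(v)^2 = \|Sf\|_{L^2(V,\mu)}^2$.

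Next I would estimate $\|Sf\|_{L^2(V,\mu)}$ directly. By the Cauchy--Schwarz inequality applied to the probability measure $p(v,\cdot)$ (equivalently, Jensen's inequality for $t \mapsto t^2$),
\[
(Sf)(v)^2 = \Big( \sum_{w} p(v,w) f(w) \Big)^2 \le \sum_{w} p(v,w) f(w)^2 .
\]
Multiplying by $\mu(v)$, summing over $v$, interchanging the order of summation (legitimate since all terms are nonnegative, by Tonelli), and using the symmetry $\rho(v,w) = \rho(w,v)$ together with $\sum_v \rho(w,v) = \mu(w)$:
\[
\sum_{v} \mu(v)\,(Sf)(v)^2 \le \sum_{v}\sum_{w} \rho(v,w) f(w)^2 = \sum_{w} f(w)^2 \sum_{v} \rho(w,v) = \sum_{w} f(w)^2 \mu(w).
\]
Translating back, $\|T\nu\| \le \|\nu\|$, so $T$ is bounded on $L^2(V,\mu)^{*}$, indeed a contraction.

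The only point deserving care — and the one I would write out most carefully — is that $T\nu$ is a well-defined element of the space for every $\nu \in L^2(V,\mu)^{*}$, i.e. the defining series converge and the rearrangements are valid. I would handle this by first proving the inequality for finitely supported $f$, where all sums are finite, which shows $S$ is bounded on a dense subspace, and then extending $S$ by continuity to all of $L^2(V,\mu)$; the Tonelli interchange is always justified because only the nonnegative quantities $f(w)^2$ and $\rho(v,w) \ge 0$ are manipulated. I do not anticipate any real obstacle here: the entire content of the lemma is the one-line Cauchy--Schwarz estimate above, and the boundedness of $\Delta_0$ (\autoref{thm:boundedoperator}) then follows since $\Delta_0$ was exhibited in Section 2 as the random walk operator of the explicit weighted graph with edge weights $\tfrac{1}{|H/h|\,|Aut(\Z_p \to H)|}$.
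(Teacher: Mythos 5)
Your proof is correct and is essentially the paper's own argument: both bound $\|T\nu\|^2$ by applying Jensen/Cauchy--Schwarz to the probability measure $\rho(v,\cdot)/\mu(v)$, interchanging the (nonnegative) sums, and using the symmetry $\rho(v,w)=\rho(w,v)$ together with $\sum_v\rho(v,w)=\mu(w)$ to conclude the operator is a contraction. Your change of variables $f=\nu/\mu$ and the remark on extending from finitely supported measures are just cleaner bookkeeping of the same computation.
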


\begin{proof}
\[
\sum_{v \in V} \frac{1}{\mu(v)} \left(\sum_{w \in V}\frac{\nu(w)\rho(v,w)}{\mu(w)}\right)^2 =
\]
\[
=\sum_{v \in V} \frac{\mu(v)}{\mu(w)^2} \left(\sum_{w \in V}\frac{\nu(w)\rho(v,w)}{\mu(v)}\right)^2\leq
\]
\[
\leq \sum_{v \in V} \frac{\mu(v)}{\mu(w)^2} \sum_{w \in V}\frac{\nu(w)^2\rho(v,w)}{\mu(v)}=
\]
\[
\sum_{v \in V} \sum_{w \in V} \frac{\rho(v,w)}{\mu(w)^2} \nu(w)^2= \sum_{w \in V} \frac{\nu(w)^2}{\mu(w)}
\]
\end{proof}

\begin{mysection}
Consider the space of functions on $X_0 \times X_0$. Equip $X_0 \times X_0$ with the measure induced by our weighted graph.

Let $\pi_i$ be the projection unto the $i^{th}$ factor. The pushforward of the weighted graph measure by $\pi_1$ is $\mu_0$.

We can defined projection and induction.

Induction is pull-back; this preserved the $L^2$ norm. Projection is conditional expectation
and decreases the $L^2$ norm. Averaging over neighbors is a composition of induction and projection and hence does not increase the norm. Averaging over neighbors preserves the constant functions. Hence the norm is $1$.
\end{mysection}

\marginparr{Find reference}

\begin{mysection}
\subsubsection{Proof}
\begin{proof}
The proof represents the Markov chain as a projection operator...
\end{proof}

We can model the action of ... on \textit{functions} as follows. First spread out over adjacent endpoints, then switch endpoints. Then average out over adjacent endpoints.

This is a sequence of isomorphisms and projections. Hence is a bounded operator.
\end{mysection}

\subsubsection{Input from Spectral Theory}

\begin{lemma}
\label{lem:hilbert:appendix}
Suppose that $T$ is a bounded self-adjoint operator $T$ on a Hilbert space $\hil$. Suppose further that $T$ satisfies:
\[
\lim_{N \rightarrow \infty} T^N v =0 
\]
for every $v$ lying in some dense subset of $\hil$. Then the operator norm of $T$ is at most $1$.
\end{lemma}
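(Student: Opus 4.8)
The plan is to argue by contradiction using the spectral theorem for bounded self-adjoint operators. Suppose $\|T\| > 1$, and fix $\varepsilon$ with $0 < \varepsilon < \|T\| - 1$. Let $\{E(\cdot)\}$ be the projection-valued spectral measure of $T$, so that $T = \int_{\sigma(T)} \lambda\, dE_\lambda$, the spectrum $\sigma(T)$ is a compact subset of $[-\|T\|, \|T\|]$, and $\|T\| = \sup\{|\lambda| : \lambda \in \sigma(T)\}$. Set $P := E\big(\{\lambda : |\lambda| \ge 1+\varepsilon\}\big)$. The first step is to observe that $P \ne 0$: otherwise $\sigma(T)$ would be a compact subset of the open interval $\big(-(1+\varepsilon),\, 1+\varepsilon\big)$, which would force $\|T\| < 1+\varepsilon < \|T\|$, a contradiction.

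The second step is the elementary inequality $\|T^N x\| \ge (1+\varepsilon)^N \|x\|$ for every $x$ in the range of $P$. This is immediate from the functional calculus: $\|T^N x\|^2 = \int \lambda^{2N}\, d\langle E_\lambda x, x\rangle$, and for $x$ in the range of $P$ the positive measure $d\langle E_\lambda x, x\rangle$ is supported on $\{|\lambda| \ge 1+\varepsilon\}$, so the integral is at least $(1+\varepsilon)^{2N}\langle x,x\rangle$.

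Third, I would bring in the hypothesis on the dense subset. Since $P$ lies in the von Neumann algebra generated by $T$, it commutes with every power $T^N$; hence for any $v$ in the dense subset we may split $v = Pv + (I-P)v$ into orthogonal components and write $T^N v = T^N(Pv) + T^N\big((I-P)v\big)$, with the two summands lying in the mutually orthogonal subspaces $P\mathcal{H}$ and $(I-P)\mathcal{H}$. Consequently $\|T^N v\|^2 \ge \|T^N(Pv)\|^2 \ge (1+\varepsilon)^{2N}\|Pv\|^2$. Since $\|T^N v\| \to 0$ while $(1+\varepsilon)^{2N} \to \infty$, this forces $Pv = 0$. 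Thus $P$ annihilates a dense subset of $\mathcal{H}$, and being bounded it must be the zero operator, contradicting the first step. Therefore $\|T\| \le 1$.

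The only real obstacle is invoking the spectral theorem correctly — specifically, that for a self-adjoint $T$ the norm equals the spectral radius and that the spectral projection onto $\{|\lambda| \ge 1+\varepsilon\}$ is nonzero whenever $\|T\| > 1+\varepsilon$; everything after that is bookkeeping with the orthogonal decomposition. One could try to avoid spectral projections by using the identity $\|T^N\| = \|T\|^N$ valid for self-adjoint $T$, but converting pointwise smallness on a merely dense set into an operator-norm bound still seems to require the spectral measure, since pointwise convergence to $0$ on a dense set does not by itself bound operator norms.
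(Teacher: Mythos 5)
Your proof is correct and is essentially the paper's argument carried out in detail: the paper simply invokes the spectral theorem to reduce to multiplication operators and asserts the claim there, while you perform the same reduction explicitly via spectral projections, filling in exactly the steps the paper leaves implicit. (One cosmetic point: $E(\{\lambda : |\lambda|\ge 1+\varepsilon\})=0$ only forces $\sigma(T)\subseteq\{\lambda : |\lambda|\le 1+\varepsilon\}$ rather than the open interval, but since $1+\varepsilon<\|T\|$ the contradiction stands.)
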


\begin{proof}
This is true for operators that act by multiplication. By the spectral theorem, \cite[IV.194]{BourbakiTS}, every bounded self-adjoint operator on a Hilbert space is conjugate, by an isometry, to an operator that acts by multiplication.
\end{proof}

\subsubsection{Finitely supported functions are dense in subspaces}

\begin{lemma}
\label{lem:density}
Let $W$ be a linear subspace of $L^2(X_0,\mu_0)$ of finite co-dimension. \textit{Finitely supported functions} are dense in $W$.
\end{lemma}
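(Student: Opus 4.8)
The plan is to produce a \emph{bounded linear projection} $Q$ of $L^2(X_0,\mu_0)$ onto $W$ which sends finitely supported measures to finitely supported measures. Granting such a $Q$, the lemma is immediate: finitely supported measures are already dense in all of $L^2(X_0,\mu_0)$, so given $v\in W$ and $\varepsilon>0$ one picks a finitely supported $f$ with $\|v-f\|<\varepsilon/\|Q\|$; then $Qf$ is a finitely supported element of $W$, and since $Qv=v$ we get $\|v-Qf\|=\|Q(v-f)\|\le\|Q\|\,\|v-f\|<\varepsilon$.

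To build $Q$, first note that $W$ having finite codimension means $W^{\perp}$ is finite dimensional, of dimension $m$ say; fix an orthonormal basis $w_1,\dots,w_m$ of $W^{\perp}$, so that $W=\{u\in L^2(X_0,\mu_0):\langle u,w_i\rangle=0\ \text{for}\ 1\le i\le m\}$. Since finitely supported measures are dense, I would choose finitely supported $g_1^{0},\dots,g_m^{0}$ with each $\|g_j^{0}-w_j\|$ small. Then the matrix $M_{ij}:=\langle g_i^{0},w_j\rangle$ is close to the identity, hence invertible, and putting $g_j:=\sum_k (M^{-1})_{jk}\,g_k^{0}$ (still a finitely supported measure) yields the biorthogonality relations $\langle g_j,w_i\rangle=\delta_{ij}$. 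Now define
\[
Qf:=f-\sum_{j=1}^{m}\langle f,w_j\rangle\,g_j .
\]
This $Q$ is bounded, with $\|Q\|\le 1+\sum_{j}\|g_j\|$; one checks $\langle Qf,w_i\rangle=\langle f,w_i\rangle-\sum_j\langle f,w_j\rangle\langle g_j,w_i\rangle=0$ for every $i$, so $Qf\in W$, while $Qf=f$ when $f\in W$. Hence $Q$ is a projection onto $W$, and since every $g_j$ is finitely supported, $Qf$ is finitely supported whenever $f$ is.

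The content here is conceptual rather than computational: one must resist using the \emph{orthogonal} projection onto $W$, which in general destroys finite support; the remedy is to allow an oblique bounded projection assembled from finitely supported ``dual vectors'' $g_j$, which is precisely what the biorthogonalization step arranges. The remaining ingredients — density of finitely supported measures in $L^2(X_0,\mu_0)$, invertibility of a matrix near the identity, and boundedness of a finite sum of rank-one operators — are all routine, so I do not foresee a real obstacle.
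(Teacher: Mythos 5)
Your proof is correct and complete; the paper itself dismisses this lemma with ``This is an exercise in linear algebra,'' so your oblique-projection argument (biorthogonalizing finitely supported approximants $g_j$ to an orthonormal basis of $W^{\perp}$ and projecting along them) supplies exactly the details the paper omits, and the key observation that the \emph{orthogonal} projection would destroy finite support is well taken. The only point worth making explicit is that your identification $W=\{u:\langle u,w_i\rangle=0,\ 1\le i\le m\}$ uses that $W$ is closed (equivalently $W=(W^{\perp})^{\perp}$); this is harmless here, since the lemma is applied to $\hil_\lambda$, which is defined as an orthogonal complement and hence closed.
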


\begin{proof}
This is an exercise in linear algebra.
\end{proof}

\begin{mysection}
Let $W$ be a subset of the support such that $V^{\perp} \rightarrow W$ is surjective....
\end{mysection}

\begin{mymysection}

\section{The dual of $\Delta_0$}
Let $\Delta_0^{*}$ be the operator dual to $\Delta_0$. 
\begin{remark}
Explicitly, given a function $f$ on $X_0$,
\[
(\Delta_0^{*}f)(G) = \probE f(\Delta_0 \, G)
\]
\end{remark}
In other words, if we regard $\Delta_0$ as a \textit{random walk on a weighted graph}, $\Delta_0^{*}$ is a \textit{weighted average over neighbors}.

\section{Functions and measures}

In the introduction, we have phrased everything in terms of $\ltwoCL$.
In the ensuing sections it will be more natural to work with $\ltwostar$, the dual of $\ltwoCL$.

Of course, $\ltwoCL$ is a Hilbert space and hence isomorphic to its dual, via the pairing
\[
\Big< \emptydot , \, \emptydot \Big>
\]
\begin{mysection}
We will regard the pairing as giving an isomorphism:
\[
\ltwoCL \xrightarrow{\sim} \ltwostar
\]
that sends $f \in \ltwoCL$ to
\[
f(\emptydot)\mu_0(\emptydot)=c_0\frac{f(\emptydot)}{\#Aut(\emptydot)} 
\]
The inverse of this map sends $\ltwostar$ to 
\[
\ltwoCL \cong \ltwoCL^{**}
\]
and hence defines a symmetric pairing on $\ltwostar$, explicitly given by:
\[
\Big<\nu_1, \, \nu_2 \Big> = c_0^{-1} \sum_G \#Aut(G)|\nu_1(G)||\nu_2(G)| 
\]
Thus, $\ltwostar$ also has the structure of a Hilbert space. With this structure,
\[
\ltwoCL \rightarrow \ltwostar
\]
is an isometry.
\end{mysection}

\subsubsection{Generalities about $\ltwoCL$ and $\ltwostar$}

We will regard the pairing on $\ltwoCL$ as giving an isomorphism:
\[
\ltwoCL \xrightarrow{\sim} \ltwostar: \, f \mapsto \Big< f , \, \emptydot \Big>
\]
The operator norm of 
\[
\Big< f , \, \emptydot \Big> \in \ltwostar
\]
is 
\[
\sup_{||h||=1} \left|\Big< f , \, h \Big>\right| = \sqrt{\Big< f , \, f \Big>}
\]
Hence, the map
\[
\ltwoCL \rightarrow \ltwostar
\]
is norm preserving. Moreover, the inverse of this map sends $\ltwostar$ to 
\[
\ltwoCL \cong \ltwoCL^{**}
\]
and hence defines a symmetric pairing on $\ltwostar$. The norm induced by this pairing agrees with the \textit{operator norm} on $\ltwostar$. Thus, $\ltwostar$ also has the structure of a Hilbert space.

\subsubsection{Motivation for using $\ltwostar$}

The main reason that we would like to work with $\ltwostar$ is that the elements of $\ltwostar$ are measures, not functions, and that our operator $\Delta_0$ is phrased most naturally as \textit{an operator on measures}.

\begin{itemize}
\item[]\begin{remark}
$\Delta_0$ was originally defined on measures. There are two natural ways to extend it to $\ltwoCL$. Firstly, we can extend it to $\ltwoCL$ via:
\[
\ltwostar \cong \ltwoCL: \, f \mapsto \Big< f, \et \Big>
\]
Secondly, we can take the dual of $\Delta_0$. The equivalence of both definitions is implied by, and in fact equivalent to, the fact that $\Delta_0$ is self-adjoint. We will denote the resulting operator on $\ltwoCL$ as $\Delta_0^{*}$.
\end{remark}
\end{itemize}
With this in mind, it might have perhaps been wise to state the results in the introduction in terms of $\ltwostar$ immediately. However, this would have made notation heavier than necessary.
\textit{Results are easier to state in $\ltwoCL$, but easier to prove in $\ltwostar$.}

It is important to keep in mind that the two spaces are canonically isomorphic; hence that choice of which space to work with is merely a matter of convenience.

\subsubsection{Things to keep in mind when passing between $\ltwoCL$ and $\ltwostar$}

Given $G \in X_0$ we write $\nu(G)$ to denote the measure $\nu$ evaluated against the indicator function of $G$. In other words $\nu(G)$ is the weight (or probability, if $\nu$ is a probability measure) of $G$. Thus if $\nu = \Big<f,\, \emptydot \Big>$, then
\[
\nu(G) = \mu_0(G)f(G)
\]

Thus, the difference between $\ltwoCL$ and $\ltwostar$ amounts to a different choice of basis.

\subsubsection{Examples}

We collect here a dictionary between the two spaces:
\begin{center}
   \begin{tabular}{|c|c|}
   \hline
 $\ltwoCL$  & $\ltwostar$  \\ \hline \hline
 Typical element: $f$  & Typical element:  $\nu$  \\ \hline
 $1$  & $\mu_0$  \\ \hline
 $\#Sur(\emptydot,F)$  & $\mu_0(\emptydot)\#Sur(\emptydot,F)$  \\ \hline
\hspace{0.2in}$\Big<f_1,f_2\Big>\defeq \sum_G \mu_0(G)f_{1}(G)f_{2}(G)$\hspace{0.2in}  & \hspace{0.2in}$\Big<\nu_1,\nu_2\Big>\defeq\sum_G \frac{\nu_{1}(G)\nu_{2}(G)}{\mu_0(G)}$ \hspace{0.2in} \\ \hline
$\Big<1,1\Big>=1$ & $\Big< \mu_0 , \mu_0 \Big>=1$  \\ \hline
$\Delta_0^{*}$ & $\Delta_0$  \\ \hline
   \end{tabular}
\end{center}

\begin{mysection}
\[
   \begin{mat}{|c|c|}
   \hline
 \ltwoCL  & \ltwostar  \\ \hline
 f  & \nu \\ \hline
 1  & \mu_0  \\ \hline
 \#Sur(\emptydot,F)  & \mu_0(\emptydot)\#Sur(\emptydot,F)  \\ \hline
 \Big<f_1,f_2\Big>\defeq \sum_G \mu_0(G)f_{1}(G)f_{2}(G)  & \Big<\nu_1,\nu_2\Big>\defeq  \\ 
 \sum_G \mu_0(G)f_{1}(G)f_{2}(G)  &  \sum_G \frac{f_{1}(G)f_{2}(G)}{\mu_0(G)} \\ \hline
   \end{mat}
\]
\end{mysection}

\begin{mysection}
The main reason that we would like to work with $\ltwostar$ is that the elements of $\ltwostar$ are measures, not functions, and that our operator $\Delta_0$ is phrased most naturally as \textit{an operator on measures}. The same is true for other operators we use in this chapter.

\begin{remark}
$\Delta_0$ is a priori defined on $\ltwostar$. However, as it is a bounded operator, we can extend it to $\ltwoCL$ via the isometry between $\ltwostar$ and $\ltwoCL$.
\end{remark}

With this in mind, it might have perhaps been wise to state the results in the introduction in terms of $\ltwostar$ immediately. However, this would have made notation heavier than necessary.

\textit{
Results are easier to state in $\ltwoCL$, but easier to prove in $\ltwostar$.}

It is important to keep in mind that the two spaces are canonically isomorphic; hence that choice of which space to work with is merely a matter of convenience.

The pairing on $\ltwoCL$ is a map:
\[
\ltwoCL \xrightarrow{\sim} \ltwostar: f \mapsto \nu
\]
defined by
\[
f \mapsto f(\emptydot)\mu_0(\emptydot)=c_0\frac{f(\emptydot)}{\#Aut(\emptydot)} 
\]
The inverse map 
\[
\ltwostar \xrightarrow{\sim} \ltwoCL \cong \ltwostar{}^{*} : \nu \mapsto f
\]
 is hence given by
 \[
 \nu \mapsto \frac{\nu(\emptydot)}{\mu_0(\emptydot)}=
 c_0^{-1} \nu(\emptydot)\#Aut(\emptydot)
 \]
 This gives the following pairing on $\ltwostar$:
 \[
 \Big<\nu_1, \, \nu_2 \Big> = c_0^{-1} \sum_G \#Aut(G)|\nu_1(G)||\nu_2(G)| 
 \]
 .....The isomorphism between the two spaces is an isometry....
.....We record a dictionary between the two Hilbert spaces....
\end{mysection}

\end{mymysection}

\bibliographystyle{alpha}
\bibliography{ThesisBibliography}

\begin{thebibliography}{Woo23}

\bibitem[Ass22]{Theo}
Theodoros Assiotis.
\newblock {I}nfinite $p$-adic random matrices and ergodic decomposition of
  $p$-adic {H}ua measures.
\newblock {\em Trans. Amer. Math. Soc.}, 375:1745--1766, 2022.

\bibitem[Bou23]{BourbakiTS}
N.~Bourbaki.
\newblock {\em Théories Spectrales}.
\newblock Springer Cham, 2023.

\bibitem[CL84]{CohenLenstra}
H.~Cohen and H.~W. Lenstra.
\newblock Heuristics on class groups of number fields.
\newblock In Hendrik Jager, editor, {\em Number Theory Noordwijkerhout 1983},
  pages 33--62, Berlin, Heidelberg, 1984. Springer Berlin Heidelberg.

\bibitem[CY23]{CheongYu}
Gilyoung Cheong and Myungjun Yu.
\newblock The distribution of the cokernel of a polynomial evaluated at a
  random integral matrix.
\newblock {\em arXiv}, math.NT/2303.09125, 2023.

\bibitem[Eva02]{Evans}
Steven~N. Evans.
\newblock Elementary divisors and determinants of random matrices over a local
  field.
\newblock {\em Stochastic Processes and their Applications}, 102(1):89--102,
  2002.

\bibitem[FW89]{FriedmanWashington}
Eduardo Friedman and Lawrence~C. Washington.
\newblock On the distribution of divisor class groups of curves over a finite
  field.
\newblock 1989.

\bibitem[Lee23]{LeeHermitian}
Jungin Lee.
\newblock Universality of the cokernels of random $p$-adic hermitian matrices.
\newblock {\em Transactions of the American Mathematical Society}, September
  2023.

\bibitem[Map]{Maples2}
Kenneth Maples.
\newblock Symmetric random matrices over finite fields.
\newblock preprint.

\bibitem[Map13]{Maples1}
Kenneth Maples.
\newblock Cokernels of random matrices satisfy the {C}ohen-{L}enstra
  heuristics.
\newblock {\em arXiv}, math.CO/1301.1239, 2013.

\bibitem[NW21]{WoodNguyen}
Hoi~H. Nguyen and Melanie~Matchett Wood.
\newblock Random integral matrices: {U}niversality of surjec- tivity and the
  cokernel.
\newblock {\em Inventiones mathematicae}, 228(1):1--76, 2021.

\bibitem[NW22]{WoodNguyen2}
Hoi~H. Nguyen and Melanie~Matchett Wood.
\newblock Local and global universality of random matrix cokernels.
\newblock {\em arXiv}, math.PR/2210.08526, 2022.

\bibitem[SW24]{SawinWood}
Will Sawin and Melanie~Matchett Wood.
\newblock The moment problem for random objects in a category.
\newblock {\em arXiv}, math.PR/2210.06279, 2024.

\bibitem[Woo17]{WoodMoments}
Melanie~Matchett Wood.
\newblock The distribution of sandpile groups of random graphs.
\newblock {\em Journal of the American Mathematical Society}, 30(4):915--958,
  2017.

\bibitem[Woo19]{WoodIntegral}
Melanie~Matchett Wood.
\newblock Random integral matrices and the {C}ohen-{L}enstra heuristics.
\newblock {\em American Journal of Mathematics}, 141(2):383--398, 2019.

\bibitem[Woo23]{WoodICM}
Melanie~Matchett Wood.
\newblock Probability theory for random groups arising in number theory.
\newblock In Dmitry Beliaev and Stanislav Smirnov, editors, {\em ICM 2022
  Proceedings}, Berlin, 2023. EMS Press.

\bibitem[Yan23]{Yan}
Eric Yan.
\newblock Universality for cokernels of {D}edekind domain valued random
  matrices.
\newblock {\em arXiv}, math.NT/2301.09196, 2023.

\end{thebibliography}

\end{document}